\pdfoutput=1
\RequirePackage{ifpdf}
\ifpdf 
\documentclass[pdftex]{sigma}
\else
\documentclass{sigma}
\fi

\usepackage{mathrsfs,pb-diagram,pb-xy}
\usepackage[all]{xy}
\usepackage[inline]{enumitem}
\usepackage{mathrsfs}
\usepackage{multicol}
\usepackage{bm}
\usepackage{tikz}

\numberwithin{equation}{section}

\newtheorem{Theorem}{Theorem}[section]
\newtheorem*{Theorem*}{Theorem}
\newtheorem{Corollary}[Theorem]{Corollary}
\newtheorem{Lemma}[Theorem]{Lemma}
\newtheorem{Proposition}[Theorem]{Proposition}
 { \theoremstyle{definition}
\newtheorem{Definition}[Theorem]{Definition}

\newtheorem{Remark}[Theorem]{Remark} }
\newtheorem{hypothesis}[Theorem]{Hypothesis}

\newtheorem{step}{Step}
\newtheorem{stepi}{Step}
\newtheorem{stepii}{Step}
\newtheorem{stepiii}{Step}
\newtheorem{stepiv}{Step}
\newtheorem{stepv}{Step}

\newcommand{\adc}{\operatorname{ad}_c}
\newcommand{\co}{\operatorname{co}}
\newcommand{\Der}{\operatorname{Der}}
\newcommand{\GK}{\operatorname{GK-dim}}
\newcommand{\gr}{\operatorname{gr}}
\newcommand{\Hom}{\operatorname{Hom}}
\newcommand{\id}{\operatorname{id}}
\newcommand{\supp}{\operatorname{supp}}

\newcommand{\toba}{\mathscr B}
\newcommand{\wtoba}{\widetilde{\mathscr B}}
\newcommand{\ghost}{\mathscr{G}}

\newcommand{\K}{{\mathcal{K}}}
\newcommand{\Ku}{\underline{\mathcal K}}
\newcommand{\Pc}{{\mathcal{P}}}
\newcommand{\Ss}{\mathcal{S}}
\newcommand{\Vc}{{\mathcal{V}}}
\newcommand{\Zc}{{\mathcal{Z}}}

\newcommand{\ab}{\mathbf a}

\newcommand{\bb}{\mathbf b}

\newcommand{\pale}{\mathfrak{P}}
\newcommand{\sele}{\mathfrak{S}}

\newcommand{\G}{{\mathbb G}}
\newcommand{\I}{{\mathbb I}}
\newcommand{\N}{{\mathbb N}}
\newcommand{\Z}{{\mathbb Z}}

\newcommand{\ttt}{\mathtt{t}}
\newcommand{\ut}{\mathtt{u}}
\newcommand{\vt}{\mathtt{v}}
\newcommand{\wt}{\mathtt{w}}
\newcommand{\xt}{\mathtt{x}}
\newcommand{\yt}{\mathtt{y}}
\newcommand{\zt}{\mathtt{z}}
\newcommand{\zc}[2]{\mathtt{z}_{#1,#2}}

\newcommand{\eny}{\mathfrak{E}}
\newcommand{\enyIIb}[1]{\eny_{\star,#1}}
\newcommand{\yd}[1]{{}_{#1}^{#1}\mathcal{YD}}

\newcommand{\bq}{\mathfrak{q}}
\newcommand{\bqs}{\bq^{\dagger}}
\newcommand{\VGamma}{\widehat{\Gamma}}
\newcommand{\tresdos}{\frac{3}{2}}
\newcommand{\cincodos}{\frac{5}{2}}
\newcommand{\idos}{\frac{2i+1}{2}}

\begin{document}
\allowdisplaybreaks

\newcommand{\arXivNumber}{2108.02608}

\renewcommand{\PaperNumber}{021}

\FirstPageHeading

\ShortArticleName{Rank 4 Nichols Algebras of Pale Braidings}

\ArticleName{Rank 4 Nichols Algebras of Pale Braidings}

\Author{Nicol\'as ANDRUSKIEWITSCH~$^{\rm a}$, Iv\'an ANGIONO~$^{\rm a}$ and Mat\'{\i}as MOYA GIUSTI~$^{\rm b}$}

\AuthorNameForHeading{N.~Andruskiewitsch, I.~Angiono and M.~Moya Giusti}

\Address{$^{\rm a)}$~Facultad de Matem\'atica, Astronom\'{\i}a, F\'{\i}sica y Computaci\'on,\\
\hphantom{$^{\rm a)}$}~Universidad Nacional de C\'ordoba, CIEM -- CONICET,\\
\hphantom{$^{\rm a)}$}~Medina Allende s/n (5000) Ciudad Universitaria, C\'ordoba, Argentina}
\EmailD{\href{mailto:nicolas.andruskiewitsch@unc.edu.ar}{nicolas.andruskiewitsch@unc.edu.ar}, \href{mailto:ivan.angiono@unc.edu.ar}{ivan.angiono@unc.edu.ar}}
\URLaddressD{\url{https://www.famaf.unc.edu.ar/~andrus/},\newline
\hspace*{14.5mm}\url{https://www.famaf.unc.edu.ar/~angiono/}}

\Address{$^{\rm b)}$~6 rue Rampal, 75019, Paris, France}
\EmailD{\href{mailto:matiasmoyagiusti@gmail.com}{matiasmoyagiusti@gmail.com}}

\ArticleDates{Received November 25, 2022, in final form March 21, 2023; Published online April 13, 2023}

\Abstract{We classify finite GK-dimensional Nichols algebras $\toba(V)$ of rank 4 such that $V$ arises as a Yetter--Drinfeld module over an abelian group but it is not a direct sum of points and blocks.}

\Keywords{Hopf algebras; Nichols algebras; Gelfand--Kirillov dimension}

\Classification{16T05}

\section{Introduction}
\subsection{The context}
Let $\Bbbk$ be an algebraically closed field of characteristic zero.
The problem of classifying Hopf algebras with finite Gelfand--Kirillov dimension, abbreviated $\GK$ henceforth,
is an active area of research. See \cite{AAH-triang,ASa-preNichols,angiono-campagnolo-sanmarco,brown-zhang}
and references therein.
Crucial for this problem and attractive in itself is the question of classifying Nichols algebras over abelian groups with finite~$\GK$; see~\cite{A-nilpotent} for its role in the study of pointed Hopf algebras over nilpotent groups.
Let $\Gamma$ be an abelian group and let $\Bbbk \Gamma$ be its group algebra.
The braided tensor category $\yd{\Bbbk\Gamma}$ of Yetter--Drinfeld modules over $\Bbbk \Gamma$
consists of $\Gamma$-graded $\Gamma$-modules, i.e., vector spaces $V = \oplus_{g \in \Gamma} V_g$
with a linear action of $\Gamma$ such that $h \cdot V_g = V_g$ for all $g,h\in \Gamma$, with usual tensor product of modules and gradings.
The braiding $c_{V,W}\colon V \otimes W \to W \otimes V$, for $V, W\in \yd{\Bbbk\Gamma}$, is given by
\begin{gather}\label{eq:braiding-ydG}
c_{V,W} (v \otimes w) = g\cdot w \otimes v, \qquad
v\in V_g, \quad g \in \Gamma, \quad w\in W.
\end{gather}
Given $V = \oplus_{g \in \Gamma} V_g \in\yd{\Bbbk\Gamma}$, its support is
$\supp V = \{g\in \Gamma\colon V_g \neq 0 \}$.
Since the Nichols algebra $\toba(V)$ depends only on the braiding, the question of classifying those $V$ with $\GK \toba(V) < \infty$
was approached via Nichols algebras of suitable classes of braided vector spaces. Concretely, we mention:
\begin{enumerate}[label=$(\alph*)$]\itemsep=0pt
\item\label{item:diagonal}
\emph{Braided vector spaces of diagonal type} (see Section~\ref{subsec:diagonal-type} for details).
\end{enumerate}
Nichols algebras arising from this class satisfy the following:

\begin{Theorem}[\cite{angiono-garcia2}]\label{thm:conj-AAH} The root system
of a Nichols algebra of diagonal type with finite GK-dimension is finite.
\end{Theorem}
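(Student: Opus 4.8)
The plan is to argue by contraposition: assuming the generalized root system $\Delta^{\bq}$ of the diagonal braiding $\bq$ of $V$ is infinite, I would show $\GK\toba(V)=\infty$. The bridge between the combinatorics of $\Delta^{\bq}$ and the growth of $\toba(V)$ is Kharchenko's PBW theorem: $\toba(V)$ has a restricted PBW basis whose generators $x_\beta$ are labelled by the positive roots $\beta\in\Delta_+^{\bq}$, each carrying a height $N_\beta\in\{2,3,\dots\}\cup\{\infty\}$ governed by the order of $q_{\beta\beta}$ (with $N_\beta=\infty$ exactly when $q_{\beta\beta}=1$ or is not a root of unity), and the ordered products $\prod_\beta x_\beta^{\,n_\beta}$ with $0\le n_\beta<N_\beta$ form a basis. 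Everything is then reduced to estimating the growth of this explicit basis.

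The engine of the lower bound is an elementary observation I would isolate first. If infinitely many roots have infinite height, pick any $m$ of them, say $x_{\beta_1},\dots,x_{\beta_m}$, and set $W=\langle 1,x_{\beta_1},\dots,x_{\beta_m}\rangle$. The ordered monomials $x_{\beta_1}^{\,n_1}\cdots x_{\beta_m}^{\,n_m}$ with $n_1+\dots+n_m\le n$ all lie in $W^n$ and are distinct PBW basis elements, hence linearly independent, so $\dim W^n\ge\binom{n+m}{m}$ and $\GK\toba(V)\ge m$. Letting $m\to\infty$ gives $\GK\toba(V)=\infty$. Thus it suffices to prove that an infinite $\Delta_+^{\bq}$ produces infinitely many infinite-height roots, or, in the residual cases where it does not, to extract super-polynomial growth directly from the Hilbert series $\prod_{\beta}\bigl(\sum_{0\le n<N_\beta}t^{\,n|\beta|}\bigr)$ of the associated graded skew-polynomial algebra.

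To make either conclusion accessible I would reduce to low rank, essentially rank $2$. Two inputs are needed. First, finiteness of $\GK$ is invariant under the reflections $\rho_i$ of the Weyl groupoid, so $\bq$ may be replaced by the braiding at any object of its groupoid; this should be set up (or cited) beforehand. Second, the Nichols subalgebra generated by two PBW letters $x_i,x_j$ is again of diagonal type, its root system is the rank-$2$ section $\Delta^{\bq}\cap(\N_0\alpha_i+\N_0\alpha_j)$, and $\GK$ of a subalgebra never exceeds $\GK$ of the whole. An infinite connected $\Delta^{\bq}$ should already be infinite on some rank-$2$ section at some object of the groupoid, a reduction special to Weyl groupoids whose reflection-closure rules out the hyperbolic Kac--Moody behaviour available to arbitrary data; combining the two inputs reduces the claim to a rank-$2$ braiding with infinite root system.

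The main obstacle is the rank-$2$ case itself. Here I would invoke the explicit, continued-fraction description of rank-$2$ generalized root systems. When the section is of affine shape the roots accumulate along a ray with total degrees growing only linearly, and the Hilbert-series product above then has partition-type coefficients $\sim\exp\bigl(c\sqrt n\bigr)$, already super-polynomial, so $\GK=\infty$. The delicate case is the indefinite shape, where the degrees $|\beta|$ grow exponentially and a crude Hilbert-series bound is inconclusive; one must show that such a configuration is incompatible with all heights being finite, equivalently that it forces infinitely many roots with $q_{\beta\beta}=1$ and so returns us to the clean engine above, or else is excluded outright by the rank-$2$ classification. Carrying out this height-and-degree bookkeeping across the finitely many shapes of infinite rank-$2$ root systems, together with securing the reflection-invariance of finite $\GK$ used in the reduction, are the two points I expect to absorb essentially all of the work.
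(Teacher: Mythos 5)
You should first be aware that this paper contains no proof of Theorem~\ref{thm:conj-AAH}: it is quoted verbatim from~\cite{angiono-garcia2} (the former \cite[Conjecture~1.3.3]{AAH-triang}), so there is no internal argument to compare against, and your proposal has to be judged as a sketch of the proof in the literature. Its sound parts are genuinely the right tools: Kharchenko's PBW basis, the ``engine'' that infinitely many PBW generators of infinite height force $\GK\toba(V)=\infty$ (this is essentially \cite[Lemma~2.3.4]{AAH-triang}), and the invariance of finite $\GK$ under the reflections of the Weyl groupoid. These are all used in \cite{AA,AAH-diag,angiono-garcia,angiono-garcia2}.

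The gap is the reduction to rank $2$, which is false as stated. Already for braidings of Cartan type the Weyl groupoid is the ordinary Weyl group, constant over all objects, and the rank-$2$ sections $\Delta\cap(\N_0\alpha_i+\N_0\alpha_j)$ are the root systems of the $2\times2$ principal submatrices; there exist indecomposable indefinite generalized Cartan matrices of rank $3$ (e.g., the triangle with $a_{ij}a_{ji}\in\{1,1,2\}$, of negative determinant) all of whose rank-$2$ principal submatrices are of finite type. So an infinite root system need not be visible in any rank-$2$ section at any object, and ``reflection-closure rules out hyperbolic Kac--Moody behaviour'' is not true --- hyperbolic Cartan matrices do arise from diagonal braidings. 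This is exactly why Cartan type and rank $3$ required a separate paper \cite{angiono-garcia} and the general case yet another \cite{angiono-garcia2}. Moreover, in the Cartan case at a root of unity all heights $N_\beta$ are finite, so your engine produces nothing and the Hilbert-series estimate you defer is where the real content lies; and you explicitly leave open the rank-$2$ indefinite case as well. In short, the scaffolding is right but the three cases you postpone (rank-$2$ indefinite, Cartan/indefinite in higher rank, and the reduction itself) are the theorem.
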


This result was conjectured in \cite[Conjecture 1.3.3]{AAH-triang}, with supporting evidence from \cite{AA,AAH-diag,angiono-garcia, R-quantum-groups}.
By Theorem~\ref{thm:conj-AAH}, the classification of Nichols algebras of diagonal type with finite GK-dimension
follows from \cite{H-classif-RS}.
\begin{enumerate}[label=$(\alph*)$,resume]\itemsep=0pt
\item\label{item:blocks}
\emph{Blocks}.
\end{enumerate}

These are the braided vector spaces $\Vc(\epsilon,\ell)$, where $\epsilon\in \Bbbk^{\times}$ and $\ell \in \N_{\ge 2}$,
with a basis $(x_i)_{i\in\I_\ell}$ such that for $i, j \in \I_\ell$, $1 < j$:
\begin{gather*}
c(x_i \otimes x_1) = \epsilon x_1 \otimes x_i, \qquad
c(x_i \otimes x_j) =(\epsilon x_j+x_{j-1}) \otimes x_i.
\end{gather*}

\begin{Theorem}[{\cite[Theorem 1.2.2]{AAH-triang}}]\label{th:blocks}
$\GK \toba(\Vc(\epsilon,\ell)) < \infty$ if and only if $\ell = 2$ and $\epsilon \in \{\pm 1\}$, in which case $\GK \toba(\Vc(\epsilon,\ell)) = 2$.
\end{Theorem}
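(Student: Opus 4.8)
The plan is to pin down a PBW basis of $\toba(\Vc(\epsilon,\ell))$ and to read $\GK$ off from it. I would realize $\Vc(\epsilon,\ell)$ in $\yd{\Bbbk\Gamma}$ by placing $x_1,\dots,x_\ell$ in a single homogeneous layer $V_g$ on which $g$ acts as one Jordan block with eigenvalue $\epsilon$; a direct check shows this reproduces the displayed braiding of the block. The candidate PBW generators are $x_1,\dots,x_\ell$ together with the iterated braided commutators obtained by applying $\adc x_2,\dots,\adc x_\ell$ to $x_1$, and the whole statement reduces to deciding when this list of generators of \emph{infinite height} is finite: for an algebra with a PBW basis one has
\[
\GK\toba(V)=\#\{\text{PBW generators of infinite height}\},
\]
since a generator of finite height contributes only a bounded factor to the Hilbert series while each infinite-height generator of degree $d$ contributes a factor $(1-t^{d})^{-1}$, i.e.\ one simple pole at $t=1$.

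\emph{The ``if'' direction.} For $\ell=2$ I would compute the quadratic relations as $\ker(\id+c)$ on $V\otimes V$. When $\epsilon=1$ this kernel is one-dimensional and yields $[x_2,x_1]_c=-\tfrac12 x_1^{2}$, so the commutator is \emph{not} a new generator; the PBW basis is $\{x_1^{a}x_2^{b}\}$, with two infinite-height generators, whence $\toba(V)$ is the Jordan plane and $\GK=2$. When $\epsilon=-1$ the kernel gives $x_1^{2}=0$ and, in degree $3$, the relation $[x_2,[x_2,x_1]_c]_c=0$; now the PBW generators are $x_1$ (height $2$), $z:=[x_2,x_1]_c$ and $x_2$ (both of infinite height), so $\toba(V)$ is the super Jordan plane and again $\GK=2$.

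\emph{The ``only if'' direction.} For $\ell=2$ and $\epsilon\neq\pm1$ the same computation shows $\ker(\id+c)=0$: there are no quadratic relations, so $z=[x_2,x_1]_c\neq0$ is independent of $x_1^{2}$ and survives as a new infinite-height generator. Tracking the $g$-action in degree $2$ exhibits $\langle z,x_1^{2}\rangle$ as a fresh Jordan block with eigenvalue $\epsilon^{2}$, and iterating $\adc x_2$ produces a super-letter $z_n:=(\adc x_2)^{n}(x_1)$ in each degree $n+1$ that does not collapse into the subalgebra generated by the previous ones; hence there are infinitely many infinite-height generators and $\GK=\infty$. The exceptional truncation in the two finite cases is precisely the coincidence $\epsilon^{2}=1$. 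For $\ell\geq3$, even when $\epsilon=\pm1$ the Jordan block has size $\geq3$, so the shift $x_j\mapsto x_{j-1}$ keeps feeding the bracket recursion; the iterated commutators built from $\adc x_\ell$ form an infinite linearly independent family of infinite-height generators, again forcing $\GK=\infty$.

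The main obstacle is the non-termination claim in the ``only if'' direction: one must show that the super-letters $z_n$ are genuinely nonzero and PBW-independent for all $n$, i.e.\ that no finite set of relations truncates the recursion outside the two exceptional cases. The cleanest rigorous route I would take is to locate inside $\toba(V)$ a Yetter--Drinfeld submodule $W$ of \emph{diagonal} type, built from $x_1$ and the surviving commutators, such that $\toba(W)$ embeds as a subalgebra; compute its braiding matrix as an explicit function of $\epsilon$; and check against Heckenberger's list that the associated root system is infinite unless $\ell=2$ and $\epsilon\in\{\pm1\}$. Theorem~\ref{thm:conj-AAH} then gives $\GK\toba(W)=\infty$, hence $\GK\toba(V)=\infty$. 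The delicate point is that the natural submodules one first writes down are again of block rather than diagonal type, so extracting a genuinely diagonal sub-braiding to which the classification applies is where the real work lies.
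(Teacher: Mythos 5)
First, a point of comparison: the paper does not prove Theorem~\ref{th:blocks} at all --- it is quoted from \cite[Theorem~1.2.2]{AAH-triang} --- so there is no internal proof to measure your argument against; I can only assess it on its own terms. Your ``if'' direction is essentially right: the quadratic relation $[x_2,x_1]=-\tfrac12 x_1^2$ for $\epsilon=1$ and $x_1^2=0$ for $\epsilon=-1$ are correct, as is reading $\GK=2$ off the PBW basis. One detail is wrong, though: the cubic relation of the super Jordan plane is not $[x_2,[x_2,x_1]_c]_c=0$ but $[x_2,x_{21}]_c=x_1x_{21}$ with $x_{21}=x_2x_1+x_1x_2$; in $\toba(\Vc(-1,2))$ one has $(\adc x_2)^2(x_1)=x_1x_{21}\neq 0$, so the presentation you give is of a different algebra, even though the PBW basis and the value of $\GK$ you state are correct.

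The ``only if'' direction is where the theorem actually lives, and there your text is a plan rather than a proof, with a concrete failure point in the proposed rigorous route. The entire burden is to show that the super-letters $z_n=(\adc x_2)^n(x_1)$ (and their analogues for $\ell\ge 3$) are nonzero and PBW-independent for all $n$; you assert this and then propose to extract a diagonal-type Yetter--Drinfeld submodule $W$ with $\toba(W)$ embedded in $\toba(V)$ and to invoke Theorem~\ref{thm:conj-AAH}. Two obstructions. First, for the subalgebra generated by a braided subspace $W$ to dominate $\toba(W)$ one needs $W$ to consist of primitive elements, and the primitives of $\toba(V)$ are exactly $V$ itself --- that is the definition of a Nichols algebra --- so no larger diagonal submodule exists inside $\toba(V)$; one is forced to pass to an associated graded object. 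Second, the diagonal degenerations one then obtains do not detect infiniteness in all the remaining cases: for $\ell=2$ and $\epsilon\in\G'_3$ the flag $0\subset\langle x_1\rangle\subset V$ yields a diagonal braiding with all entries $\epsilon$, i.e., finite Cartan type $A_2$ with parameter $\epsilon$, and for $\ell=3$, $\epsilon=1$ the degeneration is the polynomial ring in three variables; in both cases the diagonal object has finite root system and finite $\GK$, so Theorem~\ref{thm:conj-AAH} gives no contradiction. The actual proof in \cite{AAH-triang} instead constructs, by induction with skew-derivations, explicit infinite families of homogeneous elements whose ordered products are linearly independent and applies \cite[Lemma~2.3.4]{AAH-triang} --- the same device used in Step~3 of the proofs of Theorems~\ref{th:selene-1+} and~\ref{th:selene-1-} in this paper. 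Without that computation, or a worked-out substitute, the ``only if'' direction remains open.
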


Here $\toba(\Vc(1,2))$ is the well-known Jordan plane while
$\toba(\Vc(-1,2))$ is called the super Jordan plane; the adjective super is justified in~\cite{ap1}.
\begin{enumerate}[label=$(\alph*)$,resume]\itemsep=0pt
\item\label{item:blocks-points}
\emph{Direct sums of blocks and points}.
\end{enumerate}

Here a point is a braided vector space of dimension 1 and the blocks are of the form $\Vc(\epsilon,2)$,
$\epsilon \in \{\pm 1\}$.
We require at least two blocks, or one block and at least one point (to avoid overlaps with the previous classes),
and specific types of braidings between blocks and points, or between blocks (from realizations in categories of Yetter--Drinfeld modules over groups).
The precise definition is in \cite[Section~1.3.1]{AAH-triang}.
The classification of the Nichols algebras with finite $\GK$ of such braided vector spaces is \cite[Theorem~1.3.8]{AAH-triang}.
\begin{enumerate}[label=$(\alph*)$,resume]\itemsep=0pt
\item \label{item:paleblock-point} \emph{Sums of one pale block and one point}.
\end{enumerate}

Any finite-dimensional Yetter--Drinfeld module is a direct sum of indecomposable subobjects in~$\yd{\Bbbk\Gamma}$.
If the underlying braided vector space of $U\in\yd{\Bbbk\Gamma}$ is a block, then $U$ is indecomposable in $\yd{\Bbbk\Gamma}$ but the converse is not true. An indecomposable $U\in\yd{\Bbbk\Gamma}$ which is not a block, i.e., is not an indecomposable braided vector space, is called a \emph{pale block}. These appear already in dimension 3.
Thus a braided vector space $V$, $\dim V = 3$, is a direct sum of of one pale block and one point if
$V = V_1 \oplus V_2$ where $V_1$ is a pale block and $V_2 $ is a point. This turns out to mean that there exist
\begin{itemize}
\itemsep=0pt
\item a basis $(x_i)_{1\le i \le 3}$ such that $V_1$ is generated by $x_1$ and $x_2$ and $V_2 = \Bbbk x_3$ and
\item a matrix $(q_{ij})_{1 \le i,j \le 2}$ of non-zero scalars
\end{itemize}
such that the braiding is given by
\begin{align}\label{eq:braiding-paleblock-point}
(c(x_i \otimes x_j))_{1 \le i,j \le 3} &=
\begin{pmatrix}
q_{11} x_1 \otimes x_1& q_{11} x_2 \otimes x_1& q_{12} x_3 \otimes x_1
\\
q_{11} x_1 \otimes x_2 & q_{11} x_2 \otimes x_2& q_{12} x_3 \otimes x_2
\\
q_{21} x_1 \otimes x_3 & q_{21}(x_2 + x_1) \otimes x_3& q_{22} x_3 \otimes x_3
\end{pmatrix}\!.
\end{align}
Indeed, it can be shown that such $V$ has a braiding like this \cite[Sections~4.1 and~8.1]{AAH-triang} and conversely
we realize a braided vector space $V$ with braiding~\eqref{eq:braiding-paleblock-point}
in $\yd{\Bbbk\Gamma}$, where $\Gamma = \Z^2$ with a basis $g_1$, $g_2$,
by $V_1 = V_{g_1}$, $V_2 = V_{g_2}$, $g_1\cdot x_1 = q_{11} x_1$, $g_2\cdot x_1 = q_{21} x_1$,
$g_1\cdot x_2 = q_{11} x_2$, $g_2\cdot x_2 = q_{21} (x_2 + x_1)$, $g_i\cdot x_3 = q_{i2} x_3$.

The underlying braided vector space of any Yetter--Drinfeld module of dimension 3 over an abelian group
belongs to one of the classes~\ref{item:diagonal}, \ref{item:blocks}, \ref{item:blocks-points} or~\ref{item:paleblock-point},
see \cite[Sections~4.1 and 8.1]{AAH-triang}. Below we shall use the notation $\widetilde{q}_{12} := q_{12}q_{21}$.

\begin{Theorem}[{\cite[Theorem 8.1.3]{AAH-triang}}]\label{th:paleblock-point-resumen}
Let $V$ be a braided vector space of dimension~$3$ with braiding~\eqref{eq:braiding-paleblock-point}.
Then $\GK \toba(V) < \infty$ if and only if
$q_{11} =-1$ and either of the following holds:
\begin{enumerate}[label=$(\roman*)$]\itemsep=0pt
\item
$\widetilde{q}_{12} = 1$ and $q_{22} = \pm 1$; in this case $\GK \toba(V) = 1$.

\item
$q_{22} = -1 = \widetilde{q}_{12}$; in this case $\GK \toba(V) = 2$.
\end{enumerate}
\end{Theorem}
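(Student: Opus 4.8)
The plan is to split the statement into a necessity part (the listed arithmetic conditions are forced by $\GK\toba(V)<\infty$) and a sufficiency part (in each surviving case one exhibits a PBW basis and reads off the stated value of $\GK$). Throughout I would work with the realization in $\yd{\Bbbk\Gamma}$, $\Gamma=\Z^2$, given in the excerpt, write $\chi$ for the bicharacter $\chi(g_i,g_j)=q_{ij}$, and use that the pale block $V_1=\Bbbk x_1\oplus\Bbbk x_2$ is a Yetter--Drinfeld submodule, so that $\toba(V_1)\hookrightarrow\toba(V)$ as a braided Hopf subalgebra. Since $g_1$ acts on $V_1$ by the scalar $q_{11}$, the braiding of $V_1$ is diagonal with all entries $q_{11}$; hence $\toba(V_1)$ is the rank-two diagonal Nichols algebra with matrix $\big(\begin{smallmatrix}q_{11}&q_{11}\\ q_{11}&q_{11}\end{smallmatrix}\big)$, whose root system is infinite (of affine type $A_1^{(1)}$) unless $q_{11}$ is a root of unity. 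By Theorem~\ref{thm:conj-AAH} this already forces $q_{11}$ to be a root of unity; the precise value $q_{11}=-1$ will come from the interaction with the point.

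To extract that interaction I would first note that $\Bbbk x_1\oplus\Bbbk x_3$ is also a Yetter--Drinfeld submodule, of diagonal type with matrix $\big(\begin{smallmatrix}q_{11}&q_{12}\\ q_{21}&q_{22}\end{smallmatrix}\big)$; by Theorem~\ref{thm:conj-AAH} and Heckenberger's classification its invariants $(q_{11},\widetilde q_{12},q_{22})$ must lie in the finite-type list, which with $q_{11}=-1$ cuts the possibilities for $(\widetilde q_{12},q_{22})$ down to a short list. The remaining, genuinely non-diagonal, constraints come from the Jordan interaction, which I would analyze through the braided commutators $z:=(\adc x_3)(x_1)=x_3x_1-q_{21}x_1x_3$ and $u:=(\adc x_3)(x_2)$. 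A direct computation shows that $g_1$ acts on both $z$ and $u$ by a common scalar while $g_2\cdot z=q_{21}q_{22}z$ and $g_2\cdot u=q_{21}q_{22}(u+z)$; thus $\{z,u\}$ is again a pale block, now with self-braiding $\chi(g_1g_2,g_1g_2)=q_{11}\widetilde q_{12}q_{22}$. This is the crux: applying the point to the pale block regenerates a pale block one level up, and finite $\GK$ forces this descent to terminate. Demanding termination --- that the appropriate Serre-type relation makes $u$ and the next bracket $(\adc x_3)(z)$ dependent on lower PBW generators, rather than opening an infinite string --- is exactly what pins down $q_{11}=-1$ together with the two alternatives encoded in $(\widetilde q_{12},q_{22})$; the mechanism is the same as the one behind Theorem~\ref{th:blocks}, where only $\ell=2$, $\epsilon=\pm1$ survive.

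For sufficiency I would, in each surviving case, write down the candidate relations --- the exterior relations $x_1^2=x_2^2=0$, $x_1x_2+x_2x_1=0$ inherited from $\toba(V_1)$; the power relation on $x_3$ dictated by $q_{22}$; and the relations that terminate the string, expressing $(\adc x_3)(z)$, the square of $u$, and so on in terms of lower generators --- and show that they define an algebra with an explicit PBW basis built from $x_1,x_2,x_3,z$ and the finitely many surviving brackets. The value of $\GK$ is then the number of PBW generators whose self-braiding equals $1$ (the polynomial directions), all others being truncated. In case~(i) exactly one such generator appears: $x_3$ itself when $q_{22}=1$ (self-braiding $q_{22}=1$), or $z$ when $q_{22}=-1$ (self-braiding $q_{11}\widetilde q_{12}q_{22}=1$), giving $\GK=1$; in case~(ii) the terminating structure is that of a super Jordan plane sitting inside $\toba(V)$, contributing two polynomial directions and hence $\GK=2$, with everything else finite-dimensional over it.

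The hard part will be the completeness of the PBW basis, i.e.\ the upper bound on $\GK$: one must prove that the relations written down generate the whole defining ideal of $\toba(V)$, equivalently that no further braided commutator survives as an independent generator. Because the braiding is not diagonal, the diagonal-type toolbox does not apply directly, and the degree-by-degree computation of the Nichols ideal has to control the regenerated Jordan blocks $\{z,u\}$ and their successors at every level; showing that the descent really stops --- so that only finitely many generators, with precisely the claimed number of polynomial ones, occur --- is the heart of the matter. By contrast the lower bound on $\GK$, namely exhibiting the polynomial subalgebra $\Bbbk[x_3]$ or $\Bbbk[z]$ in case~(i) and the super-Jordan-plane subalgebra in case~(ii), is comparatively routine once these elements and their self-braidings have been identified.
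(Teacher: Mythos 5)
First, a point of reference: the paper does not prove this theorem --- it is imported verbatim from \cite[Theorem~8.1.3]{AAH-triang} --- so the only internal data your proposal can be checked against are the presentations and PBW bases recorded in Proposition~\ref{prop:paleblocks}. Your core observation is correct and genuinely in the spirit of the paper's methods: $z=(\adc x_3)(x_1)$ and $u=(\adc x_3)(x_2)$ do span a Yetter--Drinfeld submodule of pale-block shape with $g_2\cdot u=q_{21}q_{22}(u+z)$ and self-braiding $q_{11}\widetilde q_{12}q_{22}$; this is exactly the splitting technique $\K^1=\adc(\toba(V_2))(V_1)$ used throughout Sections~4--6. But the necessity direction is asserted rather than proved. ``Demanding termination of the descent'' is not an argument: to exclude a triple $(q_{11},\widetilde q_{12},q_{22})$ one must exhibit inside $\toba(V)$, or inside $\K^1$, a concrete braided subspace already known to have infinite $\GK$ (a diagonal subspace outside Heckenberger's list, a block $\Vc(\epsilon,\ell)$ with $\ell\ge 3$ or $\epsilon\ne\pm1$, a block-plus-point with forbidden ghost, \dots), and your sketch never says which forbidden configuration arises in which excluded case. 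Note in particular that the diagonal subspace $\langle x_1,x_3\rangle$ with $q_{11}=-1$ lies in Heckenberger's list whenever $\widetilde q_{12}=q_{22}^{-1}$ (super type $A$), for \emph{arbitrary} $q_{22}$, so the reduction to $\widetilde q_{12}\in\{\pm1\}$ cannot come from that subspace; and nothing in your argument excludes $q_{11}\in\G'_3$, which Corollary~\ref{coro:indecomposable-abelian} explicitly permits for the pale block on its own.

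Second, your rule for reading off $\GK$ --- ``the number of PBW generators whose self-braiding equals $1$, all others being truncated'' --- is false precisely in case~(ii), and the failure shows that diagonal-type intuition does not transfer. In $\toba(\eny_\star(q))$ the generator $x_{\tresdos2}$ has $\Gamma$-bidegree $(g_1g_2,g_1g_2)$, hence ``self-braiding'' $q_{11}\widetilde q_{12}q_{22}=-1$, yet it occurs with unbounded exponent in the PBW basis of Proposition~\ref{prop:paleblocks}(c): its coaction is not group-like, so the usual ``the square of a $(-1)$-primitive vanishes'' argument does not apply, and $x_{\tresdos2}^2\ne 0$. The two polynomial directions in case~(ii) are $x_{\tresdos2}$ (self-braiding $-1$) and $[x_{\tresdos2},x_{12}]_c$ (self-braiding $1$). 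Relatedly, in case~(i) one has $\widetilde q_{12}=1$, so $z=(\adc x_3)(x_1)$ is already zero in the diagonal subalgebra $\toba(\langle x_1,x_3\rangle)\hookrightarrow\toba(V)$; the surviving polynomial generator for $q_{22}=-1$ is $u$, not $z$. Finally, you explicitly defer the completeness of the relations and the linear independence of the PBW monomials, which is where essentially all the work lies (compare the proofs of Theorems~\ref{thm:q=1,q22=-1,n=3} and~\ref{thm:q=1,q22=1,n=3} for the amount of skew-derivation bookkeeping this requires). As it stands the proposal is a plausible strategy with the two decisive steps --- the obstruction in the excluded cases and the PBW upper bound in the allowed ones --- still missing.
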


The Nichols algebras in the theorem are described in Proposition~\ref{prop:paleblocks}.

\subsection{The main theorem}
Because of these antecedents, we consider the class $\pale$ of finite-dimensional braided vector spaces~$V$
with \emph{pale braiding} \cite{AAH-triang}, i.e., such that
\begin{itemize}\itemsep=0pt
\item $V$ can be realized as Yetter--Drinfeld module over an abelian group,
\item $V$ does not belong to classes~\ref{item:diagonal}, \ref{item:blocks}, nor~\ref{item:blocks-points}.
\end{itemize}

The problem is to determine when $\GK \toba(V) < \infty$ for $V \in \pale$.
Without loss of generality, we restrict ourselves to the following setting.
\begin{hypothesis}\label{hyp:pale}
$\Gamma$ is an abelian group and $V \in \yd{\Bbbk\Gamma}$ satisfies
\begin{enumerate}[label=\rm{(\Roman*)}]
\item\label{item:hyp-pale} $V \in \pale$,

\item\label{item:hyp-suppgen} $\sup V$ generates~$\Gamma$,

\item\label{item:hyp-connected} $V$ is connected, see Definition~$\ref{def:connected}$.
\end{enumerate}
\end{hypothesis}

Indeed, if~\ref{item:hyp-suppgen} does not hold, then
we replace $\Gamma$ by the subgroup generated by the support.
Also~\ref{item:hyp-connected} is controlled by Remark~\ref{obs:connected}.

Let $\Gamma$ and $V$ be as in
Hypothesis~\ref{hyp:pale}. To deal with our problem,
we consider the possible decompositions of $V$ in indecomposable Yetter--Drinfeld submodules.
Some cases are ruled out by our assumptions:
\begin{itemize}\itemsep=0pt
\item If $V$ is indecomposable, then by~\ref{item:hyp-suppgen} $V=V_g$ for some $g \in \Gamma$ and $g$ generates $\Gamma$.
Thus $g$ must act as a Jordan block of some eigenvalue $\epsilon$; i.e., $V$ is either a point or a block,
so it is not in $\pale$ since it belongs to class~\ref{item:diagonal} or~\ref{item:blocks}.

\item If $V$ is a direct sum of Yetter--Drinfeld submodules of dimension $1$, then it is of diagonal type, again out of $\pale$.
\end{itemize}

Suppose further that
$\dim V = 4$. There are three cases of decompositions
$V = V_1 \oplus V_2 \oplus \dots \oplus V_{\theta}$
where $\dim V_1 \geq \dim V_2 \geq \dots \geq \dim V_{\theta}$ and the $V_j\in \yd{\Bbbk\Gamma}$ are indecomposable to be considered,
namely
\begin{enumerate}[label=$(\arabic*)$]\itemsep=0pt
\item $\theta = 2$, $\dim V_1 = 3$ and $\dim V_2 = 1$,

\item $\theta = 3$, $\dim V_1 = 2$ and $\dim V_2 = \dim V_3 = 1$,

\item $\theta = 2$, $\dim V_1 = \dim V_2 = 2$.
\end{enumerate}

The classification of the possible $V$ with $\GK \toba(V) < \infty$ is carried out in each case in Sections
\ref{sec:block+pt}, \ref{sec:paleblock+2pts} and~\ref{sec:2blocks}, respectively, using Theorem~\ref{thm:conj-AAH}.
Putting together
the corresponding results, see Theorems~\ref{th:large-block-point}, \ref{th:2-block-points} and~\ref{th:2-blocks},
we get our main theorem:

\begin{Theorem}\label{th:paleblock-dim4}
Let $V$ be a braided vector space of dimension $4$ satisfying Hypothesis~$\ref{hyp:pale}$.
Then $\GK \toba(V) < \infty$ if and only if $V$ is in Table~$\ref{tab:pale-rk4}$.
\end{Theorem}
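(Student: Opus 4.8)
The plan is to prove Theorem~\ref{th:paleblock-dim4} by reducing the classification to the three decomposition types enumerated above and treating each one as a separate self-contained classification problem, then assembling the surviving cases into Table~\ref{tab:pale-rk4}. Since $\dim V = 4$ and $V$ satisfies Hypothesis~\ref{hyp:pale}, the analysis in \cite[Sections~4.1 and 8.1]{AAH-triang} guarantees that the underlying braided vector space decomposes as $V = V_1 \oplus \dots \oplus V_\theta$ into indecomposable Yetter--Drinfeld submodules of types falling into exactly one of the three configurations $(1)$, $(2)$, $(3)$; the discussion preceding the statement already rules out the purely indecomposable and purely diagonal cases as lying outside~$\pale$. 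Thus the whole content reduces to the three partial theorems, and the plan is to invoke them in turn.

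First I would handle configuration~$(1)$, namely $V = V_1 \oplus V_2$ with $\dim V_1 = 3$ and $\dim V_2 = 1$. Here $V_1$ is a three-dimensional indecomposable object; by Hypothesis~\ref{hyp:pale}\ref{item:hyp-pale} and the classification of rank-$3$ pale braidings, $V_1$ is forced to be a sum of one pale block and one point of the shape~\eqref{eq:braiding-paleblock-point}, so $V_1$ is itself governed by Theorem~\ref{th:paleblock-point-resumen}. The strategy is then to attach the extra point $V_2$, parametrize all admissible braidings between $V_2$ and the constituents of $V_1$ subject to realizability over an abelian group, and apply Theorem~\ref{thm:conj-AAH}: whenever $\GK\toba(V) < \infty$, the root system is finite, so one passes to the associated diagonal subquotients and matches them against Heckenberger's list \cite{H-classif-RS}. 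This yields Theorem~\ref{th:large-block-point}, contributing its rows to the table.

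Next I would treat configuration~$(2)$, $V = V_1 \oplus V_2 \oplus V_3$ with $\dim V_1 = 2$ and $\dim V_2 = \dim V_3 = 1$, i.e.\ one pale block and two points, yielding Theorem~\ref{th:2-block-points}; and finally configuration~$(3)$, $V = V_1 \oplus V_2$ with $\dim V_1 = \dim V_2 = 2$, i.e.\ two pale blocks, yielding Theorem~\ref{th:2-blocks}. In each configuration the method is uniform: fix normal forms for the braidings on the summands, enumerate the finitely many families of cross-braidings compatible with a Yetter--Drinfeld realization over an abelian group, and use finiteness of the root system (Theorem~\ref{thm:conj-AAH}) together with the known rank-$2$ and rank-$3$ results to discard the infinite-$\GK$ parameter values and pin down the finite-$\GK$ ones. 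The bookkeeping is heaviest in configuration~$(3)$, where both summands are pale blocks and the interaction between the two Jordanian directions produces the largest number of subcases.

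The main obstacle will be configuration~$(3)$: with two pale blocks, the cross-braiding data is richest, and ensuring that the enumeration of admissible braidings is exhaustive -- while correctly propagating the constraint that the diagonal subquotients have finite root system through every subcase -- is where the delicate case analysis concentrates. Once Theorems~\ref{th:large-block-point}, \ref{th:2-block-points} and~\ref{th:2-blocks} are established, the proof concludes simply by observing that their combined list of finite-$\GK$ braided vector spaces is exactly the content of Table~\ref{tab:pale-rk4}, and conversely that every entry of the table does satisfy $\GK\toba(V) < \infty$, which completes both directions of the equivalence.
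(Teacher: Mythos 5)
Your overall strategy---reduce to the three decomposition types, prove a partial classification theorem for each, and assemble the table---is exactly the paper's proof: Theorem~\ref{th:paleblock-dim4} is obtained by combining Theorems~\ref{th:large-block-point}, \ref{th:2-block-points} and~\ref{th:2-blocks}, established in Sections~\ref{sec:block+pt}, \ref{sec:paleblock+2pts} and~\ref{sec:2blocks}. However, two of your descriptions of the configurations are wrong and would mislead anyone executing the plan literally. In configuration $(1)$, $V_1$ is an \emph{indecomposable} Yetter--Drinfeld module of dimension $3$; by Corollary~\ref{coro:indecomposable-abelian} it must be a single pale block of dimension $3$ (its degree $g_1$ acts by $-\id$ and $g_2$ by a $3\times 3$ Jordan block), not ``a sum of one pale block and one point''---the latter would contradict indecomposability in $\yd{\Bbbk\Gamma}$. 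Theorem~\ref{th:paleblock-point-resumen} is applied not to $V_1$ but to the mixed rank-$3$ braided subspace $\langle x_1, x_2, x_4\rangle$, which is a (pale block of dimension $2$) plus a point. In configuration $(3)$, only $V_1$ need be pale: $V_2$ may be an honest block $\Vc(\pm 1,2)$, and the families $\sele_{1,+}$ and $\sele_{1,-}$ in the table arise precisely from that case, so restricting to ``two pale blocks'' would lose the last three rows of the table. Finally, the ``if'' direction is not a root-system argument: for each surviving family the paper exhibits an explicit presentation and PBW basis and reads off the GK-dimension from it, while Theorem~\ref{thm:conj-AAH} is invoked only at a few specific points (Lemmas~\ref{lemma:caseII-general}, \ref{lemma:caseIII-general} and Proposition~\ref{prop:caseIV}) to discard cases; most exclusions instead proceed by locating a bad braided subspace of $\K^1$ via the splitting technique.
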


For the meaning of the graphical description in the last column in Table~\ref{tab:pale-rk4}, we refer to Section~\ref{subsubsec:terminology}.

\begin{table}[ht]
\caption{Pale braidings of rank 4 with finite GK.} \label{tab:pale-rk4}\vspace{1mm}
\centering
\begin{tabular}{c| c c c |c}
\hline Shape & Name & \multicolumn{1}{|c|}{\centering $\GK$} & Theorem & Diagram$\vphantom{\Big(}$
\\ \hline
{\small 1 pale block} & $\eny_{3, -}(q)$ & 2 & \ref{thm:q=1,q22=-1,n=3} & $\xymatrix{ \underset{1}{\begin{tikzpicture}
\draw[dashed] (0,0) rectangle node[pos=.5] {$3$} (0.36,0.36);
\end{tikzpicture}} \ar @{.}[r] & \overset{- 1}{\underset{2}{\bullet}}}$
\\ \cline{2-4}
$\&$ {\small 1 point} & $\eny_{3, +}(q)$ & 4 & \ref{thm:q=1,q22=1,n=3} & $\xymatrix{ \underset{1}{\begin{tikzpicture}
\draw[dashed] (0,0) rectangle node[pos=.5] {$3$} (0.36,0.36);
\end{tikzpicture}} \ar @{.}[r] & \overset{1}{\underset{2}{\bullet}}}$
\\ \hline

{\small 1 pale block}
& $\eny_{\mu, \nu}\big(\bqs, a\big)$ & 2 & \ref{th:endymion-rank4-1} & $\xymatrix{
\overset{\nu}{\underset{3}{\bullet}} \ar@{.}^{a}[r]
&\underset{1}{\begin{tikzpicture}
\draw[dashed] (0,0) rectangle (0.36,0.36);
\end{tikzpicture}} \ar @{.}[r]
& \overset{\mu}{\underset{2}{\bullet}}}$
\\ \cline{2-4}
$\&$ {\small 2 points} & $\enyIIb{\infty}\big(\bqs\big)$ & 4 & \ref{th:endymion-2pts-case-a} &
$\xymatrix{
\overset{-1}{\underset{3}{\bullet}} \ar@{-}^{0}[r]
&\underset{1}{\begin{tikzpicture}
\draw[dashed] (0,0) rectangle (0.36,0.36);
\end{tikzpicture}} \ar @{.}[r]
& \overset{1}{\underset{2}{\bullet}}}$
\\ \hline
{\small 2 pale blocks} & $\sele_{2,0}(q)$ & 2 & \ref{th:selene-2-0} &
$\xymatrix{
\underset{1}{\begin{tikzpicture}
\draw[dashed] (0,0) rectangle (0.36,0.36);
\end{tikzpicture}} \ar@{.}^{(1,0)}[r]
&\underset{2}{\begin{tikzpicture}
\draw[dashed] (0,0) rectangle (0.36,0.36);
\end{tikzpicture}} }$
\\ \hline
{\small 1 pale block}
& $\sele_{1,+}\big(q,-\frac12\big)$ & 2 &\ref{th:selene-1+}
& $
\xymatrix{
\underset{1}{\begin{tikzpicture}
\draw[dashed] (0,0) rectangle (0.36,0.36);
\end{tikzpicture}} \ar @{.}^{(-\frac12,1)}[rr]
& &\underset{2}{\boxplus}}$
\\
$\&$ {\small 1 block}
& $\sele_{1,+}(q,-1)$ & 4 &\ref{th:selene-1+}
& $
\xymatrix{
\underset{1}{\begin{tikzpicture}
\draw[dashed] (0,0) rectangle (0.36,0.36);
\end{tikzpicture}} \ar @{.}^{(-1,1)}[rr]
& &\underset{2}{\boxplus}}$
\\ \cline{2-4}
& $\sele_{1,-}(q)$ & 4 &\ref{th:selene-1-}
& $ \xymatrix{ \underset{1}{\begin{tikzpicture}
\draw[dashed] (0,0) rectangle (0.36,0.36);
\end{tikzpicture}} \ar @{.}^{(1,1)}[rr]
& &\underset{2}{\boxminus}}$
\\
\hline
\end{tabular}
\end{table}

Theorem~\ref{th:paleblock-dim4} is the crucial recursive step towards the classification of the Nichols algebras
satisfying Hypothesis~\ref{hyp:pale} and having finite Gelfand--Kirillov dimension, that is presently work in progress.
Indeed, we can show that the members of the list in Table~\ref{tab:pale-rk4}
either belong to natural families of braided vector spaces giving rise to Nichols algebras with finite Gelfand--Kirillov dimension
or else could not be extended to such a family.
Now the technical difficulties presented by the working Hypothesis~\ref{hyp:pale} prevent us from arguing inductively
in a naive way, and in fact there are new families beyond such a recursion, but the constraints given by Theorem~\ref{th:paleblock-dim4} would make this question tractable.

\section{Preliminaries}
\subsection{Conventions}
For us $\mathbb{N} = \{1,2,\dots\}$ and $\N_0 = \N \cup \{0\}$.
If $\ell \leq \theta \in\N_0$, then $\I_{\ell, \theta} := \{\ell, \ell +1,\dots,\theta\}$, $\I_{\theta} := \I_{1, \theta}$.
The cardinal of a set $I$ is denoted by $\vert I \vert$.
The antipode of a Hopf algebra is denoted by $\Ss$.
Given a vector space $V$, $\langle v_1, \dots, v_n\rangle$ denotes the subspace spanned by $v_1, \dots, v_n \in V$.
Given an algebra $A$, $\Bbbk\langle x_1, \dots, x_n\rangle$ denotes the subalgebra generated by $x_1, \dots, x_n \in A$.

\subsection{Nichols algebras}
Let $\Gamma$ be an abelian group.
The category $\yd{\Bbbk \Gamma}$ of Yetter--Drinfeld modules over $\Bbbk\Gamma$
was already defined; we refer to the literature for that of $\yd{H}$, $H$ a general Hopf algebra. See, e.g., Section~\ref{subsec:category-bvs}
for the concept of braided vector space and
\cite{A-leyva} for the notions of braided Hopf algebras and Hopf algebras in braided tensor categories.
Fix $R$ a Hopf algebra in $\yd{H}$. The braided commutator of $x,y\in R$ is $[x,y]_c = xy - \text{multiplication} \circ c (x\otimes y)$.
Let $\adc$ denote the braided adjoint action of $R$, see, e.g., \cite[p.~165]{A-leyva};
if $x\in R$ is primitive, then
$\adc x (y) =[x,y]_c$ for all $y\in R$.

\begin{Remark}
Let
$\toba$ be an algebra in $\yd{\Bbbk \Gamma}$ and $u,v,w\in\toba$ homogeneous of degrees $g,h,k\in\Gamma$. Then
\begin{gather}\label{eq:graded-bracket-derivation1}
[uv,w]_c = u[v,w]_c + [u,h\cdot w]_c v,
\\
\label{eq:graded-bracket-derivation2}
[u,vw]_c = [u,v]_c w + g\cdot v [u,w]_c,
\\
\label{eq:graded-bracket-jacobi}
[[u,v]_c,w]_c = [u,[v,w]_c]_c -(g\cdot v)[u,w]_c +[u,(h\cdot w)]_c v.
\end{gather}
These identities will be used frequently, sometimes implicitly, in what follows.
\end{Remark}

Given $V \in \yd{H}$, the tensor algebra $T(V)$ is naturally a Hopf algebra in $\yd{H}$.
The Nichols algebra $\toba(V)$ is a quotient of $T(V)$ by a suitable homogeneous Hopf ideal; see \cite{A-leyva} for details.

Let $V\in\yd{\Bbbk \Gamma}$ with a basis $(v_i)_{i\in \I_\theta}$ such that $v_i$ is homogeneous of degree $g_i$ for all $i$.
Then there are skew-derivations $\partial_i$, $i\in \I_{\theta}$, of $T(V)$ such that
\begin{align*}
\partial _i(v_j) =\delta _{ij},\qquad
\partial _i(xy) = \partial _i(x)(g_i\cdot y)+x\partial _i(y),\qquad
x,y\in T(V), \quad i, j \in \I_{\theta}.
\end{align*}
These skew-derivations extend to $\toba(V)$. Given $x\in \toba(V)$, if
$\partial _i(x) = 0$ for all $i\in \I_\theta$, then $x=0$.

Given a braided vector space $V$ with a basis $(x_i)_{i\in\I_{\theta}}$, we denote in any intermediate Hopf algebra between $T(V)$ and $\toba(V)$
\begin{align*}
x_{i_1\cdots i_k i_{k+1}}= (\adc x_{i_1}) \cdots (\adc x_{i_k})x_{i_{k+1}},\qquad i_1,\ldots,i_{k+1}\in\I_{\theta}.
\end{align*}

We refer to~\cite{KL} for the theory of Gelfand--Kirillov dimension.
By~\cite{Ufer}, the Nichols algebras considered here admit a PBW-basis; we derive the $\GK$, when finite, from
the explicit computation of one such PBW-basis. To decide that the $\GK$ is infinite, we use instead a~variety of arguments, mostly
reducing to a subalgebra or quotient algebra; in some cases we use Theorem~\ref{thm:conj-AAH}:
explicitly, in Lemmas~\ref{lemma:caseII-general} and~\ref{lemma:caseIII-general}
and in Proposition~\ref{prop:caseIV}.

\subsubsection{The splitting technique}\label{subsec:splitting-technique}

Let $V = U \oplus W$ be a direct sum of Yetter--Drinfeld modules over a Hopf algebra $H$.
Then $\toba(V)$ splits as
\begin{equation*} 
\toba(V) \cong \K \# \toba(W)
\end{equation*}
with $\K = \toba(V)^{\co \toba(W)}$.
Further, $\K$ is isomorphic to the Nichols algebra of $\K^1 =\adc (\toba(W))(U)$,
see \cite[Proposition~8.6]{HS-adv}, and also \cite[Lemma~3.2]{AHS}.
It is often easier to compute $\toba\big( \K^1\big)$ and then derive $\toba(V)$.

\section{Indecomposable Yetter--Drinfeld modules}

\subsection{The category of braided vector spaces}\label{subsec:category-bvs}
A braided vector space is a pair $(V,c)$ where $V$ is a vector space and $c \in GL(V \otimes V)$ is a solution of the braid equation
$(c\otimes \id) (\id \otimes c)(c\otimes \id) = (\id \otimes c)(c\otimes \id)(\id \otimes c)$.
As customary, the braiding of any braided vector space is denoted by $c$.
We assume that all braidings are rigid.
The class of braided vector spaces is a category, where a morphism $f\colon (W, c) \to (W', c)$
is a linear map $f\colon W \to W'$ such that $(f \otimes f)c = c(f \otimes f)$.
A collection of morphisms of braided vector spaces is an exact sequence if the underlying collection of
linear maps is so.

\begin{Definition}
A braided vector space $(W, c)$ is \emph{simple} if $W \neq 0$
and for any exact sequence
$0 \to (U,c) \to (W, c) \to (V,c) \to 0$ of braided vector spaces, either $U=0$ or else $V=0$.
\end{Definition}

There is a forgetful functor from $\yd{\Bbbk\Gamma}$ to the category of braided vector spaces sending
$V \in \yd{\Bbbk\Gamma}$ to $(V, c_{V,V})$, cf.~\eqref{eq:braiding-ydG}.

Following~\cite{T}, a braided subspace $(U,c)$ of $(W,c)$ is \emph{categorical} if
\begin{align*}
&c(U \otimes W) = W \otimes U\qquad \text{and}\qquad c(W \otimes U) = U \otimes W.
\end{align*}

Let $(U,c)$ be a categorical braided subspace of $(W,c)$. By~\cite[Proposition~6.6]{T}, there
exists a Hopf al\-ge\-bra~$K$ such that
\begin{itemize}\itemsep=0pt
\item $W \in \yd{K}$ and $U$ is a subobject of $W$ in $\yd{K}$,
\item the braidings of $W$ and $U$ coincide with those in $\yd{K}$.
\end{itemize}
Actually, $K$ can be chosen co-quasi-triangular so that $W$ and $U$ are just $K$-comodules with braiding arising from the universal $R$-matrix.

As in \cite[Definition 2.1]{gr-jalg}, a decomposition of a braided vector space $W$ is a family of non-zero subspaces
$(W_i)_{i \in I}$ such that
\begin{align*}
W = \oplus_{i \in I} W_i,\qquad
c(W_i \otimes W_j) = W_j \otimes W_i,\qquad i,j\in I.
\end{align*}
Given such a decomposition, every $W_i$ is a categorical subspace. By~\cite[Proposition~6.6]{T}, there
exists a~Hopf algebra $K$ such that $W = \oplus W_i$ is a direct sum in $\yd{K}$ with braidings coming from~$\yd{K}$.
We~say that a braided vector space $(W,c)$ is \emph{decomposable} if it admits a decomposition
with $\vert I \vert \geq 2$; otherwise, it is \emph{indecomposable}.
In this way, if $W \in \yd{K}$ is indecomposable as braided vector space, then it is indecomposable as Yetter--Drinfeld module,
but the converse is not true: there are simple Yetter--Drinfeld modules of dimension 2 over group algebras
that are of diagonal type as braided vector spaces.

\begin{Definition}\label{def:connected}
Let $W = \oplus_{i \in I} W_i$ be a decomposition of a braided vector space $W$.
Set $c_{ij} = c_{\vert W_i \otimes W_j}\colon W_i \otimes W_j \to W_j \otimes W_i$;
$i \sim j$ when $c_{ij}c_{ji} \neq \id_{W_j \otimes W_i}$, $i \neq j \in I$; and
let $\approx$ be the equivalence relation generated by $\sim$. We say that $W$ is \emph{connected} if
$i \approx j$ for all $i, j \in \I_\theta$.
\end{Definition}

\begin{Remark}\label{obs:connected} Let $W = \oplus_{i \in I} W_i$ be a decomposition of a
braided vector space $W$ such that $\dim W < \infty$ and
$c_{ij}c_{ji} = \id_{W_j \otimes W_i}$ for every pair $i,j \in I$. Then
$\toba(W) \simeq \underline{\otimes}_i\toba(W_i)$ \cite{gr-jalg} and $\GK\toba(W) = \sum_i \GK \toba(W_i)$.
\end{Remark}

We make precise a notion from \cite{AAH-triang}. Let $K$ be a Hopf algebra.

\begin{Definition}
We say that
$W \in \yd{K}$, $\dim W < \infty$, is a \emph{pale block} if it is decomposable as braided vector space but indecomposable in $\yd{K}$.
\end{Definition}

Thus there is a difference between the study of Nichols algebras of simple or indecomposable
braided vector spaces and ditto of simple or indecomposable Yetter--Drinfeld modules.

\subsubsection{Indecomposable modules of dimension 2}
Let $K$ be a Hopf algebra.
As illustration, we describe the indecomposable but not simple objects in $\yd{K}$ of dimension 2.
The one-dimensional objects in $\yd{K}$ are parametrized by
\emph{YD-pairs}, that is pairs
$(g, \chi) \in G(K) \times \Hom_{\text{alg}}(K, \Bbbk)$ such that
\begin{align}\label{eq:yd-pair}
\chi(k) g = \chi(k_{2}) k_{1} g \Ss(k_{3})\qquad \text{for all}\quad k\in K.
\end{align}
If $(g, \chi)$ is a YD-pair, then $g\in Z(G(K))$; also, the vector space $\Bbbk_g^{\chi}$ of dimension 1,
with action and coaction given by $\chi $ and $g$, is in $\yd{K}$.

Let $\chi_1, \chi_2 \in \Hom_{\text{alg}}(K, \Bbbk)$. The space of $(\chi_1,\chi_2)$-derivations is
\begin{align*}
\Der_{\chi_1,\chi_2}(K) = \{\eta\in K^*\colon \eta(kt) = \chi_1(k)\eta(t) + \eta(k)\chi_2(t), \, k,t \in K\}.
\end{align*}
For example, $\chi_1 - \chi_2 \in \Der_{\chi_1,\chi_2}(K)$.
Dually, let $g_1, g_2 \in G(K)$. The space of $(g_2,g_1)$-skew primitive elements is
\begin{align*}
\Pc_{g_2,g_1}(K) = \{k\in K\colon \Delta(k) = g_2\otimes k + k \otimes g_1\}.
\end{align*}
For example, $g_1 - g_2 \in \Pc_{g_2,g_1}(K)$.

\begin{Definition}
A \emph{rank} 2 \emph{YD-block} for $K$ is a collection $(g_1, g_2, \chi_1, \chi_2, \eta, \nu)$,
where{\samepage
\begin{enumerate}[label=$(\alph*)$]\itemsep=0pt
\item\label{eq:-ydblock-a} $(g_i, \chi_i)$, is a YD-pair for $K$, $i \in \I_2$;

\item\label{eq:-ydblock-b} $\eta \in \Der_{\chi_1,\chi_2}(K)$;

\item\label{eq:-ydblock-c} $\nu \in \Pc_{g_2,g_1}(K)$, and for all $k\in K$
\begin{align}\label{eq:YD-triple}
&\chi_2(k) \nu + \eta(k) g_1 = \chi_1(k_2) k_1 \nu \Ss(k_3)
+ \eta(k_2) k_1 g_2 \Ss(k_3).
\end{align}
\end{enumerate}}
\end{Definition}

\begin{Remark} The following sets are subalgebras of $K$:
\begin{itemize}\itemsep=0pt
\item given $(g, \chi) \in G(K) \times \Hom_{\text{alg}}(K, \Bbbk)$, $\{k\in K\colon \eqref{eq:yd-pair} \text{ holds} \}$;

\item provided that~\ref{eq:-ydblock-a}, \ref{eq:-ydblock-b} and \ref{eq:-ydblock-c} are valid, $\{k\in K\colon \eqref{eq:YD-triple} \text{ holds} \}$.
\end{itemize}
\end{Remark}

Let $(g_1, g_2, \chi_1, \chi_2, \eta, \nu)$ be a YD-block for $K$.
Let $\Vc_{g_1, g_2}^{\chi_1, \chi_2}(\eta, \nu)$ be the vector space with a basis $(x_i)_{i\in\I_2}$,
with action and coaction of $K$ given by
\begin{alignat*}{2}
&k\cdot x_1 = \chi_1(k) x_1,\qquad&& k\cdot x_2=\chi_2(k) x_2 + \eta(k)x_{1},\qquad k\in K,
\\
&\delta(x_1) = g_1\otimes x_1, && \delta(x_2) = \nu\otimes x_1 + g_2\otimes x_2.
\end{alignat*}

\begin{Proposition}\quad
\begin{enumerate}[label=$(\roman*)$]\itemsep=0pt
\item
$\Vc_{g_1, g_2}^{\chi_1, \chi_2}(\eta, \nu) \in \yd{K}$;
it is decomposable in $\yd{K}$ iff
\begin{gather*}
\eta = a(\chi_1 - \chi_2)\qquad \text{and}\qquad \nu = a (g_1 - g_2)\qquad
\text{for some} \quad a\in \Bbbk.
\end{gather*}

\item
Let $\Vc\in \yd{K}$ not simple with $\dim \Vc =2$.
Then $\Vc \simeq \Vc_{g_1, g_2}^{\chi_1, \chi_2}(\eta, \nu)$ for some YD-block $(g_1, g_2, \chi_1, \chi_2, \eta, \nu)$.
\end{enumerate}
\end{Proposition}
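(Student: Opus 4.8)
The plan is to verify both statements by direct computation, exploiting the explicit description of action and coaction on the basis $(x_i)_{i\in\I_2}$ of $\Vc_{g_1, g_2}^{\chi_1, \chi_2}(\eta, \nu)$.

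For the first claim of $(i)$, membership in $\yd{K}$, I would check the compatibility condition between action and coaction. First I would confirm that the coaction $\delta$ is coassociative and counital, which is immediate from the grouplike-skewprimitive forms of $g_1$, $g_2$, $\nu$. Then the module axioms hold because $\chi_1,\chi_2$ are algebra maps and $\eta$ is a $(\chi_1,\chi_2)$-derivation. The substantive point is the Yetter--Drinfeld compatibility $\delta(k\cdot x) = k_1 x_{(-1)} \Ss(k_3) \otimes k_2 \cdot x_{(0)}$ for each basis vector. For $x_1$ this reduces precisely to the YD-pair condition \eqref{eq:yd-pair} for $(g_1,\chi_1)$. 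For $x_2$, expanding both sides and collecting the $x_1$ and $x_2$ components, the $x_2$-component gives the YD-pair condition for $(g_2,\chi_2)$, while the $x_1$-component is exactly the relation \eqref{eq:YD-triple} defining condition~\ref{eq:-ydblock-c}. So the YD axioms are equivalent to the defining conditions of a YD-block, which we are given.

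For the decomposability criterion in $(i)$, I would argue that a second basis vector $x_2' = x_2 + a x_1$ spans a one-dimensional subobject complementary to $\Bbbk x_1$ exactly when $x_2'$ is both an eigenvector for the action and grouplike-homogeneous for the coaction. Computing $k\cdot x_2' = \chi_2(k) x_2' + (\eta(k) + a\chi_1(k) - a\chi_2(k)) x_1$ shows the action is diagonal on $x_2'$ iff $\eta = a(\chi_1-\chi_2)$; similarly $\delta(x_2') = g_2 \otimes x_2' + (\nu - a(g_1-g_2))\otimes x_1$ is homogeneous of degree $g_2$ iff $\nu = a(g_1 - g_2)$. Since any decomposition of a two-dimensional YD-module with a distinguished subobject $\Bbbk x_1$ must produce such a complement, these two conditions are necessary and sufficient, giving the stated iff.

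For $(ii)$, the realization statement, I start from an arbitrary non-simple $\Vc \in \yd{K}$ of dimension $2$. Non-simplicity yields a one-dimensional subobject, which by the classification of one-dimensional objects corresponds to a YD-pair $(g_1,\chi_1)$; choose $x_1$ spanning it. Extend to a basis by any $x_2$. I would then read off the structure constants: the coaction forces $\delta(x_2) = \nu \otimes x_1 + g_2 \otimes x_2$ for some $g_2 \in G(K)$ and $\nu \in K$, and the action forces $k\cdot x_2 = \chi_2(k) x_2 + \eta(k) x_1$ for some $\chi_2$ and $\eta \in K^*$. Coassociativity and counit on $\delta(x_2)$ force $\nu \in \Pc_{g_2,g_1}(K)$, while the module axioms force $\chi_2 \in \Hom_{\mathrm{alg}}(K,\Bbbk)$ and $\eta \in \Der_{\chi_1,\chi_2}(K)$; the YD compatibility for $x_2$ then yields the YD-pair condition for $(g_2,\chi_2)$ and relation \eqref{eq:YD-triple}, exactly as in part $(i)$. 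Hence the data form a YD-block and $\Vc \cong \Vc_{g_1, g_2}^{\chi_1, \chi_2}(\eta, \nu)$. The main obstacle is bookkeeping: in the compatibility computations for $x_2$ one must carefully separate the $x_1$- and $x_2$-isotypic parts after applying the antipode in Sweedler notation, since it is the cross term landing in $\Bbbk x_1$ that produces the genuinely new constraint \eqref{eq:YD-triple} rather than a mere repetition of the YD-pair axiom.
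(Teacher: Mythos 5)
Your proposal is correct, and it is precisely the routine verification that the paper omits (its proof reads ``Left to the reader''): the Yetter--Drinfeld axioms evaluated on $x_1$ and on the two isotypic components of $\delta(k\cdot x_2)$ unpack exactly to conditions~\ref{eq:-ydblock-a}, \ref{eq:-ydblock-b}, \ref{eq:-ydblock-c} of a YD-block, and any complement of the canonical subobject $\Bbbk x_1$ must be spanned by some $x_2+ax_1$, which yields the decomposability criterion and part $(ii)$. One bookkeeping caveat: with $x_2'=x_2+ax_1$ the correct vanishing conditions are $\eta=-a(\chi_1-\chi_2)$ and $\nu=-a(g_1-g_2)$ (your two displayed formulas carry opposite sign conventions in the $a$-terms), but since the sign is the same in both conditions and $a$ is arbitrary, replacing $a$ by $-a$ recovers the stated criterion, so the conclusion is unaffected.
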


\begin{proof}
Left to the reader.
\end{proof}

\subsection{Pale blocks over abelian groups}
Let $\Gamma$ be an abelian group.

\subsubsection{Recollections}
Given $V = \oplus_{g\in \Gamma} V_g \in \yd{\Bbbk\Gamma}$, $\dim V < \infty$, we set
\begin{gather*}
V_g^{\lambda} := \ker (g - \lambda \id)_{\vert V_g} \subseteq V_g^{(\lambda)}
:= \bigcup_{n\in \N} \ker (g - \lambda \id)^n_{\vert V_g}, \qquad
\lambda \in \Bbbk^{\times}.
\end{gather*}
Then $V = \bigoplus_{\substack{g\in \Gamma, \\ \lambda \in \Bbbk^{\times}}} V_g^{(\lambda)}$ is a direct sum in $\yd{\Bbbk\Gamma}$, hence
\begin{align*}
c\big(V_g^{(\lambda)} \otimes V_h^{(\mu)}\big) = V_h^{(\mu)} \otimes V_g^{(\lambda)}, \qquad
g, h \in \Gamma,\quad \lambda,\mu \in \Bbbk^{\times}.
\end{align*}

\begin{Lemma}[{\cite[Lemma 8.1.1]{AAH-triang}}]
Assume that $\GK \toba(V_g)< \infty$. Then
\begin{enumerate}[label=$(\alph*)$]\itemsep=0pt
\item
If $\lambda \in \Bbbk^{\times}$, $\lambda \notin \G_2 \cup \G_3$, then
$V_g^{\lambda} = V_g^{(\lambda)}$ has dimension $\leq 1$.

\item
If $\lambda \in \G'_3$, then
$V_g^{\lambda} = V_g^{(\lambda)}$ has dimension $\leq 2$.

\item
If $V_g^{1} \neq 0$, then either
$V_g = V_g^{1}$ $($i.e., $g$ acts trivially on $V_g)$ or else $V_g$ has dimension $2$ and $g$ acts by a Jordan block.

\item
If $V_g^{-1} \neq 0$, then either
$V_g^{(-1)} = V_g^{-1}$ or else $V_g^{(-1)}$ has dimension $2$ and $g$ acts by a Jordan block.
\end{enumerate}
\end{Lemma}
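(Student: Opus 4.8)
The plan is to read the whole statement off the single operator $g$ acting on $V_g$. By~\eqref{eq:braiding-ydG} the braiding is $c(u\otimes v)=g\cdot v\otimes u$ for $u,v\in V_g$, so \emph{every} $g$-invariant subspace $W\subseteq V_g$ satisfies $c(W\otimes V_g)=V_g\otimes W$ and $c(V_g\otimes W)=W\otimes V_g$; thus $W$ is a categorical braided subspace, $\toba(W)$ embeds in $\toba(V_g)$, and $\GK\toba(W)\le\GK\toba(V_g)<\infty$. Bringing $g\vert_{V_g}$ to Jordan form exhibits two kinds of such subspaces. A single Jordan block of size $\ell$ and eigenvalue $\mu$ spans a $g$-invariant $W$ whose braiding is precisely that of the block $\Vc(\mu,\ell)$ (with $g\cdot x_1=\mu x_1$, $g\cdot x_j=\mu x_j+x_{j-1}$). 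The eigenspace $V_g^{\lambda}=\ker(g-\lambda\id)$ is $g$-invariant with $g$ acting as $\lambda\id$, so its braiding is of diagonal type with $q_{ij}=\lambda$ for all $i,j$; in particular $q_{ii}=\lambda$ and $\widetilde{q}_{ij}=\lambda^2$.

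First I would apply Theorem~\ref{th:blocks} to the blocks: $\GK\toba(\Vc(\mu,\ell))<\infty$ forces $\ell=2$ and $\mu\in\{\pm1\}$, so every Jordan block of $g$ has size $\le2$, and a block of size $2$ occurs only at eigenvalue $\pm1$. For $\lambda\notin\{\pm1\}$ there are then no blocks of size $\ge2$ with eigenvalue $\lambda$, i.e.\ $g$ is semisimple on $V_g^{(\lambda)}$ and $V_g^{\lambda}=V_g^{(\lambda)}$; this is the first assertion of parts~(a) and~(b). At $\lambda=1$ (resp.\ $\lambda=-1$) the only admissible block is the Jordan plane $\Vc(1,2)$ (resp.\ the super Jordan plane $\Vc(-1,2)$), which feeds parts~(c) and~(d). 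For the dimension bounds I invoke Theorem~\ref{thm:conj-AAH}: finiteness of $\GK\toba(V_g)$ makes the root system of the diagonal braiding on $V_g^{\lambda}$ finite. Because the datum $q_{11}=q_{22}=\lambda$, $\widetilde{q}_{12}=\lambda^2$ is symmetric, the two Cartan integers coincide, $a_{12}=a_{21}=-n$, where $n$ is the least integer $\ge0$ with $(n+1)_{\lambda}\bigl(1-\lambda^{\,n+2}\bigr)=0$. A direct computation gives $n=0$ iff $\lambda\in\G_2$, $n=1$ iff $\lambda\in\G'_3$, and $n\ge2$ (or no finite $n$, when $\lambda$ is not a root of unity) otherwise. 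A connected rank-$2$ diagram has finite root system only when $a_{12}a_{21}=n^2\le3$, i.e.\ $n\le1$ (types $A_1\times A_1$ and $A_2$). Hence for $\lambda\notin\G_2\cup\G_3$ two such eigenvectors cannot coexist and $\dim V_g^{\lambda}\le1$, proving part~(a); for $\lambda\in\G'_3$ each pair is of type $A_2$, but three of them give the Cartan matrix $\left(\begin{smallmatrix}2&-1&-1\\-1&2&-1\\-1&-1&2\end{smallmatrix}\right)$ of affine type $A_2^{(1)}$, which is not finite, so $\dim V_g^{\lambda}\le2$, proving part~(b).

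Parts~(c) and~(d) express that a genuine block cannot be accompanied by any further vector of degree $g$. For~(c), a vertex with $q_{ii}=1$ admits no finite Cartan integer once it is connected (one has $(n+1)_{1}\ne0$, so $1^{\,n}\widetilde{q}_{ij}=1$ would force $\widetilde{q}_{ij}=1$); since for an eigenvector of eigenvalue $1$ one has $\widetilde{q}_{ij}=\lambda_i\lambda_j=\lambda_j$, Theorem~\ref{thm:conj-AAH} forbids any eigenvalue $\ne1$, so $V_g=V_g^{(1)}$ is unipotent. If moreover $g\ne\id$ it contains a copy of $\Vc(1,2)$, and should $\dim V_g\ge3$, adjoining one more eigenvector produces the connected sum of a Jordan plane and a point; I would rule this out by the splitting technique of Section~\ref{subsec:splitting-technique}, writing $V_g=\Vc(1,2)\oplus\Bbbk x_3$, computing $\K^{1}=\adc\bigl(\toba(\Bbbk x_3)\bigr)(\Vc(1,2))$ and showing it is infinite dimensional, whence $\GK\toba(V_g)=\GK\toba(\K^{1})=\infty$; thus $\dim V_g=2$. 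Part~(d) is parallel, with $\Vc(-1,2)$ in place of $\Vc(1,2)$ and an eigenvector of eigenvalue $-1$ inside $V_g^{(-1)}$.

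This last step is the main obstacle. It is governed neither by Theorem~\ref{thm:conj-AAH}, since the configuration is not of diagonal type, nor by Theorem~\ref{th:paleblock-point-resumen}, where the block is diagonal as a braided subspace whereas here it is a true Jordan block. One must therefore either carry out the splitting computation above, exhibiting an explicit infinite linearly independent family inside $\K^{1}$, or invoke the classification of Nichols algebras of direct sums of blocks and points. Making these bracket calculations precise, using the identities~\eqref{eq:graded-bracket-derivation1}--\eqref{eq:graded-bracket-jacobi}, is the genuinely technical part of the argument.
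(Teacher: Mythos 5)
The paper does not prove this lemma at all: it is imported verbatim from \cite[Lemma~8.1.1]{AAH-triang}, so there is no in-paper proof to compare against. Your reconstruction follows what is essentially the intended argument — every $g$-invariant subspace of $V_g$ is categorical, so Jordan blocks of $g$ give copies of $\Vc(\mu,\ell)$ (handled by Theorem~\ref{th:blocks}) and eigenspaces give diagonal braidings with all $q_{ij}=\lambda$ (handled by Theorem~\ref{thm:conj-AAH} together with \cite{H-classif-RS}) — and the conclusions you draw are the right ones.

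Two points deserve tightening. First, the step ``a connected rank-2 diagram has finite root system only when $a_{12}a_{21}\le 3$'' is a Cartan-type criterion, and it is not a theorem for arbitrary rank-2 arithmetic root systems: finite rank-2 Weyl groupoids need not be standard, and the Cartan entries can exceed those of finite Cartan matrices at some objects. The airtight version of your argument is to observe that the diagram in question is $\xymatrix@C=18pt{\overset{\lambda}{\circ}\ar@{-}[r]^{\lambda^2}&\overset{\lambda}{\circ}}$ and to check it against Heckenberger's list (for $\lambda$ not a root of unity one can instead quote the generic classification, which does force Cartan type); the outcome is exactly what you state, namely $\lambda\in\G_3'$ is the only connected survivor, and the rank-3 case is affine $A_2^{(1)}$. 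Second, the ``main obstacle'' you flag at the end is not actually open: the configuration $\Vc(\pm1,2)\oplus\Bbbk x_3$ with everything of degree $g$ is precisely \emph{one block and one point} with weak interaction in the sense of \cite{AAH-triang}, and the ghost computes to $-2$ in case~(c) (eigenvalue $1$) and $-1$ in case~(d) (eigenvalue $-1$); since these are not in $\N_0$, \cite[Theorem~4.1.1]{AAH-triang} (the classification of blocks and points, quoted in the paper as class~\ref{item:blocks-points}) gives $\GK=\infty$ directly, with no need to redo the splitting computation. With those two substitutions your proof is complete and agrees with the source.
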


\begin{Corollary}\label{coro:indecomposable-abelian}
Let $V \in \yd{\Bbbk\Gamma}$ be indecomposable, thus $V = V_{g}^{(\lambda)}$
for some $g\in \Gamma$, $\lambda\in \Bbbk^{\times}$. Then $\GK\toba(V) <\infty$
iff either of the following holds:
\begin{enumerate}[label=$(\alph*)$]\itemsep=0pt
\item\label{item:indecomposable-abelian1} $V$ is simple, i.e., $\dim V = 1$, or

\item\label{item:indecomposable-abelian2} $\dim V = 2$, $g$ acts by a Jordan block where $\lambda = \pm 1$, or

\item\label{item:indecomposable-abelian3} $\dim V = 2$, $g$ acts by $\lambda \id$ where $\lambda \in \G'_3$, or

\item\label{item:indecomposable-abelian4} $\dim V \geq 2$, $g$ acts by $\lambda \id$ where $\lambda = \pm 1$.
\end{enumerate}
\end{Corollary}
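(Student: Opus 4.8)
The plan is to read the classification directly off the preceding Lemma, using the decomposition $V = \bigoplus_{g,\lambda} V_g^{(\lambda)}$ in $\yd{\Bbbk\Gamma}$. Since this is a direct sum of Yetter--Drinfeld submodules, indecomposability of $V$ forces $V = V_g^{(\lambda)}$ for a single pair $(g,\lambda)$, which is the reduction asserted in the statement. In particular $\supp V = \{g\}$, so $V = V_g = V_g^{(\lambda)}$ and the braiding $c(v\otimes w) = g\cdot w\otimes v$ is governed entirely by the action of $g$ on $V$. I would then prove the two implications separately.

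For the forward implication, suppose $\GK\toba(V) < \infty$. As $V = V_g$, the hypothesis $\GK\toba(V_g) < \infty$ of the preceding Lemma holds, and I would split according to $\lambda$. If $\lambda \notin \G_2 \cup \G_3$, part~(a) forces $\dim V \le 1$, hence $\dim V = 1$ and we are in \ref{item:indecomposable-abelian1}. If $\lambda \in \G'_3$, part~(b) gives $V_g^{\lambda} = V_g^{(\lambda)}$ of dimension $\le 2$, so $g$ acts semisimply by $\lambda\id$; then $\dim V = 1$ gives \ref{item:indecomposable-abelian1} and $\dim V = 2$ gives \ref{item:indecomposable-abelian3}. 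If $\lambda = 1$, part~(c) leaves exactly two options: $g$ acts trivially, yielding \ref{item:indecomposable-abelian1} or \ref{item:indecomposable-abelian4} according to $\dim V$, or $\dim V = 2$ with $g$ a Jordan block, yielding \ref{item:indecomposable-abelian2}. The case $\lambda = -1$ is handled identically using part~(d). Since these four ranges exhaust $\Bbbk^{\times}$, every such $V$ lies in one of \ref{item:indecomposable-abelian1}--\ref{item:indecomposable-abelian4}.

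For the converse I would verify finiteness of $\GK$ case by case by identifying the underlying braided vector space. In \ref{item:indecomposable-abelian1} a point gives a (possibly truncated) polynomial algebra, so $\GK \le 1$. In \ref{item:indecomposable-abelian2} the module is the block $\Vc(\lambda,2)$ with $\lambda = \pm 1$, and Theorem~\ref{th:blocks} yields $\GK\toba(V) = 2$. In \ref{item:indecomposable-abelian3} and \ref{item:indecomposable-abelian4} the scalar action makes $V$ of diagonal type with $q_{ij} = \lambda$ for all $i,j$: for $\lambda = 1$ one gets $\toba(V) = S(V)$ with $\GK = \dim V$; for $\lambda = -1$, since $c$ is the negative of the flip, one gets the exterior algebra with $\GK = 0$; and for $\lambda \in \G'_3$ one has $q_{11} = q_{22} = \lambda$ and $\widetilde{q}_{12} = \lambda^2 = \lambda^{-1} = q_{11}^{-1}$, i.e.\ Cartan type $A_2$ at a primitive cube root of unity, whose Nichols algebra is finite-dimensional. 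In every case $\GK\toba(V) < \infty$.

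The essential quantitative input --- that $\GK\toba(V_g) < \infty$ pins down the Jordan type and the dimension of the action of $g$ --- is already packaged in the preceding Lemma, which itself relies on Theorem~\ref{thm:conj-AAH} and the diagonal-type classification. Thus the corollary is mostly organizational, and the only points requiring care are the completeness of the case split over $\lambda$ in the forward direction and the correct identification of the braided vector spaces in the converse, in particular recognizing \ref{item:indecomposable-abelian3} as finite Cartan type $A_2$. I do not expect a genuine additional obstacle beyond this bookkeeping.
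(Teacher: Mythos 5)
Your proposal is correct and follows exactly the route the paper intends: the Corollary is stated without proof as an immediate consequence of the preceding Lemma, and your case split over $\lambda$ (exhausting $\Bbbk^{\times}$ via parts (a)--(d)) together with the identification of the braided vector spaces in the converse (point, block $\Vc(\pm1,2)$, Cartan type $A_2$ at a cube root of unity, symmetric/exterior algebra) is precisely the implicit bookkeeping.
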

Clearly, $V$ is indecomposable as braided vector space only in cases~\ref{item:indecomposable-abelian1}
and~\ref{item:indecomposable-abelian2}, thus $V$ is a~pale block in cases~\ref{item:indecomposable-abelian3}
and~\ref{item:indecomposable-abelian4}.

\subsubsection{Diagonal type}\label{subsec:diagonal-type} Let $V \in \yd{\Bbbk\Gamma}$ be semisimple, $\dim V = \theta \in \N$; then
$V$ has a basis $(x_i)_{i\in \I_{\theta}}$ such that $x_i\in V_{g_i}$ and $g\cdot x_i = \chi_i(g) x_i$ for some $g_i\in \Gamma$
and $\chi_i\in \VGamma$, for all $i\in \I_{\theta}$. Hence the braiding is given by $c(x_i\otimes x_j) = q_{ij} x_j \otimes x_i$,
$i, j\in \I_{\theta}$. Such braided vector spaces are called of diagonal type and have been studied intensively, see
\cite{A-leyva,AA-diag-survey,AS,H-classif-RS} and their references.
The Dynkin diagram of the braided vector space defined by the matrix $(q_{ij})_{i, j\in \I_{\theta}}$ has $\theta$ vertices, the $i$-th
vertex labeled by $q_{ii}$; and one edge between $i$ and $j \neq i$ labeled by $\widetilde{q}_{ij} = q_{ij}q_{ji}$
(the edge is omitted when $\widetilde{q}_{ij} = 1$).\looseness=1

\subsubsection{Pale braidings of rank 3}
Let $q\in \Bbbk^{\times}$. As in \cite{AAH-triang},
we name the braided vector spaces with braiding \eqref{eq:braiding-paleblock-point}
with $q_{11}= -1$, cf.~Theorem~\ref{th:paleblock-point-resumen}, as follows:
\begin{itemize}\itemsep=0pt
\item $\eny_{\pm}(q)$, when $q_{12} = q = q_{21}^{-1}$, $q_{22} = \pm 1$;
\item $\eny_{\star}(q)$, when $q_{22}=-1$, $q_{12}=q$, $q_{21}=-q^{-1}$.
\end{itemize}

The Nichols algebras $\toba(\eny_{\pm}(q))$ and $\toba(\eny_{\star}(q))$ are called the \emph{Endymion algebras} of rank 3.
In the next proposition, $x_{\tresdos2} := x_{\tresdos}x_2 - q _{12} x_2x_{\tresdos}$.

\begin{Proposition}[{\cite[Propositions 8.1.6, 8.1.7 and 8.1.8]{AAH-triang}}]\label{prop:paleblocks}
The Endymion algebras are generated by $x_1$, $x_{\tresdos}$, $x_2$ with defining relations and PBW-basis as follows:
\begin{enumerate}[label=$(\alph*)$]\itemsep=0pt
\item The relations of $\toba(\eny_+(q))$ are
\begin{gather}\label{eq:endymion-1}
x_1^2=0, \qquad x_{\tresdos}^2=0,\qquad x_1x_{\tresdos} =- x_{\tresdos}x_1,
\\
\label{eq:endymion-1b}
x_1x_2 = q_{12} x_2x_1,
\\
\label{eq:endymion-2}
x_{\tresdos2}^2=0,\qquad
 x_2x_{\tresdos2} = q_{21}x_{\tresdos2} x_2.
\end{gather}
A PBW-basis is
$\big\{ x_1^{m_1} x_{\tresdos}^{m_{\tresdos}} x_2^{m_2}x_{\tresdos2}^{n_{1}}\colon m_1, m_{\tresdos}, n_1 \in \{0,1\}, \, m_2 \in\N_0\big\}$.

\item
The relations of $\toba(\eny_-(q))$ are \eqref{eq:endymion-1}, \eqref{eq:endymion-1b} and
\begin{gather}
\label{eq:endymion-2b}
x_2 ^2=0,\qquad
x_2 x_{\tresdos2} = -q_{21} x_{\tresdos2} x_2.
\end{gather}
A PBW-basis is
$\big\{ x_1^{m_1} x_{\tresdos}^{m_{\tresdos}} x_2^{m_2}x_{\tresdos2}^{n_{1}}\colon m_1, m_{\tresdos}, m_2 \in \{0,1\}, \, n_{1} \in\N_0\big\}$.

\item
The relations of $\toba(\eny_{\star}(q))$ are \eqref{eq:endymion-1},
\begin{gather*}
x_2^2=0, \qquad x_{12}^2=0, \qquad x_{\tresdos12}^2=0,
\\
x_{\tresdos} \big[x_{\tresdos2},x_{12}\big]_c- q_{12}^2 \big[x_{\tresdos2},x_{12}\big]_c x_{\tresdos}
=q_{12} x_{12}x_{\tresdos12}.
\end{gather*}
\end{enumerate}

A PBW-basis consists of monomials
$x_{\tresdos}^{m_1}x_{\tresdos2}^{m_{\tresdos}} x_{\tresdos12}^{m_2} \big[x_{\tresdos2},x_{12}\big]_c^{n_{1}} x_1^{m_5}x_{12}^{m_6} x_2^{m_7}$,
where $m_{\tresdos}, n_{1} \in\N_0$ and $m_1, m_2, m_5, m_6, m_7 \in \{0,1\}$.
\end{Proposition}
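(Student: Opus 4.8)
The plan is to establish, for each of the three Endymion algebras, both halves of a Nichols-algebra presentation: first that every listed relation already holds in $\toba(V)$, so that $\toba(V)$ is a quotient of the algebra $\wtoba$ presented by $x_1,x_{\tresdos},x_2$ together with those relations; and second that the proposed monomials span $\wtoba$ and remain linearly independent in $\toba(V)$, which forces $\wtoba\cong\toba(V)$ and identifies the PBW-basis. The two bounds combined then also read off $\GK$.

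For the first half I would check each relation by the skew-derivation criterion recalled above: a homogeneous $r$ vanishes in $\toba(V)$ once $\partial_i(r)=0$ for $i\in\{1,\tresdos,2\}$. The quadratic relations~\eqref{eq:endymion-1} come for free, since the diagonal part of the braiding on $\langle x_1,x_{\tresdos}\rangle$ equals $-\tau$ (minus the flip), whose Nichols algebra is the exterior algebra; similarly $x_2^2=0$ when $q_{22}=-1$. A square-zero relation $z^2=0$ for an iterated bracket $z$ holds exactly when the $\Gamma$-action endows it with the \emph{pure} scalar self-braiding $-1$, with no Jordan correction; reading this off gives $x_{\tresdos2}^2=0$ in the $\eny_\pm$ cases (where $x_{12}=0$ removes the correction) and $x_{12}^2=x_{\tresdos12}^2=0$ in the $\eny_\star$ case. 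The remaining, genuinely braided relations —~\eqref{eq:endymion-1b} and the commutations~\eqref{eq:endymion-2}/\eqref{eq:endymion-2b} for $\eny_\pm$, and the degree-five relation for $\eny_\star$ — I would verify by expanding $\partial_i$ with the Leibniz rule~\eqref{eq:graded-bracket-derivation2} together with the explicit (Jordan) action of $g_1,g_2$ on the brackets $x_{12},x_{\tresdos2},x_{\tresdos12}$, organizing the computation by degree and using~\eqref{eq:graded-bracket-jacobi} to reduce the higher brackets.

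For the second half the efficient tool is the splitting technique with $W=\langle x_1,x_{\tresdos}\rangle$ the block, so that $\toba(W)$ is the four-dimensional exterior algebra and $\toba(V)\cong\K\#\toba(W)$ with $\K\cong\toba\big(\K^1\big)$, $\K^1=\adc(\toba(W))(x_2)$. The crux is to identify $\K^1$ and its braiding. In the $\eny_\pm$ cases relation~\eqref{eq:endymion-1b} gives $x_{12}=0$, whence the iterated adjoints also vanish and $\K^1=\langle x_2,x_{\tresdos2}\rangle$; a direct computation of the $\Gamma$-action shows this is of diagonal type with \emph{disconnected} Dynkin diagram, the two vertices labelled $q_{22}$ and $-q_{22}\widetilde{q}_{12}=-q_{22}$, one equal to $1$ and the other to $-1$. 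Hence $\toba\big(\K^1\big)$ is a polynomial algebra tensored with an exterior algebra, and tracking this through the smash product produces exactly the monomials $x_1^{m_1}x_{\tresdos}^{m_{\tresdos}}x_2^{m_2}x_{\tresdos2}^{n_1}$ with the asserted ranges, giving $\GK=1$; linear independence is automatic because the splitting exhibits them as a tensor-product basis of $\K\#\toba(W)$.

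The main obstacle is the $\eny_\star$ case, where $\widetilde{q}_{12}=-1$ forces $x_{12}\neq0$, so $\K^1$ does not collapse. Computing the $\Gamma$-action shows that $\langle x_{12},x_{\tresdos2}\rangle$ is itself a genuine block isomorphic to $\Vc(-1,2)$, the super Jordan plane of $\GK=2$ by Theorem~\ref{th:blocks}, sitting inside a braided vector space together with $x_2$ and a degree-$(2g_1+g_2)$ element. Establishing that $\toba\big(\K^1\big)$ has precisely the seven PBW-generators — the super-Jordan core $x_{12},x_{\tresdos2},\big[x_{\tresdos2},x_{12}\big]_c$ (with $x_{\tresdos2}$ and $\big[x_{\tresdos2},x_{12}\big]_c$ of unbounded powers) and the four nilpotent directions $x_1,x_{\tresdos},x_2,x_{\tresdos12}$ — governed by the single degree-five relation and no further one, is the delicate point. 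I would handle it either by a further application of the splitting technique, peeling off the super-Jordan core, or by a direct Diamond-Lemma verification closing the overlap ambiguities on the critical degree-five component; in either route one confirms via the skew-derivations that $\big[x_{\tresdos2},x_{12}\big]_c$ is nonzero and of unbounded powers, yielding $\GK\toba(\eny_\star(q))=2$.
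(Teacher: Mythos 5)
First, a point of reference: the paper does not prove Proposition~\ref{prop:paleblocks} at all — it is imported verbatim from \cite[Propositions 8.1.6, 8.1.7 and 8.1.8]{AAH-triang}, so there is no in-paper argument to compare against. Your reconstruction follows the same strategy as the cited source and as Sections~\ref{sec:block+pt}--\ref{sec:2blocks} of this paper: split off the block $W=\big\langle x_1,x_{\tresdos}\big\rangle$ (whose Nichols algebra is the four-dimensional exterior algebra), identify $\K^1=\adc(\toba(W))(x_2)$ and its braiding, certify the relations by skew-derivations, and get linear independence from the tensor decomposition $\toba(V)\simeq\K\#\toba(W)$. So the architecture is the right one.

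There is, however, one concrete error in your first half. You assert that $x_{\tresdos2}^2=0$ ``in the $\eny_\pm$ cases''. This is false for $\eny_-(q)$: the self-braiding of $x_{\tresdos2}$ is $g_1g_2\cdot x_{\tresdos2}=-\widetilde{q}_{12}q_{22}\,x_{\tresdos2}=-q_{22}\,x_{\tresdos2}$ (after using $x_{12}=0$), which is $+1$ when $q_{22}=-1$, so $x_{\tresdos2}$ is a polynomial, not an exterior, direction there. Accordingly the statement lists $x_{\tresdos2}^2=0$ only among the relations \eqref{eq:endymion-2} of $\toba(\eny_+(q))$, and the PBW-basis of $\toba(\eny_-(q))$ has $n_1\in\N_0$ unbounded. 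Your own second half computes the labels of the two disconnected vertices of $\K^1$ correctly as $q_{22}$ and $-q_{22}$, so the slip is internal, but as written you would impose a relation that does not hold and would contradict the basis you then produce. Separately, the $\eny_\star(q)$ case — where $\widetilde{q}_{12}=-1$, $\K^1$ is four-dimensional and contains the block $\langle x_{12},x_{\tresdos2}\rangle\simeq\Vc(-1,2)$ — is only outlined: you correctly name the seven PBW-generators with $x_{\tresdos2}$ and $\big[x_{\tresdos2},x_{12}\big]_c$ unbounded, but the verification that the single degree-five relation closes the presentation (no further relations, no collapse of $\big[x_{\tresdos2},x_{12}\big]_c$) is deferred to ``a further splitting or a Diamond-Lemma check''; that is precisely the part of the proof that carries the content and cannot be waved through.
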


\subsubsection{Assumptions} We fix for the rest of the paper the following setting.
\begin{hypothesis}\label{hypo:recargada}
$V = \oplus_{i \in \I_\theta} V_i\in \yd{\Bbbk\Gamma}$
that satisfies
\begin{itemize}\itemsep=0pt
\item $\dim V < \infty$, $\dim V_1 \geq \dim V_2 \geq \dots \geq \dim V_{\theta}$,

\item $V_i \in \yd{\Bbbk \Gamma}$ is indecomposable for $i\in \I_{\theta}$ and

\item Hypothesis~$\ref{hyp:pale}$, i.e.,
\begin{enumerate}[label=\rm{(\Roman*)}]
\item $V \in \pale$,
\item $\sup V$ generates $\Gamma$,
\item $V$ is connected.
\end{enumerate}
\end{itemize}
\end{hypothesis}

As remarked, $\theta \geq 2$.
Observe that recursive arguments need care with condition~\ref{item:hyp-suppgen}.
Since~$V_i$ is indecomposable, it is homogeneous of degree $g_i \in \Gamma$, and $g_i$ acts on $V_j$ with generalized eigenvalue $q_{ij}$
for any $i,j\in \I_{\theta}$.

\subsubsection{Terminology and graphical description}\label{subsubsec:terminology}
We attach a diagram to (some of) those $V$ as in Hypothesis~\ref{hypo:recargada}
extending the graphical description of \cite{AAH-triang}.
\begin{itemize}\itemsep=0pt
\item By~\ref{item:hyp-pale}, at least one $V_i$ is a pale block; we assume that the pale $V_i$'s are
$V_1, \dots, V_{s}$, $s \in \I_{\theta}$.
A pale block $V_i \subseteq V_{g_i}^{-1}$ of dimension $2$, respectively $n\geq 3$, is depicted by
$\underset{i}{\begin{tikzpicture}
\draw[dashed] (0,0) rectangle (0.36,0.36);
\end{tikzpicture}}$\,, respectively~$\underset{i}{\begin{tikzpicture}
\draw[dashed] (0,0) rectangle node[pos=.5] {$n$} (0.36,0.36);
\end{tikzpicture}}$.
These are the only pale blocks we need to consider, cf. Theorem~\ref{th:paleblock-point-resumen}.
\item
By assumption there exists $t \in \I_{\theta}$ such that the $V_i$'s of dimension 1 correspond to $i\in\I_{t+1, \theta}$;
these are called points and depicted as $\overset{q_{ii}} {\underset{i}{\bullet}}$.
\item
A block $\Vc(\epsilon,2)$ is depicted as $\boxplus$ if $\epsilon= 1$, respectively $\boxminus$ if $\epsilon = -1$;
no other blocks are considered, cf. Theorem~\ref{th:blocks}. They belong to the interval $\I_{s+1,t}$.
\item
When $ i\neq j \in \I_{t+1, \theta}$ and $q_{ij}q_{ji} \neq 1$, we draw an edge between
them decorated by $\widetilde{q}_{ij} := q_{ij}q_{ji}$, as in Section~\ref{subsec:diagonal-type}.
\item
Let $V_i$ be a pale block of dimension $2$ and let $V_j$ be a point. Then there
is a suitable basis $\big\{x_i, x_{\idos}\big\}$ of $V_i$ and $a_j\in \Bbbk$ such that
for $k,\ell\in \big\{i,\idos, j \big\}$
\begin{align*}
c(x_k\otimes x_{\ell}) &=
\begin{pmatrix}
- x_i \otimes x_i& - x_{\idos} \otimes x_i& q_{ij} x_j \otimes x_i
\\
- x_i \otimes x_{\idos} & - x_{\idos} \otimes x_{\idos} & q_{ij} x_j \otimes x_{\idos}
\\
q_{ji} x_i \otimes x_j & q_{ji}\big(x_{\idos} + a_jx_i\big) \otimes x_j & q_{jj} x_j \otimes x_j
\end{pmatrix}\!.
\end{align*}
If $\widetilde{q}_{ij}=1$ and $q_{jj} = \pm 1$, then
a dotted line labeled by $a_j$ is drawn between $i$ and $j$, i.e.,
$\xymatrix@C=20pt{ \underset{i}{\begin{tikzpicture}
\draw[dashed] (0,0) rectangle (0.36,0.36);
\end{tikzpicture}} \ar @{.}^{a_j}[r] & \overset{\pm1}{\underset{j}{\bullet}}}$\!.
Here $V_i \oplus V_j \simeq \eny_{\pm}(q)$ if $a_j \neq 0$.

If $\widetilde{q}_{ij}=-1$ and $q_{jj} = - 1$,
then we draw and edge labeled by $a_{j}$ between $i$ and $j$, i.e.,
$\xymatrix{ \underset{i}{\begin{tikzpicture}
\draw[dashed] (0,0) rectangle (0.36,0.36);
\end{tikzpicture}} \ar @{-}^{a_{j}}[r] & \overset{-1}{\underset{j}{\bullet}}}$\!.
Note that $V_i \oplus V_j \simeq \eny_{\star}(q)$ if $a_j \neq 0$.

\item Let $V_i$ be a pale block, $\dim V_i = 3$, and let $V_j$ be a point.
When $\widetilde{q}_{ij}=1$ and $q_{jj} = \pm 1$, respectively $q_{jj} = - 1=\widetilde{q}_{ij}$
we join $i$ and $j$ by a dotted line, respectively a line; i.e.,
$\xymatrix{ \underset{i}{\begin{tikzpicture}
\draw[dashed] (0,0) rectangle node[pos=.5] {$3$} (0.36,0.36);
\end{tikzpicture}} \ar @{.}[r] & \overset{\pm1}{\underset{j}{\bullet}}}$\!,
$\xymatrix{ \underset{i}{\begin{tikzpicture}
\draw[dashed] (0,0) rectangle node[pos=.5] {$3$} (0.36,0.36);
\end{tikzpicture}} \ar @{-}[r] & \overset{-1}{\underset{j}{\bullet}}}$\!.
\end{itemize}

The Nichols algebras $\toba(V)$ when $V$ has just one pale block and points (that is, $s=t=1$)
are informally called \emph{Endymion algebras}; and when $V$ has only pale blocks and blocks (that is, $t = \theta$),
they are called \emph{Selene algebras}.

\section{A point and a pale block of dimension 3} \label{sec:block+pt}

In this section, we assume Hypothesis~\ref{hypo:recargada} with $\theta = 2$, $\dim V_1= 3$ and $\dim V_2 = 1$.
For simplicity set $U = V_{1}$, $W = V_{2}$, $g = g_{1}$, $h = g_{2}$, $ q_{11}= \lambda_1$, $ q_{22}= \lambda_2$.
By Corollary~\ref{coro:indecomposable-abelian}, $U = U_g^{q_{11}}$ and $q_{11} = \pm 1$.
As $U$ is indecomposable and $\Gamma = \langle g,h \rangle$, $h$ must act as a Jordan block on $U$ with eigenvalue $q_{21} \in \Bbbk^{\times}$;
thus $g \neq h$ and $U = V_g$.
Fix a basis $\big\{x_1, x_2, x_{3}\big\}$ of $U$ such that $h_{\vert U}$ is given in this basis by the block
$\left(\begin{smallmatrix}
q_{21} & q_{21} & 0 \\
0 & q_{21} & q_{21} \\
0 & 0 & q_{21}
\end{smallmatrix}\right)$.
Let $\{x_{4}\}$ be a basis of $W$, so that $g\cdot x_{4} = q_{12}x_{4}$, $h\cdot x_{4} = q_{22}x_{4}$
where $q_{12}, q_{22} \in \Bbbk^{\times}$.
As usual $\widetilde{q}_{12} := q_{12}q_{21}$.

Let $q \in \Bbbk^{\times}$.
Let $\eny_{3,\pm}(q)$ denote the braided vector space $V$ as above with
\begin{align*}
q_{11}= -1, \qquad q_{22} = \pm 1, \qquad q_{12} = q= q_{21}^{-1}.
\end{align*}

In this section, we prove:

\begin{Theorem} \label{th:large-block-point} The Nichols algebra
$\toba (V)$ has finite $\GK$ if and only if $V\simeq \eny_{3,+}(q)$ or $\eny_{3,-}(q)$ for some $q \in \Bbbk^{\times}$.
\end{Theorem}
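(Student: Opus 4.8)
The plan is to prove both implications, reducing necessity to the already-settled rank-3 case of Theorem~\ref{th:paleblock-point-resumen} and treating sufficiency through the splitting technique of Section~\ref{subsec:splitting-technique}.

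\textbf{Necessity.} First I would isolate $U' := \langle x_1, x_2, x_4\rangle$. Both $g$ and $h$ preserve it: $g$ acts by $q_{11}\id$, $h$ acts on $\langle x_1, x_2\rangle$ through the top-left $2\times 2$ corner of its Jordan block ($h\cdot x_2 = q_{21}(x_1+x_2)$) and diagonally on $x_4$. Thus $U'$ is a Yetter--Drinfeld submodule whose underlying braided vector space is exactly of the form~\eqref{eq:braiding-paleblock-point}, with $x_4$ in the role of the point; consequently $\toba(U')$ is the subalgebra of $\toba(V)$ generated by $U'$, so $\GK\toba(U')\le\GK\toba(V)$. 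If $\GK\toba(V)<\infty$, Theorem~\ref{th:paleblock-point-resumen} forces $q_{11}=-1$ together with either (i)~$\widetilde q_{12}=1$ and $q_{22}=\pm1$, or (ii)~$q_{22}=-1=\widetilde q_{12}$.

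It remains to discard case~(ii), and this is the step that genuinely uses the third dimension of the block: since $\langle x_1\rangle$ is a one-dimensional submodule, the quotient $U/\langle x_1\rangle$ merely reproduces the dimension-$2$ situation (which \emph{allowed} (ii)), so $x_3$ must enter essentially. I would therefore pass to the subalgebra of $\toba(V)$ generated by $x_1$, $x_4$ and the iterated braided commutators $(\adc x_4)^k x_3$; in case~(ii) these unfold the Jordan action of $h$ into enough $h$-homogeneous directions to produce a diagonal braided subspace of rank $\ge 3$ whose generalized Dynkin diagram is not in Heckenberger's list, whence Theorem~\ref{thm:conj-AAH} gives $\GK\toba(V)=\infty$. (Equivalently, one displays directly an $\N_0^2$-family of PBW generators of bounded degree.) Only case~(i) survives, i.e.\ $V\simeq\eny_{3,\pm}(q)$ after normalizing $q_{12}=q=q_{21}^{-1}$.

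\textbf{Sufficiency.} Assume $q_{11}=-1$, $\widetilde q_{12}=1$ and $q_{22}=\pm1$. I would use $\toba(V)\cong\K\#\toba(W)$ with $\K=\toba(\K^1)$ and $\K^1=\adc(\toba(W))(U)$, so that $\GK\toba(V)=\GK\K+\GK\toba(W)$ with $\GK\toba(W)\le 1$. The crux is to pin down $\K^1$. Since $\widetilde q_{12}=1$, the diagonal subalgebra $\langle x_1,x_4\rangle$ gives $[x_4,x_1]_c=0$, hence $(\adc x_4)x_1=0$; and because $q_{22}=\pm1$ and $\widetilde q_{12}=1$ together force $\adc x_4$ to be nilpotent on $x_2$ and $x_3$, the space $\K^1$ is finite-dimensional, spanned by $x_1,x_2,x_3$ and finitely many commutators $(\adc x_4)^k x_2$, $(\adc x_4)^k x_3$. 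Reading off the induced braiding exhibits $\K^1$ as a braided vector space of finite type, so $\GK\toba(\K^1)<\infty$ and therefore $\GK\toba(V)<\infty$.

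\textbf{Main obstacle.} The delicate points are, for necessity, producing in case~(ii) the rank-$\ge3$ diagonal subspace (or the infinite PBW family) that certifies infinite $\GK$, since the non-diagonal Jordan action of $h$ must be resolved into eigenvectors before Theorem~\ref{thm:conj-AAH} can be invoked; and, for sufficiency, the bookkeeping establishing nilpotency of $\adc x_4$ and identifying $\K^1$ and its braiding precisely. The remaining computation of the exact value of $\GK$ from an explicit PBW basis is routine and is carried out in Theorems~\ref{thm:q=1,q22=-1,n=3} and~\ref{thm:q=1,q22=1,n=3}.
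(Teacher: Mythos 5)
Your overall architecture matches the paper's: necessity begins by applying Theorem~\ref{th:paleblock-point-resumen} to the braided subspace $\langle x_1,x_2,x_4\rangle$, and sufficiency is delegated to the explicit presentations of Theorems~\ref{thm:q=1,q22=-1,n=3} and~\ref{thm:q=1,q22=1,n=3}. But the step that discards case~(ii) has a genuine gap. You propose to extract from the iterated commutators $(\adc x_4)^k x_3$ a \emph{diagonal} braided subspace of rank $\ge 3$ and to invoke Theorem~\ref{thm:conj-AAH}. That cannot work as stated: taking braided commutators does not resolve the Jordan action of $h$ into eigenvectors; it propagates it. The paper's argument takes $z_i=(\adc x_4)(x_i)$ for $i\in\I_3$ (first commutators of all three block vectors, not iterated commutators of $x_3$ alone), computes the coaction in $\toba(V)\#\Bbbk\Gamma$, and finds that $Z=\langle z_1,z_2,z_3\rangle$ is a braided subspace of $\K^1$ isomorphic to the three-dimensional block $\Vc(-1,3)$, on which the relevant group element still acts by a $3\times 3$ Jordan block. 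The correct tool is therefore Theorem~\ref{th:blocks} (blocks of length $\ge 3$ have infinite $\GK$), not Heckenberger's list: Theorem~\ref{thm:conj-AAH} applies only to diagonal type, and your construction exhibits no diagonal rank-$3$ subspace. (One could instead pass to an associated graded object along a flag to reach diagonal type, as is done in Lemma~\ref{lemma:caseII-general}, but you do not set that up.)

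A secondary problem lies in sufficiency: the inference ``$\K^1$ is finite-dimensional, hence of finite type, hence $\GK\toba\big(\K^1\big)<\infty$'' is a non sequitur --- $\Vc(-1,3)$ is itself a three-dimensional braided vector space with $\GK\toba(\Vc(-1,3))=\infty$, which is exactly the obstruction exploited in case~(ii). Finiteness of $\GK$ for $\eny_{3,\pm}(q)$ is precisely the content of Theorems~\ref{thm:q=1,q22=-1,n=3} and~\ref{thm:q=1,q22=1,n=3}, where $\K^1$ is identified (in the $+$ case a six-dimensional space with basis $\zc{i}{j}$, $0\le j<i\le 3$) and a full set of relations and a PBW basis are computed; calling this step ``routine'' understates that it is where the sufficiency proof actually lives. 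Since you do cite those theorems, this half of the argument can stand on their authority, but the justification you supply in its place is not valid on its own.
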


The proof of the Theorem goes as follows.
First, the Nichols algebras $\toba(\eny_{3,\pm}(q))$ have finite $\GK$ by Theorems~\ref{thm:q=1,q22=-1,n=3} and~\ref{thm:q=1,q22=1,n=3}.
Second, let $V$ be as above.
By Theorem~\ref{th:paleblock-point-resumen} applied to the subspace
$\langle x_1, x_2, x_{4}\rangle$, we have

\begin{Lemma}
If $\GK \toba (V) < \infty$, then $q_{11} = -1$ and either
\begin{enumerate}[label=$(\roman*)$]\itemsep=0pt
\item\label{item:endymion-rg4-i}
$\widetilde{q}_{12} = 1$ and $q_{22} \in \big\{1, -1 \big\}$, or else

\item\label{item:endymion-rg4-ii}
$\widetilde{q}_{12} = -1$ and $q_{22} = -1$.
\end{enumerate}
\end{Lemma}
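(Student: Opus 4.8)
The plan is to exhibit $\langle x_1,x_2,x_4\rangle$ as a braided subspace of $V$ whose braiding is exactly of the form \eqref{eq:braiding-paleblock-point}, deduce that its Nichols algebra sits inside $\toba(V)$, and then read the constraints off the rank-$3$ classification in Theorem~\ref{th:paleblock-point-resumen}. Finiteness of $\GK\toba(V)$ will pass to the subspace, forcing $q_{11}=-1$ and the stated dichotomy.

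First I would check that $Z:=\langle x_1,x_2\rangle\oplus W$ is a Yetter--Drinfeld submodule of $V$. Indeed $g$ acts on $\langle x_1,x_2\rangle$ by $q_{11}\id$, while the given shape of $h_{\vert U}$ restricts to the $2\times 2$ Jordan block $\left(\begin{smallmatrix} q_{21}&q_{21}\\ 0&q_{21}\end{smallmatrix}\right)$ on $\langle x_1,x_2\rangle$ (here it is essential to take the bottom of the Jordan chain, $x_1,x_2$). Thus $\langle x_1,x_2\rangle$ is homogeneous of degree $g$ and $\Gamma$-stable, and adjoining the point $W=\Bbbk x_4$ gives a submodule in $\yd{\Bbbk\Gamma}$; in particular $Z$ is a braided subspace.

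Next, using \eqref{eq:braiding-ydG} I would compute the braiding of $Z$ in the basis $(x_1,x_2,x_4)$, with $x_4$ playing the role of the point $x_3$ of \eqref{eq:braiding-paleblock-point}. Since $x_1,x_2\in V_g$ and $x_4\in V_h$, one gets $c(x_i\otimes x_j)=q_{11}x_j\otimes x_i$ and $c(x_i\otimes x_4)=q_{12}x_4\otimes x_i$ for $i,j\in\{1,2\}$, $c(x_4\otimes x_1)=q_{21}x_1\otimes x_4$, and the only nondiagonal entry $c(x_4\otimes x_2)=q_{21}(x_2+x_1)\otimes x_4$, arising from $h\cdot x_2=q_{21}(x_1+x_2)$. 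This is precisely \eqref{eq:braiding-paleblock-point} with the same scalars $q_{11},q_{12},q_{21},q_{22}$, all nonzero because $g$ and $h$ act invertibly.

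Finally, as $Z$ is a braided subspace the Nichols functor gives an embedding $\toba(Z)\hookrightarrow\toba(V)$, whence $\GK\toba(Z)\le\GK\toba(V)<\infty$; applying Theorem~\ref{th:paleblock-point-resumen} to $Z$ yields $q_{11}=-1$ together with either $\widetilde{q}_{12}=1$ and $q_{22}\in\{1,-1\}$, i.e.~case~\ref{item:endymion-rg4-i}, or $q_{22}=-1=\widetilde{q}_{12}$, i.e.~case~\ref{item:endymion-rg4-ii}. The only step meriting care — and the one I would state explicitly — is the monotonicity $\GK\toba(Z)\le\GK\toba(V)$: one must invoke that $\toba$ depends only on the braiding and is functorial for inclusions of braided subspaces, so that $\toba(Z)$ is a subalgebra of $\toba(V)$ and its GK-dimension cannot exceed that of $\toba(V)$. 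Everything else is a direct transcription of the braiding followed by a citation of the rank-$3$ result.
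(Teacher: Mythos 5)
Your proposal is correct and coincides with the paper's argument, which simply invokes Theorem~\ref{th:paleblock-point-resumen} applied to the braided subspace $\langle x_1,x_2,x_4\rangle$; you have merely spelled out the details (stability of the bottom of the Jordan chain, the explicit braiding matrix, and the monotonicity of $\GK$ under passage to a braided subspace), all of which are accurate.
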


To conclude the proof, we discard the possibility~\ref{item:endymion-rg4-ii}:

\begin{Proposition} 
If $\widetilde{q}_{12}=-1$, then $\GK \toba(V) = \infty$.
\end{Proposition}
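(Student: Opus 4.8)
The plan is to split the argument according to the value of $q_{22}$ and, in the essential case, to exhibit a block $\Vc(-1,3)$ inside a suitable Nichols subalgebra, whose Nichols algebra has infinite $\GK$ by Theorem~\ref{th:blocks}.

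First I would dispose of the case $q_{22}\neq -1$. The subspace $\langle x_1,x_2,x_4\rangle$ is a Yetter--Drinfeld submodule of $V$ (note $h\cdot x_2=q_{21}(x_1+x_2)\in\langle x_1,x_2\rangle$), hence a braided subspace, and its underlying braided vector space is a pale block of dimension $2$ together with a point, with parameters $q_{11}=-1$, $\widetilde{q}_{12}=-1$ and $q_{22}$. Since $\widetilde{q}_{12}=-1\neq 1$ and $q_{22}\neq -1$, neither alternative of Theorem~\ref{th:paleblock-point-resumen} holds, so $\GK\toba(\langle x_1,x_2,x_4\rangle)=\infty$. As the Nichols algebra of a Yetter--Drinfeld submodule is a braided Hopf subalgebra of $\toba(V)$, this forces $\GK\toba(V)=\infty$.

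So I may assume $q_{22}=-1$. Here the three-dimensional subspace $\langle x_1,x_2,x_4\rangle\cong\eny_{\star}(q)$ has finite $\GK$ by Theorem~\ref{th:paleblock-point-resumen}$(ii)$, so the obstruction must be produced by the third vector $x_3$, and the tool I would use is the splitting technique of Section~\ref{subsec:splitting-technique} applied to $V=U\oplus W$. Since $q_{22}=-1$ we have $\toba(W)=\Bbbk\langle x_4\rangle/\big(x_4^2\big)$, of dimension $2$; hence $\toba(V)\cong\K\#\toba(W)$ with $\K\cong\toba\big(\K^1\big)$ and $\K^1=\adc(\toba(W))(U)=U\oplus Y$, where $Y=\langle y_1,y_2,y_3\rangle$ and $y_i:=[x_4,x_i]_c$. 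Because $\toba(W)$ is finite dimensional, $\GK\toba(V)=\GK\K$, so it suffices to prove $\GK\K=\infty$.

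Now I would determine the Yetter--Drinfeld structure of $Y$. It is homogeneous of degree $gh\neq g$, hence a submodule of $\K^1$, so $\toba(Y)$ is a braided Hopf subalgebra of $\K=\toba\big(\K^1\big)$ and $\GK\K\geq\GK\toba(Y)$. A direct computation using \eqref{eq:graded-bracket-derivation1}--\eqref{eq:graded-bracket-jacobi} shows that $g$ acts on every $y_i$ by the scalar $-q_{12}$, while $h\cdot y_1=-q_{21}y_1$ and $h\cdot y_i=-q_{21}(y_i+y_{i-1})$ for $i\geq 2$; that is, $h$ acts on $Y$ by a single Jordan block of eigenvalue $-q_{21}$, mirroring its action on $U$. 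Consequently $gh$ acts on $Y$ by a single Jordan block of eigenvalue $(-q_{12})(-q_{21})=\widetilde{q}_{12}=-1$, so as a braided vector space $Y\cong\Vc(-1,3)$. By Theorem~\ref{th:blocks}, $\GK\toba(\Vc(-1,3))=\infty$, whence $\GK\K\geq\GK\toba(Y)=\infty$ and therefore $\GK\toba(V)=\infty$.

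The main obstacle is the last computation: one must check that the $y_i$ are linearly independent, so that $\dim Y=3$ and $Y$ is a genuine block rather than a shorter Jordan chain, and that the chain $y_1,y_2,y_3$ indeed closes up under $h$ as stated; both are verified by evaluating the skew-derivations $\partial_i$ on the $y_i$. The instructive point is that the identical computation in dimension $2$ yields $Y\cong\Vc(-1,2)$, the super Jordan plane of finite $\GK$; it is precisely the presence of the third link of the Jordan chain, together with the relation $\widetilde{q}_{12}=-1$ that pushes the shifted eigenvalue back to $-1$, that converts the shifted block into the infinite-$\GK$ case $\Vc(-1,3)$.
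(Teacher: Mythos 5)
Your overall strategy coincides with the paper's: form $y_i=\adc(x_4)(x_i)$ inside $\K^1=\adc(\toba(W))(U)$, recognize the span $Y$ of the $y_i$ as $\Vc(-1,3)$, and invoke Theorem~\ref{th:blocks}. The preliminary reduction to $q_{22}=-1$ (and $q_{11}=-1$) via Theorem~\ref{th:paleblock-point-resumen} applied to $\langle x_1,x_2,x_4\rangle$ is also how the paper proceeds; it is absorbed into the Lemma stated just before the Proposition. Your computations of the $g$- and $h$-actions on the $y_i$ are correct, and linear independence is indeed checked with skew-derivations exactly as you anticipate.

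The one step that is not justified as written is the assertion that $Y$ is ``homogeneous of degree $gh$, hence a submodule of $\K^1$'', from which you read off the braiding as $c(y_i\otimes y_j)=(gh\cdot y_j)\otimes y_i$. The braiding that computes $\K=\toba\big(\K^1\big)$ is the one of $\yd{\toba(W)\#\Bbbk\Gamma}$, not of $\yd{\Bbbk\Gamma}$, and with respect to the $\toba(W)\#\Bbbk\Gamma$-coaction one has $\delta(y_i)=x_4g\otimes(2x_i+x_{i-1})+gh\otimes y_i$; so $Y$ is not even a subcomodule, and the $\Gamma$-degree alone does not determine $c$ on $Y\otimes Y$. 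The formula you want does hold, but only because the extra term contributes $\adc(x_4)(g\cdot y_j)\otimes(2x_i+x_{i-1})$, which vanishes since $\adc(x_4)(y_j)=\adc(x_4)^2(x_j)=0$ on account of $x_4^2=0$ --- this is where $q_{22}=-1$ enters a second time, and it is exactly the computation of $\delta(y_i)$ and of $c(y_i\otimes\mathfrak{y})$ that the paper carries out before concluding $Y\simeq\Vc(-1,3)$. With that verification inserted, your argument is complete.
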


\begin{proof} Let $\K^1 =\adc(\toba(W))(U)$. We shall prove that $\GK \toba\big(\K^1\big) = \infty$.
Set $x_0 := 0$ and $z_{i} =\adc(x_{4})(x_i) \in \K^1$, that is
\begin{align}\label{eq:zi-largeblock-point}
z_{i} = x_{4}x_i - q_{21}(x_i + x_{i-1})x_{4},\qquad i \in \I_{3}.
\end{align}
Let $(\partial_i)_{i \in \I_{4}}$ be the skew-derivations associated to the basis
$(x_i)_{i \in \I_{4}}$.
Since $q_{22} = -1$, we have $x_4^2 = 0$. Then
\begin{align*}
&\partial_i(z_{j}) =
\begin{cases}
2x_{4} & \mbox{if}\quad j = i, \\
x_{4} & \mbox{if}\quad j = i + 1, \\
0 & \text{otherwise,}\\
\end{cases}\qquad i \in \I_3.
\end{align*}
Thus $\{z_{1}, z_{2}, z_{3}\}$ is linearly independent.
Let $H = \toba(V)\# \Bbbk \Gamma$. Then
\begin{align*}
\Delta_{H}(z_{i}) = z_{i} \otimes 1 + 2x_{4} g \otimes x_i + x_{4} g \otimes x_{i-1} + gh \otimes z_{i},
\qquad i \in \I_3.
\end{align*}
Using $\delta =(\pi _{\toba (W)\# \Bbbk \Gamma}\otimes \id)\Delta _{H}$, we see that
\begin{align*}
\delta(z_{i}) = x_{4} g \otimes (2x_i + x_{i-1}) + gh \otimes z_{i},\qquad
 i \in \I_3.
\end{align*}
Hence for every $\mathfrak{y} \in \K^1$ and $i \in \I_3$
\begin{align*}
c(z_{i}\otimes \mathfrak{y}) = \adc(x_{4})(g \cdot \mathfrak{y}) \otimes (2x_i + x_{i-1})
+ (gh\cdot \mathfrak{y}) \otimes z_{i}.
\end{align*}
Let $Z$ be the braided subspace of $\K^1$ generated by $\big\{z_{1}, z_{2}, z_{3} \big\}$.
Then
\begin{align*}
(c(z_i \otimes z_j))_{i,j\in \I_3} =
\begin{pmatrix}
-z_{1} \otimes z_{1} & -(z_{2} + z_{1}) \otimes z_{1} & -(z_{3} + z_{2}) \otimes z_{1} \\
-z_{1} \otimes z_{2} & -(z_{2} + z_{1}) \otimes z_{2} & -(z_{3} + z_{2}) \otimes z_{2} \\
-z_{1} \otimes z_{3} & -(z_{2} + z_{1}) \otimes z_{3} & -(z_{3} + z_{2}) \otimes z_{3}
\end{pmatrix}\!.
\end{align*}
Hence $Z$ is isomorphic to $\mathcal{V}(-1, 3)$
and Theorem~\ref{th:blocks} applies.
\end{proof}

\subsection[The algebra B(E\_{3,-}(q))]{The algebra $\boldsymbol{\toba(\eny_{3,-}(q))}$}
To state our result, we need the elements
\begin{align*}
z_{i} = x_4x_i - q _{21} (x_i+x_{i-1}) x_4,\qquad
w = z_2x_3 + q_{21} (x_3+x_2) z_2,
\end{align*}
recall the notation \eqref{eq:zi-largeblock-point}.
By a direct computation, one has
\begin{align*}
\partial_i(z_{j}) = -\delta_{j,i+1}x_4,\qquad
\partial_1(w) =z_3, \qquad \partial_2(w) =-z_2,\qquad \partial_3(w) =\partial_4(w) =0.
\end{align*}

\begin{Theorem}\label{thm:q=1,q22=-1,n=3} The algebra
$\toba(\eny_{3,-}(q))$ is presented by generators $x_1$,
$x_2$, $x_3$, $x_4$ with defining relations
\begin{gather}\label{eq:endymion3-1-1}
x_i^2=0,\qquad x_ix_j = -x_jx_i, \qquad i\ne j\in\I_3,
\\ \label{eq:endymion3-1-2}
x_4^2=0,\qquad x_1x_4 = q_{12} x_4x_1,
\\ \label{eq:endymion3-1-3}
z_3z_2-z_2z_3+\tfrac{1}{2}z_2^2=0,
\\ \label{eq:endymion3-1-4}
z_2w+q_{21}wz_2=0.
\end{gather}
The monomials
\begin{align}\label{eq:monomials-eny3-}
x_1^{m_1} x_2^{m_2} x_3^{m_3} w^{p} z_2^{n_2}z_{3}^{n_3} x_4^{m_4}
m_i, \qquad p \in \{0,1\}, \quad n_j \in\N_0,
\end{align}
form a PBW-basis of $\toba(\eny_{3,-}(q))$. Hence $\GK \toba(\eny_{3,-}(q)) = 2$.
\end{Theorem}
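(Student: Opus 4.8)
The plan is to run the splitting technique of Section~\ref{subsec:splitting-technique}, using the skew-derivations recorded just before the statement to verify relations and linear independence. Since $q_{22}=-1$ forces $x_4^2=0$, the algebra $\toba(W)$ is $2$-dimensional, spanned by $1$ and $x_4$. By the splitting technique $\toba(V)\cong\K\#\toba(W)$ with $\K\cong\toba\big(\K^1\big)$ and $\K^1=\adc(\toba(W))(U)=\langle x_1,x_2,x_3,z_1,z_2,z_3\rangle$, where $z_i=\adc(x_4)(x_i)$. The first point is that, because $\widetilde{q}_{12}=1$, the relation $x_1x_4=q_{12}x_4x_1$ in \eqref{eq:endymion3-1-2} is equivalent to $z_1=0$: expanding, $z_1=\big(q_{12}^{-1}-q_{21}\big)x_1x_4=0$. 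Hence the effective generators of $\K^1$ are $x_1,x_2,x_3,z_2,z_3$, and it suffices to identify $\toba\big(\K^1\big)$ with the algebra on these letters cut out by \eqref{eq:endymion3-1-1}, \eqref{eq:endymion3-1-3} and \eqref{eq:endymion3-1-4}; tensoring with the PBW-basis $\{1,x_4\}$ of $\toba(W)$ then yields the full presentation and the monomials \eqref{eq:monomials-eny3-}.

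First I would pin down the braiding of $\K^1$. Computing $\Delta_H(z_i)$ with $H=\toba(V)\#\Bbbk\Gamma$ exactly as in the displayed coproduct computations of this section, one finds $\delta(z_i)=-x_4g\otimes x_{i-1}+gh\otimes z_i$, and re-expressing the products $x_4x_j$ through the $z$'s gives $g\cdot z_j=-q_{12}z_j$ and $h\cdot z_j=-q_{21}(z_j+z_{j-1})$. Thus $gh$ acts on $Z:=\langle z_2,z_3\rangle$ by a unipotent Jordan block of eigenvalue $\widetilde{q}_{12}=1$, so $Z\cong\Vc(1,2)$ is the Jordan plane, whose single defining relation is exactly \eqref{eq:endymion3-1-3} and which has $\GK=2$ by Theorem~\ref{th:blocks}. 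The subspace $X:=\langle x_1,x_2,x_3\rangle$ is an exterior block ($g$ acts by $-1$), giving \eqref{eq:endymion3-1-1}; the cross-braiding between $X$ and $Z$, computed the same way, is nontrivial, and this is what produces the higher PBW-letter $w=[z_2,x_3]_c=\adc(z_2)(x_3)$ together with the relation \eqref{eq:endymion3-1-4}.

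With the candidate relations in hand I would argue in the usual two directions. For the \emph{upper bound} (spanning) I verify that \eqref{eq:endymion3-1-1}--\eqref{eq:endymion3-1-4} hold in $\toba(V)$: the quadratic ones are immediate from the braiding, while \eqref{eq:endymion3-1-3} and \eqref{eq:endymion3-1-4} follow by applying every $\partial_i$ to their left-hand sides and using $\partial_i(z_j)=-\delta_{j,i+1}x_4$, $\partial_1(w)=z_3$, $\partial_2(w)=-z_2$, $\partial_3(w)=\partial_4(w)=0$ together with the fact that an element annihilated by all $\partial_i$ is zero. One then shows that the algebra $\wtoba$ presented by these relations is spanned by \eqref{eq:monomials-eny3-}, by verifying that the ordered monomials are closed under left multiplication by $x_1,x_2,x_3,x_4$; concretely this means deriving the straightening rules among the seven letters $x_1,x_2,x_3,w,z_2,z_3,x_4$ and checking that the overlap ambiguities resolve. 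For the \emph{lower bound} (independence) I show the monomials \eqref{eq:monomials-eny3-} are linearly independent in $\toba(V)$ by downward induction on degree using the $\partial_i$, which are exactly tuned to separate the letters $z_2,z_3,w$. Together these identify \eqref{eq:monomials-eny3-} as a PBW-basis and give $\wtoba=\toba(V)$. Finally, the only PBW-generators of infinite height are $z_2$ and $z_3$, so the Hilbert series of $\toba(V)$ is a polynomial divided by two geometric factors, whence $\GK\toba(V)=2$.

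The main obstacle is the \emph{completeness} of the relation set, i.e.\ the spanning step. The generator $w$ has degree $3$, and one must show that all its interactions with $z_2,z_3$ and with the $x_i$ are already forced by \eqref{eq:endymion3-1-1}--\eqref{eq:endymion3-1-4}; equivalently, every overlap ambiguity among the seven PBW letters must be shown to reduce, which is the delicate bookkeeping. In the splitting picture this is precisely the step of identifying $\toba\big(\K^1\big)$ as the algebra on $X\oplus Z$ with the stated relations, and it is exactly the nontrivial cross-braiding between the exterior block $X$ and the Jordan block $Z$ that makes this identification nonroutine rather than a formal tensor decomposition.
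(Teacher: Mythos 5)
Your proposal is correct, and its hard parts coincide with the paper's own proof: verifying \eqref{eq:endymion3-1-3} and \eqref{eq:endymion3-1-4} by applying the skew-derivations $\partial_i$, deriving the straightening rules among the seven letters to get spanning, and proving independence of the monomials \eqref{eq:monomials-eny3-} by a minimal-degree induction with the $\partial_i$. What you do differently is the organizing frame: the paper proves this theorem by a direct presentation argument, whereas you run the splitting $\toba(V)\cong\K\#\toba(W)$ with $\dim\toba(W)=2$, a device the paper reserves for $\toba(\eny_{3,+}(q))$ in Theorem~\ref{thm:q=1,q22=1,n=3}. Your identification of $\langle z_2,z_3\rangle$ as a copy of $\Vc(1,2)$ inside $\K^1$ is correct (note that it uses $\adc(x_4)(z_j)=0$, a consequence of $x_4^2=0$, to kill the non-grouplike part of $\delta(z_j)$ in the cross-braiding), and it buys you relation \eqref{eq:endymion3-1-3} and the value $\GK=2$ essentially for free from Theorem~\ref{th:blocks}, which is cleaner than the paper's direct computation. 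The one place to be careful is the claim that identifying $\toba\big(\K^1\big)$ and ``tensoring with $\{1,x_4\}$'' already yields the full presentation: the bosonization is only a linear, not an algebra, tensor decomposition, so the cross-commutation of $x_4$ with $z_2$, $z_3$ and $w$ (the paper's \eqref{eq:endymion3-1-other-rels-1} and \eqref{eq:endymion3-1-other-rels-2}, e.g.\ $x_4w=-q_{21}^2wx_4+\tfrac{q_{21}}{2}z_2^2$) must still be derived from \eqref{eq:endymion3-1-1}--\eqref{eq:endymion3-1-4}; you do account for this in your straightening/overlap step, so the plan is complete.
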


\begin{proof}
Let $\toba$ be the algebra with the desired presentation. We claim that
there is a surjective map $\pi\colon \toba \to \toba(\eny_{3,-}(q))$.
Indeed,
the relations \eqref{eq:endymion3-1-1} and \eqref{eq:endymion3-1-2} hold
in $\toba(\eny_{3,-}(q))$ because the braiding of
$\langle x_1, x_2, x_3 \rangle$ is minus the flip and $\langle x_1, x_2, x_4 \rangle \simeq \eny_-(q)$ as braided vector spaces. We~check that \eqref{eq:endymion3-1-3} holds using skew-derivations: indeed $\partial_3$ and $\partial_4$ annihilate the left side
since they kill $z_2$ and $z_3$, while for $\partial_1$ and $\partial_2$ we use \eqref{eq:endymion3-1-other-rels-1}.
Similarly, \eqref{eq:endymion3-1-4} holds since $\partial_3$ and $\partial_4$ annihilate $z_2$ and $w$, while for $\partial_1$ and $\partial_2$ we use \eqref{eq:endymion3-1-other-rels-2}.

To prove that $\pi$ is surjective, we observe that if
$\wtoba$ is an algebra and $x_1,x_2, x_3, x_4 \in \wtoba$ satisfy\eqref{eq:endymion3-1-1} and \eqref{eq:endymion3-1-2},
then $x_1$ $q$-commutes with $z_2$, $z_3$ and $w$, and the following relations also hold:
\begin{equation}\label{eq:endymion3-1-other-rels-1}
\begin{split}
&z_2x_2 =-q_{21}(x_2+x_1)z_2,
\\
&z_3x_2 =-w-q_{21}(x_2+x_1)z_3, \qquad&& x_4z_2 =-q_{21}z_2x_4,
\\
&z_3x_3 =-q_{21}(x_3+x_2)z_3, && x_4z_3 =-q_{21}(z_3+z_2)x_4,
\\
&wx_2 = q_{21}(x_2+x_1)w, && wx_3 =q_{21} (x_3+x_2)w.
\end{split}
\end{equation}

If in addition, \eqref{eq:endymion3-1-3} holds in $\wtoba$, then the following holds:
\begin{align}\label{eq:endymion3-1-other-rels-2}
x_4w &= -q_{21}^2wx_4+\frac{q_{21}}{2}z_2^2.
\end{align}

Finally, if \eqref{eq:endymion3-1-3} and \eqref{eq:endymion3-1-4} hold in $\wtoba$, then the following also holds:
\begin{align}\label{eq:endymion3-1-other-rels-3}
z_3w =-q_{21}wz_3, \qquad w^2 =0.
\end{align}

From the defining relations, the definitions of $z_2$, $z_3$ and $w$,
\eqref{eq:endymion3-1-other-rels-1}, \eqref{eq:endymion3-1-other-rels-2} and \eqref{eq:endymion3-1-other-rels-3}
we see that the monomials \eqref{eq:monomials-eny3-}
generate $\toba$ and \emph{a fortiori} $\toba(\eny_{3,-}(q))$.
Next we prove that they are linearly independent. Suppose on the contrary that there exists a non-trivial linear combination~$\mathtt{S}$ of these elements: we may assume that $\mathtt{S}$ is homogeneous of minimal degree. As
\begin{align*}
\partial_4 \big( x_1^{m_1} x_2^{m_2} x_3^{m_3} w^{p} z_{3}^{n_3}z_2^{n_2} x_4^{m_4} \big) = \delta_{m_4,1} x_1^{m_1} x_2^{m_2} x_3^{m_3} w^{p} z_{3}^{n_3}z_2^{n_2},
\end{align*}
all the elements in $\mathtt{S}$ with non-zero coefficient have $m_{4}=0$ by the minimality of the degree. Analogously, $n_2=n_3=p=0$ since
\begin{gather*}
\partial_4\partial_1 \big( x_1^{m_1} x_2^{m_2} x_3^{m_3} w^{p} z_{3}^{n_3}z_2^{n_2}\big) = -n_2 x_1^{m_1} x_2^{m_2} x_3^{m_3} w^{p} z_{3}^{n_3}z_2^{n_2-1},
\\[1mm]
(\partial_4\partial_1)^{n_3-1} \partial_4\partial_2 \big( x_1^{m_1} x_2^{m_2} x_3^{m_3} w^{p} z_{3}^{n_3}\big) = (-1)^{n_3}n_3! x_1^{m_1} x_2^{m_2} x_3^{m_3} w^{p},
\\[1mm]
\partial_4\partial_1\partial_2 \big( x_1^{m_1} x_2^{m_2} x_3^{m_3} w^{p}\big) = \delta_{p,1} x_1^{m_1} x_2^{m_2} x_3^{m_3}.
\end{gather*}
Hence $\mathtt{S}$ is a non-trivial linear combination of $x_1^{m_1} x_2^{m_2} x_3^{m_3}$, $m_i\in\{0,1\}$, and we get a contradiction. Thus the monomials \eqref{eq:monomials-eny3-} are linearly independent in $\toba(\eny_{3,-}(q))$
so they form a basis of $\toba(\eny_{3,-}(q))$; hence $\toba \simeq \toba(\eny_{3,-}(q))$.
\end{proof}

\subsection[The algebra B(E\_\{3,+\}(q))]{The algebra $\boldsymbol{\toba(\eny_{3,+}(q))}$}
We need the elements
\begin{gather*}
x_{4j}=(\adc x_4)x_j, \qquad j=2,3,
\\
x_{443} =(\adc x_4)^2x_3,
\\
\vt =[x_{42}, x_3]_c=x_{42}x_3+q_{21}(x_3+x_2)x_{42},
\\
\ut=[x_{43},x_{42}]_c=x_{43}x_{42}+x_{42}x_{43},
\\
\wt=[x_{43},\vt]_c=x_{43}\vt-q_{21}\vt x_{43}.
\end{gather*}
Observe that $\partial_3(\vt)=\partial_3(\ut)=\partial_3(\wt)=0$,
\begin{alignat}{3}
\label{eq:block-pt-1-derivations}
&\partial_1(x_{42})=-x_4,\qquad &&\partial_2(x_{43}) =-x_4, &&\partial_1(x_{443}) =x_4^2,
\\
&\partial_1(\vt)=x_{43}, &&\partial_1(\ut)=q_{12}x_{443}+x_{42}x_4,\qquad &&\partial_1(\wt)=2x_{43}^2,
\\
&\partial_2(\vt)=-x_{42}, &&\partial_2(\ut)=0, &&\partial_2(\wt)=-2u.
\end{alignat}
and all the other skew-derivations annihilate $x_{42}$, $x_{43}$, $x_{443}$.

\begin{Theorem}\label{thm:q=1,q22=1,n=3} The algebra
$\toba(\eny_{3,+}(q))$
is presented by generators $x_1$, $x_2$, $x_3$, $x_4$ with defining relations
\begin{gather}\label{eq:block-pt-1-rels-0}
x_ix_j=-x_jx_i,\qquad x_i^2 = 0, \qquad i\ne j \in\I_3,
\\
\label{eq:block-pt-1-rels-1}
x_4x_1=q_{21}x_1x_4, \qquad x_4x_{42} = q_{21}x_{42}x_4,\qquad x_4x_{443} = q_{21}x_{443}x_4,
\\
\label{eq:block-pt-1-rels-2}
x_{443}x_{42}+q_{21}x_{42}x_{443} =0,
\\
\label{eq:block-pt-1-rels-3}
x_{443}x_{43}+q_{21}(x_{43}+2x_{42})x_{443} =0,
\\
\label{eq:block-pt-1-rels-4}
x_{4}\wt-q_{21}^3\wt x_{4} +2q_{21}^2 x_{42}\ut= 0,
\\
\label{eq:block-pt-1-rels-4prim}
x_{43}\ut-\ut x_{43}+x_{42}\ut = 0,
\\
\label{eq:block-pt-1-rels-5}
x_{42}\wt+q_{21}\wt x_{42} = 0,
\\
\label{eq:block-pt-1-rels-6}
x_{43}\wt+q_{21}\wt x_{43} = 0.
\end{gather}
The monomials
\begin{align}\label{eq:monomials-eny3+}
x_1^{m_1} x_2^{m_2} x_3^{m_3} \vt^{p_1} x_{42}^{p_2} \wt^{p_3} \ut^{p_4} x_{43}^{p_5} x_{443}^{p_6} x_4^{p_7}, \qquad m_i, p_2, p_3, p_6 \in \{0,1\}, \quad p_1, p_4, p_5, p_7 \in\N_0,
\end{align}
form a PBW-basis of $\toba(\eny_{3,+}(q))$.
Hence $\GK \toba(\eny_{3,+}(q)) =4$.
\end{Theorem}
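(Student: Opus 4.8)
The plan is to mirror the strategy of Theorem~\ref{thm:q=1,q22=-1,n=3}. I would let $\wtoba$ denote the algebra presented by generators $x_1,x_2,x_3,x_4$ and relations \eqref{eq:block-pt-1-rels-0}--\eqref{eq:block-pt-1-rels-6}, construct a surjection $\pi\colon\wtoba\to\toba(\eny_{3,+}(q))$, show that the monomials \eqref{eq:monomials-eny3+} span $\wtoba$, and finally prove they are linearly independent in $\toba(\eny_{3,+}(q))$ via the skew-derivations $\partial_i$. The value of $\GK$ then follows by counting unbounded PBW-generators.

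To build $\pi$ I would check that each defining relation already holds in $\toba(\eny_{3,+}(q))$. Relations \eqref{eq:block-pt-1-rels-0} hold because $U=U_g^{-1}$, so the braiding of $\langle x_1,x_2,x_3\rangle$ is minus the flip. The $q$-commutations \eqref{eq:block-pt-1-rels-1} amount to $\adc(x_4)$ annihilating $x_1$, $x_{42}$ and $x_{443}$; these follow from $\widetilde{q}_{12}=1$ together with the degrees of $x_{42},x_{443}$, equivalently from the $\eny_+(q)$ relations of Proposition~\ref{prop:paleblocks} applied to $\langle x_1,x_2,x_4\rangle$ and $\langle x_1,x_3,x_4\rangle$. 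For the higher relations \eqref{eq:block-pt-1-rels-2}--\eqref{eq:block-pt-1-rels-6} I would argue with skew-derivations: using the formulas \eqref{eq:block-pt-1-derivations} and those preceding the theorem, I compute $\partial_i$ of each left-hand side and check that it vanishes for all $i\in\I_4$; since an element of $\toba(\eny_{3,+}(q))$ killed by every $\partial_i$ is zero, each relation holds, and $\pi$ exists.

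The harder half is to prove that the monomials \eqref{eq:monomials-eny3+} generate $\wtoba$. Following the pattern of \eqref{eq:endymion3-1-other-rels-1}--\eqref{eq:endymion3-1-other-rels-3} in the previous theorem, I would derive from the defining relations a complete set of straightening rules: how $x_4$ passes $\vt,\ut,\wt,x_{42},x_{43},x_{443}$; how $x_1,x_2,x_3$ pass these ghost generators; and how $\wt,\ut,x_{43},x_{42},x_{443}$ reorder among themselves (for instance \eqref{eq:block-pt-1-rels-4prim} rewrites $x_{43}\ut$ through $\ut x_{43}$ and $x_{42}\ut$, \eqref{eq:block-pt-1-rels-5}--\eqref{eq:block-pt-1-rels-6} move $\wt$ past $x_{42},x_{43}$, and \eqref{eq:block-pt-1-rels-2}--\eqref{eq:block-pt-1-rels-3} move $x_{443}$ to the left). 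Checking that these rewritings strictly decrease a suitable order on words and that the ambiguities are confluent shows that every word reduces to a combination of the monomials \eqref{eq:monomials-eny3+}; a fortiori they span $\toba(\eny_{3,+}(q))$ and $\pi$ is surjective. For linear independence I assume a nontrivial homogeneous relation $\mathtt{S}$ among these monomials of minimal degree. Since $\partial_4$ annihilates every PBW-generator except $x_4$ (by \eqref{eq:block-pt-1-derivations} and Leibniz) while $\partial_4(x_4^{p_7})=p_7x_4^{p_7-1}$, applying $\partial_4$ and invoking minimality forces $p_7=0$ in all surviving terms. Then suitable compositions of $\partial_1,\partial_2$, read off from $\partial_1(\vt)=x_{43}$, $\partial_2(\vt)=-x_{42}$, $\partial_1(\ut)=q_{12}x_{443}+x_{42}x_4$, $\partial_1(\wt)=2x_{43}^2$ and $\partial_2(\wt)=-2\ut$, peel off the unbounded exponents $p_1,p_4,p_5$ of $\vt,\ut,x_{43}$ and then the bounded $p_2,p_3,p_6$ of $x_{42},\wt,x_{443}$, each step killing the terms with positive exponent by minimality. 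This reduces $\mathtt{S}$ to a nontrivial combination of the $x_1^{m_1}x_2^{m_2}x_3^{m_3}$, which are independent since their braiding is minus the flip, a contradiction.

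Hence \eqref{eq:monomials-eny3+} is a PBW-basis and $\wtoba\simeq\toba(\eny_{3,+}(q))$. Exactly four PBW-generators, namely $\vt,\ut,x_{43},x_4$, carry unbounded exponents $p_1,p_4,p_5,p_7\in\N_0$, while the remaining exponents lie in $\{0,1\}$, so the Hilbert series is that of a polynomial ring in four variables times a finite-dimensional factor, whence $\GK\toba(\eny_{3,+}(q))=4$; equivalently the splitting technique of Section~\ref{subsec:splitting-technique} gives $\toba(\eny_{3,+}(q))\cong\toba(\K^1)\#\Bbbk[x_4]$ and $\GK=\GK\toba(\K^1)+1$. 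I expect the main obstacle to be the spanning step: producing and organizing the full list of straightening relations from \eqref{eq:block-pt-1-rels-2}--\eqref{eq:block-pt-1-rels-6} and verifying confluence, together with the combinatorial bookkeeping in the independence argument, where the entangled derivation values (notably $\partial_1(\ut)$ and $\partial_2(\wt)$ mixing several generators) must be combined in exactly the right order to isolate each of the four unbounded exponents.
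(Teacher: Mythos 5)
Your proposal is correct and follows essentially the same route as the paper: a surjection from the presented algebra built by checking each relation with the skew-derivations, spanning via straightening rules analogous to Step~3 and Step~5 of the paper's argument, linear independence by the minimal-degree recursion with iterated $\partial_4,\partial_1,\partial_2$, and $\GK=4$ from the four unbounded PBW exponents. The only piece you omit is the paper's preliminary computation of a basis and the coaction of $\K^1=\adc(\toba(W))(U)$ (its Steps~1--2), which motivates the choice of PBW generators but is not needed for the logic you lay out.
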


\begin{proof}
As before, let $\K^1 =\adc(\toba(W))(U)$.
Set
\begin{align*}
\zc{i}{j} := (\adc x_{4})^j x_i,\qquad i\in\I_3, \quad j\in\N_0;
\end{align*}
clearly, $\K^1$ is spanned by the $\zc{i}{j}$ with $i\in\I_3$, $ j\in\N_0$. Observe that
\begin{align*}
g\cdot \zc{i}{j} = -q_{12}^j\zc{i}{j}, \qquad
h\cdot \zc{i}{j} = q_{21}(\zc{i}{j}+\zc{i-1}{j}).
\end{align*}

\begin{step}
The set $\Zc := \{\zc{i}{j}\colon i\in\I_3, \, j\in\I_{0,i-1}\}$ is a basis of $\K^1$.
\end{step}

\begin{proof}[Proof of Step 1]
We prove by induction on $j$ that
\begin{align}\label{eq:derivadak-zij}
\partial_k(\zc{i}{j}) = \delta_{k,i-j} (-1)^j x_{4}^{j}, \qquad i,k\in\I_3, \quad j\in\N_0.
\end{align}
If $j=0$, then $\zc{i}{0}=x_i$ and the claim follows. Next if \eqref{eq:derivadak-zij} holds for $j$, then
\begin{align*}
\partial_k(\zc{i}{j+1})&=\partial_k(x_{4}\zc{i}{j}-q_{21}(\zc{i}{j}+\zc{i-1}{j})x_{4})
=x_{4}\partial_k(\zc{i}{j})-q_{21}\partial_k(\zc{i}{j}+\zc{i-1}{j})g\cdot x_{4}.
\end{align*}
If $k\ne i-j,i-j-1$, then $\partial_k(\zc{i}{j+1})=0$ by inductive hypothesis. Also,
\begin{align*}
&\partial_{i-j}(\zc{i}{j+1})=x_{4}\partial_{i-j}(\zc{i}{j})-\partial_{i-j}(\zc{i}{j})x_{4}=0,
\\
&\partial_{i-j-1}(\zc{i}{j+1})=-\partial_{i-j-1}(\zc{i-1}{j})x_{4}=-(-1)^j x_{4}^{j}x_{4}
=(-1)^{j+1} x_{4}^{j+1}.
\end{align*}

Also, $\partial_{4}(\zc{i}{j})=0$ for all $i\in\I_3$, $j\in\N_0$. Therefore, $\partial_k(\zc{i}{i})=0$ for all $k\in\I_{4}$, so $\zc{i}{i}=0$. Then $\zc{i}{j}=0$ for all $j\ge i$ and $\K^1$ is spanned by $\Zc$.
It remains to prove that $\Zc$ is linearly independent. As $\zc{i}{j}$ has degree $j+1$ in $\toba(V)$, it suffices to prove that $\{\zc{i}{j}\colon j<i\le 3\}$ is linearly independent for $j\in\I_{0,2}$. This follows from \eqref{eq:derivadak-zij} and the fact that $x_{4}^k\ne 0$ for all $k\in\N_0$.
\end{proof}

\begin{step}
The coaction on $\K^1$ satisfies
\begin{align*}
\delta(\zc{i}{j}) = \sum_{t=0}^{j}(-1)^t\binom{j}{t} x_{4}^t h^{j-t}g \otimes \zc{i-t}{j-t},\qquad
 i\in\I_3, \quad j\in\I_{0,i-1}.
\end{align*}
\end{step}

\begin{proof}[Proof of Step 2]
We proceed inductively.
If $j=0$, then $ \delta(\zc{i}{0})=\delta(x_i)=g\otimes x_i =g\otimes \zc{i}{0}$.
Assume that~\eqref{eq:derivadak-zij} holds for $j$. Then
\begin{align*}
\delta ( \zc{i}{j+1}) &= (\pi _{\toba (W)\# \Bbbk \Gamma}\otimes \id)\Delta _{H} \big(x_{4} \zc{i}{j}-q_{21}(\zc{i}{j}+\zc{i-1}{j})x_{4}\big)
\\[1mm]
&= (x_{4}\otimes 1+h\otimes x_{4})\delta(\zc{i}{j})-q_{21}\delta(\zc{i}{j}+\zc{i-1}{j})(x_{4}\otimes 1+h\otimes x_{4})
\\
&=\sum_{t=0}^{j}(-1)^t\binom{j}{t} x_{4}^{t+1} h^{j-t}g \otimes \zc{i-t}{j-t}
+ x_{4}^t h^{j+1-t}g \otimes x_{4}\zc{i-t}{j-t}
\\
&\phantom{=} -q_{21} \sum_{t=0}^{j}(-1)^t\binom{j}{t} x_{4}^{t} h^{j-t}gx_{4} \otimes (\zc{i-t}{j-t}+\zc{i-t-1}{j-t})
\\
&\phantom{=} -q_{21} \sum_{t=0}^{j}(-1)^t\binom{j}{t}x_{4}^t h^{j+1-t}g \otimes (\zc{i-t}{j-t}+\zc{i-t-1}{j-t})x_{4}
\\
&=-\sum_{t=0}^{j}(-1)^t\binom{j}{t} (x_{4}^{t+1} h^{j-t}g \otimes \zc{i-t-1}{j-t}
- x_{4}^t h^{j+1-t}g \otimes \zc{i-t}{j-t+1})
\\
&= (-1)^{j+1} x_{4}^{j+1} g \otimes \zc{i-j-1}{0}
+\sum_{t=1}^{j}(-1)^t\binom{j+1}{t} x_{4}^t h^{j+1-t}g \otimes \zc{i-t}{j+1-t}
\\
&\phantom{=} +h^{j+1}g \otimes \zc{i}{j+1},
\end{align*}
and the inductive step follows.
\end{proof}

\begin{step}\label{lem:block-pt-1-rels}
If $\wtoba$
is an algebra and $x_i\in\wtoba$, $i\in\I_4$, satisfy \eqref{eq:block-pt-1-rels-0} and \eqref{eq:block-pt-1-rels-1},
then
\begin{align}\label{eq:block-pt-1-rels-others-1}
& x_{4j}x_j=-q_{21}(x_j+x_{j-1})x_{4j},
\\\label{eq:block-pt-1-rels-others-2}
& x_{43}x_2=-\vt-q_{21}(x_2+x_1)x_{43},
\\ \label{eq:block-pt-1-rels-others-prv1}
& x_{42}^2=0,
\\\label{eq:block-pt-1-rels-others-3}
& x_4\vt=q_{21}^2\vt x_4+q_{21}\ut,
\\\label{eq:block-pt-1-rels-others-4}
& \vt x_j=q_{21}(x_j+x_{j-1})\vt,
\\\label{eq:block-pt-1-rels-others-5}
& x_{443}x_3 = -q_{21}^2(x_3+2x_2+x_1)x_{443}-2q_{21}x_{43}^2-2q_{21}x_{42}x_{43},
\\\label{eq:block-pt-1-rels-others-6}
& x_{443}x_2 = -q_{21}^2(x_2+2x_1)x_{443}-2q_{21}\ut,
\\\label{eq:block-pt-1-rels-others-7}
& x_{42}\vt = q_{21}\vt x_{42},
\\\label{eq:block-pt-1-rels-others-8}
& \ut x_2 =q_{21}^2(x_2+2x_1)\ut,
\\\label{eq:block-pt-1-rels-others-9}
&\ut x_3 = \wt+q_{21}\vt x_{42}+q_{21}^2(x_3+2x_2+x_1)\ut,
\\\label{eq:block-pt-1-rels-others-10}
& \ut x_{42} = x_{42}\ut.
\end{align}
\end{step}

\begin{proof}[Proof of Step 3]
Argue recursively on the degree of the relations.
\end{proof}

Let $\toba$ be the algebra with the desired presentation.

\begin{step}
There is a surjective map $\pi\colon \toba \to \toba(\eny_{3,+}(q))$.
\end{step}

\begin{proof}[Proof of Step 4]
Arguing as in the proof of Theorem~\ref{thm:q=1,q22=-1,n=3},
we see that the relations \eqref{eq:block-pt-1-rels-0} and \eqref{eq:block-pt-1-rels-1} hold in $\toba(\eny_{3,+}(q))$.
Using \eqref{eq:block-pt-1-derivations} and \eqref{eq:block-pt-1-rels-1}, we compute
\begin{gather*}
\partial_1 (x_{443}x_{42}+q_{21}x_{42}x_{443}) = -x_{443}x_{4} -q_{12} x_{4}^2 x_{42} + q_{21}x_{42}x_{4}^2 + q_{12}x_{4}x_{443}=0,
\\
\partial_1 (x_{443}x_{43}+q_{21}(x_{43}+2x_{42})x_{443}) =
-q_{12} x_{4}^2x_{43} + 2q_{12}x_{4}x_{443}
 + q_{21}(x_{43}+2x_{42})x_{4}^2
\\ \qquad
{}= -2x_{443}x_{4} -q_{21}(x_{43}+2x_{42})x_4^2
+ 2x_{443}x_{4}+ q_{21}(x_{43}+2x_{42})x_{4}^2 =0,
\\
\partial_2 (x_{443}x_{43}+q_{21}(x_{43}+2x_{42})x_{443}) = -x_{443}x_{4}-q_{12}x_{4}x_{443}=0.
\end{gather*}
Since $\partial_i(x_{42})=\partial_i(x_{443})=0$ for $i\in\I_{2,4}$, we conclude that \eqref{eq:block-pt-1-rels-2} holds in $\toba(\eny_{3,+}(q))$.
Similarly, $\partial_i(x_{43})=\partial_i(x_{42})=\partial_i(x_{443})=0$ for $i\in\I_{3,4}$, and \eqref{eq:block-pt-1-rels-3} holds
in $\toba(\eny_{3,+}(q))$.
For the remaining relations, we first check that
\begin{align}
\label{eq:block-pt-1-rels-others-11}
& x_4 \ut=q_{21}^2\ut x_4,
\\ \label{eq:block-pt-1-rels-others-prv2}
& x_{443}^2=0,
\\ \label{eq:block-pt-1-rels-others-15}
& x_{443}\ut = 2q_{21}^2\ut x_{443}.
\end{align}
Indeed for \eqref{eq:block-pt-1-rels-others-prv2} we use \eqref{eq:block-pt-1-rels-others-1}, \eqref{eq:block-pt-1-rels-1}, \eqref{eq:block-pt-1-rels-2} and \eqref{eq:block-pt-1-rels-3}:
\begin{align*}
x_{443}^2 = x_{443}(x_4x_{43}-q_{21}(x_{43}+x_{42})x_4)
=-x_{443}^2-2(x_{4}x_{42}-q_{21}x_{42}x_4)x_{443}=-x_{443}^2,
\end{align*}
so $x_{443}^2 = 0$. Now, by \eqref{eq:block-pt-1-rels-others-6},
$2\ut=-q_{12}x_{443}x_2-q_{21}(x_2+2x_1)x_{443}$, hence
\begin{align*}
2x_{443}\ut &= -q_{12}x_{443}^2x_2-q_{21}x_{443}(x_2+2x_1)x_{443}=-q_{21}x_{443}(x_2+2x_1)x_{443}
\\
& = -q_{21}(-2q_{21}\ut -q_{21}^2(x_2+4x_1)x_{443})x_{443} =2q_{21}^2\ut x_{443},
\end{align*}
and \eqref{eq:block-pt-1-rels-others-15} follows.
For \eqref{eq:block-pt-1-rels-others-11}, we use \eqref{eq:block-pt-1-rels-1}, \eqref{eq:block-pt-1-rels-others-prv1} and \eqref{eq:block-pt-1-rels-2}:
\begin{align*}
x_4\ut &= (x_{443}+q_{21}(x_{43}+x_{42})x_4)x_{42}+q_{21}x_{42}(x_{443}+q_{21}(x_{43}+x_{42})x_4)
\\
&= q_{21}^2(x_{43}+x_{42})x_{42}x_4+q_{21}^2x_{42}(x_{43}+x_{42})x_4 = q_{21}^2 \ut x_4.
\end{align*}

Next we evaluate appropriately the skew-derivations:
\begin{gather*}
\begin{split}
&\partial_1(x_{43}\ut-\ut x_{43}) = x_{43}(q_{12}x_{443}+x_{42}x_4)+q_{12}(q_{12}x_{443}+x_{42}x_4)x_{43}
\\ &\hphantom{\partial_1(x_{43}\ut-\ut x_{43})}
{}= q_{12}x_{43}x_{443}+(u-x_{42}x_{43})x_4-q_{12}(x_{43}+2x_{42})x_{443}
\\&\hphantom{\partial_1(x_{43}\ut-\ut x_{43})=}
{} +q_{12}x_{42}(x_{443}+q_{21}(x_{43}+x_{42})x_4)=ux_4-q_{12}x_{42}x_{443},
\end{split}
\\
\partial_1 (x_4\wt-q_{21}^3\wt x_4)= 2(x_{443}+q_{21}(x_{43}+x_{42})x_4)x_{43}-2q_{21}^3q_{12}x_{43}^2x_4
\\ \hphantom{\partial_1 (x_4\wt-q_{21}^3\wt x_4)}
{}= -2q_{21}(x_{43}+2x_{42})x_{443}+2q_{21}(x_{43}+x_{42})x_{443}
+2q_{21}^2(x_{43}+x_{42})^2x_4
\\ \hphantom{\partial_1 (x_4\wt-q_{21}^3\wt x_4)=}
{}-2q_{21}^3q_{12}x_{43}^2x_4 = 2q_{21}^2 \ut x_4-2q_{21}x_{42}x_{443},
\\
\partial_1 (x_{42}\ut) =-q_{12}^2 x_4\ut+x_{42}(q_{12}x_{443}+x_{42}x_4)=-\ut x_4+q_{12}x_{42}x_{443},
\\
\partial_2 \big(x_4\wt-q_{21}^3\wt x_4\big) = 2x_4\ut-q_{21}^3q_{12}\ut x_4=0,
\\
\partial_2 (x_{43}\ut-\ut x_{43})= -q_{12}^2x_4\ut+\ut x_4 = 0 = \partial_2(x_{42}\ut).
\end{gather*}
Then \eqref{eq:block-pt-1-rels-4} and \eqref{eq:block-pt-1-rels-4prim} hold since $\partial_3$ and $\partial_4$ annihilate both sides. Now
\begin{gather*}
\partial_1 (x_{42}\wt+q_{21}\wt x_{42}) = q_{12}^2x_{4}\wt - q_{21}\wt x_{4} + 2x_{42}x_{43}^2 - 2q_{21}q_{12}x_{43}^2 x_{42}
\\ \hphantom{\partial_1 (x_{42}\wt+q_{21}\wt x_{42})}
{} = q_{12}^2\big(q_{21}^3\wt x_{4} + 2q_{21}^2\yt\big) - q_{21}\wt x_{4} + 2x_{42}x_{43}^2 - 2x_{43}(\ut-x_{42}x_{43})
\\ \hphantom{\partial_1 (x_{42}\wt+q_{21}\wt x_{42})}
 {}= 2\yt + 2x_{42}x_{43}^2 - 2(\yt+\ut x_{43}) + 2(\ut-x_{42}x_{43})x_{43}=0,
\\
\partial_2 (x_{42}\wt+q_{21}\wt x_{42}) = -2x_{42}\ut + 2q_{12}q_{21}\ut x_{42}=0,
\\
\partial_1 (x_{43}\wt+q_{21}\wt x_{43}) =2x_{43}^2-2q_{12}q_{21}x_{43}^2=0,
\\
\partial_2 (x_{43}\wt+q_{21}\wt x_{43}) = q_{12}^2 x_4\wt-2x_{43}\ut+2q_{21}q_{12}\ut x_{43}-q_{21}\wt x_4=0,
\end{gather*}
so \eqref{eq:block-pt-1-rels-5} and \eqref{eq:block-pt-1-rels-6} also hold.
\end{proof}

To prove that $\pi$ is bijective and that \eqref{eq:monomials-eny3+} is a basis we need the following.

\begin{step}\label{lem:block3-pt-1-relations-2}
The following relations hold in $\toba$:
\begin{alignat*}{2}
&\wt x_2 = -q_{21}^2(x_2+2x_1)\wt, &&\wt x_3 = -q_{21}^2(x_3+2x_2+x_1)\wt-q_{21}\vt^2,
\\
&x_{443}\vt = q_{21}^3\vt x_{443}-2q_{21}^2x_{42}\ut,\qquad &&\ut\vt = q_{21}^2\vt\ut,
\\
&\ut \wt = q_{21}^2 \wt \ut, &&\wt \vt = q_{21} \vt \wt,
\\
&x_{443} \wt = q_{21}^4 \wt x_{443}, &&\wt^2=0.
\end{alignat*}
\end{step}

\begin{proof}[Proof of Step 5]
Use Step~\ref{lem:block-pt-1-rels} and proceed recursively on the degree.
\end{proof}

We now finish the proof of Theorem~\ref{thm:q=1,q22=1,n=3}.
By the defining relations and those in Steps~\ref{lem:block-pt-1-rels} and~\ref{lem:block3-pt-1-relations-2}, we see that the monomials \eqref{eq:monomials-eny3+} generate $\toba$ and \emph{a fortiori} $\toba(\eny_{3,+}(q))$.
Next we prove that the monomials \eqref{eq:monomials-eny3+} are linearly independent in $\toba(\eny_{3,+}(q))$.
By direct computations,
\begin{gather*}
\partial_4 \big(x_1^{m_1} x_2^{m_2} x_3^{m_3} \vt^{p_1} \wt^{p_2} x_{43}^{p_3} \ut^{p_4} x_{42}^{p_5} x_{443}^{p_6} x_4^{p_7}\big)
 = p_7 x_1^{m_1} x_2^{m_2} x_3^{m_3} \vt^{p_1} \wt^{p_2} x_{43}^{p_3} \ut^{p_4} x_{42}^{p_5} x_{443}^{p_6} x_4^{p_7-1},
\\[.5mm]
\partial_4^2 \partial_1 \big(x_1^{m_1} x_2^{m_2} x_3^{m_3} \vt^{p_1} \wt^{p_2} x_{43}^{p_3} \ut^{p_4} x_{42}^{p_5} x_{443}^{p_6}\big)
= 2p_6 x_1^{m_1} x_2^{m_2} x_3^{m_3} \vt^{p_1} \wt^{p_2} x_{43}^{p_3} \ut^{p_4} x_{42}^{p_5},
\\[.5mm]
\partial_4 \partial_1 \big(x_1^{m_1} x_2^{m_2} x_3^{m_3} \vt^{p_1} \wt^{p_2} x_{43}^{p_3} \ut^{p_4} x_{42} \big) =
-x_1^{m_1} x_2^{m_2} x_3^{m_3} \vt^{p_1} \wt^{p_2} x_{43}^{p_3} \ut^{p_4},
\\[.5mm]
\partial_4^2 \partial_1^2 \big( x_1^{m_1} x_2^{m_2} x_3^{m_3} \vt^{p_1} \wt^{p_2} x_{43}^{p_3} \ut^{p_4} \big) = 4p_4 x_1^{m_1} x_2^{m_2} x_3^{m_3} \vt^{p_1} \wt^{p_2} x_{43}^{p_3} \ut^{p_4-1},
\\[.5mm]
(\partial_4 \partial_1)^{2k} \partial_4 \partial_2 \big( x_1^{m_1} x_2^{m_2} x_3^{m_3} \vt^{p_1} \wt^{p_2} x_{43}^{2k+1}\big) = (-1)^k k!(k+1)! x_1^{m_1} x_2^{m_2} x_3^{m_3} \vt^{p_1} \wt^{p_2},
\\[.5mm]
(\partial_4 \partial_1)^{2k-1} \partial_4 \partial_2 \big( x_1^{m_1} x_2^{m_2} x_3^{m_3} \vt^{p_1} \wt^{p_2} x_{43}^{2k}\big) =
(-1)^k (k!)^2 x_1^{m_1} x_2^{m_2} x_3^{m_3} \vt^{p_1} \wt^{p_2},
\\[.5mm]
\partial_4^2 \partial_1^2 \partial_2 \big( x_1^{m_1} x_2^{m_2} x_3^{m_3} \vt^{p_1} \wt^{p_2}\big) =
-8q_{12}p_2 x_1^{m_1} x_2^{m_2} x_3^{m_3} \vt^{p_1},
\\[.5mm]
\partial_4 \partial_1 \partial_2 \big( x_1^{m_1} x_2^{m_2} x_3^{m_3} \vt^{p_1}\big) =
p_1 x_1^{m_1} x_2^{m_2} x_3^{m_3}.
\end{gather*}
The claim is established by a recursive argument as for $\toba(\eny_{3,-}(q))$. Thus \eqref{eq:monomials-eny3+} is a basis of $\toba(\eny_{3,+}(q))$ and $\toba \simeq \toba(\eny_{3,+}(q))$.
\end{proof}

\section{Two points and a pale block of dimension 2}\label{sec:paleblock+2pts}
\subsection{Notations and the main result}
In this section, we assume Hypothesis~\ref{hypo:recargada} with $\theta = 3$,
$\dim V_1= 2$ and $\dim V_2 = \dim V_3 = 1$.
Let $g_i\in \Gamma$ be such that $V_i$ is homogeneous of degree $g_i$, for $i\in \I_3$.
Let $\big\{x_1, x_{\tresdos}\big\}$ be a basis of $V_1$ and let $\{x_i\}$ be a basis of $V_i$, $i = 2,3$.
Then
\begin{itemize}\itemsep=0pt
\item If $i\in \I_3$ and $j= 2,3$, then there exists $q_{ij} \in \Bbbk^{\times}$ such that $g_i \cdot x_j = q_{ij} x_j$.
\item Since $V \in \pale$ and $V_1$ is indecomposable, $g_1$ acts on $V_1$ by $q_{11} \id$, $q_{11} \in \Bbbk^{\times}$.
\item Since $V_1$ is indecomposable, there exists $j \in \{2,3\}$ such that $g_j$ acts on $V_1$ by a Jordan block. We assume that $j = 2$
and that $g_2$ acts in the basis $\big\{x_1, x_{\tresdos}\big\}$ by
$\left(\begin{smallmatrix} q_{21} & q_{21} \\ 0 & q_{21}\end{smallmatrix}\right)$, $q_{21} \in \Bbbk^{\times}$.
Set $a_2 := 1$.
\item Since the action of $g_3$ on $V_1$ commutes with that of $g_2$,
it is given in the basis $\big\{x_1, x_{\tresdos}\big\}$ by
$\left(\begin{smallmatrix} q_{31} & q_{31}a \\ 0 & q_{31}\end{smallmatrix}\right)$, for some $q_{31} \in \Bbbk^{\times}$, $a \in \Bbbk$.
Set $a_3 := a$.
\end{itemize}
Thus the braiding of $V$ is determined by the matrix $\bq = (q_{ij})_{i,j \in \I_{3}}$ with entries in $\Bbbk^{\times}$
and the scalar $a$.
Explicitly, the braiding is
\begin{align*}
&c(x_k\otimes x_\ell) = q_{k\ell} x_{\ell} \otimes x_k,
\\
&(c(x_i \otimes x_j))_{i,j\in \{1,\tresdos, k \}} =
\begin{pmatrix}
q_{11} x_1 \otimes x_1& q_{11} x_{\tresdos} \otimes x_1& q_{1k} x_k \otimes x_1
\\
q_{11} x_1 \otimes x_{\tresdos} & q_{11} x_{\tresdos} \otimes x_{\tresdos} & q_{1k} x_k \otimes x_{\tresdos}
\\
q_{k1} x_1 \otimes x_k & q_{k1}(x_{\tresdos} + a_kx_1) \otimes x_k & q_{kk} x_k \otimes x_k
\end{pmatrix}\!,
\end{align*}
$k,\ell = 2,3$.
We give a notation in some special cases. Fix $\bqs = (q_{12}, q_{13}, q_{23})$ such that
$q_{ij} \in \Bbbk^{\times}$ for all $i<j $ and $a \in \Bbbk$.
We have the braided vector spaces
\begin{itemize}\itemsep=0pt
\item $\eny_{\mu, \nu}\big(\bqs, a\big)$, $\mu, \nu \in \{\pm\}$, where $a \neq 0$ and $\bq$ is determined by
\begin{align*}
q_{11} =-1,\qquad q_{ij}q_{ji} = 1, \quad i<j \in \I_3,\qquad q_{22} =\mu 1,\qquad q_{33} =\nu 1.
\end{align*}

\item$\enyIIb{\infty}\big(\bqs\big)$, where $ a= 0$ and $\bq$ is determined by
\begin{align}\label{eq:bq-IIa-b}
q_{11}= q_{33}=-1, \qquad
q_{3 1} = -q_{13}^{-1},\qquad
q_{22} = 1, \qquad q_{21} = q_{12}^{-1},\qquad
q_{3 2} = q_{23}^{-1}.
\end{align}

\end{itemize}
The diagrams of $\eny_{\mu, \nu}\big(\bqs, a\big)$ and $\enyIIb{\infty}\big(\bqs\big)$ are respectively
$$
\xymatrix{
\overset{\nu}{\underset{3}{\bullet}} \ar@{.}^{a}[r]
&\underset{1}{\begin{tikzpicture}
\draw[dashed] (0,0) rectangle (0.36,0.36);
\end{tikzpicture}} \ar @{.}[r]
& \overset{\mu}{\underset{2}{\bullet}}\,,}\qquad
\xymatrix{
\overset{-1}{\underset{3}{\bullet}} \ar@{-}^{0}[r]
&\underset{1}{\begin{tikzpicture}
\draw[dashed] (0,0) rectangle (0.36,0.36);
\end{tikzpicture}} \ar @{.}[r]
& \overset{1}{\underset{2}{\bullet}}\,.}
$$

In this section, we prove:

\begin{Theorem} \label{th:2-block-points}
The Nichols algebra
$\toba (V)$ has finite $\GK$ if and only if there exists $\bqs = (q_{12}, q_{13}, q_{23}) \in (\Bbbk^{\times})^3$ and $a\in \Bbbk^{\times}$ such that
$V\simeq \eny_{\mu, \nu}\big(\bqs, a\big)$ or $\enyIIb{\infty}\big(\bqs\big)$.
\end{Theorem}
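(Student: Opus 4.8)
The plan is to separate sufficiency from necessity, with essentially all the work in the necessity direction. For sufficiency I would simply defer: the braided vector spaces $\eny_{\mu,\nu}\big(\bqs,a\big)$ and $\enyIIb{\infty}\big(\bqs\big)$ are realized with finite $\GK$ (in fact $\GK\in\{2,4\}$) in Theorems~\ref{th:endymion-rank4-1} and~\ref{th:endymion-2pts-case-a}, where explicit PBW-bases are produced by the same skew-derivation method used above for $\toba(\eny_{3,\pm}(q))$. So it remains to show that no other braiding of the form fixed in this section yields finite $\GK$.

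For necessity, assume $\GK\toba(V)<\infty$ and first harvest the constraints imposed by rank-$3$ categorical subspaces. The subspace $\langle x_1, x_{\tresdos}, x_2\rangle$ is a pale block plus a point with $a_2=1\neq 0$, so Theorem~\ref{th:paleblock-point-resumen} forces $q_{11}=-1$ together with the dichotomy $(i)$ $\widetilde{q}_{12}=1$ and $q_{22}\in\{\pm1\}$, or $(ii)$ $\widetilde{q}_{12}=-1=q_{22}$. I would then split on the scalar $a$. If $a\neq 0$, after rescaling within $\langle x_1, x_{\tresdos}, x_3\rangle$ this subspace is again a pale block plus a point with nonzero shift, and Theorem~\ref{th:paleblock-point-resumen} constrains $(\widetilde{q}_{13},q_{33})$ by the same dichotomy; if $a=0$, the point $x_3$ is attached diagonally and I would instead read the constraints off the rank-$2$ diagonal subspaces $\langle x_1, x_3\rangle$ and $\langle x_{\tresdos}, x_3\rangle$ via Theorem~\ref{thm:conj-AAH} and Heckenberger's list. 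In every case the two points $\langle x_2, x_3\rangle$ form a braided subspace of diagonal type, so finiteness of its root system constrains $(q_{22}, q_{33},\widetilde{q}_{23})$ as well. Connectedness from Hypothesis~\ref{hypo:recargada} then removes the degenerate possibilities in which everything decouples.

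The decisive and hardest part is to eliminate the configurations \emph{absent} from Table~\ref{tab:pale-rk4}: principally the $\eny_\star$-type attachment $\widetilde{q}_{12}=-1$ of the block generator $x_2$, the solid attachment of $x_3$ with $a\neq 0$, and any nontrivial interaction $\widetilde{q}_{23}\neq 1$ between the two points. For this I would invoke the splitting technique of Section~\ref{subsec:splitting-technique}, passing to $\K^1=\adc(\toba(W))(U)$ for a suitable choice of $W$ among the points, and compute the braiding on the derived generators $\adc(x_j)(x_i)$ exactly as in the Proposition above, where the $z_i$ were shown to span a copy of $\Vc(-1,3)$. The expectation is that each forbidden configuration produces, among these derived generators, either a block $\Vc(-1,\ell)$ with $\ell\geq 3$ — excluded by Theorem~\ref{th:blocks} — or a diagonal braided subspace whose Dynkin diagram has infinite root system — excluded by Theorem~\ref{thm:conj-AAH}; this is precisely the role of the general reductions such as Lemma~\ref{lemma:caseII-general}. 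The main obstacle is the bookkeeping: the adjoint action of the pale block on two points generates many cross-terms, and the crux is to single out, inside this span, the precise subspace of block or infinite-diagonal type. Once all forbidden cases are ruled out, the surviving parameters are exactly those defining $\eny_{\mu,\nu}\big(\bqs,a\big)$ when $a\neq 0$ and $\enyIIb{\infty}\big(\bqs\big)$ when $a=0$, which together with the sufficiency statements yields the theorem.
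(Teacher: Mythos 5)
Your skeleton matches the paper's: sufficiency is delegated to Theorems~\ref{th:endymion-rank4-1} and~\ref{th:endymion-2pts-case-a}, and necessity starts exactly as in the paper by applying Theorem~\ref{th:paleblock-point-resumen} to $V_1\oplus V_2$ (and to $V_1\oplus V_3$ when $a\neq0$), then runs a case analysis on $(\widetilde{q}_{12},\widetilde{q}_{13},\widetilde{q}_{23})$. Your proposed elimination mechanism — locate inside $\K^1=\adc(\toba(W))(U)$ either a block $\Vc(-1,\ell)$, $\ell\ge3$, or a diagonal subspace with infinite root system — is indeed what the paper does in several places (Proposition~\ref{prop:pale-2pts-Ib}, the $q_{22}=-1$ branch of Lemma~\ref{lemma:caseII-general}, Proposition~\ref{prop:caseIII-1}). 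But there are two configurations where this toolkit, as you describe it, does not suffice, and this is precisely where the real work lies.

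First, several forbidden cases lead to a $\gr V$ of \emph{finite} Cartan type, so neither Theorem~\ref{thm:conj-AAH} nor Heckenberger's list rules them out: in Lemma~\ref{lemma:caseII-general} the subcase $q_{22}=q_{33}=\widetilde{q}_{13}=\widetilde{q}_{23}=-1$ gives $\gr V$ of type $D_4$. The paper kills it by passing to the filtration $0\subset\langle x_1,x_2,x_3\rangle\subset V$, observing that $\gr\toba(V)$ is a finite-GK \emph{pre-Nichols} algebra of $\gr V$, invoking \cite[Lemma~5.8(b)]{ASa-preNichols} to force $\overline{u}=y_{3\tresdos2}=0$, and then deriving a contradiction by an explicit skew-derivation computation showing $u\neq0$ in $\toba(V)$. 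Your sketch never mentions the associated-graded/pre-Nichols mechanism (also the engine of Lemma~\ref{lemma:caseIII-general} and Proposition~\ref{prop:caseIV}, where primitive elements such as $\overline{z}$ or the class of $z_1^2$ enlarge the infinitesimal braiding to a rank-$4$ or rank-$5$ diagonal diagram); your substitute for $a=0$ — reading constraints off the rank-$2$ subspaces $\langle x_1,x_3\rangle$ and $\langle x_{\tresdos},x_3\rangle$ — yields essentially nothing, since any rank-$2$ diagonal braiding with vertices labelled $-1$ and arbitrary $\widetilde{q}_{13}$ has finite root system. Second, the hardest exclusion, Proposition~\ref{prop:endymion-two-pts-case-b} (case~\ref{case:II} with $a\neq0$), is not handled by exhibiting a bad subspace of $\K^1$ directly: the paper must apply the braided monoidal isomorphism $(\Omega,\omega)$ of \cite[Remark~12.3.8]{HS-book} to dualize $V_3$, construct an auxiliary module $W=W_1\oplus V_2\oplus V_3^*$ in which $W_1$ becomes an honest $-1$-block, quote \cite[Lemma~5.4.11]{AAH-triang} for $\GK\toba(W)=\infty$, and transport this back via an explicit isomorphism $Z\simeq\Ku^1$. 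Without these two ideas the "decisive and hardest part" of your argument remains an unproved expectation.
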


Here is the scheme of the proof of Theorem~\ref{th:2-block-points}.\
We show in Theorems~\ref{th:endymion-rank4-1} and~\ref{th:endymion-2pts-case-a}
that $\toba \big(\eny_{\mu, \nu}\big(\bqs, a\big)\big)$ and
$\toba\big(\enyIIb{\infty}\big(\bqs\big)\big)$ have finite $\GK$.

Assume that $\GK \toba(V) < \infty$.
By Theorem~\ref{th:paleblock-point-resumen} applied to $V_1 \oplus V_2$, $q_{11} = -1$ and
either $\widetilde{q}_{12} = 1$ and $q_{22} = \pm 1$; or $q_{22} = -1 = \widetilde{q}_{12}$.
If $a\neq 0$, then by Theorem~\ref{th:paleblock-point-resumen} applied to $V_1 \oplus V_3$, either
$\widetilde{q}_{13} = 1$ and $q_{33} = \pm 1$; or $q_{33} = -1 = \widetilde{q}_{13}$;
but $\widetilde{q}_{13}$ could be $\neq \pm 1$ if $a = 0$.
We consider four cases:
\begin{enumerate}[label=(\rm{\Roman*)}]
\item\label{case:I} $\widetilde{q}_{12}= \widetilde{q}_{13}=1$;

\item\label{case:II} $\widetilde{q}_{12}=1$, $\widetilde{q}_{13} \neq 1$;

\item\label{case:III} $\widetilde{q}_{12}=-1$, $\widetilde{q}_{13}=1$;

\item\label{case:IV} $\widetilde{q}_{12}=-1$, $\widetilde{q}_{13} \neq 1$.
\end{enumerate}

In case~\ref{case:I}, we distinguish two subcases:{\samepage
\begin{enumerate}[label=$(\alph*)$]\itemsep=0pt
\item\label{case:Ia} $\widetilde{q}_{23}=1$, dealt with by Theorem~\ref{th:endymion-rank4-1},

\item\label{case:Ib} $\widetilde{q}_{23} \neq 1$; here $\GK \toba(V) = \infty$ by Proposition~\ref{prop:pale-2pts-Ib}.
\end{enumerate}}

In case~\ref{case:II}, by Lemma~\ref{lemma:caseII-general} we are reduced to
$q_{22} = \widetilde{q}_{23} = 1$, $q_{33} = \widetilde{q}_{13} = -1$ and
either $a = 0$ or
$a \neq 0$, dealt with by Theorem~\ref{th:endymion-2pts-case-a} and Proposition~\ref{prop:endymion-two-pts-case-b},
respectively.

Finally, in cases~\ref{case:III} and~\ref{case:IV}, $\GK \toba(V) = \infty$, or $V$ belongs to case~\ref{case:II} after reindexing,
by Lemma~\ref{lemma:caseIII-general} and
Propositions~\ref{prop:caseIII-1} and~\ref{prop:caseIV}.

\subsection[Case~(I)]{Case~\ref{case:I}}
In this subsection, we assume that $\widetilde{q}_{12}= \widetilde{q}_{13}=1$.

\subsubsection[Case (I)(a): widetilde\{q\}\_\{23\} = 1]{Case~\ref{case:I}\ref{case:Ia}: $\boldsymbol{\widetilde{q}_{23} = 1}$}

Here $a\neq 0$ because of Hypothesis~\ref{hyp:pale}\ref{item:hyp-connected}, or the vertex 3 would be disconnected.
Thus $q_{22} = \pm 1$, $q_{33} = \pm 1$. All four posibilities give rise to Nichols algebras with finite $\GK$.
For convenience we introduce
\begin{align*}
z = x_{\tresdos}x_2 - q _{12} x_2x_{\tresdos},\qquad
w = x_{\tresdos}x_3 - q _{13} x_3x_{\tresdos}.
\end{align*}

\begin{Theorem} \label{th:endymion-rank4-1}
The algebras $\toba\big(\eny_{\mu, \nu}\big(\bqs, a\big)\big)$ are generated by $x_1$, $x_{\tresdos}$, $x_2$, $x_3$ with defining relations and PBW-basis as follows:
\begin{enumerate}[label=$(\alph*)$]\itemsep=0pt
\item The relations of $\toba\big(\eny_{+, +}\big(\bqs, a\big)\big)$ are
\eqref{eq:endymion-1}, \eqref{eq:endymion-1b}, \eqref{eq:endymion-2},
\begin{alignat}{2}\label{eq:endymion-31}
&x_1x_3 = q_{13}x_3x_1,&&
\\ \label{eq:endymion-31b}
&w^2 =0,&&x_3w = q_{31} wx_3,
\\ \label{eq:endymion-41}
&x_2 x_3 \overset{\blacklozenge}{=} q_{23} x_3x_2,\qquad &&
 x_3z \overset{\diamond}{=} q_{32}q_{31} z x_3.
\end{alignat}
A PBW-basis is formed by the monomials \eqref{eq:monomials-eny++}.

\item The relations of $\toba\big(\eny_{+, -}\big(\bqs, a\big)\big)$ are
\eqref{eq:endymion-1}, \eqref{eq:endymion-1b}, \eqref{eq:endymion-2}, \eqref{eq:endymion-31}, \eqref{eq:endymion-41} and
\begin{align}
\label{eq:endymion-32}
x_3 ^2=0, \qquad x_3 w = -q_{31} w x_3.
\end{align}
A PBW-basis is formed by the monomials
\begin{align*}
x_1^{m_1} x_{\tresdos}^{m_{\tresdos}} x_2^{m_2}z^{n_{1}}x_{3}^{m_{3}}w^{p_{3}}\colon\
m_1, m_{\tresdos}, m_2, p_{3} \in \{0,1\}, \quad n_1, m_3 \in\N_0.
\end{align*}

\item The relations of $\toba(\eny_{-, +}(\bqs, a))$ are
\eqref{eq:endymion-1}, \eqref{eq:endymion-1b}, \eqref{eq:endymion-2b}, \eqref{eq:endymion-31}, \eqref{eq:endymion-31b} and \eqref{eq:endymion-41}.
A PBW-basis is formed by the monomials
\begin{align*}
x_1^{m_1} x_{\tresdos}^{m_{\tresdos}} x_2^{m_2}z^{n_{1}}x_{3}^{m_{3}}w^{p_{3}}\colon\
m_1, m_{\tresdos}, n_1, m_3\in \{0,1\}, \quad m_2, p_{3} \in\N_0.
\end{align*}

\item The relations of $\toba\big(\eny_{-, -}\big(\bqs, a\big)\big)$ are \eqref{eq:endymion-1}, \eqref{eq:endymion-1b}, \eqref{eq:endymion-2b}, \eqref{eq:endymion-31},
\eqref{eq:endymion-41} and \eqref{eq:endymion-32}. A PBW-basis is formed by the monomials
\begin{align*}
x_1^{m_1} x_{\tresdos}^{m_{\tresdos}} x_2^{m_2}z^{n_{1}}x_{3}^{m_{3}}w^{p_{3}}\colon\
m_1, m_{\tresdos}, m_2, m_3\in \{0,1\}, \quad n_1, p_{3} \in\N_0.
\end{align*}
\end{enumerate}

Hence
$\GK \toba\big(\eny_{\mu, \nu}\big(\bqs, a\big)\big) = 2$ for all $\mu, \nu \in \{\pm\}$.
\end{Theorem}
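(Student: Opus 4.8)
The plan is to mirror the strategy already used for $\toba(\eny_{3,\pm}(q))$ in Theorems~\ref{thm:q=1,q22=-1,n=3} and~\ref{thm:q=1,q22=1,n=3}: for each sign choice $\mu,\nu\in\{\pm\}$ I let $\toba$ denote the algebra presented by the stated generators and relations, exhibit a surjection $\pi\colon\toba\to\toba(\eny_{\mu,\nu}(\bqs,a))$, show that the proposed monomials span $\toba$, and finally verify that their images are linearly independent in $\toba(\eny_{\mu,\nu}(\bqs,a))$ using the skew-derivations $\partial_i$ attached to the basis $\{x_1,x_{\tresdos},x_2,x_3\}$. Independence simultaneously forces $\pi$ to be an isomorphism and proves the PBW-claim, after which $\GK$ is read off from the number of unbounded exponents.

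For the surjection, most relations are inherited from rank-$3$ subconfigurations. Since $\widetilde{q}_{12}=\widetilde{q}_{13}=1$ and $q_{11}=-1$, the braided subspaces $\langle x_1,x_{\tresdos},x_2\rangle$ and $\langle x_1,x_{\tresdos},x_3\rangle$ are isomorphic to $\eny_{\mu}(q)$ and $\eny_{\nu}(q)$; hence by Proposition~\ref{prop:paleblocks} the block relations~\eqref{eq:endymion-1}, the $q$-commuting relations~\eqref{eq:endymion-1b} and~\eqref{eq:endymion-31}, and the point-block relations~\eqref{eq:endymion-2}, \eqref{eq:endymion-31b} (resp.~\eqref{eq:endymion-2b}, \eqref{eq:endymion-32}) all hold in $\toba(\eny_{\mu,\nu}(\bqs,a))$. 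The relation $x_2x_3=q_{23}x_3x_2$ (marked $\blacklozenge$) holds because $\langle x_2,x_3\rangle$ is of diagonal type with $\widetilde{q}_{23}=1$. The one genuinely new relation is the coupling $x_3z=q_{32}q_{31}zx_3$ (marked $\diamond$) between $z=x_{\tresdos}x_2-q_{12}x_2x_{\tresdos}$ and the point $x_3$; I verify it directly by checking that every $\partial_i$ kills $x_3z-q_{32}q_{31}zx_3$, using that $\partial_i(z)=0$ for the indices other than those of $x_1,x_2$ and reducing the two remaining cases to the relations already established.

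To see that the monomials span $\toba$, I first record, recursively on the degree, a list of straightening rules—the rank-$4$ analogues of~\eqref{eq:endymion3-1-other-rels-1}—that move $x_2$, $x_3$, $z$, $w$ past one another and past $x_1,x_{\tresdos}$ into the prescribed order; these include the commutations of $z$ with $x_3$ and $w$, of $w$ with $x_2$ and $z$, and of $x_3$ with $z$. For linear independence in $\toba(\eny_{\mu,\nu}(\bqs,a))$ I take a hypothetical nontrivial homogeneous relation $\mathtt S$ of minimal degree among the proposed monomials and apply suitably chosen compositions of the $\partial_i$, exactly as in the two rank-$3$ proofs, to peel off the exponents one at a time—removing first those of the unbounded $\eny$-type variables and then the bounded ones—until only a combination of the block monomials $x_1^{m_1}x_{\tresdos}^{m_{\tresdos}}$ with $m_1,m_{\tresdos}\in\{0,1\}$ remains, and these are linearly independent. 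The resulting contradiction establishes the PBW-basis in each case; since every such basis has exactly two exponents ranging over $\N$ while the remaining ones are bounded, counting monomials of bounded total degree yields polynomial growth of degree $2$, i.e.\ $\GK\toba(\eny_{\mu,\nu}(\bqs,a))=2$.

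The main obstacle I anticipate is the bookkeeping around the coupling of the two Endymion pieces: unlike a tensor decomposition, $z$ and $w$ belong to different rank-$3$ blocks and do not commute freely with the opposite point, so the spanning argument demands a complete and internally consistent set of straightening relations, and the order in which the $\partial_i$ are applied in the independence step must be tailored to each sign pattern so that the unbounded exponents are stripped before the bounded ones. Getting these recursive derivation identities to close up uniformly across $\eny_{+,+}$, $\eny_{+,-}$, $\eny_{-,+}$ and $\eny_{-,-}$ is where the real work lies; once they are in place, the four assertions follow by the same mechanism.
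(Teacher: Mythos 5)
Your proposal is correct in outline and its first three stages (surjection via the two rank-$3$ subconfigurations $\langle x_1,x_{\tresdos},x_2\rangle\simeq\eny_{\mu}(q_{12})$ and $\langle x_1,x_{\tresdos},x_3\rangle\simeq\eny_{\nu}(q_{13})$, the quantum plane $\langle x_2,x_3\rangle$, direct verification of the $\diamond$ relation by derivations, and spanning via straightening rules analogous to \eqref{eq:endymion3-1-other-rels-1}) coincide with what the paper does. Where you diverge is the linear-independence step: you propose the derivation-peeling argument of Theorems~\ref{thm:q=1,q22=-1,n=3} and~\ref{thm:q=1,q22=1,n=3} (take a minimal-degree relation $\mathtt S$ and strip exponents by iterated $\partial_i$), whereas the paper instead invokes the splitting technique of Section~\ref{subsec:splitting-technique}: the decomposition $V=(V_1\oplus V_2)\oplus V_3$ gives a linear isomorphism $\toba(\eny_{\mu,\nu}(\bqs,a))\simeq\toba(\eny_{\mu}(q_{12}))\otimes\K$ with $x_3,w\in\K$, so independence of the full monomial set reduces to the already-known independence of $x_1^{m_1}x_{\tresdos}^{m_{\tresdos}}x_2^{m_2}z^{n_1}$ inside $\toba(\eny_{\mu}(q_{12}))$ and of $x_3^{m_3}w^{p_3}$ inside $\K$, both read off from Proposition~\ref{prop:paleblocks} and the embedding of $\toba(\eny_{\nu}(q_{13}))$. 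The paper's route buys a much shorter independence argument that needs no new derivation formulas for products of $z$'s and $w$'s and transfers uniformly across the four sign patterns; your route is self-contained and consistent with the method used elsewhere in the paper, but it carries exactly the bookkeeping cost you flag (computing $\partial_i$ on mixed monomials in $z$ and $w$ and tailoring the order of peeling to each case), all of which the tensor factorization lets one avoid. Both arguments are valid; the GK-dimension conclusion is obtained identically in either case from the PBW-basis having two unbounded exponents.
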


\begin{proof} We prove the claim for $\toba(\eny_{+, +}) := \toba\big(\eny_{+, +}\big(\bqs, a\big)\big)$; for the other algebras is similar.
The relations \eqref{eq:endymion-1}, \eqref{eq:endymion-1b}, \eqref{eq:endymion-2} hold
in $\toba\big(\eny_{+, +}\big(\bqs, a\big)\big)$ because
the braided subspace $\langle x_1, x_{\frac 32}, x_2 \rangle \simeq \eny_+(q_{12})$,
while \eqref{eq:endymion-31}, \eqref{eq:endymion-31b} hold because
$\langle x_1, x_{\frac 32}, x_3 \rangle \simeq \eny_+(q_{13})$ and in both cases Proposition~\ref{prop:paleblocks} applies.
The relation \eqref{eq:endymion-41}$ \blacklozenge$ holds because $\langle x_1, x_3 \rangle$ generates a quantum plane
and $\diamond$ is verified using derivations. Thus we have a surjective map $\toba \to \toba(\eny_{+, +})$,
where $\toba$ is the algebra with the claimed presentation.

From the defining relations, we deduce
\begin{equation}\label{eq:endymion-61}
\begin{split}
&x_1 z \overset{\ast}{=}-q_{12} z x_1, &&x_\tresdos z \overset{\ast}{=} -q_{12} z x_\tresdos,
&&x_1 w \overset{\ast}{=} -q_{13} w x_1,
\\
&x_\tresdos w \overset{\ast}{=} -q_{13} w x_\tresdos,\qquad
&&wz \overset{\circ}{=} -q_{32}q_{31}q_{12} zw,\qquad
&&x_2 w \overset{\bullet}{=} q_{23} q_{21} w x_2.
\end{split}
\end{equation}
Indeed the verification of $\ast$ is direct and $\circ$ follows from them and \eqref{eq:endymion-31b}.
In turn $\bullet$ follows from~\eqref{eq:endymion-41}$\diamond$. Using the defining relations, the definitions of $z$ and $w$
and the relations \eqref{eq:endymion-61}, we see that the monomials
\begin{align}\label{eq:monomials-eny++}
x_1^{m_1} x_{\tresdos}^{m_{\tresdos}} x_2^{m_2}z^{n_{1}}x_{3}^{m_{3}}w^{p_{3}}\colon\
m_1, m_{\tresdos}, n_1, p_{3} \in \{0,1\}, \quad m_2, m_3 &\in\N_0
\end{align}
generate $\toba$ and \emph{a fortiori} $\toba(\eny_{+, +})$. The monomials $x_1^{m_1} x_{\tresdos}^{m_{\tresdos}} x_2^{m_2}z^{n_{1}}$,
respectively $x_{3}^{m_{3}}w^{p_{3}}$,
are linearly independent in $\toba(\eny_{+, +})$ because
\begin{align*}
&\toba(\eny_+(q_{12})) \simeq \Bbbk \langle x_1, x_{\frac 32}, x_2 \rangle \hookrightarrow \toba(\eny_{+, +})
\hookleftarrow \Bbbk \langle x_1, x_{\frac 32}, x_3 \rangle
\simeq \toba(\eny_+(q_{13})).
\end{align*}
The decomposition $V = (V_1 \oplus V_2) \oplus V_3$ induces a linear isomorphism
$\toba(\eny_{+, +}) \simeq \toba(\eny_{+}(q_{12})) \otimes \K$
and $x_3, w \in \K = \toba\left(\adc(\toba(\eny_{+}(q_{12})))(V_3) \right)$, hence we conclude that the monomials \eqref{eq:monomials-eny++} form a basis
of $\toba\big(\eny_{+, +}\big(\bqs, a\big)\big) $.
Finally, the ordered monomials \eqref{eq:monomials-eny++} define an ascending algebra filtration whose associated graded algebra is a
(truncated) quantum polynomial algebra. Hence $\GK \toba\big(\eny_{+, +}\big(\bqs, a\big)\big) = 2 $.
\end{proof}

\subsubsection[Case (I)(b): widetilde\{q\}\_\{23\} neq 1]{Case~\ref{case:I}\ref{case:Ib}: $\boldsymbol{\widetilde{q}_{23} \neq 1}$}
Recall that $\widetilde{q}_{12}= \widetilde{q}_{13}=1$.

\begin{Proposition}\label{prop:pale-2pts-Ib} $\GK \toba(V) = \infty$.
\end{Proposition}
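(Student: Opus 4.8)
The plan is to locate, inside a Nichols subalgebra of $\toba(V)$, a braided subspace of diagonal type whose generalized Dynkin diagram is connected and carries a vertex labelled $1$. Such a braiding does not occur in Heckenberger's classification \cite{H-classif-RS} of diagonal types with finite root system: if $q_{ii}=1$ and the vertex $i$ has a neighbour, the corresponding Cartan integer cannot be defined, so the root system is infinite. By Theorem~\ref{thm:conj-AAH} its Nichols algebra then has infinite $\GK$, and since a subalgebra has $\GK$ at most that of the ambient algebra, this forces $\GK\toba(V)=\infty$. Throughout, $q_{11}=-1$ and, by Theorem~\ref{th:paleblock-point-resumen} applied to $V_1\oplus V_2$ (here $\widetilde{q}_{12}=1$), $q_{22}\in\{1,-1\}$; the whole point is to exhibit in both cases a vertex of self-braiding $1$ joined to $x_3$ through the edge $\widetilde{q}_{23}\neq1$.

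If $q_{22}=1$ there is nothing to split: the points $\langle x_2,x_3\rangle$ already form a diagonal braided subspace of $V$ with $q_{x_2x_2}=q_{22}=1$ and $\widetilde{q}_{x_2,x_3}=\widetilde{q}_{23}\neq1$, so its diagram is a single edge with a $1$ at one end, and the principle above applies at once.

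The case $q_{22}=-1$ is the delicate one, since now $\langle x_2,x_3\rangle$ may well be of finite type. I would instead invoke the splitting of Section~\ref{subsec:splitting-technique} for $V=U\oplus V_1$ with $U=V_2\oplus V_3$, so that $\K=\toba(\K^1)$ is a Nichols subalgebra of $\toba(V)$, where $\K^1=\adc(\toba(V_1))(U)$. The relevant element is $z=\adc(x_{\tresdos})(x_2)=x_{\tresdos}x_2-q_{12}x_2x_{\tresdos}\in\K^1$; it is nonzero, being the Endymion generator $x_{\tresdos2}$ of $V_1\oplus V_2\simeq\eny_-(q_{12})$. A short computation of the $\Gamma$-action gives $g_1\cdot z=-q_{12}z$ and $g_2\cdot z=q_{21}q_{22}z$, so $z$ has self-braiding $-\widetilde{q}_{12}q_{22}=-q_{22}=1$; and with $x_3\in U\subseteq\K^1$ one finds $\widetilde{q}_{z,x_3}=\widetilde{q}_{13}\widetilde{q}_{23}=\widetilde{q}_{23}\neq1$.

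The main obstacle is to be sure that $\langle z,x_3\rangle$ really is a \emph{diagonal} braided subspace of $\K^1$, and not a block. Indeed, computing the coaction in $\toba(V_1)\#\Bbbk\Gamma$ yields $\delta(z)=g_1g_2\otimes z-x_1g_2\otimes x_2$, whose second summand is not group-like and could a priori couple $z$ to $x_2$ in Jordan fashion. The resolution is that on $\K^1$ the factor $x_1$ acts by $\adc(x_1)$, and $\widetilde{q}_{12}=\widetilde{q}_{13}=1$ give $\adc(x_1)(x_2)=\adc(x_1)(x_3)=\adc(x_1)(z)=0$; hence $x_1g_2$ annihilates $x_2,x_3,z$ and the non-group-like part of $\delta(z)$ contributes nothing to the braiding restricted to $\langle z,x_3\rangle$. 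Therefore this subspace is diagonal, connected, with a vertex of label $1$, so $\toba(\langle z,x_3\rangle)\hookrightarrow\K\hookrightarrow\toba(V)$ has infinite $\GK$, completing the proof.
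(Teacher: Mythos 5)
Your proof is correct and takes essentially the same route as the paper: both hinge on the element $z=\adc(x_{\tresdos})(x_2)\in\K^1$, whose self-braiding $-q_{22}$ complements $q_{22}$ so that a vertex labelled $1$ is joined to $x_3$ by the edge $\widetilde{q}_{23}\neq 1$. The paper treats both signs of $q_{22}$ at once via the three-dimensional diagonal subspace $\langle x_2,x_3,z\rangle$ and invokes \cite[Lemma~2.3.7]{AAH-triang}, whereas you split into the cases $q_{22}=\pm1$ and conclude via Theorem~\ref{thm:conj-AAH}; the substance is identical.
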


\begin{proof}
We check that $x_2, x_3, z\in \K^1 = \adc(\toba(V_1))(\langle x_2, x_3 \rangle)$ are linearly independent using skew-derivations.\
We show that they span a braided subspace $W$ of diagonal type. First, we have
\begin{gather*}
\Delta_{H}(x_j) = x_j \otimes 1 + g_j \otimes x_j ,\qquad j =2,3, \\
\Delta_{H}(z) = z \otimes 1 - x_1 g_2 \otimes x_2 + g_1g_2 \otimes z.
\end{gather*}
Therefore,
$\delta(x_j) = g_j \otimes x_j$, $j =2,3$,
$\delta(z) = -x_1 g_2 \otimes x_2 + g_1g_2 \otimes z$.
Thus
\begin{gather*}
c(x_j\otimes y) = g_j \cdot y \otimes x_j,\qquad j =2,3, \\
c(z\otimes y) = -\adc(x_1)(g_2 \cdot y) \otimes x_2 + g_1g_2 \cdot y \otimes z
\end{gather*}
for every $y \in \K^1$.
Hence $W$ is a braided vector subspace of $\K^1$ with braiding given in the basis $\{y_1 = x_2, y_2= x_3, y_3 = z \}$ by
\begin{align*}
(c(y_i \otimes y_j))_{i,j\in \I_{3}}=
\begin{pmatrix}
q_{22} x_2 \otimes x_2 & q_{23} x_3 \otimes x_2 & q_{21}q_{22} z \otimes x_2 \\
q_{32} x_2 \otimes x_3 & q_{33} x_3 \otimes x_3 & q_{31}q_{32} z \otimes x_2 \\
q_{12}q_{22} x_2 \otimes z & q_{13}q_{23} x_3 \otimes z & -q_{22} z \otimes z,
\end{pmatrix}
\end{align*}
which is of diagonal type with diagram
$\xymatrix{
\overset{q_{22}}{\circ} \ar @{-}[r]^{\widetilde{q}_{23}} & \overset{q_{33}}{\circ} \ar @{-}[r]^{\widetilde{q}_{23}} &
\overset{-q_{22}}{\circ}}$.
Since $\widetilde{q}_{23} \neq 1$ and $q_{22}\in \{\pm 1\}$, $\GK \toba(W) = \infty$ by \cite[Lemma~2.3.7]{AAH-triang}.
\end{proof}

\subsection[Case~(II)]{Case~\ref{case:II}} In this subsection, we assume that $\widetilde{q}_{12}=1$, $\widetilde{q}_{13} \neq 1$.
We set $v = x_1x_3 - q_{13}x_3x_1$ which is $\neq 0$ by hypothesis.

\begin{Lemma}\label{lemma:caseII-general}
If $\GK \toba(V)$ is finite, then $q_{22} = \widetilde{q}_{23} = 1$, $q_{33} = \widetilde{q}_{13}=-1$.
%
%
%
\end{Lemma}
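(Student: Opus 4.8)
The plan is to combine the rank-3 results on the subconfigurations $V_1\oplus V_2$ and $V_1\oplus V_3$ with the splitting technique and Theorem~\ref{thm:conj-AAH}. I keep the data recorded before the statement: $q_{11}=-1$, $\widetilde{q}_{12}=1$ and $q_{22}=\pm1$ (Theorem~\ref{th:paleblock-point-resumen} applied to $V_1\oplus V_2$), so that $V_1\oplus V_2\simeq\eny_{\pm}(q_{12})$; in particular $x_1x_2=q_{12}x_2x_1$, i.e.\ $\adc(x_1)(x_2)=0$, and $\adc(x_1)(z)=0$ hold in $\toba(V)$. These, together with $x_1^2=x_{\tresdos}^2=0$ (so that $\toba(V_1)$ is an exterior algebra, hence $\GK\toba(V_1)=0$), are what make the braidings in $\K^1$ computable.

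First I would pin down $\widetilde{q}_{13}$ and $q_{33}$, splitting on the parameter $a$. If $a\neq0$, then $V_1\oplus V_3$ is a genuine pale block plus point of dimension $3$ with nonzero off-diagonal parameter $a_3=a$; since $\widetilde{q}_{13}\neq1$, Theorem~\ref{th:paleblock-point-resumen} leaves only its case $(ii)$, giving $q_{33}=\widetilde{q}_{13}=-1$. If $a=0$, then $g_3$ acts diagonally on $V_1$, so $\langle x_1,x_{\tresdos},x_3\rangle$ is a braided subspace of diagonal type whose diagram is the connected path with centre $x_3$ (label $q_{33}$), two leaves $x_1,x_{\tresdos}$ (both labelled $-1$) and both edges labelled $\widetilde{q}_{13}\neq1$. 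Finiteness of $\GK\toba(V)$ forces finiteness of its root system (Theorem~\ref{thm:conj-AAH}), and inspection of Heckenberger's rank-3 list, excluding the few sporadic diagrams by means of the rank-2 subdiagram $\langle x_1,x_3\rangle$, shows that the only possibility with $\widetilde{q}_{13}\neq1$ is again $q_{33}=\widetilde{q}_{13}=-1$. Either way $q_{33}=\widetilde{q}_{13}=-1$.

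With these values in hand I would attack $q_{22}$ and $\widetilde{q}_{23}$ through the splitting $\toba(V)\cong\K\#\toba(V_1)$, $\K\simeq\toba(\K^1)$ with $\K^1=\adc(\toba(V_1))(\langle x_2,x_3\rangle)$, so that $\GK\toba(V)=\GK\toba(\K^1)$. Computing the Yetter--Drinfeld structure of $\K^1$ (the coactions of $z=\adc(x_{\tresdos})(x_2)$ and $v=\adc(x_1)(x_3)$) as in Proposition~\ref{prop:pale-2pts-Ib}, one finds that: the pair $\{x_3,v\}$ spans a block isomorphic to $\Vc(-1,2)$ (here the hypothesis $\widetilde{q}_{13}=q_{33}=-1$ is exactly what makes the reflected Jordan eigenvalue equal to $-1$); $x_2$ and $z$ are points of labels $q_{22}$ and $-q_{22}$; and $\langle x_2,z,v\rangle$ is a braided subspace of diagonal type, a path with centre $v$ and edges $\widetilde{q}_{23}$ and $-\widetilde{q}_{23}$. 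Thus $\K^1$ is a block-and-points configuration, and I would conclude by applying Theorem~\ref{thm:conj-AAH} and the diagonal estimate \cite[Lemma~2.3.7]{AAH-triang} to its diagonal subspaces, and the blocks-and-points classification \cite[Theorem~1.3.8]{AAH-triang} to the block $\{x_3,v\}$ together with the points $x_2$ and $z$, thereby ruling out $\widetilde{q}_{23}\neq1$ and $q_{22}=-1$ and leaving $q_{22}=\widetilde{q}_{23}=1$.

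The main obstacle is this last step. The novelty of Case~\ref{case:II} against Case~\ref{case:I}\ref{case:Ib} is that $\widetilde{q}_{13}\neq1$ forces $v=\adc(x_1)(x_3)\neq0$ to intervene, so the triple $\{x_2,x_3,z\}$ no longer closes under the braiding and $x_3$ must be replaced by the block $\{x_3,v\}$. Consequently no single rank-$2$ or rank-$3$ diagonal subspace suffices: the dangerous configuration $q_{22}=-1$, $\widetilde{q}_{23}=-1$ has \emph{all} of its rank-$\le 3$ subdiagrams finite, and only the full block-and-points structure of $\K^1$ (including the higher reflections $w=\adc(x_{\tresdos})(x_3)$ and $\adc(x_1)(w)$ on the $x_3$-side) detects its infinite growth. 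Carrying out this reflected-module computation carefully and matching it to \cite[Theorem~1.3.8]{AAH-triang} is the crux of the argument.
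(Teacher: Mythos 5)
Your reduction in the case $a\neq 0$ (Theorem~\ref{th:paleblock-point-resumen} applied to $V_1\oplus V_3$) agrees with the paper, but the rest of the argument has two genuine gaps. First, in the subcase $a=0$ the rank-3 diagonal subspace $\langle x_1,x_{\tresdos},x_3\rangle$ does \emph{not} force $q_{33}=\widetilde{q}_{13}=-1$: its diagram is the path with ends labelled $-1$, centre labelled $q_{33}$ and both edges labelled $\widetilde{q}_{13}$, and for \emph{every} $p\neq 1$ the choice $q_{33}=p^{-1}$, $\widetilde{q}_{13}=p$ gives a diagram of super type $A_3$ (odd isotropic ends, even centre, as for $\mathfrak{gl}(2|2)$) which lies in Heckenberger's rank-3 list and is not excluded by the rank-2 subdiagram $\overset{-1}{\circ}\!-\!\overset{q_{33}}{\circ}$, which is just super $A_2$. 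This is exactly why the paper passes to $\gr\toba(V)$ and adjoins the degree-2 primitive $\overline{z}$ to the infinitesimal braiding, producing a rank-4 star-shaped diagram with three $-1$-leaves attached to the vertex $q_{33}$ by edges $\widetilde{q}_{13}$; only that rank-4 diagram pins down $q_{33}=\widetilde{q}_{13}=-1$ via Theorem~\ref{thm:conj-AAH} and \cite{H-classif-RS}.

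Second, the elimination of $q_{22}=-1$ is not actually carried out, and the route you sketch is doubtful. The braiding on $\langle x_3,v\rangle$ computed from the coaction $\delta(v)=2x_1g_3\otimes x_3+g_1g_3\otimes v$ gives $c(v\otimes x_3)=-2\,v\otimes x_3-q_{13}\,x_3\otimes v$, which is not of the triangular form of a block $\Vc(-1,2)$ in the basis $\{x_3,v\}$, and $\K^1$ is a Yetter--Drinfeld module over $\toba(V_1)\#\Bbbk\Gamma$ rather than over a group algebra, so \cite[Theorem~1.3.8]{AAH-triang} does not apply off the shelf. You correctly identify that the configuration $q_{22}=\widetilde{q}_{23}=\widetilde{q}_{13}=q_{33}=-1$ has all small subdiagrams finite, but then defer precisely this case; the paper disposes of it by a genuinely different mechanism: the rank-2 diagonal subspace $\langle z,v\rangle$ of $\K^1$ forces $\widetilde{q}_{13}\widetilde{q}_{23}=1$, the rank-4 diagram of $\gr V$ together with Theorem~\ref{thm:conj-AAH} leaves only Cartan type $D_4$ with parameter $-1$, and that last case is killed by showing that $\overline{u}=y_{3\tresdos 2}$ must vanish in the pre-Nichols algebra $\gr\toba(V)$ by \cite[Lemma~5.8(b)]{ASa-preNichols}, while an explicit skew-derivation computation shows $u$ cannot lie in $\toba(V)^3_3$. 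Without an argument of comparable strength your proof does not close the hardest case.
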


\begin{proof}
Assume that $q_{22} = 1$.
Then $\widetilde{q}_{23} = 1$ by \cite[Lemma~2.3.7]{AAH-triang} applied to $\langle x_2, x_3\rangle$.
If $a\ne 0$, then $q_{33} = -1 = \widetilde{q}_{13}$ by Theorem~\ref{th:paleblock-point-resumen}.
Next we assume $a = 0$: here, $0 \subset \langle x_1, x_2, x_3\rangle \subset V$ is a flag of Yetter--Drinfeld submodules
such that $\gr V$ (the associated graded object in $\yd{\Bbbk G}$) is of diagonal type. By \cite[Lemma~3.4.2\,(b)]{AAH-triang},
$\gr \toba(V)$ (the graded algebra associated to the filtration induced by the one on $V$) is a pre-Nichols algebra of $\gr V$.
The class $\overline{z}$ of $z$ in $\gr \toba(V)$ is primitive in $\gr \toba(V)$ since
\begin{align*}
\Delta(z) =z\otimes 1 -x_1\otimes x_2 + 1\otimes z
\end{align*}
and $\overline{z}$ is non-zero by \cite[Propositions 8.1.6 and~8.1.7]{AAH-triang}. Let $\mathcal{H}=\gr \toba(V)\#\Bbbk\Gamma$: $\mathcal{H}$ is a pointed Hopf algebra and the diagram of $\mathcal{H}$ is of diagonal type. Let $W$ be the infinitesimal braiding of~$\mathcal{H}$. In~$\mathcal{H}$, $\overline{z}$ has degree 2, $x_1$, $x_{\tresdos}$ and $x_3$ are linearly independent of degree 1 and
\begin{align*}
\Delta(\overline{z}) =\overline{z}\otimes 1+g_1g_2 \otimes \overline{z},\qquad
\Delta(x_i)=x_i\otimes 1+g_{\lfloor i \rfloor}\otimes x_i,
\end{align*}
(where $\lfloor i\rfloor$ is the integral part of $i$) so $\overline{z}$ and the $x_i$'s are linearly independent elements in $W$.
Computing the actions of the corresponding group-like elements on $\overline{z}$ and $x_i$, we see that
$$
\xymatrix@C=40pt{\overset{-1}{\underset{1}{\circ}} \ar@{-}[r]^{\widetilde{q}_{13}} & \overset{q_{33}}{\underset{3}{\circ}} \ar@{-}[d]_{\widetilde{q}_{13}} \ar @{-}[r]^{\widetilde{q}_{13}} & \overset{-1}{\underset{\overline{z}}{\circ}}
\\ & \overset{-1}{\underset{\tresdos}{\circ}} &}
$$
is a subdiagram of the Dynkin diagram of $W$.
By Theorem~\ref{thm:conj-AAH}, we see that $q_{33}=\widetilde{q}_{13} =-1$ by~\cite{H-classif-RS}.

Assume that $q_{22} = -1$.
We check that $z, v \in \K^1 = \adc(\toba(v))(\langle x_2, x_3 \rangle)$ are linearly independent using skew-derivations.
We show that they span a braided subspace $W$ of diagonal type. First, we have
\begin{align*}
&\Delta_{H}(z) = z \otimes 1 - x_1 g_2 \otimes x_2 + g_1g_2 \otimes z , \\
&\Delta_{H}(v) = v \otimes 1 + 2 x_1 g_3 \otimes x_3 + g_1g_3 \otimes v;
\end{align*}
one therefore has
\begin{align*}
\delta(z) = - x_1 g_2 \otimes x_2 + g_1g_2 \otimes z, \qquad
\delta(v) = 2 x_1 g_3 \otimes x_3 + g_1g_3 \otimes v.
\end{align*}
Consequently we have for every $y \in \K^1$
\begin{align*}
&c(z \otimes y) = -\adc(x_1)(g_2 \cdot y) \otimes x_2 + g_1g_2 \cdot y \otimes z, \\
&c(v \otimes y) = 2\adc(x_1)(g_3 \cdot y) \otimes x_3 + g_1g_3 \cdot y \otimes v.
\end{align*}
Hence $W$ is a braided vector subspace of $\K^1$ with braiding in the basis $\{z, v \}$ given by
\begin{align*}
\begin{pmatrix}
z \otimes z & -q_{13}q_{21}q_{23} v \otimes z \\
-q_{12}q_{31}q_{32} z \otimes v & q_{33} v \otimes v
\end{pmatrix}\!,
\end{align*}
so is of diagonal type with diagram $\xymatrix{
\overset{1}{\circ} \ar @{-}[r]^{\widetilde{q}_{13}\widetilde{q}_{23}} & \overset{q_{33}}{\circ} }$.
Assume that $\GK \toba(W) < \infty$.
Then $\widetilde{q}_{13}\widetilde{q}_{23} = 1$ by \cite[Lemma~2.3.7]{AAH-triang}; thus $\widetilde{q}_{23} = \widetilde{q}_{13}^{-1} \neq 1$.
Again, $0 \subset \langle x_1, x_2, x_3\rangle \subset V$ is a flag of Yetter--Drinfeld submodules such that $\gr V$ is of diagonal type; its diagram is
$$
\xymatrix{
\overset{-1}{\underset{1} {\circ}}\ar @{-}[rr]^{\widetilde{q}_{13}} & & \overset{q_{33}} {\underset{3} {\circ}}
\ar @{-}[d]^{\widetilde{q}_{13}} \ar @{-}[dll]_{\widetilde{q}_{23}} \\
\overset{-1}{\underset{2} {\circ}} && \overset{-1}{\underset{\tresdos} {\circ}} }
$$
By \cite[Lemma 3.4.2\,(c)]{AAH-triang}, $\GK \toba(\gr V)\le \GK \toba(V)$.
By Theorem~\ref{thm:conj-AAH}, the unique open case is $q_{33}=\widetilde{q}_{23} = \widetilde{q}_{13} = -1$, see \cite{H-classif-RS}.

Now we fix $q_{33}=\widetilde{q}_{23} = \widetilde{q}_{13} = -1$ and suppose that $\GK \toba(V)<\infty$.
Then $\gr V$ is a braided vector space of Cartan type $D_4$,
and the corresponding graded Hopf algebra $\toba:=\gr \toba(V)$ is a pre-Nichols algebra of $\gr V$ such that $\GK \toba<\infty$,
see \cite[Lemma~3.4.2\,(b)]{AAH-triang}.
Let $y_i$ be the class of $x_i$ in $\toba$,
\begin{align*}
y_{3\tresdos2}=\adc y_3\big(\adc y_{\tresdos} (y_2)\big), \qquad u=(\adc x_3)\big(\adc x_{\tresdos} (x_2)\big).
\end{align*}
Notice that its class $\overline{u}$ in $\toba$ is $\overline{u}=y_{3\tresdos2}$. Then $\overline{u}=0$ by
\cite[Lemma 5.8\,(b)]{ASa-preNichols}.
We claim that there exist $a_i\in\Bbbk$ such that
\begin{align}
u = {}&a_1 x_{132} + a_2 x_2x_{13} +a_3 x_{32}x_1 + a_4 x_3x_{13} +a_5 x_{13}x_1
 + a_6 x_2x_{32}\nonumber
 \\
 & + a_7 x_{32}x_3 + a_8x_2x_3x_1.
\label{eq:linear-comb-u}
\end{align}
Indeed, $u\in\toba(V)^3_4$ and the subspace $\langle x_1,x_2,x_3\rangle$ is of Cartan type
$A_3$ with parameter $-1$, so $\{x_{132},x_2x_{13},x_{32}x_1,x_3x_{13},x_{13}x_1,x_2x_{32},x_{32}x_3, x_2x_3x_1\}$
is a basis of $\toba(V)^3_3$. As $\partial_1(u)\!={\partial_3(u)\!=0}$, we have that
\begin{gather*}
\begin{split}
&0 = a_3 x_{32}-2a_5 x_{3}x_1 +a_5 x_{13}+ a_8x_2x_3,
\\
&0 = 2a_1 x_{12} + 2a_2 x_2x_{1} + a_4 (2x_3x_1 -q_{31} x_{13}) + a_7 (x_{32}- 2 x_{2}x_3) + q_{31} a_8x_2x_1.
\end{split}
\end{gather*}
As $\{ x_{13}, x_{32}, x_2x_3, x_2x_1, x_3x_1 \}$ are linearly independent, we get
$a_3=a_5=a_8=0$ from the first equality, and $a_1=a_2=a_4=a_7=0$, so \eqref{eq:linear-comb-u} reduces to
$u = a_6 x_2x_{32}$. But applying $\partial_2$, we get
\begin{align*}
q_{31} x_{13} -2x_3x_1 = -a_6 q_{32}x_{32}+2a_6x_2x_3,
\end{align*}
a contradiction. Hence $\GK \toba(V)=\infty$.
\end{proof}

In the next subsections, we study two subcases of the situation left open in Lemma~\ref{lemma:caseII-general}, namely $a=0$ and $a \neq 0$.

\subsubsection[Case~(II), when the ghost is infinite]{Case~\ref{case:II}, when the ghost is infinite}
Here $q_{22} = \widetilde{q}_{23} = 1$, $q_{11} = q_{33} = \widetilde{q}_{13}=-1$, $a = 0$.
To spell out our next result, we introduce
\begin{align}\label{eq:defzt-abc}
\zt_{\ell mn}:= x_{\tresdos}^\ell x_{3\tresdos}^m x_{13\tresdos}^n\cdot x_2 ,\qquad
\yt :=[\zt_{110},\zt_{001}]_c= \zt_{110}\zt_{001} +\zt_{001}\zt_{110}.
\end{align}

\begin{Theorem} \label{th:endymion-2pts-case-a}
The algebra $\toba\big(\enyIIb{\infty}\big(\bqs\big) \big)$ is generated by $x_1$, $x_{\tresdos}$, $x_2$, $x_3$ with defining relations
\begin{align}
&\label{eq:endymion-2pts-def-rel-1}
\begin{aligned}
& x_{\tresdos}x_1+x_1x_{\tresdos}, & & x_{\tresdos}^2, & & x_{3\tresdos}^2, & & x_{13\tresdos}^2,
\\
&x_{13\tresdos}x_3+q_{13}^2x_3x_{13\tresdos},\quad & & x_3^2, & & x_{13}^2, & & x_1^2,
\end{aligned}
\\
\label{eq:endymion-2pts-def-rel-2}
& \begin{aligned}
x_2x_3-q_{23}x_3x_2, & & x_1x_2-q_{12}x_2x_1, && x_{\tresdos2}x_2-q_{12}x_2x_{\tresdos2},
\end{aligned}
\\
\label{eq:endymion-2pts-def-rel-3}
& \zt_{\ell mn}^2, \qquad (\ell mn)\in\{(010), (001), (101),(011)\},
\\
\label{eq:endymion-2pts-def-rel-4}
& x_{\tresdos}\yt - q_{12}^2q_{13}^2\yt x_{\tresdos}+q_{12}q_{13}\zt_{001}\zt_{101},
\\
\label{eq:endymion-2pts-def-rel-last}
& \zt_{110}\yt -\yt\zt_{110}+\zt_{001}\yt.
\end{align}
A PBW-basis is formed by the monomials
\begin{align}
&
\zt_{000}^{m_{000}} \zt_{100}^{m_{100}} \zt_{010}^{m_{010}} \zt_{001}^{m_{001}} \yt^m \zt_{110}^{m_{110}} \zt_{101}^{m_{101}} \zt_{011}^{m_{011}} \zt_{111}^{m_{111}}
x_{\tresdos}^{a_1} x_{3\tresdos}^{a_2} x_{13\tresdos}^{a_3} x_3^{a_4} x_{13}^{a_5} x_1^{a_6},
\nonumber\\
& a_i, m_{100}, m_{010}, m_{001}, m_{101},m_{011} \in \{0,1\}, \qquad m_{000}, m, m_{110}, m_{111} \in\N_0.\label{eq:endymion-2pts-case-a-PBW}
\end{align}
Hence $\GK \toba\big(\enyIIb{\infty}\big(\bqs\big)\big) = 4$.
\end{Theorem}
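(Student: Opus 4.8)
The plan is to run the splitting technique of Section~\ref{subsec:splitting-technique} for the decomposition $V = W \oplus V_2$, where $W = V_1 \oplus V_3 = \langle x_1, x_{\tresdos}, x_3\rangle$. The first observation is that, because $a = 0$, the element $g_3$ acts diagonally (by $q_{31}\id$) on the pale block $V_1$, so the braiding restricted to $W$ is of diagonal type; its Dynkin diagram is the path $x_1 - x_3 - x_{\tresdos}$ with every vertex and every edge labelled $-1$ (the two edge labels both equal $\widetilde{q}_{13}=-1$), i.e.\ Cartan type $A_3$ at the parameter $-1$. Hence $\toba(W)$ is finite-dimensional with PBW-generators the six root vectors $x_1, x_{\tresdos}, x_3, x_{13}, x_{3\tresdos}, x_{13\tresdos}$, each occurring to exponent $0$ or $1$; in particular $\GK\toba(W) = 0$, and relations \eqref{eq:endymion-2pts-def-rel-1} are precisely (part of) the defining relations of $\toba(W)$. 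By the splitting technique $\toba(V)\cong\K\#\toba(W)$ with $\K\simeq\toba(\K^1)$ and $\K^1 = \adc(\toba(W))(x_2)$, so $\GK\toba(V) = \GK\toba(\K^1)$ and the theorem reduces to identifying $\toba(\K^1)$ and proving $\GK = 4$.

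Next I would pin down $\K^1$. Since $x_2$ $q$-commutes with $x_1$, $x_3$ and $x_{13}$ by \eqref{eq:endymion-2pts-def-rel-2} and the quantum-plane relation, one has $\adc(x_1)(x_2) = \adc(x_3)(x_2) = \adc(x_{13})(x_2) = 0$; thus only $x_{\tresdos}, x_{3\tresdos}, x_{13\tresdos}$ act nontrivially, and as each of these squares to zero in $\toba(W)$ the space $\K^1$ is spanned by the eight elements $\zt_{\ell mn}$, $(\ell mn)\in\{0,1\}^3$, of \eqref{eq:defzt-abc}. I would then compute the coaction $\delta(\zt_{\ell mn})$ exactly as in Steps~1--2 of the proof of Theorem~\ref{thm:q=1,q22=1,n=3}, read off the braiding on $\K^1$, and in particular verify that $\zt_{000} = x_2$ contributes a vertex of type $q_{22} = 1$, the would-be Jordan direction responsible for the free exponents. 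The bracket $\yt = [\zt_{110},\zt_{001}]_c$ of \eqref{eq:defzt-abc} is the extra PBW-generator forced by this $q_{22}=1$ vertex.

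With $\K^1$ understood I would present the abstract algebra $\toba$ by the relations \eqref{eq:endymion-2pts-def-rel-1}--\eqref{eq:endymion-2pts-def-rel-last} and construct a surjection $\pi\colon\toba\to\toba(\enyIIb{\infty}(\bqs))$. Relations \eqref{eq:endymion-2pts-def-rel-1} hold as the relations of the diagonal-type subalgebra $\toba(W)$; \eqref{eq:endymion-2pts-def-rel-2} hold because $\langle x_1,x_{\tresdos},x_2\rangle\simeq\eny_+(q_{12})$ (cf.\ \eqref{eq:endymion-1b}, \eqref{eq:endymion-2}) and $\langle x_2,x_3\rangle$ is a quantum plane; the remaining relations \eqref{eq:endymion-2pts-def-rel-3}, \eqref{eq:endymion-2pts-def-rel-4}, \eqref{eq:endymion-2pts-def-rel-last} I would check by applying the skew-derivations $\partial_i$ to both sides, as was done for \eqref{eq:endymion3-1-3}--\eqref{eq:endymion3-1-4} and \eqref{eq:block-pt-1-rels-4}--\eqref{eq:block-pt-1-rels-6}. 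To prove $\pi$ surjective I would derive, recursively on degree, the full list of $q$-commutation relations among the $\zt_{\ell mn}$, $\yt$ and the six generators of $\toba(W)$ (the analogue of Steps~3 and~5 of Theorem~\ref{thm:q=1,q22=1,n=3}), which lets me reorder an arbitrary monomial into the normal form \eqref{eq:endymion-2pts-case-a-PBW}; hence these monomials span $\toba$ and \emph{a fortiori} $\toba(\enyIIb{\infty}(\bqs))$.

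Finally, linear independence of \eqref{eq:endymion-2pts-case-a-PBW} in $\toba(\enyIIb{\infty}(\bqs))$ I would establish by the minimal-degree/skew-derivation argument used for $\toba(\eny_{3,\pm}(q))$: computing $\partial_i$ on the $\zt_{\ell mn}$ and on $\yt$, then stripping off the rightmost factors one at a time to force all exponents of a hypothetical minimal relation to vanish. The finite $\GK$ then follows from the shape of the basis: exactly the four exponents $m_{000}, m, m_{110}, m_{111}$ range over $\N_0$, and the ordered monomials define an ascending filtration whose associated graded algebra is a (truncated) quantum polynomial algebra, giving $\GK = 4$, consistently with $\GK\toba(V) = \GK\toba(\K^1)$ and $\GK\toba(W) = 0$. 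The main obstacle is the bookkeeping in the middle two steps: verifying the two non-homogeneous relations \eqref{eq:endymion-2pts-def-rel-4} and \eqref{eq:endymion-2pts-def-rel-last} and assembling the complete straightening system for the eight $\zt$-families together with $\yt$ and the six root vectors of $\toba(W)$. The derivation values $\partial_i(\zt_{\ell mn})$ and $\partial_i(\yt)$ must be computed with care, since any error there propagates through both the spanning and the independence arguments.
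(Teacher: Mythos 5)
Your proposal is correct and follows essentially the same route as the paper: split off $\toba(V_1\oplus V_3)$ (Cartan type $A_3$ at $-1$), identify $\K^1$ with the eight elements $\zt_{\ell mn}$, verify the relations \eqref{eq:endymion-2pts-def-rel-3}--\eqref{eq:endymion-2pts-def-rel-last} by skew-derivations, straighten monomials into the normal form \eqref{eq:endymion-2pts-case-a-PBW}, and prove linear independence by the minimal-degree derivation argument, concluding $\GK=4$ from the four unbounded exponents. The only work left implicit in your sketch (the explicit $q$-commutation system \eqref{eq:q-commut-zs}, \eqref{eq:rels-zs}, \eqref{eq:rels-zs-more} and the derivation values $\partial_2(\zt_{\ell mn})$, $\partial_2(\yt)$) is exactly the bookkeeping the paper carries out in its Steps~3--6.
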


\begin{proof} We proceed by steps.
\begin{stepi}
Note that $V_1\oplus V_3$ is of Cartan type $A_3$ with parameter $q= -1$. Now the defining relations of $\toba(V_1\oplus V_3)$
are \eqref{eq:endymion-2pts-def-rel-1}, see {\rm \cite{AA-diag-survey}}. Thus these relations hold in $\toba\big(\enyIIb{\infty}\big(\bqs\big)\big)$.
Also the following set is a PBW-basis of $\toba(V_1\oplus V_3)$:
\begin{align}\label{eq:Cartan-A3-PBWbasis}
& x_{\tresdos}^a x_{3\tresdos}^b x_{13\tresdos}^c x_3^d x_{13}^e x_1^f, \qquad a,b,c,d,e,f \in\{0,1\}.
\end{align}
Exchanging $1$ and $\tresdos$ we obtain another presentation and PBW-basis of $\toba(V_1\oplus V_3)$.
We will use both presentations and basis in the sequel.
\end{stepi}

\begin{stepi}
The subspace $\langle x_2, x_3 \rangle$ is a quantum plane, and $\langle x_1,x_{\tresdos},x_2\rangle\simeq \eny_{+}(q_{12})$.
Hence the relations \eqref{eq:endymion-2pts-def-rel-2} hold in $\toba\big(\enyIIb{\infty}\big(\bqs\big)\big)$.
\end{stepi}

\begin{stepi}
$B = \{ \zt_{\ell mn} | 0\le \ell,m,n\leq 1 \}$ is a basis of $\K^1 :=\adc(\toba(V_1\oplus V_3))(V_2)$.
\end{stepi}

\begin{proof}[Proof of Steps 1, 2 and 3]
The following formulas are easy to check:
\begin{align*}
&g_1 \cdot \zt_{\ell mn} = (-1)^{\ell +m}q_{12}q_{13}^{m+n}\zt_{\ell mn},
\\ &g_3 \cdot \zt_{\ell mn} = q_{31}^{\ell +m+2n}q_{32}(-1)^{m+n}\zt_{\ell mn},
\\
&g_2 \cdot \zt_{\ell mn} = \begin{cases}
q_{21}^{\ell+m+2n}q_{23}^{m+n}\zt_{\ell mn}, & \ell mn\ne 110,
\\
q_{21}^2q_{23}(\zt_{110}+\zt_{001}), & \ell mn=110.
\end{cases}
\end{align*}
Next we claim that the following relations hold:
\begin{align}\label{eq:endymion-2pts-adc-on-z}
\begin{aligned}
&(\adc x_1)\zt_{\ell mn} = \delta_{m,1}\delta_{n,0}(-1)^{\ell }\zt_{\ell 01}, \qquad
(\adc x_{\tresdos})\zt_{\ell mn} = \delta_{\ell ,0}\zt_{1mn},
\\
&(\adc x_3)\zt_{\ell mn} = \delta_{\ell ,1}\delta_{m,0}\zt_{01n}.
\end{aligned}
\end{align}
The verification uses \eqref{eq:endymion-2pts-def-rel-1}, \eqref{eq:endymion-2pts-def-rel-2} and the definition \eqref{eq:defzt-abc}.
Summarizing, the adjoint action of~$x_i$, $g_j$ can be read in the following graph:
$$
\xymatrix@C=30pt{\overset{\zt_{000}}{\circ} \ar@{->}[r]^{\tresdos} &
\overset{\zt_{100}}{\circ} \ar@{->}[r]^{3} &
\overset{\zt_{010}}{\circ} \ar@{->}[r]^{\tresdos} \ar@{->}[rd]^{1} &
\overset{\zt_{110}}{\circ} \ar@{->}[rd]^{1} \ar@{.>}[d] &
& &
\\
& & &
\overset{\zt_{001}}{\circ} \ar@{->}[r]^{\tresdos} &
\overset{\zt_{101}}{\circ} \ar@{->}[r]^{3} &
\overset{\zt_{011}}{\circ} \ar@{->}[r]^{\tresdos} &
\overset{\zt_{111}}{\circ}. }
$$
\begin{itemize}[leftmargin=*]
\item The elements (one or two) in the $n$-th column have degree $n$.
\item We draw an arrow from $\zt_{\ell mn}$ to $\zt_{pqr}$ labeled with $i\in\big\{1,\tresdos,3\big\}$ if and only if
$(\adc x_i)\zt_{\ell mn}\in\Bbbk^{\times} \zt_{pqr}$. Moreover, this non-zero scalar is $1$ if $i\ne 1$.
\item If there is not an arrow starting in $\zt_{\ell mn}$ with label $i$, then $(\adc x_i)\zt_{\ell mn}=0$.
\item The dotted arrow from $\zt_{110}$ to $\zt_{001}$ means $g_2 \cdot \zt_{110}= q_{21}^2q_{23}(\zt_{110}+\zt_{001})$. Otherwise, the action of $g_i$ on $\zt_{\ell mn}$ is diagonal.
\end{itemize}

By \eqref{eq:endymion-2pts-def-rel-2} and \eqref{eq:Cartan-A3-PBWbasis}, $\K^1$ is spanned by $B$.
Also, $\partial_1(\zt_{\ell mn})=\partial_{\tresdos}(\zt_{\ell mn})=\partial_3(\zt_{\ell mn})=0$ for all $\ell$, $m$, $n$ since $\ker \partial_i$ is a subalgebra of $\toba(V_1\oplus V_3)$ stable by $\adc x_i$. Now we compute $\partial_2(\zt_{100})= -x_1$,
\begin{alignat*}{3}
&\partial_2 (\zt_{010}) = -x_{31}, &&
\partial_2 (\zt_{001}) = -2x_1x_{31}, &&
\partial_2 (\zt_{110}) = -\big(x_{\tresdos31}+x_1x_{31}\big),
\\
&\partial_2 (\zt_{101}) = 2x_1x_{\tresdos31},\qquad &&
\partial_2 (\zt_{011}) = 2x_{31}x_{\tresdos31},\qquad &&
\partial_2 (\zt_{111}) = -2x_1x_{31}x_{\tresdos31}.
\end{alignat*}
Hence $B$ is linearly independent.
\end{proof}

\begin{stepi}\label{lem:endymion-2pts-def-rel-1}
The relations \eqref{eq:endymion-2pts-def-rel-3} and \eqref{eq:endymion-2pts-def-rel-4} hold in $\toba\big(\enyIIb{\infty}\big(\bqs\big)\big)$.
\end{stepi}

\begin{proof}[Proof of Step 4]
As $\partial_i(\zt_{\ell mn})=0$ for all $i\ne 2$, it is enough to check that $\partial_2$ annihilates each one of these relations. Using \eqref{eq:graded-bracket-jacobi} and \eqref{eq:endymion-2pts-adc-on-z},
\begin{align*}
&\partial_2 \big(\zt_{010}^2\big) = -q_{21}q_{23}x_{31}\zt_{010}-\zt_{010}x_{31}
= -q_{21}q_{23}[x_{31}, \zt_{010}]_c
= -q_{21}q_{23}[x_{3}, [x_1,\zt_{010}]_c]_c
\\ &\phantom{\partial_2 (\zt_{010}^2)}
= -q_{21}q_{23}[x_{3}, \zt_{001}]_c =0,
\\
&\partial_2 \big(\zt_{001}^2\big) = -q_{21}^2q_{23}[x_1x_{31}, \zt_{001}]_c
=-q_{21}^2q_{23}x_1[x_{31}, \zt_{001}]_c-q_{21}q_{31}[x_1, \zt_{001}]_cx_{31}=0,
\\
&\partial_2 \big(\zt_{101}^2\big) = 2q_{21}^3q_{23}\big[x_1x_{\tresdos31}, \zt_{101}\big]_c
= 2q_{21}^3q_{23}x_1\big[x_{\tresdos}, [x_{31},\zt_{101}]_c\big]_c
\\&\phantom{\partial_2 (\zt_{101}^2)}
=-4q_{21}^2q_{23}q_{31} x_1\big[x_{\tresdos}, \zt_{011}x_1\big]_c=-4q_{21}^2q_{23}q_{31} x_1\zt_{111}x_1=0,
\\
&\partial_2 \big(\zt_{011}^2\big) = 2q_{21}^3q_{23}^2\big[x_{31}x_{\tresdos31}, \zt_{011}\big]_c
= 2q_{21}^3q_{23}^2x_{31}\big[x_{\tresdos}, [x_{31},\zt_{011}]_c\big]_c=0,
\end{align*}
and \eqref{eq:endymion-2pts-def-rel-3} follows. Next we check that
\begin{align*}
\partial_2 (\yt) = -q_{21}^2q_{23} \big[x_{\tresdos31}+x_1x_{31},\zt_{001}\big]_c-2q_{21}^2q_{23} [x_1x_{31},\zt_{110}]_c+2\zt_{001}x_1x_{31}
 =2\zt_{001}x_1x_{31}.
\end{align*}
Using this equality and \eqref{eq:endymion-2pts-adc-on-z}, we see that \eqref{eq:endymion-2pts-def-rel-4} holds because
\begin{align*}
&\partial_{2} \big(x_{\tresdos}\yt -q_{12}^2q_{13}^2\yt x_{\tresdos}+q_{12}q_{13}\zt_{001}\zt_{101}\big)
\\
&\qquad{}= 2\big[x_{\tresdos}, \zt_{001}x_1x_{31}\big]_c
-2q_{13}q_{21}^2q_{23}x_1x_{31}\zt_{101}+2q_{12}q_{13}\zt_{001} x_1x_{\tresdos31}=0.\tag*{\qed}
\end{align*}
\renewcommand{\qed}{}
\end{proof}

\begin{stepi}\label{lem:endymion-2pts-rels-other-1}
Let $\wtoba$ be an algebra and $x_i\in\wtoba$ such that \eqref{eq:endymion-2pts-def-rel-1}, \eqref{eq:endymion-2pts-def-rel-2}, \eqref{eq:endymion-2pts-def-rel-3}, \eqref{eq:endymion-2pts-def-rel-4} hold. Then the following relations also hold:
\begin{gather}\label{eq:q-commut-zs}
\begin{aligned}
&\zt_{100}\zt_{000}=q_{12}\zt_{000}\zt_{100},&&
\zt_{010}\zt_{100} = -q_{31}q_{32}\zt_{100}\zt_{010},
\\
&\zt_{001}\zt_{010} = q_{13}q_{12}\zt_{010}\zt_{001},&&
\zt_{110}\zt_{010} = q_{13}q_{12}\zt_{010}\zt_{110},
\\
&\zt_{101}\zt_{001} = -q_{13}q_{12}\zt_{001}\zt_{101},&&
\zt_{011}\zt_{101} = q_{31}^3q_{32}\zt_{101}\zt_{011},
\\
&\zt_{111}\zt_{011} = q_{13}^2q_{12}\zt_{011}\zt_{111},&&
\yt\zt_{001} = \zt_{001}\yt,
\\
&\zt_{101}\zt_{110} = -q_{12}q_{13}(\zt_{110}+2\zt_{001})\zt_{110},\quad&&
\zt_{100}^2=0;
\end{aligned}
\\ \label{eq:rels-zs}
\begin{aligned}
& [\zt_{011},\zt_{001}]_c=0,\quad&&
[\zt_{111},\zt_{001}]_c=0,
\\
&[\zt_{111},\zt_{001}]_c=0,&&
[\zt_{011},\zt_{110}]_c =-q_{12}q_{31}q_{32}\zt_{001}\zt_{011}.
\end{aligned}
\end{gather}
In particular, these relations hold in $\toba\big(\enyIIb{\infty}\big(\bqs\big) \big)$.
\end{stepi}

\begin{proof}[Proof of Steps 5]
The relation $\zt_{100}\zt_{000}=q_{12}\zt_{000}\zt_{100}$ is \eqref{eq:endymion-2pts-def-rel-2}, and from this relation we deduce that $\zt_{100}^2=0$. Using \eqref{eq:endymion-2pts-def-rel-4} and \eqref{eq:endymion-2pts-adc-on-z},
\begin{align*}
-q_{12}q_{13}\zt_{001}\zt_{101} &= [x_{\tresdos}, \yt]_c = [x_{\tresdos}, [\zt_{001}, \zt_{110}]_c]_c
= [\zt_{101}, \zt_{110}]_c
\\
&= \zt_{101}\zt_{110}+ q_{12}q_{13}(\zt_{110}+\zt_{001})\zt_{110}.
\end{align*}
All the other relations involve $\zt_{\ell mn}$ and $\zt_{def}$ such that $\zt_{\ell mn}=(\adc x_i)\zt_{def}$ for some $d,e,f\in\{0,1\}$ and $i\in\big\{1,3,\tresdos\big\}$, and also $\zt_{def}^2=0$. If $i=1,\tresdos$, then
\begin{align*}
\zt_{\ell mn}\zt_{def} &= (x_i\zt_{def}-(-1)^{d+e}q_{12}q_{13}^{e+f}\zt_{def}x_i) \zt_{def}
\\
&= -(-1)^{d+e}q_{12}q_{13}^{e+f}\zt_{def}x_i\zt_{def}
= -(-1)^{d+e}q_{12}q_{13}^{e+f}\zt_{def}\zt_{\ell mn},
\end{align*}
If $i=3$, then an analogous proof shows that $\zt_{\ell mn}$ and $\zt_{def}$ $q$-commute. For the last relation, we use the definition of $\yt$ and that $\zt_{001}^2=0$.

By \eqref{eq:q-commut-zs}, elements $\zt_{\ell mn}$ and $\zt_{def}$ joined by an arrow $q$-commute.
The relations \eqref{eq:rels-zs} are $q$-commutations between other $\zt_{\ell mn}$'s.
By the defining relations, \eqref{eq:graded-bracket-jacobi} and \eqref{eq:q-commut-zs} we have
\begin{gather*}
0 = [x_3, [\zt_{101},\zt_{001}]_c]_c = [[x_3, \zt_{101}]_c,\zt_{001}]_c =[\zt_{011},\zt_{001}]_c,
\\
0 = \big[x_{\tresdos}, [\zt_{011},\zt_{101}]_c\big]_c = \big[\big[x_{\tresdos}, \zt_{011}\big]_c,\zt_{101}\big]_c =[\zt_{111},\zt_{001}]_c,
\\
0 = \big[x_{\tresdos}, [\zt_{011},\zt_{001}]_c\big]_c
=[\zt_{111},\zt_{001}]_c,
\\
0 = [x_3, [\zt_{101}, \zt_{110}]_c+q_{12}q_{13}\zt_{001}\zt_{101}]_c
= [\zt_{011}, \zt_{110}]_c+q_{12}q_{31}q_{32}\zt_{001}\zt_{011},
\end{gather*}
and the step follows.
\end{proof}

\begin{stepi}
The relation \eqref{eq:endymion-2pts-def-rel-last} holds in $\toba\big(\enyIIb{\infty}\big(\bqs\big)\big)$.
\end{stepi}

\begin{proof}[Proof of Step 6]
By the formulas for $\partial_2$ and the relations in Steps~\ref{lem:endymion-2pts-def-rel-1} and~\ref{lem:endymion-2pts-rels-other-1},
we have
\begin{gather*}
\partial_2 (\zt_{110}\yt -\yt\zt_{110}) =
2\zt_{110}\zt_{001}x_1x_{31}
-q_{21}^4q_{23}^2\big(x_{\tresdos31}+x_1x_{31}\big)\yt+\yt\big(x_{\tresdos31}+x_1x_{31}\big)
\\
\phantom{\partial_2 (\zt_{110}\yt -\yt\zt_{110})=}
-2q_{21}^2q_{23}\zt_{001}x_1x_{31}(\zt_{110}+\zt_{001})
\\
\phantom{\partial_2 (\zt_{110}\yt -\yt\zt_{110})}
= 2\yt x_1x_{31}
-2q_{21}^2q_{23}\zt_{001}[x_1x_{31},\zt_{110}]_c
-q_{21}^4q_{23}^2\big[x_{\tresdos31}+x_1x_{31},\yt \big]_c
\\
\phantom{\partial_2 (\zt_{110}\yt -\yt\zt_{110})} = 2\yt x_1x_{31},
\\
\partial_2 (\zt_{001}\yt) = \partial_2 (\yt\zt_{001})
=-2\yt x_1x_{31} + 2q_{21}^2q_{23}\zt_{001}x_1x_{31}\zt_{001}=-2\yt x_1x_{31}.
\end{gather*}
Hence \eqref{eq:endymion-2pts-def-rel-last} holds in $\toba\big(\enyIIb{\infty}\big(\bqs\big)\big)$.
\end{proof}

Let $\toba$ be the algebra with the claimed presentation. By the previous steps,
there is a~surjective map $\toba \to \toba\big(\enyIIb{\infty}\big(\bqs\big)\big)$.\
To prove that this is an isomorphism, we order the set of PBW generators $(S,<)$ from \eqref{eq:endymion-2pts-case-a-PBW}
by
\begin{gather*}
\zt_{000}>\zt_{100}>\zt_{010}> \zt_{001} > \yt > \zt_{110} > \zt_{101} > \zt_{011} > \zt_{111}
\\ \phantom{\zt_{000}}
{}> x_{\tresdos} > x_{3\tresdos} > x_{13\tresdos} > x_3 > x_{13} > x_1.
\end{gather*}
Let $Z$ be the subspace spanned by the set of monomials \eqref{eq:endymion-2pts-case-a-PBW}.
We establish new relations using \eqref{eq:graded-bracket-jacobi},
\eqref{eq:endymion-2pts-adc-on-z}, \eqref{eq:q-commut-zs} and \eqref{eq:rels-zs}:
\begin{align}\label{eq:rels-zs-more}
\begin{aligned}
&[ \zt_{010},\zt_{000}]_c = [ \zt_{110},\zt_{100}]_c = [ \yt,\zt_{010}]_c = 0,
\\
&[ \zt_{101}, \yt ]_c = [ \zt_{001},\zt_{100}]_c = [ \zt_{111},\zt_{101}]_c = 0.
\end{aligned}
\end{align}
The relations \eqref{eq:rels-zs-more} together with \eqref{eq:q-commut-zs} and \eqref{eq:rels-zs}
say that for every pair $s<s'\in S$ joined by an arrow or that have only one element in the middle, $ss'$ is a linear combination of monomials in $Z$ which are products of elements $>s$. Recursively we get the same statement for every pair $s<s'\in S$. Hence the monomials
\eqref{eq:endymion-2pts-case-a-PBW} generate $\toba$ and \emph{a fortiori} $\toba\big(\enyIIb{\infty}\big(\bqs\big)\big)$.
Since $V = (V_1 \oplus V_3) \oplus V_2$, the multiplication gives a linear isomorphism $\toba\big(\enyIIb{\infty}\big(\bqs\big)\big) \simeq \K \otimes \toba(V_1\oplus V_3)$. Then the problem reduces to prove that the monomials
\begin{align*}
& \zt_{000}^{m_{000}} \zt_{100}^{m_{100}} \zt_{010}^{m_{010}} \zt_{001}^{m_{001}} \yt^m \zt_{110}^{m_{110}} \zt_{101}^{m_{101}} \zt_{011}^{m_{011}} \zt_{111}^{m_{111}},
\\
& m_{100}, m_{010}, m_{001}, m_{101},m_{011} \in \{0,1\}, \qquad m_{000}, m, m_{110}, m_{111} \in\N_0,
\end{align*}
are linearly independent (so they form a basis of $\K$). Suppose on the contrary that there exists a non-trivial linear combination $\mathtt{S}$ of these elements: we may assume that $\mathtt{S}$ is homogeneous of minimal degree. By \eqref{eq:endymion-2pts-adc-on-z},
\begin{align*}
x_1x_{31}x_{\tresdos31}\zt_{111}=\zt_{111}x_1x_{31}x_{\tresdos31},
\end{align*}
and by direct computations,
\begin{align*}
\partial_1\partial_3\partial_1\partial_{\tresdos}\partial_3\partial_1\big(x_1x_{31}x_{\tresdos31}\big)=
\partial_1\partial_3\partial_1(4x_1x_{31})=8.
\end{align*}
As $\partial_i(\zt_{\ell mn})=0$ if $i\ne 2$ and $\partial_2(\zt_{\ell mn})$ has degree $<7$ if $\ell mn\ne 111$ (so $\partial_1\partial_3\partial_1\partial_{\tresdos}\partial_3\partial_1$ annihilates $\partial_2(\zt_{\ell mn})$), we have that
\begin{align*}
&\partial_1\partial_3 \partial_1\partial_{\tresdos}\partial_3\partial_1\partial_2\big(\zt_{000}^{m_{000}} \zt_{100}^{m_{100}} \zt_{010}^{m_{010}} \zt_{001}^{m_{001}} \yt^m \zt_{110}^{m_{110}} \zt_{101}^{m_{101}} \zt_{011}^{m_{011}} \zt_{111}^{m_{111}}\big)
\\ &\qquad{} = -16m_{111} \zt_{000}^{m_{000}} \zt_{100}^{m_{100}} \zt_{010}^{m_{010}} \zt_{001}^{m_{001}} \yt^m \zt_{110}^{m_{110}} \zt_{101}^{m_{101}} \zt_{011}^{m_{011}} \zt_{111}^{m_{111}-1}.
\end{align*}
Hence all the elements in $\mathtt{S}$ with non-zero coefficient have $m_{111}=0$ by the minimality of the degree. Analogously, $m_{011}=m_{101}=0$ since
\begin{align*}
& \partial_3 \partial_1\partial_{\tresdos}\partial_3\partial_1\partial_2\big(\zt_{000}^{m_{000}} \zt_{100}^{m_{100}} \zt_{010}^{m_{010}} \zt_{001}^{m_{001}} \yt^m \zt_{110}^{m_{110}} \zt_{101}^{m_{101}} \zt_{011}^{m_{011}}\big)
\\
& \qquad {}=16\delta_{m_{011},1} \zt_{000}^{m_{000}} \zt_{100}^{m_{100}} \zt_{010}^{m_{010}} \zt_{001}^{m_{001}} \yt^m \zt_{110}^{m_{110}} \zt_{101}^{m_{101}},
\\
& \partial_1\partial_{\tresdos}\partial_3\partial_1\partial_2\big(\zt_{000}^{m_{000}} \zt_{100}^{m_{100}} \zt_{010}^{m_{010}} \zt_{001}^{m_{001}} \yt^m \zt_{110}^{m_{110}} \zt_{101}^{m_{101}}\big)
\\
& \qquad {}=16\delta_{m_{101},1} \zt_{000}^{m_{000}} \zt_{100}^{m_{100}} \yt^m \zt_{010}^{m_{010}} \zt_{001}^{m_{001}} \zt_{110}^{m_{110}}.
\end{align*}
Next we compute
\begin{align*}
\partial_2 \big(\zt_{110}^2\big) &= -q_{21}^2q_{23} \big[x_{\tresdos31}+x_1x_{31}, \zt_{110}\big]_c
-q_{21}^2q_{23} (x_{\tresdos31}+x_1x_{31})\zt_{001}
\\
& = -q_{21}^2q_{23} \zt_{111}
-q_{21}q_{23}q_{13}^2 \zt_{011}x_1
-q_{21}q_{31} \zt_{101}x_{31}
-\zt_{001}\partial_2(\zt_{110}).
\end{align*}
By induction on $t\in\N$, we obtain that
\begin{gather*}
\partial_2 \big(\zt_{110}^{2t-1}\big) \in \zt_{110}^{2t-2}\partial_2(\zt_{110})+ \sum_{j=0}^2 \K \toba^j(V_1\oplus V_3),
\\[-2mm]
\partial_2 \big(\zt_{110}^{2t}\big) \in -\zt_{001}\zt_{110}^{2t-2}\partial_2(\zt_{110})+ \sum_{j=0}^2 \K \toba^j(V_1\oplus V_3).
\end{gather*}
Using these equalities we obtain the following:
\begin{gather*}
 \partial_{\tresdos}\partial_3\partial_1\partial_2\big(\zt_{000}^{m_{000}} \zt_{100}^{m_{100}} \zt_{010}^{m_{010}} \zt_{001}\yt^m \zt_{110}^{2t-1}\big)=-4 \zt_{000}^{m_{000}} \zt_{100}^{m_{100}} \zt_{010}^{m_{010}}\zt_{001}\yt^m \zt_{110}^{2t-2},
\\
 \partial_{\tresdos}\partial_3\partial_1\partial_2\big(\zt_{000}^{m_{000}} \zt_{100}^{m_{100}} \zt_{010}^{m_{010}} \zt_{001}\yt^m \zt_{110}^{2t}\big)=0,
\\
 \partial_{\tresdos}\partial_3\partial_1\partial_2\big(\zt_{000}^{m_{000}} \zt_{100}^{m_{100}} \zt_{010}^{m_{010}}\yt^m \zt_{110}^{2t-1}\big)=-4 \zt_{000}^{m_{000}} \zt_{100}^{m_{100}} \zt_{010}^{m_{010}}\yt^m \zt_{110}^{2t-2},
\\
 \partial_{\tresdos}\partial_3\partial_1\partial_2\big(\zt_{000}^{m_{000}} \zt_{100}^{m_{100}} \zt_{010}^{m_{010}} \yt^m \zt_{110}^{2t}\big)=4 \zt_{000}^{m_{000}} \zt_{100}^{m_{100}} \zt_{010}^{m_{010}} \zt_{001}\yt^m \zt_{110}^{2t-1},
\\
 \partial_1\partial_3\partial_1\partial_2\big(\zt_{000}^{m_{000}} \zt_{100}^{m_{100}} \zt_{010}^{m_{010}} \zt_{001}\yt^m \zt_{110}^{2t}\big)=-4\zt_{000}^{m_{000}} \zt_{100}^{m_{100}} \zt_{010}^{m_{010}}\yt^m \zt_{110}^{2t},
\\
 \partial_1\partial_3\partial_1\partial_2\big(\zt_{000}^{m_{000}} \zt_{100}^{m_{100}} \zt_{010}^{m_{010}} \zt_{001}\yt^m\big)=-4\zt_{000}^{m_{000}} \zt_{100}^{m_{100}} \zt_{010}^{m_{010}}\yt^m,
\\
 \partial_1\partial_3\partial_1\partial_2\big(\zt_{000}^{m_{000}} \zt_{100}^{m_{100}} \zt_{010}^{m_{010}} \yt^m\big)=-4m\zt_{000}^{m_{000}} \zt_{100}^{m_{100}} \zt_{010}^{m_{010}}\zt_{001}\yt^{m-1},
\\
 \partial_1\partial_3\partial_1\partial_2\big(\zt_{000}^{m_{000}} \zt_{100}^{m_{100}} \zt_{010}^{m_{010}} \zt_{001}^{m_{001}}\big)=-4\delta_{m_{101},1} \zt_{000}^{m_{000}} \zt_{100}^{m_{100}} \zt_{010}^{m_{010}}.
\end{gather*}
Thus we get that all the elements in $\mathtt{S}$ with non-zero coefficient have $m_{110}=m=m_{001}=0$
applying either $\partial_{\tresdos}\partial_3\partial_1\partial_2$ or else $\partial_1\partial_3\partial_1\partial_2$. Next,
\begin{gather*}
 \partial_3\partial_1\partial_2\big(\zt_{000}^{m_{000}} \zt_{100}^{m_{100}} \zt_{010}^{m_{010}}\big)
=-2\delta_{m_{101},1} \zt_{000}^{m_{000}} \zt_{100}^{m_{100}},
\\[1mm]
 \partial_1\partial_2\big(\zt_{000}^{m_{000}} \zt_{100}^{m_{100}}\big)
=-\delta_{m_{101},1} \zt_{000}^{m_{000}},
\end{gather*}
so $\mathtt{S}=a \zt_{000}^n$, $a\in\Bbbk^{\times}$, and we get a contradiction since $\zt_{000}^n\ne0$ for all $n\in\mathbb N_0$. Thus \eqref{eq:endymion-2pts-case-a-PBW} is a~basis of $\toba\big(\enyIIb{\infty}\big(\bqs\big)\big)$, and $\toba=\toba\big(\enyIIb{\infty}\big(\bqs\big)\big)$.

Finally, the ordered monomials \eqref{eq:endymion-2pts-case-a-PBW} define an ascending algebra filtration whose associated graded algebra is a
(truncated) quantum polynomial algebra. Hence
\begin{equation*}
\GK \toba\big(\enyIIb{\infty}\big(\bqs\big)\big) =4.
\tag*{\qed}
\end{equation*}
\renewcommand{\qed}{}
\end{proof}

\subsubsection[Case (II), finite ghost]{Case~\ref{case:II}, finite ghost}\label{subsubsec:caseII-2}
Here
$q_{22} = \widetilde{q}_{23} = 1$, $q_{33} = \widetilde{q}_{13}=-1$.
Let $\bqs = (q_{12}, q_{13}, q_{23}) \in (\Bbbk^{\times})^3$. Define
$\bq$ by \eqref{eq:bq-IIa-b}.

\begin{Proposition} \label{prop:endymion-two-pts-case-b}
Assume that $a\ne 0$. Then $\GK \toba(V)=\infty$.
\end{Proposition}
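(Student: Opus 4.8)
The plan is to use the splitting technique of Section~\ref{subsec:splitting-technique}, splitting off the point $V_3$. Since $q_{33}=-1$, the algebra $\toba(V_3)=\Bbbk\langle x_3\rangle$ is $2$-dimensional, so $\GK\toba(V)=\GK\toba(\K^1)$ with $\K^1=\adc(\toba(V_3))(V_1\oplus V_2)$. As $x_3^2=0$, the space $\K^1$ is spanned by $x_1,x_{\tresdos},x_2$ together with $x_{31}:=(\adc x_3)(x_1)$ and $x_{3\tresdos}:=(\adc x_3)(x_{\tresdos})$; here $(\adc x_3)(x_2)=0$ because $x_2$ and $x_3$ $q$-commute. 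First I would record linear independence of these five elements via the skew-derivations $\partial_i$, and then compute the coaction $\delta=(\pi\otimes\id)\Delta_H$ on each generator ($H=\toba(V)\#\Bbbk\Gamma$), exactly as in the proofs of Theorems~\ref{th:endymion-rank4-1} and~\ref{th:endymion-2pts-case-a}. The key outputs are $\delta(x_{31})=2x_3g_1\otimes x_1+g_1g_3\otimes x_{31}$ and $\delta(x_{3\tresdos})=x_3g_1\otimes(2x_{\tresdos}+ax_1)+g_1g_3\otimes x_{3\tresdos}$, where the surviving coefficient $1-\widetilde{q}_{13}=2$ uses $\widetilde{q}_{13}=-1$, and $a$ enters through the Jordan part $g_3\cdot x_{\tresdos}=q_{31}(x_{\tresdos}+ax_1)$.

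From these formulas the braiding of $\K^1$ is read off. The decisive observation is that, because $a\neq0$, the pair $\langle x_{31},x_{3\tresdos}\rangle$ is homogeneous of degree $g_1g_3$ and $g_1g_3$ acts on it by a non-trivial Jordan block of eigenvalue $-1$; hence it is a genuine block isomorphic to $\Vc(-1,2)$. Moreover $\adc(x_3)$ annihilates $x_2$, $x_{31}$ and $x_{3\tresdos}$, so all the $\adc(x_3)$-correction terms of the coaction vanish on the subspace $\langle x_{31},x_{3\tresdos},x_2\rangle$, which is therefore a braided subspace whose braiding is of the pale-block-and-point form~\eqref{eq:braiding-paleblock-point}: the block is $\langle x_{31},x_{3\tresdos}\rangle$, the point is $x_2$, the ghost equals $1\neq0$ (coming from $g_2\cdot x_{3\tresdos}=q_{21}q_{32}(x_{3\tresdos}+x_{31})$), and the relevant invariants are $-1$ on the block, $1$ on the point, and connecting parameter $q_{32}^2$. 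Applying Theorem~\ref{th:paleblock-point-resumen} to this rank-$3$ subspace, its point-invariant is $1$, so finiteness of $\GK$ would force its connecting parameter $q_{32}^2=1$; hence whenever $q_{32}^2\neq1$ we obtain $\GK\toba(\K^1)=\infty$, and so $\GK\toba(V)=\infty$.

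It remains to treat $q_{32}^2=1$ (equivalently $q_{23}=\pm1$), where the point $x_2$ decouples from the block $\langle x_{31},x_{3\tresdos}\rangle$ and the subspace above has finite $\GK$; I expect this to be the main obstacle. Here one cannot isolate a single clean rank-$3$ configuration, because the $\adc(x_3)$-corrections genuinely entangle $\langle x_{31},x_{3\tresdos}\rangle$ with the remaining pair $\langle x_1,x_{\tresdos}\rangle$ (for instance $c(x_{31}\otimes x_1)=-2x_{31}\otimes x_1-q_{31}x_1\otimes x_{31}$ is not of block-and-point type), so no $2$- or $3$-dimensional piece closes up on its own. The plan is to analyse the full five-dimensional $\K^1$ in this case, locating inside $\toba(\K^1)$ either a block $\Vc(-1,\ell)$ with $\ell\geq3$, ruled out by Theorem~\ref{th:blocks}, or, after passing to the associated graded of a suitable filtration, a rank-$2$ diagonal sub-braiding forbidden by Theorem~\ref{thm:conj-AAH}. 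In either route the nonzero ghost $a$ is precisely what produces the forbidden growth, consistent with its absence in the companion case $a=0$ of Theorem~\ref{th:endymion-2pts-case-a}.
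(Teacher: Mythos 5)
Your first reduction (splitting off $V_3$, the basis $\bigl\{x_1,x_{\tresdos},x_{31},x_{3\tresdos},x_2\bigr\}$ of $\K^1$, and the two coaction formulas) coincides with Step~1 of the paper's proof. The decisive second step, however, contains a genuine gap. The rank-$3$ subspace $\bigl\langle x_{31},x_{3\tresdos},x_2\bigr\rangle$ is indeed a categorical braided subspace, but it is a \emph{genuine} block plus a point, not a pale block plus a point: its own degree $g_1g_3$ acts on $\bigl\langle x_{31},x_{3\tresdos}\bigr\rangle$ by the Jordan block $-\left(\begin{smallmatrix}1&a\\0&1\end{smallmatrix}\right)$, whereas in \eqref{eq:braiding-paleblock-point} the block's group element acts by a scalar. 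Hence Theorem~\ref{th:paleblock-point-resumen} does not apply; the relevant result is the one-block-one-point classification of \cite{AAH-triang}. Moreover the interaction parameter is $\widetilde q_{12}\widetilde q_{23}=1$ (automatic in Case~\ref{case:II}), not $q_{32}^2$, so your dichotomy on $q_{32}^2$ is vacuous. What this subspace actually yields, after normalizing the block (replace $x_{31}$ by $ax_{31}$), is a weak interaction with ghost $-2a^{-1}$; the necessary condition $-2a^{-1}\in\N_0$ from \cite[Lemma~4.2.3]{AAH-triang} excludes only those $a$ not of the form $-2/n$, $n\in\N$, and therefore does not dispose of all $a\neq0$.

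For the surviving values of $a$ your text offers only a plan (``locate a block $\Vc(-1,\ell)$ with $\ell\ge3$, or a forbidden diagonal sub-braiding''), with no construction; this is precisely where the content of the proof lies, since the five-dimensional $\K^1$ does not visibly contain such a configuration. The paper closes this gap by a different device: it applies the braided monoidal isomorphism $(\Omega,\omega)$ of \cite[Remark~12.3.8]{HS-book} (reflection at the point $V_3$), identifies $Z=(\Omega,\omega)\bigl(\K^1\bigr)$ with $\Ku^1=\adc(\toba(V_3^*))(W_1\oplus V_2)$ for an auxiliary $W=W_1\oplus V_2\oplus V_3^*$ in which $W_1$ is a genuine $-1$-block (this uses $a\neq0$) and $V_3^*$ has mild interaction, and then invokes \cite[Lemma~5.4.11]{AAH-triang} to conclude $\GK\toba(W)=\infty$ and hence $\GK\K=\infty$. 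Without this reflection step, or a substitute argument covering $a=-2/n$, your proof is incomplete.
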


\begin{proof}[Overview of the proof]
By the spliting technique, see Section~\ref{subsec:splitting-technique}, it suffices to
show that $\GK \K = \infty$, where $\K = \toba\left( \K^1 \right)$ and $\K^1 = \adc(\toba(V_3))(V_1 \oplus V_2)$.
Clearly, $V_3^*$, which is generated by $f_3$ with $f_3(x_3)=1$, belongs to $\yd{\Bbbk\Gamma}$ with structure
\begin{align*}
\delta(f_3)=g_3^{-1}\otimes f_3, \qquad g_i\cdot f_3=q_{i3}^{-1}f_3;
\end{align*}
particularly, $\toba(V_3^*) \simeq \Lambda (V_3^*)$.
Thus we may consider $\toba(V_3^*)\#\Bbbk \Gamma$ and use the braided monoidal isomorphism of~\cite[Remark~12.3.8]{HS-book}
\begin{align*}
(\Omega,\omega)\colon\ \yd{\toba(V_3)\#\Bbbk \Gamma}\to \yd{\toba(V_3^*)\#\Bbbk \Gamma}.
\end{align*}
By~\cite[Corollary~12.3.9]{HS-book}, $(\Omega,\omega)(\K)\simeq \toba(Z)$, where $Z=(\Omega,\omega)\big(\K^1\big)$.
Now we introduce $W=W_1\oplus V_2\oplus V_3^*$, see Step~\ref{stepii:def-W}, and apply the splitting technique again:
let $\Ku = \toba\left(W\right)^{\co \toba(V_3^*)} \simeq \toba \big(\Ku^1\big)$, where
\begin{align*}
\Ku^1=\adc(\toba(V_3^*))(W_1 \oplus V_2).
\end{align*}
We shall derive from \cite[Lemma 5.4.11]{AAH-triang} that $\GK\toba(W)=\infty$, hence $\GK \toba\big(\Ku^1\big)=\infty$ since
$\toba(W) \simeq \toba\big(\Ku^1\big) \# \toba(V_3^*)$ and $\dim \toba(V_3^*)=2$.

Finally, we show in Step~\ref{stepii:isomorphism} that $Z\simeq \Ku^1$.
Since the functor $(\Omega,\omega)$ preserves the algebra structure,
$\GK \K=\GK\Ku=\infty$, so $\GK\toba(V)=\infty$.

\begin{stepii}\label{stepii:base1}
The set $B=\big\{x_1,x_{\tresdos},x_{31},x_{3\tresdos},x_2\big\}$ is a basis of $\K^1$
and the coaction of the elements of~$B$ is $\delta(x_i) = g_{\lfloor i\rfloor}\otimes x_i$,
where $\lfloor i\rfloor$ is the integral part of $i$,
\begin{align*}
&\delta(x_{31}) = 2x_3g_1 \otimes x_1 + g_1g_3 \otimes x_{31},
\\
&\delta\big(x_{3\tresdos}\big) = x_3g_1 \otimes \big(2x_{\tresdos}+ax_1\big) + g_1g_3 \otimes x_{3\tresdos}.
\end{align*}
\end{stepii}

Indeed, $(\adc x_3)x_2=0$ and $x_3^2=0$, so $B$ spans $\K^1$.
The computation of the coaction is direct; it implies in turn that $B$ is linearly independent.

\begin{stepii}
Here is the structure of $Z\in \yd{\toba(V_3^*)\#\Bbbk \Gamma}$.
By definition, $Z=\K^1$ as vector space, and the $\Gamma$-action on $Z$ coincides with the one of $\K^1$.
Next:
\begin{enumerate}[label=$(\roman*)$]
\item The $\toba(V_3^*)$-action on $Z$ is given by:
\begin{align*}
f_3 \cdot x_i =0,\qquad f_3 \cdot x_{31} = 2x_1,\qquad f_3 \cdot x_{3\tresdos} = 2x_{\tresdos}+ax_1.
\end{align*}
\item The coaction $\delta\colon Z\to \toba(V_3^*)\#\Bbbk\Gamma\otimes Z$ is given by:
\begin{alignat*}{2}
&\delta(x_1) = f_3g_1 \otimes x_{31}+g_1\otimes x_1,&&
\delta(x_{\tresdos}) = f_3g_1 \otimes \big(2x_{3\tresdos}-ax_{31}\big)+g_1\otimes x_{\tresdos},
\\
&\delta(x_{3j})=g_1g_3\otimes x_{3j}, \quad j=1,\tfrac{3}{2},\qquad
&&\delta(x_2) = g_2\otimes x_2.
\end{alignat*}
\end{enumerate}
\end{stepii}
This follows from \cite[Theorem 12.3.2 and Remark 12.3.8]{HS-book} by Step~\ref{stepii:base1}.

\begin{stepii} \label{stepii:def-W}
Let $W=W_1\oplus V_2\oplus V_3^*$,
where $W_1\in\yd{\Bbbk\Gamma}$ is homogeneous of degree $g_1g_3$, has a basis $w_1$, $w_{\tresdos}$ and $\Gamma$-action given by
\begin{alignat*}{2}
&g_i\cdot w_1=q_{i1}q_{i3}w_1, &&
g_1 \cdot w_{\tresdos}=-w_{\tresdos}+w_1,
\\
&g_2 \cdot w_{\tresdos}=q_{21}q_{23}\big(w_{\tresdos}+w_1\big), \qquad&&
g_3 \cdot w_{\tresdos}=-q_{31}\big(w_{\tresdos}+aw_1\big).
\end{alignat*}
As $a\ne 0$, $W_1$ is a $-1$-block and $W$ is a sum of a block with two points, where $V_3^*$ has mild interaction and $V_2$ has weak interaction.
By {\rm \cite[Lemma~5.4.11]{AAH-triang}}, $\GK\toba(W)=\infty$.
\end{stepii}

\begin{stepii}\label{stepii:base2} Let $w_{3i}:=(\adc f_3)w_i$, $i=1,\tresdos$.
\begin{enumerate}[label=$(\roman*)$]
\item The set $\underline{B}=\big\{w_1,w_{\tresdos},w_{31},w_{3\tresdos},x_2\big\}$ is a basis of $\Ku^1$.

\item The coaction $\delta\colon\ \Ku^1\to \toba(V_3^*)\#\Bbbk\Gamma\otimes \Ku^1$ is given by:
\begin{alignat*}{2}
&\delta(w_{31}) = 2f_3g_1 \otimes w_{1}+g_1\otimes w_{31},
&&\delta(w_{j})=g_1g_3\otimes x_{j}, \quad j=1,\tfrac{3}{2},
\\
&\delta(w_{3\tresdos}) = f_3g_1 \otimes \big(2w_{\tresdos}+aw_{1}\big)+g_1\otimes w_{3\tresdos}, \qquad
&&\delta(x_2) = g_2\otimes x_2.
\end{alignat*}
\end{enumerate}
\end{stepii}
The proof follows as for $\K^1$. Finally we deduce from Step~\ref{stepii:base2}:

\begin{stepii}\label{stepii:isomorphism}
The linear isomorphism $Z\to\Ku^1$ given by
\begin{align*}
x_{31} \mapsto 2w_1,\qquad
x_{3\tresdos} \mapsto 2w_{\tresdos},\qquad
x_{1} \mapsto w_{31},\qquad
x_{\tresdos} \mapsto w_{3\tresdos}-aw_{31},\qquad
x_{2} \mapsto x_2,
\end{align*}
is $\toba(V_3^*)\#\Bbbk\Gamma$-linear and $\toba(V_3^*)\#\Bbbk\Gamma$-colinear. \hfill \qed
\end{stepii}
\renewcommand{\qed}{}
\end{proof}

\subsection[Case~(III)]{Case~\ref{case:III}}
In this subsection, we assume that $\widetilde{q}_{12}=-1$, $\widetilde{q}_{13}=1$.
Hence $q_{22}=-1$, and $V_1\oplus V_2$ is isomorphic to $\eny_{\star}(q_{12})$.

\begin{Lemma}\label{lemma:caseIII-general}
If $\GK \toba(V)$ is finite, then either of the following holds:{\samepage
\begin{enumerate}[label=$(\Alph*)$]
\item\label{item:caseIII-A} $a = 0$, $q_{33} =\widetilde{q}_{23}= -1,$

\item\label{item:caseIII-B} $a \ne 0$, $q_{33} =\widetilde{q}_{23}= 1,$

\item\label{item:caseIII-C} $a \ne 0$, $q_{33} =\widetilde{q}_{23}= -1$.
\end{enumerate}}
\end{Lemma}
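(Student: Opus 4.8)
The plan is to combine Theorem~\ref{th:paleblock-point-resumen}, applied to the subspace $V_1\oplus V_3$, with a passage to an associated graded braided vector space of diagonal type, and finally a reindexing that reduces one surviving branch to Case~\ref{case:II}. Throughout I use the opening observations of the subsection: $q_{22}=-1$ and $V_1\oplus V_2\simeq\eny_{\star}(q_{12})$. I first split on $a$. If $a\neq 0$, then $V_1\oplus V_3$ is a pale block of dimension $2$ together with a point satisfying $\widetilde{q}_{13}=1$, so Theorem~\ref{th:paleblock-point-resumen} forces $q_{33}\in\{\pm1\}$. If $a=0$, then $g_3$ acts diagonally on $V_1$ and $\widetilde{q}_{13}=1$, so the interaction between $V_3$ and $V_1$ is trivial; by Definition~\ref{def:connected} and Hypothesis~\ref{hyp:pale}\ref{item:hyp-connected}, connectedness of $V$ then forces $\widetilde{q}_{23}\neq 1$, the vertex $3$ being attached only through $V_2$.

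The core step is the diagonal-type reduction along the flag $0\subset\langle x_1,x_2,x_3\rangle\subset V$ of Yetter--Drinfeld submodules. In the associated graded object $\gr V$ the class of $x_{\tresdos}$ becomes a diagonal vector, so $\gr V$ is of diagonal type, and its Dynkin diagram is a ``star'' centred at the vertex $2$: the vertices $1$, $\tresdos$, $2$ carry label $-1$ and form the path $1-2-\tresdos$ with both edges equal to $\widetilde{q}_{12}=-1$ (using $\widetilde{q}_{\tresdos,2}=\widetilde{q}_{12}$), the vertex $3$ carries label $q_{33}$ and is joined to $2$ by an edge labelled $\widetilde{q}_{23}$, and every other edge is trivial. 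By \cite[Lemma~3.4.2]{AAH-triang}, $\gr\toba(V)$ is a pre-Nichols algebra of $\gr V$ with $\GK\gr\toba(V)\le\GK\toba(V)<\infty$, so by Theorem~\ref{thm:conj-AAH} this diagram has finite root system and I appeal to the classification \cite{H-classif-RS}. The subdiagram $\{1,\tresdos,2\}$ is a copy of $A_3$ with parameter $-1$ sitting at the trivalent node $2$; consulting \cite{H-classif-RS} one finds that the only finite completions are $\widetilde{q}_{23}=1$ (so that vertex $3$ disconnects) or $\widetilde{q}_{23}=-1$ together with $q_{33}=-1$ (the Cartan $D_4$ with parameter $-1$). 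Indeed, $\widetilde{q}_{23}=-1$ with $q_{33}=1$ is impossible because a vertex labelled $1$ joined by a nontrivial edge cannot lie in a connected finite-type diagram (equivalently $\GK=\infty$ by \cite[Lemma~2.3.7]{AAH-triang}), and every $\widetilde{q}_{23}\notin\{1,-1\}$ is ruled out at the trivalent node. Hence $\widetilde{q}_{23}\in\{1,-1\}$, and $\widetilde{q}_{23}=-1$ forces $q_{33}=-1$.

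Assembling the cases: if $a=0$ then $\widetilde{q}_{23}\neq1$ from the first paragraph, hence $\widetilde{q}_{23}=q_{33}=-1$, which is~\ref{item:caseIII-A}; if $a\neq0$ and $\widetilde{q}_{23}=-1$ then $q_{33}=-1$, which is~\ref{item:caseIII-C}. The remaining and most delicate branch is $a\neq0$, $\widetilde{q}_{23}=1$, in which $\gr V$ is silent on $q_{33}$ because vertex $3$ disconnects. Here I would reindex: since $a\neq0$, $g_3$ also acts on $V_1$ by a nontrivial Jordan block, so interchanging the points $V_2$ and $V_3$ (and rescaling the basis of $V_1$) gives an isomorphic braided vector space lying in Case~\ref{case:II}, with new parameters $\widetilde{q}_{12}=1$, $\widetilde{q}_{13}=-1\neq1$ and new $q_{22}$ equal to the old $q_{33}$. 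Since the Nichols algebra is unchanged, Lemma~\ref{lemma:caseII-general} applies and yields new $q_{22}=1$, that is $q_{33}=1$; this is~\ref{item:caseIII-B}. I expect the main obstacle to be the trivalent-node analysis of the star diagram via \cite{H-classif-RS}, together with the careful bookkeeping of the reindexing step, which must be invoked precisely in the branch $\widetilde{q}_{23}=1$ where the graded argument yields no information on $q_{33}$.
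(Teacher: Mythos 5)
Your proposal reaches the correct trichotomy, but by a genuinely different route from the paper's, and the difference is worth spelling out. The paper does not work with the rank-$4$ diagram of $\gr V$ alone: in each of the cases $a=0$ and $a\neq 0$ it adjoins to the infinitesimal braiding of $\gr\toba(V)\#\Bbbk\Gamma$ an extra primitive element (the class of $z_1^2$ in degree $4$, resp.\ of $(\adc x_{\tresdos})x_3$ in degree $2$), producing a rank-$5$ diagonal diagram in which some vertex is labelled $+1$; the elementary criterion \cite[Lemma~2.3.7]{AAH-triang} (a vertex labelled $1$ with a nontrivial adjacent edge forces $\GK=\infty$) then yields $\widetilde{q}_{23}^2=1$ when $a=0$ and the clean identity $\widetilde{q}_{23}=q_{33}$ when $a\neq 0$, with the classification \cite{H-classif-RS} invoked only once, to pin down $q_{33}=-1$ in the case $a=0$. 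Your argument instead leans entirely on the classification at the trivalent node of the rank-$4$ star, and this has two consequences. First, the assertions that no arithmetic root system completes the $A_3$-at-$(-1)$ star with third edge $\notin\{\pm1\}$, and that $\widetilde{q}_{23}=-1$ forces $q_{33}=-1$, are table checks in \cite{H-classif-RS} that you state but do not carry out; they are true, but they are exactly the verification burden the paper's extra-vertex device is designed to minimize. Second, you correctly identify that the rank-$4$ diagram is blind to the branch $a\neq0$, $\widetilde{q}_{23}=1$, $q_{33}=-1$ (vertex $3$ disconnects there), and your repair --- swapping $V_2\leftrightarrow V_3$, renormalizing the basis of $V_1$, and invoking Lemma~\ref{lemma:caseII-general} on the resulting Case~\ref{case:II} data --- is legitimate and non-circular, since that lemma is proved independently and its hypotheses (new $\widetilde{q}_{12}=1$, new $\widetilde{q}_{13}=-1$, new $q_{22}=q_{33}\in\{\pm1\}$) are met; indeed this is precisely the reduction the paper performs \emph{after} the lemma to dispose of cases~\ref{item:caseIII-B} and~\ref{item:caseIII-C}. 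The paper's vertex $z$, labelled $-q_{33}$, handles that branch directly without any reindexing. Note finally that your reindexing, applied to the whole branch $a\neq 0$, would show that case~\ref{item:caseIII-C} also has infinite $\GK$ --- which is true and is exactly the paper's remark following the lemma, so this is a strengthening rather than an inconsistency.
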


\begin{proof}
Let $\toba=\toba(V)$. Then $0 \subset \langle x_1, x_2, x_3\rangle \subset V$ is a flag in $\yd{\Bbbk G}$, $\gr V$ is a braided vector space of diagonal type, and the corresponding graded Hopf algebra $\gr \toba$ is a pre-Nichols algebra of $\gr V$, see \cite[Lemma 3.4.2]{AAH-triang}.

We assume first that $a=0$, so $\widetilde{q}_{23}\ne 1$.
Let $u$ be the class of $z_1^2$ in $\gr \toba$. Then $u$ is a non-zero primitive element in $\gr \toba$, see the proof of \cite[Proposition 8.1.8]{AAH-triang}. Let $\mathcal{H}=\gr \toba\#\Bbbk\Gamma$: $\mathcal{H}$ is a~pointed Hopf algebra and the diagram of $\mathcal{H}$ is of diagonal type. Let $W$ be the infinitesimal braiding of $\mathcal{H}$. In $\mathcal{H}$, $u$ has degree 4, the $x_i$'s are linearly independent of degree 1 and
\begin{align*}
\Delta(u) =u\otimes 1+g_1^2g_2^2 \otimes u, \qquad \Delta(x_i)=x_i\otimes 1+g_i\otimes x_i,
\end{align*}
so $u$ and the $x_i$'s are linearly independent vectors in $W$.
Computing the actions of $g_1^2g_2^2$ and $g_3$ on $u$ and $x_3$, we see that
$$
\xymatrix@C=35pt{\overset{-1}{\underset{1}{\circ}} \ar@{-}[r]^{-1} & \overset{-1}{\underset{2}{\circ}} \ar@{-}[d]^{-1} \ar @{-}[r]^{\widetilde{q}_{23}} &
\overset{q_{33}}{\underset{3}{\circ}} \ar @{-}[r]^{\widetilde{q}_{23}^2} & \overset{1}{\underset{u}{\circ}}
\\ & \overset{-1}{\underset{\tresdos}{\circ}} & &}
$$
is a subdiagram of the Dynkin diagram of $W$. Thus $\widetilde{q}_{23}=-1$ by \cite[Lemma 2.3.7]{AAH-triang},
and $q_{33}=-1$ by Theorem~\ref{thm:conj-AAH}.

Now we assume $a\ne 0$. Hence $q_{33}= \pm 1$.
Let $z$ be the class of $\big(\adc x_{\tresdos}\big)x_3$ in $\gr \toba$. Then $z$ is a~non-zero primitive element in $\gr \toba$ by \cite[Propositions 8.1.6 and 8.1.7]{AAH-triang}. Let $\mathcal{H}=\gr \toba\#\Bbbk\Gamma$: $\mathcal{H}$~is a pointed Hopf algebra and the diagram of $\mathcal{H}$ is of diagonal type. Let $W$ be the infinitesimal braiding of $\mathcal{H}$. In $\mathcal{H}$, $z$ has degree 2, the $x_i$'s are linearly independent of degree 1 and
\begin{align*}
\Delta(z) =z\otimes 1+g_1g_2 \otimes z, \qquad \Delta(x_i)=x_i\otimes 1+g_i\otimes x_i,
\end{align*}
so $z$ and the $x_i$'s are linearly independent elements in $W$.
Computing the actions of the corresponding group-like elements on $z$ and $x_i$, we see that
$$
\xymatrix@C=40pt{\overset{-1}{\underset{1}{\circ}} \ar@{-}[r]^{-1} & \overset{-1}{\underset{2}{\circ}} \ar@{-}[d]_{-1} \ar @{-}[r]^{\widetilde{q}_{23}}
\ar@{-}[rd]^{-\widetilde{q}_{23}} & \overset{q_{33}}{\underset{3}{\circ}}
\\ & \overset{-1}{\underset{\tresdos}{\circ}} & \overset{-q_{33}}{\underset{z}{\circ}}}
$$
is a subdiagram of the Dynkin diagram of $W$, thus $\widetilde{q}_{23}=q_{33}$ by \cite[Lemma 2.3.7]{AAH-triang}.
\end{proof}

Notice that~\ref{item:caseIII-B} corresponds to Lemma~\ref{lemma:caseII-general} up to exchanging $x_2$ and $x_3$, so this situation was treated previously. Also, \ref{item:caseIII-C} was discarded in Lemma~\ref{lemma:caseII-general}, up to exchanging $x_2$ and $x_3$. Thus we only have to deal with~\ref{item:caseIII-A}.

\begin{Proposition}\label{prop:caseIII-1}
If $a = 0$ and $q_{33} =\widetilde{q}_{23}= -1$, then $\GK \toba(V)=\infty$.
\end{Proposition}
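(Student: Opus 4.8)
The plan is to argue by contradiction: assume $\GK\toba(V)<\infty$. As in Lemma~\ref{lemma:caseIII-general}, since $a=0$ the chain $0\subset\langle x_1,x_2,x_3\rangle\subset V$ is a flag of Yetter--Drinfeld submodules whose associated graded $\gr V$ is of diagonal type; here $\gr V$ is of Cartan type $D_4$ with parameter $-1$, the trivalent vertex being $x_2$ and the three leaves $x_1$, $x_{\tresdos}$, $x_3$. Note that, because $a=0$ and $\widetilde{q}_{13}=1$, the three leaves are pairwise inert, so all leaf--leaf brackets already vanish in $\toba(V)$. By \cite[Lemma~3.4.2(b)]{AAH-triang}, $\gr\toba(V)$ is a finite-GK pre-Nichols algebra of $\gr V$, and I would first record that, with the values $q_{33}=\widetilde{q}_{23}=-1$ fixed, the primitive element $u$ (the class of $z_1^2$, already used in Lemma~\ref{lemma:caseIII-general}) has $q_{uu}=1$ and edge $\widetilde{q}_{23}^2=1$ to the vertex $3$; hence $u$ is an isolated vertex of label $1$ and the infinitesimal braiding of $\mathcal H=\gr\toba(V)\#\Bbbk\Gamma$ is just $D_4$ together with this isolated vertex. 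This is of finite type, so finiteness of $\GK$ is \emph{not} yet contradicted and a finer argument that genuinely uses the pale block through the centre $x_2$ is needed. In particular, the direct transcription of the $D_4$ argument of Lemma~\ref{lemma:caseII-general} is unavailable: there the block sat at a leaf and the contradiction came from $x_{3\frac32 2}$ via a vanishing of two $q$-commuting leaves, whereas here the root vectors $x_{3\tresdos 2}$ are genuinely non-zero.

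The key idea is to upgrade the isolated label-$1$ vertex $u$ into a connected subdiagram of infinite type. I would exploit the Jordan string $g_2\cdot x_{\tresdos}=q_{21}(x_{\tresdos}+x_1)$ to produce, inside $\toba(\eny_{\star}(q_{12}))=\toba(V_1\oplus V_2)\hookrightarrow\toba(V)$ (Proposition~\ref{prop:paleblocks}(c)), the non-nilpotent, mutually commuting elements $x_{\tresdos2}=[x_{\tresdos},x_2]_c$ and $w=[x_{\tresdos2},x_{12}]_c$ that account for $\GK\toba(\eny_{\star})=2$. A short computation gives $g_1\cdot x_{\tresdos2}=-q_{12}x_{\tresdos2}$ and $g_2\cdot x_{\tresdos2}=-q_{21}x_{\tresdos2}$, so $x_{\tresdos2}$ is an eigenvector of self-braiding $-1$, while the interaction of the point $x_3$ with $x_{\tresdos2}$ has $\widetilde{q}=\widetilde{q}_{13}\widetilde{q}_{23}=-1$. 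The plan is then either (i) to find in the infinitesimal braiding a further primitive element $p$, built from $x_{\tresdos2}$, $w$ and $x_3$, whose addition joins $u$ (label $1$) to the rest of the diagram by a non-trivial edge, since a connected diagonal braiding of rank $\ge 2$ carrying a vertex labelled $1$ joined by a non-trivial edge has infinite $\GK$ by \cite[Lemma~2.3.7]{AAH-triang} and Theorem~\ref{thm:conj-AAH} (with \cite{H-classif-RS}); or (ii) as in Proposition~\ref{prop:endymion-two-pts-case-b}, to apply the splitting technique of Section~\ref{subsec:splitting-technique} and locate inside $\toba(\K^1)$ a block $\Vc(-1,\ell)$ with $\ell\ge 3$, to which Theorem~\ref{th:blocks} applies; the elements $x_{\tresdos2}$, $x_3$ and their iterated adjoint action are the natural candidates to build such a $\Vc(-1,\ell)$ string.

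The hard part will be precisely this explicit construction: tracking the Jordan correction through the iterated adjoint action of $x_3$ on the non-nilpotent elements $x_{\tresdos2}$ and $w$, and then verifying by means of the skew-derivations $\partial_1,\partial_2,\partial_3$ that the resulting family is linearly independent and carries either the braiding matrix of $\Vc(-1,\ell)$ or the Dynkin diagram forbidden by \cite{H-classif-RS}. This is the same style of bookkeeping as in the Proposition of Section~\ref{sec:block+pt} (where an analogous $\adc$-string produced a copy of $\Vc(-1,3)$) and in Proposition~\ref{prop:endymion-two-pts-case-b}, and I expect the whole difficulty to be computational rather than conceptual. Once the infinite-GK braided subspace is exhibited, $\GK\toba(V)=\infty$ follows, contradicting the standing assumption and proving the Proposition.
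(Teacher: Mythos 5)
Your preliminary analysis is correct: with $a=0$ and $q_{33}=\widetilde{q}_{23}=-1$ the graded object $\gr V$ is of finite Cartan type $D_4$ and the auxiliary vertex $u$ of label $1$ is disconnected from the rest of the diagram, so the diagonal-type reduction used in Lemmas~\ref{lemma:caseII-general} and~\ref{lemma:caseIII-general} cannot be pushed further and a genuinely non-diagonal argument is required. From that point on, however, the proposal is a plan rather than a proof: neither of your two routes is carried out, and the entire content of the Proposition --- exhibiting a concrete braided subspace of infinite $\GK$ --- is deferred to ``the hard part''. Worse, the one computation you do perform is wrong: since $g_2\cdot x_{\tresdos}=q_{21}\big(x_{\tresdos}+x_1\big)$ and $g_2\cdot x_2=-x_2$, one gets $g_2\cdot x_{\tresdos2}=-q_{21}\big(x_{\tresdos2}+x_{12}\big)$, not $-q_{21}x_{\tresdos2}$. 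Thus $x_{\tresdos2}$ is not a point of self-braiding $-1$ but sits together with $x_{12}$ in a $-1$-block, so the diagonal vertex you intended to attach to $u$ (or to $x_3$) does not exist in the form described, and route~(i) collapses. Route~(ii) is the right general strategy but is left entirely unexecuted, with candidates ($x_{\tresdos2}$, $w$, iterated $\adc x_3$) for which no braiding matrix, no linear independence, and no identification with a $\Vc(-1,\ell)$ or a forbidden diagram is given.

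For comparison, the paper's proof splits off the point $V_2=\Bbbk x_2$, not $V_3$ or the block. Because the interaction of $x_2$ with the pale block is mild, $q_{22}=-1$ forces $x_2^2=0$, so $\K^1=\adc(\toba(V_2))(V_1\oplus V_3)$ is spanned by $x_1,x_{\tresdos},x_3$ and the three elements $\yt_i=(\adc x_2)x_i$, $i\in\big\{1,\tresdos,3\big\}$. Computing the coaction (e.g., $\delta(\yt_1)=2x_2g_1\otimes x_1+g_1g_2\otimes\yt_1$) shows that $W=\langle\yt_1,\yt_{\tresdos},\yt_3\rangle$ is a braided subspace which is a $\Vc(-1,2)$-block in the basis $\big\{-\yt_1,\yt_{\tresdos}\big\}$ together with a point of label $-1$ and ghost $-1\notin\N_0$; then $\GK\toba(W)=\infty$ by Theorem~4.1.1 of the memoir of Andruskiewitsch, Angiono and Heckenberger, and hence $\GK\toba(V)=\infty$. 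This is the concrete construction your proposal needs and does not supply; as written, the argument has a genuine gap at its central step.
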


\begin{proof}
We consider $\K^1 = \adc(\toba(V_2))(\langle V_1\oplus V_3 \rangle)$; as $x_2^2=0$, the set
\begin{align*}
\big\{x_1, x_{\tresdos}, x_3, \yt_{1}:=(\adc x_2)x_{1}, \yt_{\tresdos}:=(\adc x_2)x_{\tresdos}, \yt_{3}:=(\adc x_2)x_{3} \big\}
\end{align*}
is a basis of $\K^1$. The coaction for the $\yt_i$'s is given by
\begin{align*}
\begin{split}
&\delta(\yt_1) =2x_2g_1 \otimes x_1 + g_1g_2 \otimes \yt_1,
\\
&\delta(\yt_3) =2x_2g_3 \otimes x_3 + g_2g_3 \otimes \yt_3,
\\
&\delta(\yt_{\tresdos}) =2x_2g_1 \otimes x_{\tresdos} +x_2g_1 \otimes x_1 + g_1g_2 \otimes \yt_{\tresdos}.
\end{split}
\end{align*}
Then the subspace $W$ spanned by the $\yt_i$'s is a braided subspace with braiding
\begin{align*}
\begin{pmatrix}
- \yt_1 \otimes \yt_1& (-\yt_{\tresdos} - \yt_1) \otimes \yt_1& -q_{12}q_{13}q_{23} \yt_2 \otimes \yt_1
\\
- \yt_1 \otimes \yt_{\tresdos} & (-\yt_{\tresdos} - \yt_1) \otimes \yt_{\tresdos}& -q_{12}q_{13}q_{23} \yt_2 \otimes \yt_{\tresdos}
\\
-q_{21}q_{31}q_{32} \yt_1 \otimes \yt_2 & -q_{21}q_{31}q_{32}\big(\yt_{\tresdos} + \yt_1\big) \otimes \yt_2& -\yt_2 \otimes \yt_2
\end{pmatrix}\!.
\end{align*}
Then the braiding corresponds to a sum of a block, in the basis $\bigl\{-\yt_1, \yt_{\tresdos}\bigr\}$, with $\epsilon=-1$, and a~point $\yt_2$ with label $-1$: the ghost is $-1$, so by \cite[Theorem~4.1.1]{AAH-triang}, $\GK \toba(W)=\infty$. Thus $\GK \toba(V)=\infty$.
\end{proof}

\subsection[Case (IV)]{Case~\ref{case:IV}}
In this subsection, we suppose that $\widetilde{q}_{12}=-1$, $\widetilde{q}_{13} \neq 1$.

\begin{Proposition}\label{prop:caseIV}
If $\widetilde{q}_{12}=-1$, $\widetilde{q}_{13} \neq 1$, then $\GK \toba(V)=\infty$.
\end{Proposition}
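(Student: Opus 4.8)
The plan is to reduce to a question about braided vector spaces of diagonal type and then invoke Theorem~\ref{thm:conj-AAH}, exactly in the spirit of Lemmas~\ref{lemma:caseII-general} and~\ref{lemma:caseIII-general}. First I would record that $\widetilde{q}_{12}=-1$ forces $q_{22}=-1$ by Theorem~\ref{th:paleblock-point-resumen} applied to $V_1\oplus V_2$ (so $V_1\oplus V_2\simeq\eny_{\star}(q_{12})$ and $x_2^2=0$). Since $x_1$ is a common eigenvector for the whole $\Gamma$-action, $0\subset\langle x_1,x_2,x_3\rangle\subset V$ is a flag of Yetter--Drinfeld submodules whose associated graded $\gr V$ is of diagonal type; by \cite[Lemma~3.4.2]{AAH-triang} the graded algebra $\gr \toba(V)$ is a pre-Nichols algebra of $\gr V$ and $\GK \toba(\gr V)\le\GK \toba(V)$. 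Hence it suffices to prove $\GK \toba(\gr V)=\infty$.

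The decisive structural point is that $\gr V$ has a basis $x_1,x_2,x_3,\overline{x}_{\tresdos}$ in which $\overline{x}_{\tresdos}$ and $x_1$ are twins: both lie in degree $g_1$ with the same character, so both carry the vertex label $q_{11}=-1$, both are joined to $x_2$ by an edge $\widetilde{q}_{12}=-1$ and to $x_3$ by an edge $\widetilde{q}_{13}\neq 1$, while the edge joining them is trivial. I would then show that such a diagram cannot underlie a finite arithmetic root system, whence $\GK \toba(\gr V)=\infty$ by Theorem~\ref{thm:conj-AAH} together with \cite{H-classif-RS}.

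To make this uniform in the free scalars $q_{33},\widetilde{q}_{23}$ and in $a$, I would split on $a$. When $a\neq 0$ the element $g_3$ acts on $V_1$ by a Jordan block, so $V_1\oplus V_3$ is again a pale block plus a point; Theorem~\ref{th:paleblock-point-resumen} applied to it then either gives $\GK=\infty$ outright or forces $\widetilde{q}_{13}=q_{33}=-1$. In that surviving situation the twins together with $x_2,x_3$ close up into a four-cycle $x_1-x_2-\overline{x}_{\tresdos}-x_3-x_1$ all of whose vertices are labelled $-1$ and all of whose edges equal $-1$; this all-$(-1)$ four-cycle is of affine type and has an infinite root system, so $\GK \toba(\gr V)=\infty$. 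The case $a=0$ is handled by the same diagonal diagram: one checks against \cite{H-classif-RS} that the sub-path $x_1-x_3-\overline{x}_{\tresdos}$, whose ends are labelled $-1$ with equal nontrivial edges $\widetilde{q}_{13}$, can only be finite when $\widetilde{q}_{13}=q_{33}=-1$, and then one falls back again on the all-$(-1)$ four-cycle.

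The main obstacle is precisely this last verification: showing, uniformly over the free parameters, that the twin configuration is never a finite arithmetic root system. I expect the cleanest route is to isolate the all-$(-1)$ four-cycle produced by the two twin vertices and check directly that its Weyl groupoid is infinite --- equivalently, that one reflection already produces a rank-$2$ diagonal sub-braiding with a vertex of label $1$ and a nontrivial edge, which has infinite Gelfand--Kirillov dimension by \cite[Lemma~2.3.7]{AAH-triang}. For the weakly interacting case $a=0$ an alternative is to exhibit an explicit diagonal subspace of $\K^1=\adc(\toba(V_1))(\langle x_2,x_3\rangle)$ whose diagram contains a vertex of label $1$ joined by an edge $\neq 1$, as was done in Proposition~\ref{prop:pale-2pts-Ib}; this bypasses the need to quote Heckenberger's list in full.
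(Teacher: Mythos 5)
Your overall strategy---pass to $\gr V$ via the flag $0\subset\langle x_1,x_2,x_3\rangle\subset V$, observe that the resulting diagonal braiding has the twin vertices $x_1$ and $\overline{x}_{\tresdos}$, and appeal to Theorem~\ref{thm:conj-AAH}---is exactly the paper's, and your identification of the Dynkin diagram of $\gr V$ is correct. The gap is in how you exploit it. The paper finishes in one line: since $\widetilde{q}_{12}=-1$ and $\widetilde{q}_{13}\neq 1$, \emph{all four} edges of the cycle $1-2-\tresdos-3-1$ are present no matter what $a$, $q_{22}$, $q_{33}$, $\widetilde{q}_{23}$ are, and no diagram in the rank-$4$ table of \cite{H-classif-RS} contains a cycle of length $4$; hence the root system of $\gr V$ is infinite and Theorem~\ref{thm:conj-AAH} gives $\GK\toba(V)=\infty$. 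No case distinction on $a$ and no pinning down of scalars is needed.

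Your detour through forcing $\widetilde{q}_{13}=q_{33}=-1$ does not close. In the branch $a=0$ you assert that finiteness of the rank-$3$ sub-path on $\{x_1,x_3,\overline{x}_{\tresdos}\}$, with end labels $-1$ and equal edges $\widetilde{q}_{13}$, forces $\widetilde{q}_{13}=q_{33}=-1$. This is false: the super-type $A_3$ diagram with end vertices labelled $-1$, middle vertex $q$ and both edges equal to $q^{-1}$ lies in Heckenberger's rank-$3$ list for every $q\neq\pm1$, so the sub-path eliminates nothing beyond $\widetilde{q}_{13}\neq 1$. (This is precisely why Lemma~\ref{lemma:caseII-general} has to adjoin a fourth vertex $\overline{z}$ before it can conclude $q_{33}=\widetilde{q}_{13}=-1$.) Moreover, even where you do reach $q_{22}=q_{33}=\widetilde{q}_{13}=-1$, the induced rank-$4$ diagram need not be the affine all-$(-1)$ cycle: $\widetilde{q}_{23}$ is a free parameter and may be $\neq 1$, adding a chord between the vertices $2$ and $3$, so ``affine Cartan type, hence infinite'' does not apply as stated. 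Both problems disappear once you argue directly on the full rank-$4$ diagram as above.
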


\begin{proof} Here $0 \subset \langle x_1, x_2, x_3\rangle \subset V$ is a flag of YD modules: $\gr V$ is of diagonal type with diagram
$$
\xymatrix{
\overset{-1}{\underset{1} {\circ}}\ar @{-}[d]_{-1} \ar @{-}[rr]^{\widetilde{q}_{13}} & & \overset{q_{33}} {\underset{3} {\circ}}\ar @{-}[d]^{\widetilde{q}_{13}} \\
\overset{q_{22}}{\underset{2} {\circ}} \ar @{-}[rr]^{-1} && \overset{-1}{\underset{\tresdos} {\circ}\,.} }
$$
There are no cycles of length $4$ in \cite[Table 3]{H-classif-RS}, so $\GK \toba(V) = \infty$ by Theorem~\ref{thm:conj-AAH}.
\end{proof}

\section{Two blocks}\label{sec:2blocks}
In this section, we consider $V\!\in \yd{\Bbbk\Gamma}$ satisfying Hypothesis~\ref{hypo:recargada}
with $\theta = 2$, ${\dim V_1= \dim V_2 = 2}$ so that $V_1$ is pale, and $V_2$ is either a pale block or a block.
Let $g_i\in\Gamma$ such that $V_i \subset V_{g_i}$, $i\in \I_2$.
We fix bases $\big\{x_i, x_{i+\frac{1}{2}}\big\}$ of $V_i$, $i\in \I_2$,
such that there exist $q_{ij} \in \Bbbk^{\times}$, $i,j\in \I_2$, and $a,b\in \Bbbk$ satisfying
\begin{alignat*}{2}
&g_i\cdot x_j = q_{ij}x_j, \quad i,j\in \I_2,\qquad&& g_1\cdot x_{\tresdos} = q_{11}x_{\tresdos},
\\
&g_2\cdot x_{\tresdos} = q_{21}\big(x_{\tresdos}+x_1\big),&&
g_1\cdot x_{\cincodos} = q_{12}\big(x_{\cincodos} + a x_{2}\big),\qquad
g_2\cdot x_{\cincodos} = q_{22}\big(x_{\cincodos} + b x_{2}\big).
\end{alignat*}

Thus the braiding of $V$ is determined by the matrix $\bq = (q_{ij})_{i,j \in \I_{2}}$
and the scalars~$a$,~$b$.
Again we consider some special cases; for $q \in \Bbbk^{\times}$ we set
\begin{alignat}{5}\label{eq:selene-defi-20}
&\sele_{2,0}(q) &&\text{where} &&q_{11} = -1 = q_{22}, && q_{12} = q = q_{21}^{-1}, \quad&& a=1, \ b=0,
\\ \label{eq:selene-defi-1+}
&\sele_{1,+}(q, a) \qquad&&\text{where}\quad &&q_{11} = -1 =- q_{22}, \quad&& q_{12} = q = q_{21}^{-1}, && b=1,
\\ \label{eq:selene-defi-1-}
&\sele_{1,-}(q) &&\text{where} &&q_{11} = -1 = q_{22}, && q_{12} = q = q_{21}^{-1}, && a= -1, \ b=1.
\end{alignat}
The diagrams of $\sele_{2,0}(q)$, $\sele_{1,+}(q, a)$ and $\sele_{1,-}(q)$ are respectively
$$
\xymatrix{
\underset{1}{\begin{tikzpicture}
\draw[dashed] (0,0) rectangle (0.36,0.36);
\end{tikzpicture}} \ar@{.}^{(1,0)}[r]
&\underset{2}{\begin{tikzpicture}
\draw[dashed] (0,0) rectangle (0.36,0.36);
\end{tikzpicture}}\, ,}
\qquad
\xymatrix{
\underset{1}{\begin{tikzpicture}
\draw[dashed] (0,0) rectangle (0.36,0.36);
\end{tikzpicture}} \ar @{.}^{(a,1)}[rr]
& & \underset{2}{\boxplus}\,,}
\qquad
\xymatrix{
\underset{1}{\begin{tikzpicture}
\draw[dashed] (0,0) rectangle (0.36,0.36);
\end{tikzpicture}} \ar @{.}^{(1,1)}[r]
& \underset{2}{\boxminus}\,.}
$$
The dotted line means that
$\widetilde{q}_{12}=1$ and is labeled by the pair $(a,b)$.

Here is the main result of this section.

\begin{Theorem} \label{th:2-blocks}
The algebra $\toba (V)$ has finite $\GK$ if and only if $V$ is isomorphic
either to $\sele_{2,0}(q)$, or to $\sele_{1,+}(q, a)$ with $a \in \bigl\{-1, -\frac12\bigr\}$, or to $\sele_{1,-}(q)$ for some $q\in \Bbbk^{\times}$.
\end{Theorem}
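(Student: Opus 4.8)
The statement is an equivalence and I would prove the two implications separately. For the sufficiency I would simply invoke the three construction results: the Nichols algebras $\toba(\sele_{2,0}(q))$, $\toba(\sele_{1,+}(q,a))$ with $a\in\{-1,-\frac12\}$ and $\toba(\sele_{1,-}(q))$ are presented by explicit generators and relations, equipped with a PBW-basis, and shown to have $\GK$ equal to $2$ or $4$ in Theorems~\ref{th:selene-2-0}, \ref{th:selene-1+} and~\ref{th:selene-1-}. Thus this direction reduces entirely to those computations.

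For the necessity, assume $\GK\toba(V)<\infty$; the task is to whittle the parameters $(\bq,a,b)$ down to the three normal forms. Since $g_2$ acts on the pale block $V_1$ through a nontrivial Jordan block (the ghost $a_2=1$ is nonzero), the rank-$3$ subspace $V_1\oplus\langle x_2\rangle$ is a pale block plus a point with nonzero ghost; Theorem~\ref{th:paleblock-point-resumen} then forces $q_{11}=-1$ (in particular excluding the possibility $q_{11}\in\G'_3$ of Corollary~\ref{coro:indecomposable-abelian}) and restricts $(\widetilde{q}_{12},q_{22})$ to the admissible rank-$3$ patterns, namely $\widetilde{q}_{12}=1$ with $q_{22}=\pm1$, or $\widetilde{q}_{12}=-1=q_{22}$. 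I would then branch according to the nature of $V_2$: it is a pale block when $b=0$ and $a\neq0$, and a genuine block when $b\neq0$, the latter forcing $q_{22}\in\{\pm1\}$ by Theorem~\ref{th:blocks}. The two-pale-block branch must land on $\sele_{2,0}(q)$, while the pale-block-plus-block branch must split into the $\boxplus$ case $q_{22}=1$ and the $\boxminus$ case $q_{22}=-1$.

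In every branch the engine for discarding infinite $\GK$ is the finite-root-system dichotomy. Whenever $\widetilde{q}_{12}\neq1$, or whenever a flag of Yetter--Drinfeld submodules produces an associated graded $\gr V$ of diagonal type, I would read off a Dynkin subdiagram of the infinitesimal braiding and invoke Theorem~\ref{thm:conj-AAH} together with the classification of \cite{H-classif-RS} to kill all configurations of infinite root system, exactly in the spirit of Lemmas~\ref{lemma:caseII-general} and~\ref{lemma:caseIII-general}. In parallel I would apply the splitting technique of Section~\ref{subsec:splitting-technique} to $\K^1=\adc(\toba(V_2))(V_1)$ (or its mirror), compute the braiding on the span of the adjoint-action elements such as $(\adc x_2)x_1$ and $(\adc x_{\cincodos})x_1$ and their iterates, and recognize it as a block, a point or a diagonal braiding of known type; this reduces the question to Theorem~\ref{th:blocks} or to the block-and-point classification of \cite[Theorem~1.3.8]{AAH-triang}.

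The delicate point, and the step I expect to be the main obstacle, is to pin down the scalars $a,b$ once the diagonal data are fixed, because the two Jordan couplings then interact and generic values produce infinite $\GK$. This is exactly where the rigidity $a\in\{-1,-\frac12\}$ in the $\sele_{1,+}$ family and $a=-1$, $b=1$ in $\sele_{1,-}$ must emerge, and I expect it to demand the most careful work: expanding the mixed braided brackets, tracking the skew-derivations $\partial_i$ to certify linear independence, and showing that any other value of $(a,b)$ forces either a sub-block $\Vc(\epsilon,\ell)$ with $\ell\geq3$ or $\epsilon\notin\{\pm1\}$, or a diagonal sub-braiding of infinite type, each excluded by Theorem~\ref{th:blocks} or Theorem~\ref{thm:conj-AAH}. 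Connectedness from Hypothesis~\ref{hyp:pale} keeps all surviving configurations genuinely coupled through $a$ and $b$, since $\widetilde{q}_{12}=1$ there. Matching the resulting normal forms against \eqref{eq:selene-defi-20}--\eqref{eq:selene-defi-1-} then produces precisely the list in the statement, completing the proof.
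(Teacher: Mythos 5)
Your overall architecture matches the paper's: sufficiency by invoking Theorems~\ref{th:selene-2-0}, \ref{th:selene-1+} and~\ref{th:selene-1-}; for necessity, forcing $q_{11}=-1$ and the weak/mild dichotomy from Theorem~\ref{th:paleblock-point-resumen} applied to $V_1\oplus\Bbbk x_2$, killing the mild case by a diagonal-type flag together with Theorem~\ref{thm:conj-AAH} (this is exactly Proposition~\ref{prop:mild-2blocks}), and then branching on whether $V_2$ is a pale block ($b=0$, $a\neq0$) or a block ($b\neq0$). Up to and including the two-pale-block branch, which the paper settles by applying Theorem~\ref{th:paleblock-point-resumen} to $\Bbbk x_1\oplus V_2$ to discard $q_{22}=1$, your plan is essentially the paper's.

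The gap is in the step you yourself single out as delicate: pinning down $a$, equivalently the ghost $\ghost$ of \eqref{eq:discrete-ghost}, in the pale-block-plus-block branch. The paper does obtain $\ghost\in\N_0$ from the rank-3 block-and-point results (\cite[Lemma~4.2.3]{AAH-triang} applied to $\langle x_1,x_2,x_{\cincodos}\rangle$, and \cite[Theorem~4.1.1]{AAH-triang} to discard $\ghost=0$ via a braided subspace of $\K^1$), which is roughly what you propose. But the upper bounds $\ghost\le 2$ when $q_{22}=1$ and $\ghost\le 1$ when $q_{22}=-1$ --- i.e., the rigidity $a\in\bigl\{-1,-\tfrac12\bigr\}$ --- are \emph{not} obtained by exhibiting a forbidden sub-block $\Vc(\epsilon,\ell)$ with $\ell\ge3$ or $\epsilon\notin\{\pm1\}$, nor a diagonal sub-braiding of infinite root system: no such subobject materializes for large ghost, and the rank-3 classification itself admits arbitrarily large integer ghost in other families, so neither Theorem~\ref{th:blocks} nor Theorem~\ref{thm:conj-AAH} can close this case. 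What the paper actually does (Steps 2--3 of the proofs of Theorems~\ref{th:selene-1+} and~\ref{th:selene-1-}) is construct inside $\K=\toba(\adc(\toba(V_1))(V_2))$ the recursively defined elements $\wt_{n+1}=[x_{\tresdos\cincodos},\wt_n]_c$, prove by induction --- controlling the scalars $\ab_n$, $\bb_n$ through the skew-derivations --- that these are all nonzero exactly when the ghost exceeds the stated bound, show that the ordered products of the $\wt_n$ are linearly independent, and conclude $\GK=\infty$ from \cite[Lemma~2.3.4]{AAH-triang}. This ``infinitely many independent PBW generators'' mechanism is the essential new input of the proof and is absent from your outline; the obstructions you list would not terminate the argument.
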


Let us overview the proof.
We show that $\toba(\sele_{2,0}(q))$, $\toba\big(\sele_{1,+}\big(q,-\frac{1}{2}\big)\big)$, $\toba(\sele_{1,+}(q,-1))$ and $\toba(\sele_{1,-}(q))$
have finite $\GK$ in Theorems~\ref{th:selene-2-0}, \ref{th:selene-1+} and~\ref{th:selene-1-}.

Suppose then that $\GK \toba(V) < \infty$.
Since $V_1 \oplus \Bbbk x_2$ is a braided subspace with braiding~\eqref{eq:braiding-paleblock-point} up to reindexing,
by Theorem~\ref{th:paleblock-point-resumen}
we may assume that $q_{11} = -1$ and
that either $\widetilde{q}_{12}=1$ (we say that the interaction is \emph{weak}) and $q_{22}=\pm 1$
or else $\widetilde{q}_{12}=-1$ (the interaction is \emph{mild}) and $q_{22}=-1$,
which is discarded in Proposition~\ref{prop:mild-2blocks}.
So we assume that the interaction is weak.

Now $V_2$ is a pale block if and only if $b=0$. In this case, we may assume that $a =1$ after normalizing $x_2$.
By Theorem~\ref{th:paleblock-point-resumen} applied to the braided subspace $\Bbbk x_1\oplus V_2$, $\GK \toba(V) =\infty$ if $q_{22}=1$.
Hence we assume $q_{22}=-1$. That is, we are left with the braided vector space $\sele_{2,0}(q)$ with $q= q_{12}$.

Next we assume that $V_2$ is a block, that is $b\neq 0$; up to normalization, we may assume that $b= q_{22}$.
As in \cite{AAH-triang}, it is convenient to consider the ghost
\begin{equation}\label{eq:discrete-ghost}
\ghost := \begin{cases} -2a, & q_{22} = 1,
\\
\phantom{-2}a, & q_{22} = -1.
\end{cases}
\end{equation}

The subspace $\big\langle x_1, x_2, x_\cincodos \big\rangle$ is of the form
\emph{one block and one point}. Therefore,
by \cite[Lem\-ma~4.2.3]{AAH-triang},
$\GK \toba(V) =\infty$ if $\ghost \notin \N_0$.
Hence we assume that $\ghost\in \N_0$. Then we discard $\ghost = 0$ in
Proposition~\ref{prop:pale+Jordan-discarding} and $\ghost \neq 1$ in Theorems~\ref{th:selene-1+} and~\ref{th:selene-1-}.

\subsection{Mild interaction} 

We show that this implies infinite $\GK$.

\begin{Proposition}\label{prop:mild-2blocks}
If $\widetilde{q}_{12} = -1$, then $\GK \toba(V)=\infty$.
\end{Proposition}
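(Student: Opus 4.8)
The plan is to pass to an associated graded braided vector space, reduce to the diagonal type, and invoke Theorem~\ref{thm:conj-AAH}, exactly as in Proposition~\ref{prop:caseIV}. Recall that we are already in the setting $q_{11}=-1$; moreover, since $V_1\oplus\Bbbk x_2$ carries a paleblock--point braiding \eqref{eq:braiding-paleblock-point} with $\widetilde{q}_{12}=-1$, Theorem~\ref{th:paleblock-point-resumen} forces $q_{22}=-1$: indeed, if $q_{22}\neq-1$ then already $\GK\toba(V_1\oplus\Bbbk x_2)=\infty$, and as this is a braided subspace we would be done at once. So throughout we may assume $q_{11}=q_{22}=-1$.

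First I would exhibit the complete flag of Yetter--Drinfeld submodules
$0\subset\langle x_1\rangle\subset\langle x_1,x_2\rangle\subset\langle x_1,x_2,x_{\cincodos}\rangle\subset V$.
Stability under the coaction is immediate from the degrees, and stability under the $\Gamma$-action follows from the explicit formulas for $g_1$ and $g_2$; the only points needing a glance are that $g_1\cdot x_{\cincodos}$ and $g_2\cdot x_{\cincodos}$ land in $\langle x_2,x_{\cincodos}\rangle$. All successive quotients are one-dimensional, so by \cite[Lemma~3.4.2]{AAH-triang} the object $\gr V$ is of diagonal type and $\gr\toba(V)$ is a pre-Nichols algebra of $\gr V$; in particular $\GK\toba(\gr V)\le\GK\toba(V)$ by \cite[Lemma~3.4.2(c)]{AAH-triang}.

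Next I would compute the generalized Dynkin diagram of $\gr V$ on the basis furnished by the classes of $x_1,x_2,x_{\cincodos},x_{\tresdos}$. Reading off eigenvalues, every vertex should be labeled $-1$; the two "opposite" products $\widetilde{q}_{1,\tresdos}=(-1)(-1)=1$ and $\widetilde{q}_{2,\cincodos}=q_{22}^2=1$ yield no edge, while each of the four remaining products equals $q_{12}q_{21}=\widetilde{q}_{12}=-1$. Hence the diagram is the $4$-cycle
$$
\xymatrix{
\overset{-1}{\underset{1}{\circ}} \ar@{-}[r]^{-1} \ar@{-}[d]_{-1} & \overset{-1}{\underset{2}{\circ}} \ar@{-}[d]^{-1} \\
\overset{-1}{\underset{\cincodos}{\circ}} \ar@{-}[r]_{-1} & \overset{-1}{\underset{\tresdos}{\circ}}\,.
}
$$

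Finally, there are no cycles of length $4$ in \cite[Table~3]{H-classif-RS}, as already used in Proposition~\ref{prop:caseIV}; thus by Theorem~\ref{thm:conj-AAH} we get $\GK\toba(\gr V)=\infty$, and therefore $\GK\toba(V)=\infty$. The only genuine content is the bookkeeping of the previous paragraph: checking that precisely the two non-adjacent pairs $1$--$\tresdos$ and $2$--$\cincodos$ are unlinked, so that the diagram is a true $4$-cycle rather than one carrying a chord. Once this is verified the conclusion is a direct transcription of the argument for Proposition~\ref{prop:caseIV}, and no computation inside $\toba(V)$ itself is needed.
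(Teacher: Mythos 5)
Your proof is correct and follows essentially the same route as the paper's: exhibit a flag of Yetter--Drinfeld submodules with one-dimensional quotients, observe that $\gr V$ is of diagonal type with Dynkin diagram the $4$-cycle whose vertices and edges are all labelled $-1$, and conclude via \cite[Lemma~3.4.2(c)]{AAH-triang}. The only cosmetic differences are the ordering of the flag (the paper uses $0 \subset \langle x_1\rangle \subset \langle x_1, x_{\tresdos} \rangle \subset \langle x_1, x_{\tresdos}, x_2 \rangle \subset V$, but yours is equally a flag of Yetter--Drinfeld submodules) and the final citation: the paper identifies the diagram as affine Cartan type and cites \cite{AAH-diag}, whereas you invoke Theorem~\ref{thm:conj-AAH} together with the absence of $4$-cycles in \cite[Table~3]{H-classif-RS}, exactly as in Proposition~\ref{prop:caseIV}; both are valid.
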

\begin{proof}
Here, $0 \subset \langle x_1\rangle \subset \big\langle x_1, x_{\tresdos} \big\rangle \subset \big\langle x_1, x_{\tresdos}, x_2 \big\rangle \subset V$ is a flag of Yetter--Drinfeld submodules such that $\gr V$ is of diagonal type; its diagram is
$$
\xymatrix{
\overset{-1}{\underset{1} {\circ}}\ar @{-}[d]_{-1} \ar @{-}[rr]^{-1} & & \overset{-1} {\underset{2} {\circ}}\ar @{-}[d]^{-1} \\
\overset{-1}{\underset{\cincodos} {\circ}} \ar @{-}[rr]^{-1} && \overset{-1}{\underset{\tresdos} {\circ}},}
$$
thus $\gr V$ is of affine Cartan type, so $\GK \toba(V) = \infty$ by \cite{AAH-diag} and
\cite[Lemma~3.4.2\,(c)]{AAH-triang}.
\end{proof}

\subsection{Two pale blocks, weak interaction} Recall the Selene braided vector space $\sele_{2,0}(q)$ defined in \eqref{eq:selene-defi-20}.

\begin{Theorem} \label{th:selene-2-0}
The algebra $\toba(\sele_{2,0}(q))$ is generated by $x_1$, $x_{\tresdos}$, $x_2$, $x_\cincodos$ with defining relations
\begin{alignat}{3}
\label{eq:endymion-III-1}
& x_i^2 = x_{i+\tfrac{1}{2}}^2=0, \qquad&&\hspace{-20mm}
x_ix_{i+\tfrac{1}{2}} =-x_{i+\tfrac{1}{2}} x_i, \qquad &&i\in\I_2;
\\ \label{eq:endymion-III-2}
&x_2 x_1=q_{21}x_1 x_2, \qquad&&\hspace{-20mm}
x_2 x_{\tresdos2} = -q_{21} x_{\tresdos2}x_2, \qquad
&&x_1 x_{\cincodos1} = -q_{12} x_{\cincodos1}x_1,
\\ \label{eq:two-pale-def-rel-1}
&x_{1\cincodos} = x_{\tresdos2},
\\
\label{relations-two-pale-blocks-4}
&x_{\tresdos\cincodos}x_{2} = -q_{12} x_{2}x_{\tresdos\cincodos},
\\ \label{relations-two-pale-blocks-5}
& x_{\tresdos\cincodos}x_{\cincodos} = -q_{12} x_{\cincodos}x_{\tresdos\cincodos} - q_{12}x_{2}x_{\tresdos\cincodos},
\\ \label{relations-two-pale-blocks-6}
& x_{\tresdos\cincodos}x_{\tresdos2} = x_{\tresdos2}x_{\tresdos\cincodos} - x_{\tresdos2}^2.
\end{alignat}

A PBW-basis is formed by the monomials
\begin{align}\label{eq:two-pale-PBW-basis}
x_{\cincodos}^{m_1}x_{\tresdos\cincodos}^{m_2}x_{2}^{m_3}x_{\tresdos2}^{m_4} x_{\tresdos}^{m_5} x_1^{m_6},\qquad
 m_1, m_3, m_5, m_6 \in \{0,1\}, \quad m_2, m_4 \in \N_0.
\end{align}
Hence $\GK \toba(\sele_{2,0}(q)) = 2$.
\end{Theorem}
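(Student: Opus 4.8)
The plan is to follow the template used for the rank $3$ and $4$ Endymion algebras: let $\toba$ be the abstract algebra presented by the generators $x_1,x_{\tresdos},x_2,x_{\cincodos}$ and the relations \eqref{eq:endymion-III-1}--\eqref{relations-two-pale-blocks-6}, build a surjection $\pi\colon\toba\to\toba(\sele_{2,0}(q))$, and then show that the monomials \eqref{eq:two-pale-PBW-basis} are at once a spanning set of $\toba$ and a linearly independent set in $\toba(\sele_{2,0}(q))$, whence $\pi$ is an isomorphism. First I would produce $\pi$ by checking the defining relations in $\toba(\sele_{2,0}(q))$. The relations \eqref{eq:endymion-III-1} and \eqref{eq:endymion-III-2} come for free from recognizing braided subspaces: since $q_{11}=q_{22}=-1$, the braiding restricted to each $V_i=\langle x_i,x_{i+\frac12}\rangle$ is minus the flip, so $\toba(V_i)=\Lambda(V_i)$; the subspace $\langle x_1,x_2\rangle$ is a quantum plane because $\widetilde{q}_{12}=1$; and, as $q_{11}=q_{22}=-1$ and $\widetilde{q}_{12}=1$, one has $\langle x_1,x_{\tresdos},x_2\rangle\simeq\eny_-(q_{12})$ and $\langle x_2,x_{\cincodos},x_1\rangle\simeq\eny_-(q_{21})$, so the last two relations of \eqref{eq:endymion-III-2} follow from Proposition~\ref{prop:paleblocks}.

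The genuinely new relations \eqref{eq:two-pale-def-rel-1}--\eqref{relations-two-pale-blocks-6} I would verify with the skew-derivations $\partial_1,\partial_{\tresdos},\partial_2,\partial_{\cincodos}$: since an element annihilated by every $\partial_i$ is zero, it suffices to check that each $\partial_i$ kills each relation. The conceptually crucial one is the linking relation \eqref{eq:two-pale-def-rel-1}, $x_{1\cincodos}=x_{\tresdos2}$, which identifies the two a priori distinct degree-$2$ elements of $\Gamma$-degree $g_1g_2$ and so reduces the number of PBW generators; here one finds $\partial_2(x_{1\cincodos})=-x_1=\partial_2(x_{\tresdos2})$ and $\partial_i(x_{1\cincodos})=0=\partial_i(x_{\tresdos2})$ for $i\neq 2$, using $q_{12}q_{21}=1$. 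The relations \eqref{relations-two-pale-blocks-4}--\eqref{relations-two-pale-blocks-6} involving $x_{\tresdos\cincodos}=\adc(x_{\tresdos})(x_{\cincodos})$ are treated the same way, reducing iterated brackets by means of \eqref{eq:graded-bracket-derivation1}--\eqref{eq:graded-bracket-jacobi}.

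Next I would show that the monomials \eqref{eq:two-pale-PBW-basis} span $\toba$. As in the proof of Theorem~\ref{thm:q=1,q22=-1,n=3}, the content is to establish, by recursion on the degree, a complete set of straightening rules: $q$-commutation relations among the PBW generators $x_{\cincodos},x_{\tresdos\cincodos},x_2,x_{\tresdos2},x_{\tresdos},x_1$ that rewrite any out-of-order product as a combination of ordered monomials of the same degree. The relations \eqref{relations-two-pale-blocks-4}--\eqref{relations-two-pale-blocks-6}, together with \eqref{eq:endymion-III-1}--\eqref{eq:endymion-III-2} and the collapse \eqref{eq:two-pale-def-rel-1}, furnish the base cases, and the remaining commutators (between $x_{\tresdos\cincodos}$ and $x_{\tresdos2}$, and those moving $x_1,x_{\tresdos}$ to the right) follow by repeated use of \eqref{eq:graded-bracket-derivation1}--\eqref{eq:graded-bracket-jacobi}. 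This step is routine but the most laborious.

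Finally, for linear independence and the $\GK$ I would apply the splitting technique of Section~\ref{subsec:splitting-technique} with $V=V_2\oplus V_1$, i.e.\ $\toba(\sele_{2,0}(q))\simeq\K\#\toba(V_1)$ where $\toba(V_1)=\Lambda(V_1)$ has dimension $4$ with basis $x_{\tresdos}^{m_5}x_1^{m_6}$, and $\K\simeq\toba\big(\K^1\big)$ with $\K^1=\adc(\toba(V_1))(V_2)$. Mimicking Step~3 of Theorem~\ref{th:endymion-2pts-case-a}, I would first compute with the $\partial_i$ that the higher iterated brackets vanish, so that $\K^1=\langle x_2,x_{\cincodos},x_{\tresdos2},x_{\tresdos\cincodos}\rangle$ is four-dimensional (here \eqref{eq:two-pale-def-rel-1} says $\adc(x_1)(x_{\cincodos})=x_{\tresdos2}$). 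The product decomposition then reduces independence to that of the $\K$-monomials $x_{\cincodos}^{m_1}x_{\tresdos\cincodos}^{m_2}x_2^{m_3}x_{\tresdos2}^{m_4}$, which I would obtain by taking a homogeneous relation of minimal degree and stripping off generators one at a time with suitable compositions of $\partial_1,\partial_{\tresdos},\partial_2,\partial_{\cincodos}$, exactly as in Theorems~\ref{thm:q=1,q22=-1,n=3} and~\ref{th:endymion-2pts-case-a}. Since only $x_{\tresdos\cincodos}$ and $x_{\tresdos2}$ have unbounded exponents, the ordered monomials define an ascending filtration whose associated graded algebra is a (truncated) quantum polynomial algebra in two free variables, giving $\GK\toba(\sele_{2,0}(q))=2$. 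The main obstacle is not a single conceptual point but the combined bookkeeping of the last two steps — pinning down the braiding on $\K^1$ and verifying that the straightening rules close up — the one real surprise being the collapse \eqref{eq:two-pale-def-rel-1}.
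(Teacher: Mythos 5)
Your proposal is correct and follows essentially the same route as the paper's proof: identify the sub-Endymion algebras $\eny_-(q_{12})$ and $\eny_-(q_{21})$ to get \eqref{eq:endymion-III-1}--\eqref{eq:endymion-III-2}, verify \eqref{eq:two-pale-def-rel-1}--\eqref{relations-two-pale-blocks-6} by killing them with all skew-derivations, derive the remaining straightening rules recursively, and use the four-dimensionality of $\K^1=\adc(\toba(V_1))(V_2)$ together with derivation-stripping of a minimal-degree relation for linear independence. The only cosmetic difference is that you route the independence argument through the factorization $\toba(\sele_{2,0}(q))\simeq\K\otimes\toba(V_1)$, while the paper applies the derivations directly to the full ordered monomials; both work.
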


\begin{proof} We proceed by steps.

\begin{stepiii}
As $V_1 \oplus \Bbbk x_2 \simeq \eny_-(q)$ and $V_2 \oplus \Bbbk x_1 \simeq \eny_-\big(q^{-1}\big)$, the relations \eqref{eq:endymion-III-1}, \eqref{eq:endymion-III-2}
hold in $\toba(V)$.
\end{stepiii}

Next we focus on the Nichols algebra $\K = \toba\left(\adc(\toba(V_1))(V_2)\right)$.

\begin{stepiii}\quad
\begin{enumerate}[label=$(\alph*)$]
\item\label{item:two-pale-basis-K1-relation}
The relation \eqref{eq:two-pale-def-rel-1} holds in $\toba(V)$.
\item\label{item:two-pale-basis-K1}
The set $\big\{x_{2}, x_{\tresdos2}, x_{\cincodos}, x_{\tresdos\cincodos} \big\}$ is a basis of $\K^1$.
\end{enumerate}
\end{stepiii}

\begin{proof}[Proof of Step~2]
For~\ref{item:two-pale-basis-K1-relation}, we compute $\partial_i\big(x_{1\cincodos}\big)= \partial_i\big(x_{\tresdos2}\big)=0$, if $i\in\big\{1,\tresdos\big\}$,
\begin{align*}
\partial_2\big(x_{1\cincodos}\big) = \partial_2\big(x_{\tresdos2}\big)=-x_1, \qquad
 \partial_{\cincodos}\big(x_{1\cincodos}\big) = \partial_{\cincodos}\big(x_{\tresdos2}\big)=0.
\end{align*}

For~\ref{item:two-pale-basis-K1}, we use~\ref{item:two-pale-basis-K1-relation}, \eqref{eq:endymion-III-1} and \eqref{eq:endymion-III-2} to check that
\begin{align*}
x_{12} = x_{1\tresdos2} = x_{1\tresdos\cincodos} = 0, \qquad x_{1\cincodos} = x_{\tresdos2}.
\end{align*}
As $\toba(V_1) = \bigwedge(V_1)$, $\K^1 = \adc(\toba(V_1))(V_2)$ is spanned by
$\big\{x_{2}, x_{\tresdos2}, x_{\cincodos}, x_{\tresdos\cincodos} \big\}$. Also, $\partial_i$ annihilates each element of this set if $i=1,\tresdos$, and
\begin{alignat*}{4}
&\partial_2(x_{2}) = 1,\qquad &&\partial_2\big(x_{\cincodos}\big) = 0, \qquad &&\partial_2\big(x_{\tresdos2}\big) = -x_1, \qquad &&\partial_2\big(x_{\tresdos\cincodos}\big) = -\big(x_\tresdos+x_1\big),
\\
&\partial_\cincodos(x_{2}) = 0, &&\partial_\cincodos\big(x_{\cincodos}\big) = 1, &&\partial_\cincodos\big(x_{\tresdos2}\big) = 0, &&\partial_\cincodos\big(x_{\tresdos\cincodos}\big) = -x_1.
\end{alignat*}
Thus $\big\{x_{2}, x_{\tresdos2}, x_{\cincodos}, x_{\tresdos\cincodos} \big\}$ is linearly independent.
\end{proof}

We shall need the action of $g_2$ in $\K^1$: $g_2\cdot x_{2} = -x_{2}$,
$g_2\cdot x_{\cincodos} = -x_{\cincodos}$,
\begin{align*}
g_2 \cdot x_{\tresdos2} = -q_{21} x_{\tresdos2} , \qquad
g_2 \cdot x_{\tresdos\cincodos} = -q_{21} \big(x_{\tresdos\cincodos}+x_{\tresdos2}\big).
\end{align*}

\begin{stepiii}
Let $\wtoba$ be an algebra with elements $x_1$, $x_{\tresdos}$, $x_2$, $x_\cincodos$ satisfying
\eqref{eq:endymion-III-1}, \eqref{eq:endymion-III-2} and \eqref{eq:two-pale-def-rel-1}.
Then the following relations also hold in $\wtoba$:
\begin{align}\label{eq:two-pale-def-rel-other-1}
x_1x_{\tresdos\cincodos} = -q_{12}\big(x_{\tresdos\cincodos}+x_{\tresdos2}\big)x_1,\qquad
x_{\tresdos}x_{\tresdos2}=-q_{12}x_{\tresdos2}x_{\tresdos}.
\end{align}
In particular, \eqref{eq:two-pale-def-rel-other-1} holds in $\toba(V)$. The verification is straightforward.
\end{stepiii}

\begin{stepiii}
The relations \eqref{relations-two-pale-blocks-4}, \eqref{relations-two-pale-blocks-5} and \eqref{relations-two-pale-blocks-6}
hold in $\toba(V)$.
\end{stepiii}

\begin{proof}[Proof of Step~4]
As $\partial_1$ and $\partial_{\tresdos}$ annihilate $x_{2}$, $x_{\cincodos}$, $x_{\tresdos2}$, $x_{\tresdos\cincodos}$,
it suffices to check that $\partial_2$ and $\partial_\cincodos$ annihilate each of these relations. For \eqref{relations-two-pale-blocks-4} and \eqref{relations-two-pale-blocks-5},
\begin{align*}
&\partial_2 \big( x_{\tresdos\cincodos}x_{2} +q_{12} x_{2}x_{\tresdos\cincodos} \big) =
x_{\tresdos\cincodos}+\big(x_\tresdos+x_1\big)x_{2}
 -q_{12} x_{2}\big(x_\tresdos+x_1\big) -\big(x_{\tresdos\cincodos}+x_{\tresdos2}\big)=0,
\\
&\partial_{\cincodos} \big( x_{\tresdos\cincodos}x_{2} +q_{12} x_{2}x_{\tresdos\cincodos} \big) =
x_1x_{2} -q_{12} x_{2}x_1=0,
\\
&\partial_2 \big( x_{\tresdos\cincodos}x_{\cincodos}+q_{12} x_{\cincodos}x_{\tresdos\cincodos} + q_{12}x_{2}x_{\tresdos\cincodos} \big)
\\ &\qquad
{}=\big(x_\tresdos+x_1\big)x_{\cincodos}-q_{12} x_{\cincodos}(x_\tresdos+x_1)
 - q_{12}x_{2}\big(x_\tresdos+x_1\big)-\big(x_{\tresdos\cincodos}+x_{\tresdos2}\big)
 \\ &\qquad
= x_{\tresdos\cincodos}+x_{1\cincodos}-x_{\tresdos\cincodos}-x_{\tresdos2}=0,
\\
 &\partial_{\cincodos} \big( x_{\tresdos\cincodos}x_{\cincodos}\!+q_{12} x_{\cincodos}x_{\tresdos\cincodos} \!+ q_{12}x_{2}x_{\tresdos\cincodos} \big)=
x_1x_{\cincodos}
\!+x_{\tresdos\cincodos}
-\!\big(x_{\tresdos\cincodos}\!+x_{\tresdos2}\big)\!
-q_{12} x_{\cincodos}x_1 -q_{12}x_{2}x_1=0.
\end{align*}
Using \eqref{eq:endymion-III-1}, \eqref{eq:endymion-III-2}, \eqref{eq:two-pale-def-rel-1} and
\eqref{eq:two-pale-def-rel-other-1}, we see finally that \eqref{relations-two-pale-blocks-6} also holds:
\begin{align*}
&\begin{aligned}
\partial_2 \big( x_{\tresdos\cincodos}x_{\tresdos2}-x_{\tresdos2}x_{\tresdos\cincodos} + x_{\tresdos2}^2 \big) ={}&
-x_{\tresdos\cincodos}x_1+q_{21}\big(x_{\tresdos}+x_1\big)x_{\tresdos2}
+x_{\tresdos2}\big(x_{\tresdos}+x_1\big)
\\
& -q_{21}x_1\big(x_{\tresdos\cincodos}+x_{\tresdos2}\big)
- x_{\tresdos2}x_1 +q_{21}x_1x_{\tresdos2}=0,
\end{aligned}
\\
&\partial_{\cincodos} ( x_{\tresdos\cincodos}x_{\tresdos2}-x_{\tresdos2}x_{\tresdos\cincodos} + x_{\tresdos2}^2 ) = q_{21}x_1x_{\tresdos2}+x_{\tresdos2}x_1=0. \tag*{\qed}
\end{align*}
\renewcommand{\qed}{}
\end{proof}

Let $\toba$ be the algebra with the claimed presentation. By the previous steps,
there is a~surjective map $\toba \to \toba(\sele_{2,0}(q))$.
Now $\toba$ is spanned by the monomials \eqref{eq:two-pale-PBW-basis} because of
the defining relations, \eqref{eq:two-pale-def-rel-other-1} and
\begin{align*}
x_{\tresdos2}x_{\cincodos} &= -q_{12} x_{\cincodos}x_{\tresdos2} - q_{12}x_{2}x_{\tresdos2},
\end{align*}
that follows from \eqref{relations-two-pale-blocks-4} and \eqref{eq:endymion-III-1}.\
To prove that the monomials in \eqref{eq:two-pale-PBW-basis} form a basis of $\toba$ and that
$\toba \simeq \toba(\sele_{2,0}(q))$, it suffices to prove that these monomials are linearly independent in $\toba(\sele_{2,0})$. By direct computations,
\begin{gather*}
\partial_1 \big(x_{\cincodos}^{m_1}x_{\tresdos\cincodos}^{m_2}x_{2}^{m_3}x_{\tresdos2}^{m_4} x_{\tresdos}^{m_5} x_1^{m_6}\big)
= \delta_{m_6,1} x_{\cincodos}^{m_1}x_{\tresdos\cincodos}^{m_2}x_{2}^{m_3}x_{\tresdos2}^{m_4} x_{\tresdos}^{m_5},
\\
\partial_{\tresdos} \big(x_{\cincodos}^{m_1}x_{\tresdos\cincodos}^{m_2}x_{2}^{m_3}x_{\tresdos2}^{m_4} x_{\tresdos}^{m_5} \big)
= \delta_{m_5,1} x_{\cincodos}^{m_1}x_{\tresdos\cincodos}^{m_2}x_{2}^{m_3}x_{\tresdos2}^{m_4},
\\
\partial_1\partial_{\cincodos} \big(x_{\cincodos}^{m_1}x_{\tresdos\cincodos}^{m_2}x_{2}^{m_3}x_{\tresdos2}^{m_4}\big)
\in m_2(-q_{12})^{m_3}x_{\cincodos}^{m_1}x_{\tresdos\cincodos}^{m_2-1}x_{2}^{m_3}x_{\tresdos2}^{m_4}
 + \sum_{k\ge1} \Bbbk x_{\cincodos}^{m_1}x_{\tresdos\cincodos}^{m_2-1-k}x_{2}^{m_3}x_{\tresdos2}^{m_4+k},
\\
\partial_1\partial_{2} \big(x_{\cincodos}^{m_1}x_{2}^{m_3}x_{\tresdos2}^{m_4}\big)
= -m_4x_{\cincodos}^{m_1}x_{2}^{m_3} x_{\tresdos2}^{m_4-1}.
\end{gather*}
The claim is established by a recursive argument as in previous proofs.
\end{proof}

\subsection{A pale block and a block, weak interaction}
In this subsection we assume that $q_{11}=- 1$,
$\widetilde{q}_{12} = 1$, $q_{22}=\pm 1=b$ and $\ghost \in \N_0$.

\subsubsection{The vanishing ghost}
We discard here the possibility $a=0$. We start by a lemma that is also useful later
when dealing with a Jordan or a super Jordan plane, i.e., $q_{22}= 1$ or $-1$.

{\samepage\begin{Lemma}\label{lem:pale+Jordan-basis-K1}
Let $\K = \toba(\K^1)$, where $\K^1 = \adc(\toba(V_1))(V_2)$.
\begin{enumerate}[label=$(\alph*)$]\itemsep=0pt
\item\label{item:pale+Jordan-relation-endy} The relations
\eqref{eq:endymion-1} and \eqref{eq:endymion-1b}
hold in $\toba(V)$.
\item\label{item:pale+Jordan-basis-K1-relation} The following relation holds in $\toba(V)$:
\begin{align}\label{eq:pale+Jordan-def-rel-1}
x_{1\cincodos} &= a x_{\tresdos2}.
\end{align}
\item\label{item:pale+Jordan-basis-K1}
The set $\big\{x_{2}, x_{\tresdos2}, x_{\cincodos}, x_{\tresdos\cincodos}\big\}$ is a basis of $\K^1$.
\end{enumerate}
\end{Lemma}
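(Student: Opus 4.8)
The plan is to prove the three parts in order, bootstrapping each from the previous one. For part~\ref{item:pale+Jordan-relation-endy}, the starting point is that the braided subspace $\big\langle x_1, x_{\tresdos}, x_2\big\rangle$ is isomorphic to $\eny_{+}(q_{12})$ or to $\eny_{-}(q_{12})$ according as $q_{22}=1$ or $q_{22}=-1$: indeed $g_1$ acts on $V_1$ by $-\id$ (so the braiding of $\big\langle x_1,x_{\tresdos}\big\rangle$ is minus the flip) and $\widetilde q_{12}=1$ with $q_{21}=q_{12}^{-1}$, which is exactly the defining data of $\eny_{\pm}(q_{12})$. Since \eqref{eq:endymion-1} and \eqref{eq:endymion-1b} are defining relations common to both $\toba(\eny_+(q))$ and $\toba(\eny_-(q))$ by Proposition~\ref{prop:paleblocks}, they hold in $\toba(V)$.

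For part~\ref{item:pale+Jordan-basis-K1-relation}, I would verify \eqref{eq:pale+Jordan-def-rel-1} with skew-derivations, using that an element of $\toba(V)$ annihilated by every $\partial_i$ is zero. Expanding the braided brackets, $x_{1\cincodos}=x_1 x_{\cincodos}-q_{12}x_{\cincodos}x_1-a q_{12}x_2 x_1$ while $a\,x_{\tresdos2}=a\big(x_{\tresdos}x_2-q_{12}x_2 x_{\tresdos}\big)$. Applying $\partial_1,\partial_{\tresdos},\partial_2,\partial_{\cincodos}$ to the difference $x_{1\cincodos}-a\,x_{\tresdos2}$ and using the action formulas for $g_1,g_2$ together with $\widetilde q_{12}=q_{12}q_{21}=1$, each of the four skew-derivations is seen to vanish; hence the difference is $0$. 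Note this step, like part~\ref{item:pale+Jordan-relation-endy}, uses only $q_{11}=-1$ and $\widetilde q_{12}=1$, not the value of $q_{22}$ or $b$.

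For part~\ref{item:pale+Jordan-basis-K1}, I would first record that $\toba(V_1)=\bigwedge(V_1)$ has basis $\big\{1,x_1,x_{\tresdos},x_1 x_{\tresdos}\big\}$, so $\K^1=\adc(\toba(V_1))(V_2)$ is spanned by the eight elements $\adc(w)(x_j)$ with $w\in\big\{1,x_1,x_{\tresdos},x_1x_{\tresdos}\big\}$ and $j\in\{2,\cincodos\}$. The four unwanted generators then reduce: $x_{12}=0$ by \eqref{eq:endymion-1b}; $x_{1\cincodos}=a\,x_{\tresdos2}$ by part~\ref{item:pale+Jordan-basis-K1-relation}; and $x_{1\tresdos2}=x_{1\tresdos\cincodos}=0$ by expanding the outer bracket with \eqref{eq:graded-bracket-derivation2} and invoking \eqref{eq:endymion-1}, \eqref{eq:endymion-1b} and part~\ref{item:pale+Jordan-basis-K1-relation}. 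Concretely, since $[x_1,x_{\tresdos}]_c=0$ and $[x_1,x_2]_c=0$, the term $x_{1\tresdos2}$ collapses at once; whereas $x_{1\tresdos\cincodos}$ reduces to $-x_{\tresdos}x_{1\cincodos}-q_{12}x_{1\cincodos}x_{\tresdos}=-a\big(x_{\tresdos}x_{\tresdos2}+q_{12}x_{\tresdos2}x_{\tresdos}\big)$, which vanishes thanks to the auxiliary $q$-commutation $x_{\tresdos}x_{\tresdos2}=-q_{12}x_{\tresdos2}x_{\tresdos}$, itself immediate from $x_{\tresdos}^2=0$. Thus $\K^1$ is spanned by $\big\{x_2,x_{\tresdos2},x_{\cincodos},x_{\tresdos\cincodos}\big\}$.

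It remains to prove linear independence of these four, which I would do with the grading plus skew-derivations: $\{x_2,x_{\cincodos}\}$ lie in degree $1$ and $\big\{x_{\tresdos2},x_{\tresdos\cincodos}\big\}$ in degree $2$ of $\toba(V)$, so the two pairs are independent by homogeneity; within degree $1$ the elements are distinct members of the chosen basis of $V$, and within degree $2$ the derivations $\partial_2,\partial_{\cincodos}$ separate them, since $\partial_{\cincodos}\big(x_{\tresdos\cincodos}\big)=-x_1\neq 0=\partial_{\cincodos}\big(x_{\tresdos2}\big)$ and $\partial_2\big(x_{\tresdos2}\big)=-x_1$. The main obstacle I anticipate is precisely the vanishing $x_{1\tresdos\cincodos}=0$: unlike $x_{1\tresdos2}$ it does not collapse from the exterior-algebra relations alone, and one must first rewrite $x_{1\cincodos}$ as $a\,x_{\tresdos2}$ via part~\ref{item:pale+Jordan-basis-K1-relation} and then use the derived $q$-commutation between $x_{\tresdos}$ and $x_{\tresdos2}$. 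Bookkeeping the braided brackets and the $g_i$-actions correctly through \eqref{eq:graded-bracket-derivation2} is where the care lies; everything else is a routine skew-derivation computation.
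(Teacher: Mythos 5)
Your proposal is correct and follows essentially the same route as the paper: part (a) from $V_1\oplus\Bbbk x_2\simeq\eny_{\pm}$ and Proposition~\ref{prop:paleblocks}, part (b) by checking all four skew-derivations on $x_{1\cincodos}-a\,x_{\tresdos2}$, and part (c) by spanning $\K^1$ from $\toba(V_1)=\bigwedge(V_1)$, reducing $x_{12}$, $x_{1\tresdos2}$, $x_{1\tresdos\cincodos}$ to zero via \eqref{eq:endymion-1}, \eqref{eq:endymion-1b} and \eqref{eq:pale+Jordan-def-rel-1}, and separating the survivors with $\partial_2$, $\partial_{\cincodos}$. The only difference is that you spell out the bracket manipulations (e.g.\ the $q$-commutation $x_{\tresdos}x_{\tresdos2}=-q_{12}x_{\tresdos2}x_{\tresdos}$ needed for $x_{1\tresdos\cincodos}=0$) that the paper leaves implicit.
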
}

\begin{proof} Item \ref{item:pale+Jordan-relation-endy} follows since $V_1 \oplus \Bbbk x_2\simeq \eny_{\pm}$.
For~\ref{item:pale+Jordan-basis-K1-relation}, we compute
\begin{align*}
\partial_i\big(x_{1\cincodos}\big) = \partial_i\big(x_{\tresdos2}\big)=0, \quad
i \in\big\{1,\tfrac{3}{2}, \tfrac{5}{2}\big\},\qquad
\partial_2\big(x_{\tresdos2}\big) =-x_1,\qquad
\partial_2\big(x_{1\cincodos}\big) = -ax_1.
\end{align*}

For~\ref{item:pale+Jordan-basis-K1}, we use~\ref{item:pale+Jordan-basis-K1-relation}, \eqref{eq:endymion-1} and \eqref{eq:endymion-1b} to check that
$x_{12}= 0$, $x_{1\tresdos2} = x_{1\tresdos\cincodos}=0$. As $\toba(V_1) = \bigwedge(V_1)$, $\K^1 = \adc(\toba(V_1))(V_2)$ is spanned by
$\big\{x_{2}, x_{\tresdos2}, x_{\cincodos}, x_{\tresdos\cincodos} \big\}$. Also, $\partial_i$ annihilates each element of this set if $i=1,\tresdos$, and
\begin{alignat*}{4}
&\partial_2(x_{2}) = 1,\qquad && \partial_2\big(x_{\cincodos}\big) = 0, \qquad &&\partial_2\big(x_{\tresdos2}\big) = -x_1, \qquad &&\partial_2\big(x_{\tresdos\cincodos}\big) = -a\big(x_\tresdos+x_1\big),
\\
&\partial_\cincodos(x_{2}) = 0, && \partial_\cincodos\big(x_{\cincodos}\big) = 1, &&\partial_\cincodos\big(x_{\tresdos2}\big) = 0, &&\partial_\cincodos\big(x_{\tresdos\cincodos}\big) = -x_1.
\end{alignat*}
Thus $\big\{x_{2}, x_{\tresdos2}, x_{\cincodos}, x_{\tresdos\cincodos} \big\}$ is linearly independent.
\end{proof}

\begin{Proposition}\label{prop:pale+Jordan-discarding}
If $a=0$, then $\GK \toba(V)=\infty$.
\end{Proposition}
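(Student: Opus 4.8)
The plan is to apply the splitting technique of Section~\ref{subsec:splitting-technique} to the decomposition $V=V_1\oplus V_2$ with $W=V_1$, so that $\toba(V)\cong\K\#\toba(V_1)$ with $\K\cong\toba(\K^1)$ and $\K^1=\adc(\toba(V_1))(V_2)$. Since $\toba(V_1)=\bigwedge(V_1)$ is finite dimensional, it suffices to prove $\GK\toba(\K^1)=\infty$, and for this I would exhibit a braided subspace of $\K^1$ whose Nichols algebra already has infinite $\GK$. By Lemma~\ref{lem:pale+Jordan-basis-K1} a basis of $\K^1$ is $\big\{x_2,x_{\tresdos2},x_{\cincodos},x_{\tresdos\cincodos}\big\}$, and the decisive feature of the hypothesis $a=0$ is that $\adc(x_1)$ annihilates $\K^1$: indeed $x_{12}=0$ by \eqref{eq:endymion-1b}, $x_{1\cincodos}=a\,x_{\tresdos2}=0$ by \eqref{eq:pale+Jordan-def-rel-1}, and $x_{1\tresdos2}=x_{1\tresdos\cincodos}=0$ as recorded in the proof of Lemma~\ref{lem:pale+Jordan-basis-K1}.

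First I would compute the coaction of $H=\toba(V)\#\Bbbk\Gamma$ on the generators and project it onto $\toba(V_1)\#\Bbbk\Gamma$. Using $\widetilde{q}_{12}=1$ one gets, for instance, $\delta(x_{\tresdos2})=-x_1g_2\otimes x_2+g_1g_2\otimes x_{\tresdos2}$ and $\delta(x_{\tresdos\cincodos})=-x_1g_2\otimes x_{\cincodos}+g_1g_2\otimes x_{\tresdos\cincodos}$, while $x_2,x_{\cincodos}$ are $g_2$-homogeneous. The terms carrying $x_1$ contribute $\adc(x_1)$ to the braiding, hence vanish on $\K^1$ by the previous paragraph; therefore the braiding of $\K^1$ is the purely $\Gamma$-graded one $c(u\otimes v)=(\deg u\cdot v)\otimes u$, with $\deg x_2=\deg x_{\cincodos}=g_2$ and $\deg x_{\tresdos2}=\deg x_{\tresdos\cincodos}=g_1g_2$. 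A direct check of the $\Gamma$-action then shows that $\K^1$ is a direct sum of two blocks, $U'=\langle x_2,x_{\cincodos}\rangle\simeq\Vc(q_{22},2)$ of degree $g_2$ and $W'=\langle x_{\tresdos2},x_{\tresdos\cincodos}\rangle\simeq\Vc(-q_{22},2)$ of degree $g_1g_2$, where I use $q_{11}=-1$ and $\widetilde{q}_{12}=1$ to evaluate the self-eigenvalues $q_{22}$ and $-q_{12}q_{21}q_{22}=-q_{22}$.

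Since $q_{22}=\pm1$, exactly one of $U',W'$ is a Jordan plane $\Vc(1,2)$; call it $B$ and let $p$ be the $\deg$-eigenvector (the ``bottom'') of the other block. One verifies that $B\oplus\Bbbk p$ is a braided subspace of $\K^1$ of the form \emph{one block and one point}, with $\widetilde{q}$ between them equal to $q_{12}q_{21}q_{22}^2=1$. Normalizing $B$ so that its own grading acts by $\left(\begin{smallmatrix}1&1\\0&1\end{smallmatrix}\right)$, the grading of $p$ acts on $B$ by $\lambda\left(\begin{smallmatrix}1&1\\0&1\end{smallmatrix}\right)$ with shift $1\neq0$, so by \eqref{eq:discrete-ghost} its ghost is $\ghost=-2\notin\N_0$. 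By \cite[Lemma~4.2.3]{AAH-triang} (exactly the dichotomy invoked at the opening of this section) $\GK\toba(B\oplus\Bbbk p)=\infty$, and since $\toba(B\oplus\Bbbk p)\hookrightarrow\toba(\K^1)=\K\hookrightarrow\toba(V)$ we conclude $\GK\toba(V)=\infty$.

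The main obstacle is the braiding computation on $\K^1$: one must carry the bosonized coproduct far enough to see the $x_1$-terms and then use the identities of Lemma~\ref{lem:pale+Jordan-basis-K1} to kill them, since this vanishing of $\adc(x_1)$ is precisely what distinguishes $a=0$ from the finite Selene cases, where $x_{1\cincodos}=a\,x_{\tresdos2}\neq0$ produces a genuine two-block interaction. Once the braiding is identified, extracting the Jordan plane and the sign of the ghost is routine, and the conclusion follows from the block-and-point criterion already used in this section.
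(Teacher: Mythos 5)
Your proposal is correct and takes essentially the same route as the paper: after splitting off $\toba(V_1)$, both arguments exhibit inside $\K^1$ a braided subspace of type \emph{one block and one point} with weak interaction and negative ghost, and conclude via the block-and-point dichotomy of \cite{AAH-triang} (your Lemma~4.2.3 versus the paper's Theorem~4.1.1, the same criterion). The only cosmetic difference is that the paper works uniformly with the single subspace $\big\langle x_2, x_{\cincodos}, x_{\tresdos2}\big\rangle$, viewed as the block $\Vc(q_{22},2)$ plus the point $x_{\tresdos2}$ of label $-q_{22}$ (ghost $-2$ or $-1$ according to the sign of $q_{22}$), whereas you first identify the full decomposition $\K^1\simeq \Vc(q_{22},2)\oplus\Vc(-q_{22},2)$ and then switch to the other block when $q_{22}=-1$.
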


\begin{proof}
The coaction of $\mathcal{K}^1$ satisfies
\begin{align*}
\delta(x_i) = g_2 \otimes x_i,\qquad
\delta(x_{\tresdos i}) = g_1g_2 \otimes x_{\tresdos2} - x_1g_2 \otimes x_i,\qquad
 i\in\big\{2,\tfrac{5}{2}\big\}.
\end{align*}
Set $y_1=x_2$, $y_2=x_{\cincodos}$, $y_3=x_{\tresdos2}$. Then $\{y_1, y_2, y_3\}$ is a braided vector subspace of $\K^1$, and the braiding is given by
\begin{align*}
(c(y_i \otimes y_j))_{i,j\in\I_3}=\begin{pmatrix}
q_{22} y_1 \otimes y_1& (q_{22} y_2 + y_1) \otimes y_1 & q_{21}q_{22} y_3 \otimes y_1
\\
q_{22} y_1 \otimes y_2 & (q_{22} y_2 + y_1) \otimes y_2 & q_{21}q_{22} y_3 \otimes y_2
\\
q_{12}q_{22} y_1 \otimes y_3 & q_{12}q_{22} (y_2 + q_{22}y_1) \otimes y_3 & -q_{22} y_3 \otimes y_3
\end{pmatrix}\!.
\end{align*}
This corresponds to \emph{one block and one point} with negative ghost, so by \cite[Theorem~4.1.1]{AAH-triang},
we have $\GK \K=\infty$.
Thus $\GK \toba(V)=\infty$.
\end{proof}

\subsubsection{A pale block and a Jordan plane}\label{subsubsec:pale-jordan}
Here we assume that $q_{11} = -1$, $q_{22}=1$, $q_{12} = q = q_{21}^{-1}$, $b=1$ and $\ghost =-2a \in \N$, cf. \eqref{eq:discrete-ghost}.
When $\ghost= 1$, respectively $\ghost=2$, $V$
is the braided vector space $\sele_{1,+}\big(q, -\frac12\big)$, respectively $\sele_{1,+}(q,-1)$, see \eqref{eq:selene-defi-1+}.
To state our result we need the elements
\begin{align}\label{eq:pale-Jordan-def-w-t}
\ttt= x_{\tresdos\cincodos}x_{\cincodos}+q_{12}x_{\cincodos}x_{\tresdos\cincodos},\qquad
\wt= x_{\tresdos\cincodos}x_{2}+q_{12}x_{2}x_{\tresdos\cincodos}.
\end{align}

\begin{Theorem} \label{th:selene-1+}
The algebra $\toba(V)$ has finite $\GK $ if and only if $\ghost \le 2$.
\begin{itemize}
\item If $\ghost = 1$, then $\toba\big(\sele_{1,+}\big(q, -\frac12\big)\big)$ is presented by generators
$x_1$, $ x_{\tresdos}$, $ x_2$, $x_\cincodos$ with defining relations
\eqref{eq:endymion-1}, \eqref{eq:endymion-1b}, \eqref{eq:endymion-2}, \eqref{eq:pale+Jordan-def-rel-1} and
\begin{alignat}{2}\label{eq:pale+Jordan-def-rel-6}
&x_{\tresdos\cincodos}x_2 =q_{12}x_2x_{\tresdos\cincodos},
&&x_{\tresdos\cincodos}x_{\cincodos} =q_{12}\big(x_{\cincodos}+\tfrac{1}{2}x_{2}\big)x_{\tresdos\cincodos},
\\
\label{eq:pale+Jordan-def-rel-4}
&x_{\tresdos2}^2 =x_{\tresdos\cincodos}^2=0,
&&x_{\tresdos\cincodos}x_{\tresdos2} =-x_{\tresdos2}x_{\tresdos\cincodos},
\\
\label{eq:pale+Jordan-def-rel-5}
&x_{\cincodos}x_{2} =x_{2}x_{\cincodos}-\tfrac{1}{2}x_2^2.\qquad
\end{alignat}

A PBW-basis is formed by the monomials
\begin{align}\label{eq:selene-1+-PBW-basis}
x_{\cincodos}^{m_1}x_{2}^{m_2}x_{\tresdos\cincodos}^{m_3}x_{\tresdos2}^{m_4} x_{\tresdos}^{m_5} x_1^{m_6} ,\qquad
 m_3, m_4, m_5, m_6 \in \{0,1\}, \quad m_1, m_2 \in \N_0.
\end{align}
Hence $\GK \toba\big(\sele_{1,+}\big(q, -\frac12\big) \big)= 2$.

\item
If $\ghost = 2$, then $\toba(\sele_{1,+}(q, -1))$ is presented by generators
$x_1$, $x_{\tresdos}$, $x_2$, $x_\cincodos$ with defining relations
\eqref{eq:endymion-1}, \eqref{eq:endymion-1b}, \eqref{eq:endymion-2}, \eqref{eq:pale+Jordan-def-rel-1}, \eqref{eq:pale+Jordan-def-rel-5} and
\begin{alignat}{2}
\label{eq:pale+Jordan-def-rel-7}
&x_{\tresdos2}x_{\cincodos} =q_{12}x_{\cincodos}x_{\tresdos2}+\wt,
&&\ttt x_{\cincodos} =q_{12}\big(x_{\cincodos}+x_{2}\big)\ttt,
\\\label{eq:pale+Jordan-def-rel-8}
&x_{\tresdos2} \ttt =-q_{12}(\ttt-\wt)x_{\tresdos2},
&&\wt x_{\cincodos} =q_{12}\big(x_{\cincodos}+x_{2}\big)\wt,
\\ \label{eq:pale+Jordan-def-rel-9}
&x_{\tresdos\cincodos} \ttt =-q_{12}(\ttt-\wt)x_{\tresdos\cincodos}.\qquad
\end{alignat}

A PBW-basis is formed by the monomials
\begin{align}\label{eq:selene-1++-PBW-basis}
\begin{split}
&x_{2}^{m_1}x_{\cincodos}^{m_2} \wt^{m_3}\ttt^{m_4} x_{\tresdos2}^{m_5}\xt^{m_6}x_{\tresdos\cincodos}^{m_7} x_{\tresdos}^{m_8} x_1^{m_9},
\\
&m_3, m_4, m_5, m_8,m_9 \in \{0,1\}, \quad m_1, m_2,m_6,m_7 \in \N_0.
\end{split}
\end{align}
Hence $\GK \toba(\sele_{1,+}(q, -1)) = 4$.
\end{itemize}
\end{Theorem}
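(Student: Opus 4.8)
The plan is to treat the two finite cases $\ghost=1$ and $\ghost=2$ by the explicit method already used for $\toba(\sele_{2,0}(q))$ in Theorem~\ref{th:selene-2-0}, and to dispose of $\ghost\ge 3$ by a degeneration argument. Throughout I work with the splitting $\toba(V)\simeq \K\#\toba(V_1)$, where $\toba(V_1)=\bigwedge(V_1)$ is finite-dimensional, so that $\GK\toba(V)=\GK\K$, and $\K=\toba(\K^1)$ with $\K^1=\adc(\toba(V_1))(V_2)$ spanned by $\{x_2,x_{\tresdos2},x_{\cincodos},x_{\tresdos\cincodos}\}$ by Lemma~\ref{lem:pale+Jordan-basis-K1}. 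This reduces every computation to these four generators, on which the explicit values of $\partial_2$ and $\partial_\cincodos$ are recorded in that lemma.

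For each of $\ghost=1,2$ I first let $\toba$ be the algebra with the stated presentation and build a surjection $\pi\colon\toba\to\toba(V)$ by checking that the listed relations hold in $\toba(V)$. The relations \eqref{eq:endymion-1}, \eqref{eq:endymion-1b}, \eqref{eq:endymion-2} hold because $V_1\oplus\Bbbk x_2\simeq\eny_+(q)$, and \eqref{eq:pale+Jordan-def-rel-1} is Lemma~\ref{lem:pale+Jordan-basis-K1}\ref{item:pale+Jordan-basis-K1-relation}. The remaining relations---\eqref{eq:pale+Jordan-def-rel-6}, \eqref{eq:pale+Jordan-def-rel-4}, \eqref{eq:pale+Jordan-def-rel-5} for $\ghost=1$, and \eqref{eq:pale+Jordan-def-rel-5}, \eqref{eq:pale+Jordan-def-rel-7}, \eqref{eq:pale+Jordan-def-rel-8}, \eqref{eq:pale+Jordan-def-rel-9} for $\ghost=2$---are verified by noting that $\partial_1$ and $\partial_{\tresdos}$ annihilate them automatically (they kill the generators of $\K^1$) and then checking that $\partial_2$ and $\partial_\cincodos$ annihilate them too, using the action values together with the definitions \eqref{eq:pale-Jordan-def-w-t} of $\ttt$ and $\wt$ in the case $\ghost=2$.

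Next I would prove that $\pi$ is injective and that the displayed monomials form a PBW basis, exactly as in Theorems~\ref{thm:q=1,q22=1,n=3} and~\ref{th:selene-2-0}. Working from the presentation I first derive the auxiliary $q$-commutation relations between $x_1,x_{\tresdos}$ and the $\K^1$-generators (and, for $\ghost=2$, the further cross relations among $\wt,\ttt,x_{\tresdos2},x_{\tresdos\cincodos}$), which show that the stated monomials span $\toba$ and \emph{a fortiori} $\toba(V)$. Linear independence in $\toba(V)$ then follows by the usual recursive skew-derivation argument: applying suitable compositions of $\partial_1,\partial_{\tresdos},\partial_2,\partial_\cincodos$ to a putative minimal homogeneous relation strips the PBW exponents off one at a time until a contradiction is reached. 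Once the monomials are a basis, they define an ascending filtration whose associated graded algebra is a (truncated) quantum polynomial algebra, so $\GK\K$ equals the number of unbounded exponents: two in \eqref{eq:selene-1+-PBW-basis}, giving $\GK\toba(\sele_{1,+}(q,-\tfrac12))=2$, and four in \eqref{eq:selene-1++-PBW-basis}, giving $\GK\toba(\sele_{1,+}(q,-1))=4$.

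Finally, for $\ghost\ge 3$ I would show $\GK\toba(V)=\infty$. Since $\ghost\in\N_0$ is already forced and $\ghost=0$ is excluded by Proposition~\ref{prop:pale+Jordan-discarding}, only $\ghost\ge 3$ remains. Following the pattern of Proposition~\ref{prop:pale+Jordan-discarding}, I would isolate inside $\K^1$ a three-dimensional braided subspace of the form \emph{one block and one point} whose ghost grows with $\ghost$ and is no longer admissible once $\ghost\ge 3$, so that $\GK\K=\infty$ by \cite[Theorem~4.1.1]{AAH-triang}; alternatively one passes to the diagonal degeneration $\gr V$ along a flag of Yetter--Drinfeld submodules and invokes Theorem~\ref{thm:conj-AAH} with \cite{H-classif-RS}, as in Lemmas~\ref{lemma:caseII-general} and~\ref{lemma:caseIII-general}. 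The main obstacle is the case $\ghost=2$: being the genuinely $\GK=4$ algebra, it needs the full list of cross relations among $\wt,\ttt,x_{\tresdos2},x_{\tresdos\cincodos}$ and a more delicate recursion to separate the four unbounded PBW variables; pinning down the correct reducing subspace for $\ghost\ge 3$ is the other point requiring care.
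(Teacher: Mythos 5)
Your treatment of the two finite cases $\ghost=1$ and $\ghost=2$ follows the paper's route essentially verbatim: verify the listed relations with $\partial_2$ and $\partial_{\cincodos}$ (the other skew-derivations vanishing automatically), derive the auxiliary commutation rules so that the stated monomials span, prove linear independence by the recursive skew-derivation argument, and read off $\GK$ from the number of unbounded PBW exponents. That part is fine.

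The genuine gap is the claim $\GK\toba(V)=\infty$ for $\ghost\ge 3$, where both of your proposed reductions fail. First, no three-dimensional braided subspace of type \emph{one block and one point} can serve as an obstruction: by \cite[Theorem~4.1.1]{AAH-triang} such braided vector spaces have Nichols algebras of finite $\GK$ for \emph{every} discrete ghost $\ghost\in\N_0$, however large; the block-plus-point criterion only excludes $\ghost\notin\N_0$, and that has already been used in the reduction preceding the theorem. So there is no admissibility threshold at $\ghost=3$ to appeal to. Second, the diagonal degeneration is useless here: since the interaction is weak ($\widetilde{q}_{12}=1$) and $q_{11}=-1$, $q_{22}=1$, the associated graded object $\gr V$ is a totally disconnected braided vector space of diagonal type with labels $-1,-1,1,1$, whose Nichols algebra has $\GK=2$ independently of $a$; the ghost sits entirely in the nilpotent off-diagonal part of the action and is annihilated by passing to $\gr V$, so \cite[Lemma~3.4.2]{AAH-triang} combined with Theorem~\ref{thm:conj-AAH} yields no contradiction. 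What the paper actually does is construct infinitely many independent PBW generators directly: it introduces the iterated brackets $\wt_n=\big(\adc x_{\tresdos\cincodos}\big)^n(x_2)$, computes $[x_{\tresdos},\wt_n]_c$ and $\partial_2(\wt_n)$ recursively in terms of scalars $\ab_n$, $\bb_n$, shows these are nonzero precisely because $\tfrac{2\ghost}{2\ghost-1}\notin\Z$, deduces that the products $\wt_{2n_1}\cdots\wt_{2n_k}$ with $n_1<\cdots<n_k$ are linearly independent, and concludes $\GK=\infty$ from \cite[Lemma~2.3.4]{AAH-triang}. Without this (or an equivalent exhibition of super-polynomial growth) the ``only if'' half of the theorem remains unproved.
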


\begin{proof} We start by observing that
\begin{align*}
g_1\cdot x_{\tresdos\cincodos} = -q_{12}\big(x_{\tresdos\cincodos}+ax_{\tresdos2}\big),\qquad
g_2\cdot x_{\tresdos\cincodos} = q_{21}\big(x_{\tresdos\cincodos}+(a+1)x_{\tresdos2}\big),
\end{align*}
where we used \eqref{eq:pale+Jordan-def-rel-1}.
From these equalities and Lemma~\ref{lem:pale+Jordan-basis-K1}, we get
\begin{align}\label{eq:pale+Jordan-g=1-subspace}
\begin{aligned}
&c(x_{\tresdos i} \otimes x_{\tresdos2}) = -x_{\tresdos2} \otimes x_{\tresdos i}, \\
&c(x_{\tresdos i} \otimes x_{\tresdos\cincodos}) = (-x_{\tresdos\cincodos}+(\ghost-1)x_{\tresdos2}) \otimes x_{\tresdos i},\qquad i = 2,\tfrac{5}{2}.
\end{aligned}
\end{align}
Then $\langle x_{\tresdos\cincodos},x_{\tresdos2}\rangle$ is a braided vector subspace of $\K^1$.

\begin{stepiv}
Assume that $\ghost \neq 1$. Then the Nichols algebra of $\big\langle x_{\tresdos\cincodos},x_{\tresdos2}\big\rangle$ is
isomorphic to the super Jordan plane. Set $\xt=x_{\tresdos\cincodos}x_{\tresdos2}+x_{\tresdos2}x_{\tresdos\cincodos}$. Then
\begin{align}\label{eq:pale+Jordan-def-rel-3}
x_{\tresdos2}^2 =0, \qquad
x_{\tresdos\cincodos}\xt = \xt x_{\tresdos\cincodos} +(\ghost-1) x_{\tresdos2}\xt,
\end{align}
and $\big\{x_{\tresdos2}^a \xt^b x_{\tresdos\cincodos}^c \mid a\in\{0,1\}, \, b,c \in\N_0\big\}$
is a basis of $\Bbbk \big\langle x_{\tresdos2}, x_{\tresdos\cincodos}\big\rangle$.
\end{stepiv}

\begin{stepiv}\label{lem:pale+Jordan-K1-action-derivations}
We define $\wt_n\in\toba(V)$ recursively by
$\wt_0 := x_2$ and
\begin{align*}
\wt_{n+1} := \big[x_{\tresdos\cincodos},\wt_n\big]_c =x_{\tresdos\cincodos}\wt_n -(g_1g_2\cdot \wt_n) x_{\tresdos\cincodos},\qquad
 n\in\N.
\end{align*}
We also define scalars $\ab_n$, $\bb_n$ by $\ab_n := \prod_{j=1}^n \big((\ghost-1)j - \frac{1}{2}\ghost\big)$, $\bb_0=1$ and
\begin{align*}
\bb_{n} &= \begin{cases}
- \frac{1}{2}\ghost\ab_{k-1} +\bb_{2k-1}, & n=2k,
\\
\bb_{2k} \big(k(\ghost-1) + \frac{1}{2}\ghost -1\big) + \frac{1}{2}\ghost\ab_k, & n=2k+1.
\end{cases}
\end{align*}

Then we have
\begin{align}
\label{eq:pale+Jordan-wn-adj-action}
&[x_{\tresdos2},\wt_n]_c= [x_1,\wt_n]_c =[\xt,\wt_n]_c =0,
\\
\label{eq:pale+Jordan-wn-adj-action-2}
&[x_{\tresdos},\wt_n]_c= \begin{cases}
q_{12}^{2k} \ab_k x_{\tresdos2}\xt^{k}, & n=2k, \\
-q_{12}^{2k+1}\ab_k \xt^{k+1}, & n=2k+1.
\end{cases}
\\
\label{eq:pale+Jordan-action-wn}
&g_1\cdot \wt_n =(-1)^n q_{12}^{n+1} \wt_n, \qquad
g_2\cdot \wt_n = q_{21}^{n} \wt_n.
\\
\label{eq:pale+Jordan-derivations-wn}
&\partial_{2}(\wt_n)=\begin{cases}
\bb_{2k} \xt^{k}, & n=2k, \\
\bb_{2k+1} x_{\tresdos2}\xt^{k}, & n=2k+1;
\end{cases} \qquad \partial_{i}(\wt_n) =0, \quad i=1,\tfrac{3}{2},\tfrac{5}{2}.
\end{align}
\end{stepiv}

\begin{proof}[Proof of Steps 1 and 2]
We proceed recursively on $n\in\N_0$.
When $n=0$ \eqref{eq:pale+Jordan-action-wn} and \eqref{eq:pale+Jordan-derivations-wn} are clear. For \eqref{eq:pale+Jordan-wn-adj-action} and \eqref{eq:pale+Jordan-wn-adj-action-2} we compute
\begin{align*}
\partial_2(\xt) = -(\ghost-1)x_{\tresdos2}x_1, \qquad
\partial_i(\xt) = 0, \quad i\in\big\{1,\tfrac{3}{2},\tfrac{5}{2}\big\}.
\end{align*}
Using \eqref{eq:endymion-1b} and \eqref{eq:endymion-2}, we check that
\begin{align*}
\partial_2 \big(\xt x_2-q_{12}^2 x_2\xt\big) = \xt -(\ghost-1)x_{\tresdos2}x_1 x_2 -\xt +q_{12}^2(\ghost-1) x_2x_{\tresdos2}x_1=0.
\end{align*}
As $\partial_i(\xt) = \partial_i(x_2) = 0$, $i\in\{1,\tresdos,\cincodos\}$, the relation
$[\xt, x_2]_c=0$ holds in $\K$. Now $[x_1,x_2]_c=[x_{\tresdos2},x_2]_c=0$ are \eqref{eq:endymion-1b} and \eqref{eq:endymion-2}, respectively, and \eqref{eq:pale+Jordan-wn-adj-action-2} follows.
Now assume that all equations hold for $n$. By the inductive hypothesis,
\begin{align*}
&g_1\cdot \wt_{n+1} = \bigl[-q_{12}\big(x_{\tresdos\cincodos}+ax_{\tresdos2}\big), (-1)^n q_{12}^{n+1} \wt_n\big]_c =(-1)^{n+1} q_{12}^{n+2} \wt_{n+1},
\\
&g_2\cdot \wt_{n+1} = \big[q_{21}\big(x_{\tresdos\cincodos}+(a+1)x_{\tresdos2}\big), q_{21}^{n} \wt_n\big]_c = q_{21}^{n+1} \wt_{n+1},
\end{align*}
where we used \eqref{eq:graded-bracket-jacobi}, \eqref{eq:pale+Jordan-def-rel-3}. Thus \eqref{eq:pale+Jordan-action-wn} is proved.
Next we establish \eqref{eq:pale+Jordan-wn-adj-action}:
\begin{align*}
&\big[x_{\tresdos2},\wt_{n+1}\big]_c = [\xt,\wt_n]_c +\big(g_1g_2 \cdot x_{\tresdos\cincodos}\big)\big[x_{\tresdos2},\wt_n\big]_c \pm q_{12} \big[x_{\tresdos2},\wt_n\big]_c x_{\tresdos\cincodos}=0,
\\
&\big[x_1,\wt_{n+1}\big]_c = \big[\big[x_1,x_{\tresdos\cincodos}\big]_c,\wt_n\big]_c +(g_1 \cdot x_{\tresdos\cincodos})[x_1,\wt_n]_c \pm q_{12} [x_1,\wt_n]_c x_{\tresdos\cincodos}=0,
\\
&[\xt,\wt_{n+1}]_c = (\ghost-1)\big[x_{\tresdos2}\xt,\wt_n\big]_c +\big(g_1^2g_2^2 \cdot x_{\tresdos\cincodos}\big)[\xt,\wt_n]_c \pm q_{12} [\xt,\wt_n]_c x_{\tresdos\cincodos}=0.
\end{align*}
We go on with \eqref{eq:pale+Jordan-wn-adj-action-2} considering separately the cases $n$ odd or even:
\begin{align*}
&\begin{aligned}
\big[x_{\tresdos}, \wt_{2k}\big]_c &=
\big(g_1 \cdot x_{\tresdos\cincodos}\big)\big[x_{\tresdos},\wt_{2k-1}\big]_c -\big[x_{\tresdos},g_1g_2\cdot \wt_{2k-1}\big]_c x_{\tresdos\cincodos}
\\
&= -q_{12}\big(x_{\tresdos\cincodos}+ax_{\tresdos2}\big) \big[x_{\tresdos},\wt_{2k-1}\big]_c+q_{12} \big[x_{\tresdos},\wt_{2k-1}\big]_c x_{\tresdos\cincodos}
\\
&= -q_{12}\ab_{k-1} \big(\big(x_{\tresdos\cincodos}+ax_{\tresdos2}\big) \xt^{k} - \xt^{k}x_{\tresdos\cincodos}\big)
= -q_{12}\ab_{k-1} \big(k(\ghost-1)+a\big) x_{\tresdos2}\xt^{k},
\end{aligned}
\\
&\begin{aligned}
\big[x_{\tresdos}, \wt_{2k+1}\big]_c &= -q_{12}\big(x_{\tresdos\cincodos}+ax_{\tresdos2}\big) \big[x_{\tresdos},\wt_{2k}\big]_c -
q_{12} \big[x_{\tresdos},\wt_{2k}\big]_c x_{\tresdos\cincodos}
\\
&=-q_{12} \ab_k \big(\xt^{k+1} - x_{\tresdos2}\big(x_{\tresdos\cincodos}\xt^{k}
- \xt^{k} x_{\tresdos\cincodos}\big)\big) = -q_{12} \ab_k \xt^{k+1}.
\end{aligned}
\end{align*}
Now we deal with \eqref{eq:pale+Jordan-derivations-wn}. By formula~\eqref{eq:pale+Jordan-action-wn}, $\wt_{n+1}=x_{\tresdos\cincodos}\wt_n-(-1)^nq_{12} \wt_n x_{\tresdos\cincodos}$. Let $i=1,\tresdos$. As~${\partial_i\big(x_{\tresdos\cincodos}\big)=0}$, we have that $\partial_i(\wt_{n+1})=0$. Now,
\begin{align*}
\partial_{\cincodos} (\wt_{n+1}) = -q_{21}^n x_1 \wt_n+(-1)^nq_{12} \wt_n x_1
= -q_{21}^n [x_1, \wt_n]_c=0.
\end{align*}
For the last skew-derivation we consider the cases $n=2k-1$, $n=2k$:
\begin{gather*}
\partial_{2} (\wt_{2k})
= -aq_{21}^{2k-1}\big[x_{\tresdos},\wt_{2k-1}\big]_c -aq_{21}^{2k-1}[x_{1},\wt_{2k-1}]_c
+x_{\tresdos\cincodos}\partial_2(\wt_{2k-1})
\\ \hphantom{\partial_{2} (\wt_{2k})= }
{}+ \partial_2(\wt_{2k-1}) \big(x_{\tresdos\cincodos}+(a+1)x_{\tresdos2}\big)
\\ \hphantom{\partial_{2} (\wt_{2k})}
{}= a\ab_{k-1} \xt^{k} +\bb_{2k-1} \big(x_{\tresdos\cincodos}x_{\tresdos2}\xt^{k-1}+x_{\tresdos2}\xt^{k-1} \big(x_{\tresdos\cincodos}+(a+1)x_{\tresdos2}\big)\big)
\\\hphantom{\partial_{2} (\wt_{2k})}
{}= (a\ab_{k-1} +\bb_{2k-1}) \xt^{k},
\\
\partial_{2} (\wt_{2k+1}) =
-aq_{21}^{2k}\big[x_{\tresdos},\wt_{2k}\big]_c
-aq_{21}^{2k}[x_1,\wt_{2k}]_c
+x_{\tresdos\cincodos}\partial_2(\wt_{2k})
-\partial_2(\wt_{2k}) \big(x_{\tresdos\cincodos}+(a+1)x_{\tresdos2}\big)
\\ \hphantom{\partial_{2} (\wt_{2k+1})}
{} = -a\ab_k x_{\tresdos2}\xt^{k}
+\bb_{2k} \big(x_{\tresdos\cincodos}\xt^{k}-\xt^{k} x_{\tresdos\cincodos}\big)
-(a+1)\bb_{2k} \xt^{k}x_{\tresdos2}
\\ \hphantom{\partial_{2} (\wt_{2k+1})}
{} = \big(\bb_{2k} (k(\ghost-1)-a-1) -a\ab_k\big) x_{\tresdos2}\xt^{k}.
\tag*{\qed}
\end{gather*}
\renewcommand{\qed}{}
\end{proof}

\begin{stepiv}\label{prop:pale+Jordan}
If $\ghost\in\N_{>2}$, then $\GK \toba(V)=\infty$.
\end{stepiv}

\begin{proof}[Proof of Step 3]
We claim that
$\wt_n \ne 0$, $\bb_n \ne 0$, for all $n\in\N_0$.
Indeed, $\tfrac{2\ghost}{2\ghost-1}\notin\Z$, so $\ab_k\ne 0$ for all $k\in\N$.
By \eqref{eq:pale+Jordan-wn-adj-action-2}, $[x_{\tresdos},\wt_n]_c\ne 0$, so $\wt_n \ne 0$ for all $n\in\N_0$. Hence
$0\ne \partial_2(\wt_n)= -\bb_n x_{\tresdos2}^{n+1}$, so $\bb_{n}\ne 0$ for all $n\in\N_0$.

By \cite[Lemma~2.3.4]{AAH-triang}, to prove the step it is enough to show that the set
\begin{align}\label{eq:pale+Jordan-prod-wn}
 \wt_{2n_1}\wt_{2n_2}\cdots \wt_{2n_k},\qquad
 k\in\mathbb N_0, \quad n_1< \cdots< n_k \in\N,
\end{align}
is linearly independent.
Otherwise pick a non-trivial linear combination $\mathtt S$ of elements in \eqref{eq:pale+Jordan-prod-wn}
homogeneous of minimal degree $N$. By Step~\ref{lem:pale+Jordan-K1-action-derivations}, we have
\begin{align*}
&(\partial_1\partial_2)^{2n_k}\partial_2 (\wt_{2n_1}\wt_{2n_2}\cdots \wt_{2n_k})
\\
&\qquad{}= \sum_{i=1}^k \bb_{2n_i}q_{21}^{2n_{i+1}+\cdots +2n_k} (\partial_1\partial_2)^{2n_k} (\wt_{2n_1}\cdots \wt_{2n_{i-1}} \xt^{n_i}\wt_{2n_{i+1}}\cdots \wt_{2n_k})
\\
&\qquad{} = n_k! \bb_{2n_k} \wt_{2n_1}\wt_{2n_2}\cdots \wt_{2n_{k-1}},
\\
&(\partial_1\partial_2)^{2m}\partial_2 (\wt_{2n_1}\wt_{2n_2}\cdots \wt_{2n_k}) =0, \qquad
\text{if}\quad m>n_k.
\end{align*}
Let $M$ be maximal between the $n_k$'s such that $\wt_{2n_1}\wt_{2n_2}\cdots \wt_{2n_k}$ has
coefficient $\neq 0$ in $\mathtt S$.
Then $0=(\partial_1\partial_2)^{2M}\partial_2(\mathtt S)$ is a non-trivial linear combination of degree $N-4M-1$,
a contradiction. Thus \eqref{eq:pale+Jordan-prod-wn} is linearly independent.
\end{proof}

\begin{stepiv}
Assume that $\ghost = 1$. Then \eqref{eq:pale+Jordan-def-rel-6}, \eqref{eq:pale+Jordan-def-rel-4} and \eqref{eq:pale+Jordan-def-rel-5}
hold in $\toba(V)$.
\end{stepiv}

\begin{proof}[Proof of Step 4]
By \eqref{eq:pale+Jordan-g=1-subspace}, the braiding of $Z = \big\langle x_{\tresdos\cincodos},x_{\tresdos2}\big\rangle$ is minus the flip, hence
$\toba(Z) \simeq \Lambda(Z)$ hence \eqref{eq:pale+Jordan-def-rel-4} holds.
Now $\langle x_{2}, x_{\cincodos}\rangle \simeq $ the Jordan plane, so \eqref{eq:pale+Jordan-def-rel-5} holds.
To check \eqref{eq:pale+Jordan-def-rel-6}, we use \eqref{eq:endymion-1}, \eqref{eq:endymion-1b}, \eqref{eq:endymion-2}, \eqref{eq:pale+Jordan-def-rel-1}, \eqref{eq:pale+Jordan-def-rel-4}:
\begin{align*}
&\partial_{\cincodos} \big(x_{\tresdos\cincodos}x_{\cincodos}-q_{12}\big(x_{\cincodos}+\tfrac{1}{2}x_{2}\big)x_{\tresdos\cincodos}\big) \\
&\qquad{}=x_{\tresdos\cincodos}-x_1\big(x_{\cincodos}+x_2\big)
-\big(x_{\tresdos\cincodos}+\tfrac{1}{2}x_{\tresdos2}\big)
+q_{12}\big(x_{\cincodos}+\tfrac{1}{2}x_{2}\big)x_1
= -x_{1\cincodos}-\tfrac{1}{2}x_{\tresdos2}=0,
\\
&\partial_{2} \big(x_{\tresdos\cincodos}x_{\cincodos}-q_{12}\big(x_{\cincodos}+\tfrac{1}{2}x_{2}\big)x_{\tresdos\cincodos}\big)
\\
&\qquad{}=\tfrac{1}{2}\big(\big(x_{\tresdos}+x_1\big)\big(x_{\cincodos}+x_2\big)
-x_{\tresdos\cincodos}-\tfrac{1}{2}x_{\tresdos2}
-q_{12}\big(x_{\cincodos}+\tfrac{1}{2}x_{2}\big)\big(x_{\tresdos}+x_1\big)\big)
\\
&\qquad{}
=\tfrac{1}{2}\big(x_{\tresdos\cincodos}+x_{1\cincodos}+x_{\tresdos2}
-x_{\tresdos\cincodos}-\tfrac{1}{2}x_{\tresdos2} \big)=0,
\\
&\partial_{\cincodos} \big(x_{\tresdos\cincodos}x_2-q_{12}x_2x_{\tresdos\cincodos}\big) =
-x_1x_2+q_{12}x_2x_1=0,
\\
& \partial_{2} \big(x_{\tresdos\cincodos}x_2-q_{12}x_2x_{\tresdos\cincodos}\big) =
x_{\tresdos\cincodos}
+\tfrac{1}{2}\big(x_{\tresdos}+x_1\big) x_2
-\big(x_{\tresdos\cincodos}+\tfrac{1}{2}x_{\tresdos2}\big)
-\tfrac{1}{2}q_{12}x_2(x_{\tresdos}+x_1)=0.
\end{align*}
As $\partial_1$, $\partial_{\tresdos}$ annihilate all the terms in \eqref{eq:pale+Jordan-def-rel-6}, both relations hold in $\toba(V)$.
\end{proof}

\begin{stepiv}
End of the case $\ghost = 1$.
\end{stepiv}

\begin{proof}[Proof of Step 5]
If $\toba$ is the algebra with the claimed presentation, then
there is a surjective map $\toba \twoheadrightarrow \toba\big(\sele_{1,+}\big(q, -\frac12\big)\big)$.
Now the following relations hold in $\toba$:
\begin{align*}
&x_1x_{\tresdos2} = -q_{12}x_{\tresdos2}x_1, \qquad x_{\tresdos}x_{\tresdos2} = -q_{12}x_{\tresdos2}x_{\tresdos},
\qquad x_{\tresdos2}x_{\cincodos} = q_{12}\big(x_{\cincodos}+\tfrac{1}{2}x_{2}\big)x_{\tresdos2},
\\
&x_1x_{\tresdos\cincodos} = -q_{12}\big(x_{\tresdos\cincodos}-\tfrac{1}{2}x_{\tresdos2}\big)x_1, \qquad
x_{\tresdos}x_{\tresdos\cincodos} = -q_{12}\big(x_{\tresdos\cincodos}-\tfrac{1}{2}x_{\tresdos2}\big)x_{\tresdos}.
\end{align*}
Hence $\toba$ is spanned by the monomials in \eqref{eq:selene-1+-PBW-basis}.
It only remains to prove that they are linearly independent in $\toba(\sele_{1,+}(q, -\frac12))$. By direct computations,
\begin{align*}
\begin{split}
&\partial_1 \big(x_{\cincodos}^{m_1}x_{2}^{m_2}x_{\tresdos\cincodos}^{m_3}x_{\tresdos2}^{m_4} x_{\tresdos}^{m_5} x_1^{m_6} \big)
 =\delta_{m_6,1} x_{\cincodos}^{m_1}x_{2}^{m_2}x_{\tresdos\cincodos}^{m_3}x_{\tresdos2}^{m_4} x_{\tresdos}^{m_5},
\\
&\partial_{\tresdos} \big(x_{\cincodos}^{m_1}x_{2}^{m_2}x_{\tresdos\cincodos}^{m_3}x_{\tresdos2}^{m_4} x_{\tresdos}^{m_5} \big)
 =\delta_{m_5,1} x_{\cincodos}^{m_1}x_{2}^{m_2}x_{\tresdos\cincodos}^{m_3}x_{\tresdos2}^{m_4},
\\
&\partial_1\partial_{\cincodos} \big(x_{\cincodos}^{m_1}x_{2}^{m_2}x_{\tresdos\cincodos}^{m_3}x_{\tresdos2}^{m_4}\big)
 = -(-1)^{m_4}\delta_{m_3,1} x_{\cincodos}^{m_1}x_{2}^{m_2}x_{\tresdos2}^{m_4},
\\
&\partial_1\partial_{2} \big(x_{\cincodos}^{m_1}x_{2}^{m_2}x_{\tresdos2}^{m_4}\big) = -\delta_{m_4,1} x_{\cincodos}^{m_1}x_{2}^{m_2}.
\end{split}
\end{align*}
Thus the case $\ghost=1$ follows using again a recursive argument.
\end{proof}

\begin{stepiv}
Assume that $\ghost =2$. Then \eqref{eq:pale+Jordan-def-rel-7}, \eqref{eq:pale+Jordan-def-rel-8} and
\eqref{eq:pale+Jordan-def-rel-9} hold in $\toba(V)$.
\end{stepiv}

\begin{proof}[Proof of Step 6]
We check these relations using derivations.
First we check that
\begin{align*}
\partial_{\cincodos}(\ttt)=x_{\tresdos2}, \qquad
\partial_{2}(\ttt)=x_{\tresdos\cincodos},\qquad
\partial_{\cincodos}(\wt)=0, \qquad
\partial_{2}(\wt)=x_{\tresdos2}.
\end{align*}
Using these computations, \eqref{eq:endymion-1}, \eqref{eq:endymion-1b}, \eqref{eq:endymion-2b}, \eqref{eq:pale+Jordan-def-rel-1} and \eqref{eq:pale+Jordan-def-rel-5}, we have
\begin{gather*}
\partial_{\cincodos} \big(x_{\tresdos2}x_{\cincodos}-q_{12}x_{\cincodos}x_{\tresdos2}-\wt\big) =x_{\tresdos2}-q_{12}q_{21}x_{\tresdos2}=0,
\\
\partial_{2} \big(x_{\tresdos2}x_{\cincodos}-q_{12}x_{\cincodos}x_{\tresdos2}\big) = -x_1\big(x_{\cincodos}+x_2\big)+q_{12}x_{\cincodos}x_1=-x_{1\cincodos}=\partial_2(\wt),
\\
\partial_{\cincodos} \big(\ttt x_{\cincodos}-q_{12}(x_{\cincodos}+x_{2})\ttt\big)
=\ttt+x_{\tresdos2} (x_{\cincodos}+x_2)-q_{12}(x_{\cincodos}+x_{2})x_{\tresdos2}-q_{12}q_{21}(\ttt+\wt)
\\ \hphantom{\partial_{\cincodos} \big(\ttt x_{\cincodos}-q_{12}(x_{\cincodos}+x_{2})\ttt\big)}
{} = \ttt+\wt- (\ttt+\wt)=0,
\\
\partial_{2} \big(\ttt x_{\cincodos}-q_{12}\big(x_{\cincodos}+x_{2}\big)\ttt\big) = x_{\tresdos\cincodos} \big(x_{\cincodos}+x_2\big)-q_{12}\big(x_{\cincodos}+x_{2}\big)x_{\tresdos\cincodos}
-(\ttt+\wt)=0,
\\
\partial_{\cincodos} \big(x_{\tresdos2} \ttt+q_{12}(\ttt-\wt)x_{\tresdos2}\big)= x_{\tresdos2}^2+q_{12}q_{21}x_{\tresdos2}^2=0,
\\
\partial_{2} \big(x_{\tresdos2} \ttt+q_{12}(\ttt-\wt)x_{\tresdos2}\big)
= -q_{21}x_1(\ttt-\wt)+x_{\tresdos2}x_{\tresdos\cincodos} -q_{12}(\ttt-\wt)x_1+\big(x_{\tresdos\cincodos}-x_{\tresdos2}\big)x_{\tresdos2}
\\ \hphantom{\partial_{2} \big(x_{\tresdos2} \ttt+q_{12}(\ttt-\wt)x_{\tresdos2}\big)}
{} = -q_{21}[x_1,\ttt]_c+\xt=0,
\\
\partial_{\cincodos} \big(\wt x_{\cincodos} -q_{12}\big(x_{\cincodos}+x_{2}\big)\wt\big)= \wt-q_{12}q_{21}\wt=0,
\\
\partial_{2} \big(\wt x_{\cincodos} -q_{12}\big(x_{\cincodos}+x_{2}\big)\wt\big) = x_{\tresdos2}(x_{\cincodos}+x_2)-\wt-q_{12}\big(x_{\cincodos}+x_2\big)x_{\tresdos2}=0,
\\
\partial_{\cincodos} \big(x_{\tresdos\cincodos} \ttt +q_{12}(\ttt-\wt)x_{\tresdos\cincodos}\big)=
-q_{21}x_1 (\ttt-\wt) + x_{\tresdos\cincodos}x_{\tresdos2}+x_{\tresdos2}\big(x_{\tresdos\cincodos}+x_{\tresdos2}\big)-q_{12}(\ttt-\wt)x_1
\\ \hphantom{\partial_{\cincodos} \big(x_{\tresdos\cincodos} \ttt +q_{12}(\ttt-\wt)x_{\tresdos\cincodos}\big)}
{} = -q_{21}[x_1,\ttt]_c+\xt = 0,
\\
\partial_{2} \big(x_{\tresdos\cincodos} \ttt +q_{12}(\ttt-\wt)x_{\tresdos\cincodos}\big) =
q_{21}(x_{\tresdos}+x_1) (\ttt-\wt) +\big(x_{\tresdos\cincodos}-x_{\tresdos2}\big)\big(x_{\tresdos\cincodos}+x_{\tresdos2}\big)+ x_{\tresdos\cincodos}^2
\\ \hphantom{\partial_{2} \big(x_{\tresdos\cincodos} \ttt +q_{12}(\ttt-\wt)x_{\tresdos\cincodos}\big) =}
 +q_{12}(\ttt-\wt)\big(x_{\tresdos}+x_1\big)
= \xt-2x_{\tresdos\cincodos}^2+ 2x_{\tresdos\cincodos}^2-\xt=0.
\end{gather*}
As $\partial_1$, $\partial_{\tresdos}$ annihilate all the terms in these relations, they hold in $\toba(V)$.
\end{proof}

\begin{stepiv}
End of the case $\ghost = 2$.
\end{stepiv}

\begin{proof}[Proof of Step 7]
If $\toba$ is the algebra with the claimed presentation, then
there is a surjective map $\toba \twoheadrightarrow \toba(\sele_{1,+}(q, -1))$.
Now the following relations hold in $\toba$:
\begin{alignat*}{2}
&x_1\ttt =-q_{12}^2(\ttt-\wt)x_1 +q_{12}\xt,\qquad&&
x_{\tresdos}\ttt =-q_{12}^2(\ttt-\wt)x_1 +q_{12}\xt-2x_{\tresdos\cincodos}^2,
\\
&x_\tresdos\wt =-q_{12}^2\wt x_{\tresdos}+q_{12}\xt,&&
\wt^2 = \ttt^2 =0,
\end{alignat*}
and $[\xt,\yt]_c=0$ for other PBW generators $\xt$, $\yt$;
thus $\toba$ is spanned by the monomials \eqref{eq:selene-1--PBW-basis}.
We prove linear independence in $\toba(\sele_{1,+}(q, -1))$:
\begin{gather*}
\partial_1\big( x_{2}^{m_1}x_{\cincodos}^{m_2} \wt^{m_3}\ttt^{m_4} x_{\tresdos2}^{m_5}\xt^{m_6}x_{\tresdos\cincodos}^{m_7} x_{\tresdos}^{m_8} x_1^{m_9} \big)
=\delta_{m_9,1}x_{2}^{m_1}x_{\cincodos}^{m_2} \wt^{m_3}\ttt^{m_4} x_{\tresdos2}^{m_5}\xt^{m_6}x_{\tresdos\cincodos}^{m_7} x_{\tresdos}^{m_8},
\\
\partial_{\tresdos} \big( x_{2}^{m_1}x_{\cincodos}^{m_2} \wt^{m_3}\ttt^{m_4} x_{\tresdos2}^{m_5}\xt^{m_6}x_{\tresdos\cincodos}^{m_7} x_{\tresdos}^{m_8} \big)
= \delta_{m_8,1} x_{2}^{m_1}x_{\cincodos}^{m_2} \wt^{m_3}\ttt^{m_4} x_{\tresdos2}^{m_5}\xt^{m_6}x_{\tresdos\cincodos}^{m_7},
\\
\partial_1\partial_{\cincodos} \big(x_{2}^{m_1}x_{\cincodos}^{m_2} \wt^{m_3}\ttt^{m_4} x_{\tresdos2}^{m_5}\xt^{m_6}x_{\tresdos\cincodos}^{m_7} \big)
=-m_7 x_{2}^{m_1}x_{\cincodos}^{m_2} \wt^{m_3}\ttt^{m_4} x_{\tresdos2}^{m_5}\xt^{m_6}x_{\tresdos\cincodos}^{m_7-1}
\\
\partial_1\partial_{2}\partial_1\partial_{2} \big(x_{2}^{m_1}x_{\cincodos}^{m_2} \wt^{m_3}\ttt^{m_4} x_{\tresdos2}^{m_5}\xt^{m_6} \big)
= -2m_6 x_{2}^{m_1}x_{\cincodos}^{m_2} \wt^{m_3}\ttt^{m_4} x_{\tresdos2}^{m_5}\xt^{m_6-1},
\\
\partial_1\partial_{2} \big(x_{2}^{m_1}x_{\cincodos}^{m_2} \wt^{m_3}\ttt^{m_4} x_{\tresdos2}^{m_5} \big)
= -\delta_{m_5,1} x_{2}^{m_1}x_{\cincodos}^{m_2} \wt^{m_3}\ttt^{m_4},
\\
\partial_1\partial_{2}^2 \big(x_{2}^{m_1}x_{\cincodos}^{m_2} \wt^{m_3}\ttt^{m_4}\big)
= -\delta_{m_4,1} x_{2}^{m_1}x_{\cincodos}^{m_2} \wt^{m_3},
\\
\partial_1\partial_{2}\partial_{\cincodos} \big(x_{2}^{m_1}x_{\cincodos}^{m_2} \wt^{m_3}\big)
= -\delta_{m_3,1} x_{2}^{m_1}x_{\cincodos}^{m_2}.
\end{gather*}
Thus the claim follows by a recursive argument as in previous cases.
\end{proof}
\renewcommand{\qed}{}
\end{proof}

\subsubsection{A pale block and a super Jordan plane}
As in Section~\ref{subsubsec:pale-jordan}, we assume that $q_{11} = -1$, $q_{12} = q = q_{21}^{-1}$, $b = q_{22}$ and $\ghost =a \in \N$, cf.\ \eqref{eq:discrete-ghost}. But now $q_{22}= -1$ so that $\toba(V_2)$ is a super Jordan plane.
When $\ghost= 1$, $V$ is the braided vector space $\sele_{1,-}(q)$, see \eqref{eq:selene-defi-1-}. To state our result we need the same elements
$\ttt$ and $\wt$ as in \eqref{eq:pale-Jordan-def-w-t}.

\begin{Theorem} \label{th:selene-1-}
The algebra $\toba(V)$ has finite $\GK $ if and only if $\ghost = 1$.
If $\ghost = 1$, then $\toba(\sele_{1,-}(q))$ is presented by generators
$x_1$, $x_{\tresdos}$, $x_2$, $x_\cincodos$ with defining relations
\eqref{eq:endymion-1}, \eqref{eq:endymion-1b}, \eqref{eq:endymion-2b}, \eqref{eq:pale+Jordan-def-rel-1} and
\begin{alignat}{2}
\label{eq:pale+superJordan-def-rel-6}
&x_{\tresdos2}x_{\cincodos} =-q_{12}x_{\cincodos}x_{\tresdos2}+\wt,&&
\ttt x_{\cincodos} =q_{12}\big(x_{\cincodos}-x_{2}\big)\ttt,
\\\label{eq:pale+superJordan-def-rel-4}
&x_{\tresdos}\ttt =-q_{12}^2(\ttt+2\wt)x_{\tresdos}-q_{12}x_{\tresdos2}x_{\tresdos\cincodos},\qquad&&
\wt x_{\cincodos} =q_{12}\big(x_{\cincodos}-x_{2}\big)\wt,
\\ \label{eq:pale+superJordan-def-rel-5}
&x_{\tresdos\cincodos}x_{\tresdos2} = x_{\tresdos2}x_{\tresdos\cincodos} -\tfrac{1}{2}x_{\tresdos2}^2,&&
 x_{\cincodos}x_{\cincodos2} =x_{\cincodos2}x_{\cincodos}+x_2x_{\cincodos2}.
\end{alignat}

A PBW-basis is formed by the monomials
\begin{align}\label{eq:selene-1--PBW-basis}
\begin{split}
&x_{2}^{m_1}x_{\cincodos2}^{m_2}x_{\cincodos}^{m_3}\wt^{m_4}\ttt^{m_5} x_{\tresdos2}^{m_6}x_{\tresdos\cincodos}^{m_7} x_{\tresdos}^{m_8} x_1^{m_9},
\\
&m_1, m_4, m_5, m_8,m_9 \in \{0,1\}, \qquad m_2, m_3,m_6,m_7 \in \N_0.
\end{split}
\end{align}
Hence $\GK \toba(\sele_{1,-}(q)) = 4$.
\end{Theorem}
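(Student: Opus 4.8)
The plan is to follow closely the architecture of the proof of Theorem~\ref{th:selene-1+}, adapting each step to the super Jordan plane case $q_{22} = -1$. I would begin with the preliminary computation of the action of $g_1$ and $g_2$ on $x_{\tresdos\cincodos}$ using \eqref{eq:pale+Jordan-def-rel-1} and Lemma~\ref{lem:pale+Jordan-basis-K1}, producing the analogue of \eqref{eq:pale+Jordan-g=1-subspace}. As before this exhibits $\langle x_{\tresdos\cincodos}, x_{\tresdos2}\rangle$ as a braided subspace of $\K^1 = \adc(\toba(V_1))(V_2)$ whose Nichols algebra controls the dependence on $\ghost$. The essential structural change from the Jordan case is that now $\langle x_2, x_{\cincodos}\rangle$ is a super Jordan plane, which is where the extra generator $x_{\cincodos2}$ and the second relation of \eqref{eq:pale+superJordan-def-rel-5} originate.

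To discard the range $\ghost \geq 2$, I would set up a recursive family analogous to Step~\ref{lem:pale+Jordan-K1-action-derivations}: put $\wt_0 := x_2$, $\wt_{n+1} := [x_{\tresdos\cincodos}, \wt_n]_c$, and compute by induction the bracket actions $[x_i, \wt_n]_c$, the group actions $g_i \cdot \wt_n$, and the skew-derivations $\partial_i(\wt_n)$. The combinatorial scalars $\ab_n, \bb_n$ must be recomputed for $q_{22} = -1$; the decisive point is that, in contrast with the Jordan case where the product defining $\ab_k$ vanishes exactly at $\ghost = 2$, here the relevant scalars remain nonzero for every $\ghost \geq 2$. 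Hence the monomials $\wt_{2n_1}\cdots\wt_{2n_k}$ with $n_1 < \cdots < n_k$ stay linearly independent and \cite[Lemma~2.3.4]{AAH-triang} forces $\GK\toba(V) = \infty$. This non-vanishing is precisely what moves the finiteness threshold from $\ghost \le 2$ down to $\ghost = 1$.

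For the case $\ghost = 1$ I would first collect the relations coming from sub-objects: $V_1 \oplus \Bbbk x_2 \simeq \eny_-(q)$ gives \eqref{eq:endymion-1}, \eqref{eq:endymion-1b} and \eqref{eq:endymion-2b} (in particular $x_2^2 = 0$), while the super Jordan plane $\langle x_2, x_{\cincodos}\rangle$ contributes the last relation of \eqref{eq:pale+superJordan-def-rel-5}. The remaining relations \eqref{eq:pale+superJordan-def-rel-6}, \eqref{eq:pale+superJordan-def-rel-4}, and the first identity of \eqref{eq:pale+superJordan-def-rel-5} I would verify by checking that each of $\partial_1, \partial_{\tresdos}, \partial_2, \partial_{\cincodos}$ annihilates them, relying on auxiliary formulas such as $\partial_{\cincodos}(\ttt) = x_{\tresdos2}$, $\partial_2(\ttt) = x_{\tresdos\cincodos}$, $\partial_2(\wt) = x_{\tresdos2}$, $\partial_{\cincodos}(\wt) = 0$, together with the value of $[x_1, \ttt]_c$ and $\partial_2(\xt)$. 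This produces a surjection $\toba \twoheadrightarrow \toba(\sele_{1,-}(q))$ from the algebra $\toba$ presented by the claimed relations.

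The main obstacle will be proving that the monomials \eqref{eq:selene-1--PBW-basis} form a PBW-basis. Spanning follows by deriving enough $q$-commutation relations among the PBW generators, in part governed by the super Jordan plane structure, and arguing recursively on degree. For linear independence in $\toba(\sele_{1,-}(q))$ I would invoke the minimal-degree contradiction method: assuming a nontrivial homogeneous relation $\mathtt{S}$ of least degree, apply suitable compositions of skew-derivations to peel off the generators $x_1, x_{\tresdos}, x_{\tresdos\cincodos}, x_{\tresdos2}, \ttt, \wt$ one layer at a time, reducing $\mathtt{S}$ to a combination of monomials $x_2^{m_1}x_{\cincodos2}^{m_2}x_{\cincodos}^{m_3}$ lying in the super Jordan plane $\toba(V_2)$. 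Since these are linearly independent there, we reach the desired contradiction. The most delicate bookkeeping concerns the super Jordan generator $x_{\cincodos2}$, whose skew-derivations couple the block layer $x_{\tresdos2}, x_{\tresdos\cincodos}$ to the $\wt, \ttt$ layer and so must be separated with care. Once independence is established, the ordered monomials induce an ascending algebra filtration whose associated graded is a (truncated) quantum polynomial algebra, whence $\GK\toba(\sele_{1,-}(q)) = 4$.
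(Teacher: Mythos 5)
Your proposal matches the paper's proof in essentially every respect: the braided subspace $\langle x_{\tresdos\cincodos},x_{\tresdos2}\rangle$ of $\K^1$, the recursive family $\wt_n=[x_{\tresdos\cincodos},\wt_{n-1}]_c$ with scalars $\ab_n,\bb_n$ that stay nonzero for all $\ghost\ge 2$ (which is exactly why the threshold drops to $\ghost=1$), the appeal to \cite[Lemma~2.3.4]{AAH-triang}, the derivation-checking of the relations, and the surjection-plus-skew-derivation argument for the PBW basis. The only cosmetic deviation is that you restrict to even-indexed products $\wt_{2n_1}\cdots\wt_{2n_k}$ as in the Jordan case, whereas here $\partial_2(\wt_n)=\bb_n x_{\tresdos2}^n$ uniformly and the paper uses all indices; either family gives the required superpolynomial growth.
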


\begin{proof} We use \eqref{eq:pale+Jordan-def-rel-1} to check that
\begin{align*}
g_1\cdot x_{\tresdos\cincodos}= -q_{12}\big(x_{\tresdos\cincodos}+ax_{\tresdos2}\big), \qquad
g_2\cdot x_{\tresdos\cincodos}= -q_{21}\big(x_{\tresdos\cincodos}+(a-1)x_{\tresdos2}\big).
\end{align*}
From these equalities and Lemma~\ref{lem:pale+Jordan-basis-K1}, we get
\begin{align*}
c\big(x_{\tresdos i} \otimes x_{\tresdos2}\big) = x_{\tresdos2} \otimes x_{\tresdos i}, \qquad
c\big(x_{\tresdos i} \otimes x_{\tresdos\cincodos}\big) = \big(x_{\tresdos\cincodos}+(2a-1)x_{\tresdos2}\big) \otimes x_{\tresdos i}, \qquad
i = 2,\tfrac{5}{2}.
\end{align*}
Then $\langle x_{\tresdos\cincodos},x_{\tresdos2}\rangle$ is a braided vector subspace of $\K^1$.

\begin{stepv}
The Nichols algebra of $\langle x_{\tresdos\cincodos},x_{\tresdos2}\rangle$ is
isomorphic to the Jordan plane. Then the set $\big\{x_{\tresdos2}^a x_{\tresdos\cincodos}^b \mid a,b \in\N_0\big\}$
is a basis of the subalgebra $\Bbbk \big\langle x_{\tresdos2},x_{\tresdos\cincodos}\big\rangle$ and
\begin{align}\label{eq:pale+superJordan-def-rel-1}
x_{\tresdos\cincodos}x_{\tresdos2}^n = x_{\tresdos2}^nx_{\tresdos\cincodos} -\tfrac{(2a-1)n}{2}x_{\tresdos2}^{n+1} \qquad \text{for all} \quad n\in\N.
\end{align}
\end{stepv}

We define $\wt_n\in\toba(V)$ recursively by
$\wt_0 := x_2$ and
\begin{align*}
\wt_{n+1} := \big[x_{\tresdos\cincodos},\wt_n\big]_c =x_{\tresdos\cincodos}\wt_n -(g_1g_2\cdot \wt_n) x_{\tresdos\cincodos},\qquad
 n\in\N.
\end{align*}
Thus $\wt = \wt_1$, see above. We also define scalars $\ab_n$, $\bb_n$ by
\begin{gather*}
\ab_n := -\frac{1}{2^{n+1} \ghost}\prod_{k=0}^n \big((2\ghost -1)k-2 \ghost \big),
\\
\bb_{n+1} :=(-1)^{n} \ghost \ab_n -\bb_n \bigg(\frac{(2\ghost-1)n}{2}+(\ghost - 1)\bigg).
\end{gather*}

\begin{stepv}\label{lem:pale+superJordan-K1-action-derivations}
We have
\begin{alignat}{2}\label{eq:pale+superJordan-wn-adj-action}
&[x_{\tresdos2},\wt_n]_c = [x_1,\wt_n]_c =0,\qquad&&
[x_{\tresdos},\wt_n]_c = q_{12}^n\ab_n x_{\tresdos2}^{n+1}.
\\
\label{eq:pale+superJordan-action-wn}
&g_1\cdot \wt_n =(-1)^n q_{12}^{n+1} \wt_n,&&
g_2\cdot \wt_n =(-1)^{n+1} q_{21}^{n} \wt_n.
\\
\label{eq:pale+superJordan-derivations-wn}
&\partial_{2}(\wt_n) =\bb_n x_{\tresdos2}^{n}, &&\partial_{i}(\wt_n) =0, \qquad i=1,\tfrac32,\tfrac52, \quad n\in\N_0.
\end{alignat}
\end{stepv}

\begin{proof}[Proof of Steps 1 and 2]
We proceed recursively on $n\in\N_0$.
For $n=0$, the first two equalities of \eqref{eq:pale+superJordan-wn-adj-action} follow since $x_2^2=0=(\adc x_1)x_2$, while the last one, \eqref{eq:pale+superJordan-action-wn} and \eqref{eq:pale+superJordan-derivations-wn} are straightforward.
Assume that \eqref{eq:pale+superJordan-wn-adj-action}, \eqref{eq:pale+superJordan-action-wn} and \eqref{eq:pale+superJordan-derivations-wn} hold for $n$. Then
\begin{gather*}
g_1 \cdot x_{n+1} =
\bigl[ -q_{12}\big(x_{\tresdos\cincodos}+ax_{\tresdos2}\big), (-1)^nq_{12}^{n+1} \wt_n \bigr]_c
= (-1)^{n+1}q_{12}^{n+2} \wt_{n+1},
\\
g_2 \cdot x_{n+1} =
(-1)^{n+2}q_{21}^{n+1} \big[ x_{\tresdos\cincodos}+(a-1)x_{\tresdos2}, \wt_n \big]_c
= (-1)^{n+2}q_{21}^{n+1} \wt_{n+1},
\\
\big[x_{\tresdos2},\wt_{n+1}\big]_c = \big[ \big[x_{\tresdos2}, x_{\tresdos\cincodos}\big]_c, \wt_n \big]_c
+\big(g_1g_2 \cdot x_{\tresdos\cincodos}\big) \big[x_{\tresdos2}, \wt_n \big]_c
+q_{12}\big[x_{\tresdos2}, \wt_n \big]_c x_{\tresdos\cincodos}
\\ \hphantom{[x_{\tresdos2},\wt_{n+1}]_c }
 {} = -\tfrac{2a-1}{2} \big[ x_{\tresdos2}^2, \wt_n \big]_c=0,
\\
[x_1,\wt_{n+1}]_c = \big[ \big[ x_1, x_{\tresdos\cincodos}\big]_c, \wt_n \big]_c
+\big(g_1 \cdot x_{\tresdos\cincodos}\big) [x_1, \wt_n ]_c
+q_{12}[x_1, \wt_n ]_c x_{\tresdos\cincodos} =0,
\\
\big[x_{\tresdos},\wt_{n+1}\big]_c = \big[ \big[ x_{\tresdos}, x_{\tresdos\cincodos}\big]_c, \wt_n \big]_c
+\big(g_1 \cdot x_{\tresdos\cincodos}\big) \big[x_{\tresdos}, \wt_n \big]_c
+q_{12}\big[x_{\tresdos}, \wt_n \big]_c x_{\tresdos\cincodos}
\\ \hphantom{[x_{\tresdos},\wt_{n+1}]_c}
{} =
-q_{12}^{n+1}\ab_n \big(x_{\tresdos\cincodos}+ax_{\tresdos2}\big)x_{\tresdos2}^{n+1}
+q_{12}^{n+1}\ab_n x_{\tresdos2}^{n+1}x_{\tresdos\cincodos}
\\ \hphantom{[x_{\tresdos},\wt_{n+1}]_c}
{} = q_{12}^{n+1}\ab_n \tfrac{(2a-1)(n+1)-2a}{2}x_{\tresdos2}^{n+2} = \ab_{n+1}q_{12}^{n+1} x_{\tresdos2}^{n+2},
\end{gather*}
by \eqref{eq:graded-bracket-jacobi}, Lemma~\ref{lem:pale+Jordan-basis-K1} and the inductive hypothesis.
We conclude that
\begin{align*}
&\wt_{n+1}=x_{\tresdos\cincodos}\wt_n+q_{12}\wt_nx_{\tresdos\cincodos},\qquad \text{so}\quad
\partial_1 (\wt_{n+1})=\partial_{\tresdos}(\wt_{n+1})=0,
\\
&\partial_{\cincodos} (\wt_{n+1})
= (-1)^{n} q_{21}^{n}x_1 \wt_n -q_{12}\wt_n x_1
= (-1)^{n} q_{21}^{n} [x_1, \wt_n]_c=0.
\end{align*}
Finally, we compute the remaining skew-derivation:
\begin{align*}
\partial_{2} (\wt_{n+1})
&= (-1)^{n} q_{21}^{n}a(x_{\tresdos}+x_1) \wt_n+\bb_n x_{\tresdos\cincodos}x_{\tresdos2}^{n}
-\bb_n x_{\tresdos2}^{n} \big(x_{\tresdos\cincodos}+(a-1)x_{\tresdos2}\big)
\\ &\hphantom{\partial_{2} (\wt_{n+1})}
-aq_{12}\wt_n (x_{\tresdos}+x_1)
\\
&= (-1)^{n} q_{21}^{n}a \big(\big[x_{\tresdos},\wt_n\big]_c + [x_{1},\wt_n]_c\big)
-\bb_n \bigg(\frac{(2a-1)n}{2}-(a-1)\bigg)x_{\tresdos2}^{n+1}
\\
&= \bigg((-1)^{n} a \ab_n -\bb_n \bigg(\frac{(2a-1)n}{2}+(a-1)\bigg)\bigg) x_{\tresdos2}^{n+1}
= \bb_{n+1} x_{\tresdos2}^{n+1}. \tag*{\qed}
\end{align*}
\renewcommand{\qed}{}
\end{proof}

\begin{stepv}
If $\ghost\in\N_{\ge 2}$, then $\GK \toba(V)=\infty$.
\end{stepv}

\begin{proof}[Proof of Step 3]
First we claim that $\wt_n \ne 0$, $\bb_n \ne 0$, for all $n\in\N_0$.

If $\ghost=a\ge 2$, then $\tfrac{2\ghost}{2\ghost-1}=1+\tfrac{1}{2\ghost-1}\notin\Z$, so $\ab_n\ne 0$ for $n\in\N$.
By \eqref{eq:pale+superJordan-wn-adj-action}, we have $[x_{\tresdos},\wt_n]_c\ne 0$, so $\wt_n \ne 0$.
By \eqref{eq:pale+superJordan-derivations-wn}, $0\ne \partial_2(\wt_n)$, so $\bb_{n}\ne 0$.

By \cite[Lemma~2.3.4]{AAH-triang}, to prove the step it is enough to show that the set
\begin{gather}\label{eq:pale+superJordan-prod-wn}
 \wt_{n_1}\wt_{n_2}\cdots \wt_{n_k}, \qquad
 k\in\mathbb N_0, \quad n_1< \cdots< n_k \in\N
\end{gather}
is linearly independent.
Otherwise pick a non-trivial linear combination $\mathtt S$ of elements in \eqref{eq:pale+superJordan-prod-wn},
homogeneous of minimal degree $N$. By Step~\ref{lem:pale+superJordan-K1-action-derivations}, we have
\begin{gather*}
(\partial_1\partial_2)^{n_k}\partial_2 (\wt_{n_1}\wt_{n_2}\cdots \wt_{n_k})
\\ \qquad
{}= \sum_{i=1}^k \bb_{n_i}(-1)^{n_{i+1}+\cdots+n_k+k-i} q_{21}^{n_{i+1}+\cdots+n_k}
(\partial_1\partial_2)^{n_k} \big(\wt_{n_1}\cdots \wt_{n_{i-1}} x_{\tresdos2}^{n_i+1}\wt_{n_{i+1}}\cdots \wt_{n_k}\big)
\\ \qquad
{} = (-1)^{n_k}n_k! \bb_{n_k} \wt_{n_1}\wt_{n_2}\cdots \wt_{n_{k-1}},
\\[1mm]
(\partial_1\partial_2)^{m}\partial_2 (\wt_{n_1}\wt_{n_2}\cdots \wt_{n_k}) =0, \qquad \text{if}\quad m>n_k.
\end{gather*}
Let $M$ be maximal between the $n_k$'s such that $\wt_{n_1}\wt_{n_2}\cdots \wt_{n_k}$ has
coefficient $\neq 0$ in $\mathtt S$.
Then $0=(\partial_1\partial_2)^{M}\partial_2(\mathtt S)$ is a non-trivial linear combination of degree $N-2M-1$,
a contradiction. Thus \eqref{eq:pale+superJordan-prod-wn} is linearly independent.
\end{proof}

\begin{stepv}
Assume that $\ghost = 1$. Then \eqref{eq:pale+superJordan-def-rel-6}, \eqref{eq:pale+superJordan-def-rel-4} and
\eqref{eq:pale+superJordan-def-rel-5} hold in $\toba(V)$.
\end{stepv}

The first relation in \eqref{eq:pale+superJordan-def-rel-5} is \eqref{eq:pale+superJordan-def-rel-1} for $n=a=1$ while the second holds since
$\big\langle x_{2}, x_{\cincodos}\big\rangle \simeq $ the Jordan super plane.
Next we check \eqref{eq:pale+superJordan-def-rel-6} and \eqref{eq:pale+superJordan-def-rel-4}.
First we use \eqref{eq:pale+Jordan-def-rel-1} and that $\wt$ is $\wt_1$ in Step~\ref{lem:pale+superJordan-K1-action-derivations} to get
\begin{gather*}
\partial_{\cincodos}(\ttt)=x_{\tresdos2},\qquad
\partial_{2}(\ttt)=x_{\tresdos\cincodos},\qquad
\partial_{\cincodos}(\wt)=0,\qquad
\partial_{2}(\wt)=x_{\tresdos2}.
\end{gather*}
Using these computations, \eqref{eq:endymion-1}, \eqref{eq:endymion-1b}, \eqref{eq:endymion-2b}, \eqref{eq:pale+Jordan-def-rel-1} and \eqref{eq:pale+superJordan-def-rel-5}, we have
\begin{gather*}
\partial_{\cincodos} \big(x_{\tresdos2}x_{\cincodos} +q_{12}x_{\cincodos}x_{\tresdos2}\big) = 0 = \partial_{\cincodos}(\wt),
\\
\partial_{2} \big(x_{\tresdos2}x_{\cincodos} +q_{12}x_{\cincodos}x_{\tresdos2}\big) = -x_{\tresdos}\bigl(-x_{\cincodos}+x_2\bigr) -q_{12}x_{\cincodos}x_{\tresdos}=x_{\tresdos2}=\partial_{2}(\wt);
\\
\partial_{\cincodos} \big(\ttt x_{\cincodos} -q_{12}\big(x_{\cincodos}-x_{2}\big)\ttt\big) =
\partial_{2} \big(\ttt x_{\cincodos} -q_{12}\big(x_{\cincodos}-x_{2}\big)\ttt\big) = 0;
\\
\partial_{\cincodos} \big(\wt x_{\cincodos}-q_{12}\big(x_{\cincodos}-x_{2}\big)\wt\big) =
\partial_{2} \big(\wt x_{\cincodos}-q_{12}\big(x_{\cincodos}-x_{2}\big)\wt\big) =0;
\\
\partial_{\cincodos} \big(x_{\tresdos}\ttt +q_{12}^2(\ttt+2\wt)x_{\tresdos}\big) = x_{\tresdos}x_{\tresdos2} +q_{12}x_{\tresdos2}\big(x_{\tresdos}+x_1\big)
= q_{12}x_{\tresdos2}x_1,
\\
\partial_{\cincodos} \big(x_{\tresdos2}x_{\tresdos\cincodos}\big) = -x_{\tresdos2}x_1,
\\
\partial_{2} \big(x_{\tresdos}\ttt +q_{12}^2(\ttt+2\wt)x_{\tresdos}\big) = x_{\tresdos}x_{\tresdos\cincodos} +q_{12}\big(x_{\tresdos\cincodos}+2x_{\tresdos2}\big)(x_{\tresdos}+x_1)
\\ \hphantom{\partial_{2} \big(x_{\tresdos}\ttt +q_{12}^2(\ttt+2\wt)x_{\tresdos}\big)}
{}= q_{12}x_{\tresdos2}x_{\tresdos}+q_{12}x_{\tresdos\cincodos}x_1+2q_{12}x_{\tresdos2}x_1,
\\
\partial_{2} \big(x_{\tresdos2}x_{\tresdos\cincodos}\big) = q_{21}x_1x_{\tresdos\cincodos}-x_{\tresdos2}\big(x_{\tresdos}+x_1\big)
=-x_{\tresdos\cincodos}x_1 -x_{\tresdos2}x_{\tresdos}-2x_{\tresdos2}x_1.
\end{gather*}
As $\partial_1$, $\partial_{\tresdos}$ annihilate all the terms in \eqref{eq:pale+superJordan-def-rel-6} and \eqref{eq:pale+superJordan-def-rel-4}, they hold in $\toba(V)$.

Let $\toba$ be the algebra with the claimed presentation. Then
there is a surjective map $\toba \twoheadrightarrow \toba(\sele_{1,-}(q))$.
Also the following relations hold in $\toba$:{\samepage
\begin{alignat*}{2}
&x_1\ttt =-q_{12}^2(\ttt+2\wt)x_1 -\tfrac{1}{2}x_{\tresdos2}^2,
&&x_{\tresdos}x_{\cincodos2} = q_{12}^2x_{\cincodos2}x_{\tresdos}+2\wt,
\\
&x_\tresdos\wt =-q_{12}^2\wt x_{\tresdos} -\tfrac{1}{2}x_{\tresdos2}^2,
&&x_{\tresdos2}x_{\cincodos2} = q_{12}^2x_{\cincodos2}x_{\tresdos2}+2q_{12}x_2\wt,
\\
&\ttt x_{2} = q_{12}x_2\ttt+3q_{12}x_2\wt+x_{\cincodos2}x_{\tresdos2},\qquad
&&\wt^2 = \ttt^2 =0,
\end{alignat*}}
and $[\xt,\yt]_c=0$ for other pairs of PBW generators $\xt$, $\yt$.

Hence $\toba$ is spanned by the monomials in \eqref{eq:selene-1--PBW-basis}.
It only remains to prove that they are linearly independent in $\toba(\sele_{1,-}(q))$. By direct computations,
\begin{gather*}
\partial_1\partial_{\tresdos} \big(x_{2}^{m_1}x_{\cincodos2}^{m_2}x_{\cincodos}^{m_3}\wt^{m_4}\ttt^{m_5} x_{\tresdos2}^{m_6}x_{\tresdos\cincodos}^{m_7} x_{\tresdos}^{m_8} x_1^{m_9} \big)
=\delta_{m_9,1}\delta_{m_8,1} x_{2}^{m_1}x_{\cincodos2}^{m_2}x_{\cincodos}^{m_3}\wt^{m_4}\ttt^{m_5} x_{\tresdos2}^{m_6}x_{\tresdos\cincodos}^{m_7},
\\
\partial_1\partial_{\cincodos} \big(x_{2}^{m_1}x_{\cincodos2}^{m_2}x_{\cincodos}^{m_3}\wt^{m_4}\ttt^{m_5} x_{\tresdos2}^{m_6}x_{\tresdos\cincodos}^{m_7} \big)
=-m_7 x_{2}^{m_1}x_{\cincodos2}^{m_2}x_{\cincodos}^{m_3}\wt^{m_4}\ttt^{m_5} x_{\tresdos2}^{m_6}x_{\tresdos\cincodos}^{m_7-1},
\\
\partial_1\partial_{2} \big(x_{2}^{m_1}x_{\cincodos2}^{m_2}x_{\cincodos}^{m_3}\wt^{m_4}\ttt^{m_5} x_{\tresdos2}^{m_6} \big)
= -m_6 x_{2}^{m_1}x_{\cincodos2}^{m_2}x_{\cincodos}^{m_3}\wt^{m_4}\ttt^{m_5} x_{\tresdos2}^{m_6-1},
\\
\partial_1\partial_{2}\partial_{\cincodos} \big(x_{2}^{m_1}x_{\cincodos2}^{m_2}x_{\cincodos}^{m_3}\wt^{m_4}\ttt^{m_5}\big)
= -\delta_{m_5,1} x_{2}^{m_1}x_{\cincodos2}^{m_2}x_{\cincodos}^{m_3}\wt^{m_4},
\\
\partial_1\partial_{2}\partial_{\cincodos} \big(x_{2}^{m_1}x_{\cincodos2}^{m_2}x_{\cincodos}^{m_3}\wt^{m_4}\big)
= -\delta_{m_4,1} x_{2}^{m_1}x_{\cincodos2}^{m_2}x_{\cincodos}^{m_3}.
\end{gather*}
Thus the proof follows using a recursive argument as in previous cases.
\end{proof}

\subsection*{Acknowledgements}

This material is based upon work supported by the National Science Foundation under Grant No.~DMS-1440140 while N.A.\
was in residence at the Mathematical Sciences Research Institute in Berkeley, California, in the Spring 2020 semester.
The work of N.A.\ and I.A.\ was partially supported by CONICET (PIP 11220200102916CO),
FONCyT-ANPCyT (PICT-2019-03660) and Secyt (UNC).
The work of M.M.\ was carried out while he was in residency at the I.H.E.S (2018), University Paris-Est Marne-la-Vall\'ee (2018/2019) and University of Lille (2019/2020).
We thank the referees for a careful reading of this article and many useful remarks.

\pdfbookmark[1]{References}{ref}
\LastPageEnding

\end{document}